\definecolor{green}{rgb}{0,0.8,0} 
\definecolor{deepgreen}{cmyk}{1,0,1,0.5}
\newcommand{\Del}[1]{}
\numberwithin{equation}{section}
\newtheorem{theorem}{Theorem}[section]
\newtheorem{lemma}[theorem]{Lemma}
\newtheorem{proposition}[theorem]{Proposition}
\newcommand{\japanese}[1]{\langle #1\rangle}
\renewcommand{\Im}{\mathrm{Im}}
\renewcommand{\hbar}{{\underline h}}
\newcommand{\bbC}{\mathbb C}
\newcommand{\bbE}{\mathbb E}
\newcommand{\bbN}{\mathbb N}
\newcommand{\bbP}{\mathbb P}
\newcommand{\bbR}{\mathbb R}
\newcommand{\bbS}{\mathbb S}
\newcommand{\bbZ}{\mathbb Z}
\newcommand{\calA}{\mathcal A}
\newcommand{\calC}{\mathcal C}
\newcommand{\calE}{\mathcal E}
\newcommand{\calF}{\mathcal F}
\newcommand{\calH}{\mathcal H}
\newcommand{\calI}{\mathcal I}
\newcommand{\calM}{\mathcal M}
\newcommand{\calN}{\mathcal N}
\newcommand{\calO}{\mathcal O}
\newcommand{\calP}{\mathcal P}
\newcommand{\calQ}{\mathcal Q}
\newcommand{\calR}{\mathcal R}
\newcommand{\ud}{\mathrm{d}}
\newcommand{\pt}{\partial_t}
\newcommand{\hf}{\frac{1}{2}}
\newcommand{\bA}{\mathbf{A}}
\newcommand{\bM}{\mathbf{M}}
\newcommand{\ulk}{\underline{k}}
\begin{document}

\title[Probabilistic small data GWP of the energy-critical Maxwell-Klein-Gordon equation]{Probabilistic small data global well-posedness \\ of the energy-critical Maxwell-Klein-Gordon equation} 

\author[J. Krieger]{Joachim Krieger}
\address{B\^atiment des Math\'ematiques \\ EPFL \\ Station 8 \\ 1015 Lausanne \\ Switzerland}
\email{joachim.krieger@epfl.ch}

\author[J. L\"uhrmann]{Jonas L\"uhrmann}
\address{Department of Mathematics \\ Texas A\&M University \\ College Station, TX 77843, USA}
\email{luhrmann@math.tamu.edu}

\author[G. Staffilani]{Gigliola Staffilani}
\address{Department of Mathematics \\ Massachusetts Institue of Technology \\ Cambridge, MA 02139, USA}
\email{gigliola@math.mit.edu}

\thanks{J. Krieger is supported in part by Consolidator Grant BSCGI0-157694 of the Swiss National Science Foundation. J. L\"uhrmann is supported in part by NSF grant DMS-1954707. G. Staffilani is supported in part by NSF grant DMS-1764403 and by the Simons Foundation}

\begin{abstract}
 We establish probabilistic small data global well-posedness of the energy-critical Maxwell-Klein-Gordon equation relative to the Coulomb gauge for scaling super-critical random initial data. The proof relies on an induction on frequency procedure and a modified linear-nonlinear decomposition furnished by a delicate ``probabilistic'' parametrix construction. This is the first global existence result for a geometric wave equation for random initial data at scaling super-critical regularity. 
\end{abstract}

\maketitle

\tableofcontents

\section{Introduction}

The study of the local and global well-posedness of nonlinear dispersive and hyperbolic PDEs for scaling super-criticial random initial data has seen an enormous amount of progress in recent years. 
The goal of our work is to consider the energy-critical Maxwell-Klein-Gordon equation, a prime example of a geometric semilinear wave equation, and to establish a probabilistic small data global well-posedness result for scaling super-critical random initial data. 

\medskip 

The energy-critical Maxwell-Klein-Gordon equation on $(1+4)$-dimensional Minkowski space $\bbR^{1+4}$ models the interaction of an electromagnetic field with a charged particle field. The electromagnetic field is described by a real-valued connection $1$-form $A_\alpha \colon \bbR^{1+4} \to \bbR$, $\alpha = 0, 1, \ldots, 4$, and the particle field in terms of a complex-valued scalar function $\phi \colon \bbR^{1+4} \to \bbC$. Upon introducing the curvature $2$-form 
\begin{equation*}
 F_{\alpha \beta} = \partial_\alpha A_\beta - \partial_\beta A_\alpha, \quad 0 \leq \alpha, \beta \leq 4,
\end{equation*} 
and the covariant derivatives
\begin{equation*}
 D_\alpha = \partial_\alpha + i A_\alpha, \quad 0 \leq \alpha \leq 4,
\end{equation*}
the Maxwell-Klein-Gordon system of equations reads 
\begin{equation} \label{equ:MKG}
 \left\{ \begin{aligned}
   \partial^\beta F_{\alpha \beta} &= \Im \bigl( \phi \overline{D_\alpha \phi} \bigr), \\
   D^\alpha D_\alpha \phi &= 0.
 \end{aligned} \right. \tag{MKG}
\end{equation}
Here we use the standard conventions of raising and lowering indices with respect to the Minkowski metric~$\text{diag}[1,-1,\ldots, -1]$, and of summing over repeated upper and lower indices. 

The system of equations \eqref{equ:MKG} is invariant under the scaling transformation
\begin{equation*}
 A_{\alpha}(t,x) \rightarrow \lambda A_{\alpha}(\lambda t, \lambda x), \quad \phi(t,x) \rightarrow \lambda \phi(\lambda t, \lambda x) \quad \text{for } \lambda > 0.
\end{equation*}
It also admits the conserved energy functional
\begin{equation*} 
 E(A, \phi) := \int_{\bbR^4} \Big( \frac{1}{4}\sum_{\alpha, \beta} F_{\alpha\beta}^2 + \frac{1}{2} \sum_{\alpha} \big|D_{\alpha}\phi\big|^2 \Big) \, dx,
\end{equation*}
which is invariant under the above scaling. For this reason the Maxwell-Klein-Gordon system of equations is referred to as energy-critical in four space dimensions.

Another important feature of the (MKG) system of equations is its gauge invariance. If $(A_\alpha, \phi)$ is a solution to (MKG), then so is $(A_\alpha - \partial_\alpha \gamma, e^{i \gamma} \phi)$ for any suitably regular function $\gamma \colon \bbR^{1+4} \to \bbR$. This yields an equivalence relation on the set of pairs $(A_\alpha, \phi)$ satisfying (MKG). In order to uniquely determine the solutions to (MKG), we therefore have to add an additional set of constraints to fix the ambiguity arising from this gauge invariance.

Imposing the global Coulomb gauge condition
\begin{equation*}
 \partial^j A_j = 0,
\end{equation*}
(MKG) becomes a system of wave equations in the dynamical variables $\phi$ and $A_j$, $j = 1, \ldots, 4$, coupled to an elliptic equation in the temporal component $A_0$, given by
\begin{equation} \label{equ:MKG-CG}
 \left\{ \begin{aligned}
          \Box A_j &= - \calP_j \Im \bigl( \phi \overline{D_x \phi} \bigr), \\
          D^\alpha D_\alpha \phi &= 0, \\
          \Delta A_0 &= - \Im \bigl( \phi \overline{D_0 \phi} \bigr),
         \end{aligned} \right. \tag{MKG-CG}
\end{equation}
where $\calP_j v =  v_j - \partial_j \Delta^{-1} \partial^\ell v_\ell$ is the Leray projection to divergence-free vector fields. In the formulation (MKG-CG), at any fixed time the temporal component $A_0$ is uniquely determined in terms of the dynamical variables $(A_x, \phi)$ by the elliptic equation. It therefore suffices to prescribe 
\[
 A_x[0] := (A_x, \partial_t A_x)(0) = (a, b), \quad \phi[0] := (\phi, \partial_t \phi)(0) = (\phi_0, \phi_1)
\]
as initial data for (MKG-CG) with $a$ and $b$ obeying the Coulomb gauge condition 
\begin{equation*}
 \partial^j a_j = \partial^j b_j = 0.
\end{equation*}

\medskip

Relative to the Coulomb gauge, the nonlinearities in the wave equations for the dynamical variables in (MKG-CG) have a favorable algebraic structure, the so-called null structure, which damps the worst interactions. Schematically, the system of equations (MKG-CG) is of the form 
\begin{equation*} 
 \left\{ \begin{aligned}
          \Box A_j &= - \calP_j \Im \bigl( \phi \overline{\nabla_x \phi} \bigr) + \text{``cubic terms''}, \\
          \Box \phi &= - 2i A^j \partial_j \phi - 2i A_0 \partial_t \phi + \text{``cubic terms''}, \\
          \Delta A_0 &= - \Im \bigl( \phi \overline{\partial_t \phi} \bigr) + \text{``cubic terms''},
         \end{aligned} \right. 
\end{equation*}
where the quadratic terms in the wave equations for $A_x$ and $\phi$ exhibit the null structures
\begin{align*}
 \calP_j \bigl( \phi \overline{\nabla_x \phi} \bigr) &= \partial^k \Delta^{-1} \calN_{kj}\bigl( \phi, \overline{\phi} \bigr), \\
 A^j \partial_j \phi &= \calN_{kj} \bigl( \partial^k \Delta^{-1} A^j, \phi \bigr),
\end{align*}
with $\calN_{ij}(\phi, \psi) = (\partial_i \phi) (\partial_j \psi) - (\partial_j \phi) (\partial_i \psi)$. The discovery of the presence of null structure in the nonlinearities of (MKG-CG) is due to Klainerman-Machedon~\cite{KlMa_MKG} and marked the beginning of the study of low-regularity well-posedness of solutions to the Maxwell-Klein-Gordon system of equations, which we briefly review now. Finite energy global well-posedness of (MKG-CG) in energy sub-critical $d=3$ space dimensions was established by Klainerman-Machedon~\cite{KlMa_MKG}. 
Through a deep structural analysis of the (MKG-CG) equations, Machedon-Sterbenz~\cite{MaSte} obtained an almost optimal local existence result for (MKG-CG) for $d=3$. An analogous almost optimal local existence result was obtained by Selberg~\cite{Selberg} in $d=4$ space dimensions.  For small critical Sobolev data, Rodnianski-Tao~\cite{RT} proved global existence for (MKG-CG) in $d \geq 6$ space dimensions. Their approach was further advanced in joint work of the first author with Sterbenz and Tataru~\cite{KST} to show global existence for small energy data for the energy-critical Maxwell-Klein-Gordon equation in $d = 4$ space dimensions. More recently, global existence and scattering for arbitrary finite energy data was established for the energy-critical Maxwell-Klein-Gordon equation independently by Oh-Tataru~\cite{OT1, OT2, OT3} and by the first two authors~\cite{KL15}.

\subsection{Randomization procedure}

In this work we consider the Cauchy problem for (MKG-CG) in four space dimensions for random initial data at scaling super-critical regularity, i.e. below the energy regularity.
Before stating our main theorem and putting it into perspective with prior random data results in the next subsection, we first describe our randomization procedure for the initial data. It relies on a unit-scale decomposition of frequency space and was introduced in~\cite{ZF12, LM14}. This procedure was subsequently coined ``Wiener randomization'' in \cite{BOP1, BOP2} to emphasize its natural association with the Wiener decomposition~\cite{Wiener32} and the modulation spaces introduced by H. Feichtinger~\cite{Feichtinger}. 

We pick an even, non-negative bump function $\varphi \in C_c^{\infty}(\bbR^4)$ with $\text{supp} (\varphi) \subseteq B(0,1)$ and such that 
\begin{equation*}
 \sum_{m \in \bbZ^4} \varphi(\xi - m) = 1 \quad \text{for all } \xi \in \bbR^4.
\end{equation*}
Then we let $\{g_m\}_{m \in \bbZ^4}$, $\{ \tilde{g}_m \}_{m \in \bbZ^4}$, $\{h_m\}_{m \in \bbZ^4}$, and $\{ \tilde{h}_m \}_{m \in \bbZ^4}$ be sequences of complex-valued standard (zero-mean) Gaussian random variables  on a probability space $(\Omega, \calF, \bbP)$. 
We assume the symmetry conditions $g_{-m} = \overline {g_m}$ and $\tilde{g}_{-m} = \overline {\tilde{g}_m}$ for all $m \in \bbZ^4$. Moreover, we suppose that $\{g_0, \textup{Re}(g_m), \textup{Im}(g_m)\}_{m \in \calI}$ are independent, zero-mean, real-valued random variables, where $\calI \subset \bbZ^4$ is such that we have a \textit{disjoint} union $\bbZ^4 = \calI \cup (-\calI) \cup \{0\}$, and similarly for the $\tilde{g}_m$. The Gaussians $\{h_m\}_{m \in \bbZ^4}$, and $\{ \tilde{h}_m \}_{m \in \bbZ^4}$ are just assumed to be independent random variables without any additional constraints.
We remark that we could more generally work with sequences of independent uniformly sub-Gaussian random variables with zero mean.

Let $0 < \delta_\ast \ll 1$ be some small absolute constant whose size will be specified later on. For any regularity exponent $1-\delta_\ast < s < 1$, we consider a pair of real-valued $1$-forms obeying the Coulomb gauge condition
\begin{equation*}
 A_x[0] = (a, b) \in H^s_x(\bbR^4) \times H^{s-1}_x(\bbR^4), \quad \partial^j a_j = \partial^j b_j = 0,
\end{equation*}
and a pair of complex-valued functions
\begin{equation*}
 \phi[0] = (\phi_0, \phi_1) \in H^s_x \times H^{s-1}_x(\bbR^4).
\end{equation*} 
Then we define the randomization of $(a,b)$ and $(\phi_0,\phi_1)$ by
\begin{equation} \label{equ:def_random_data}
 \begin{aligned}
  A_x^\omega[0] = (a^\omega, b^\omega) &:= \biggl( \sum_{m \in \bbZ^4} g_m(\omega) \varphi(D-m) a, \sum_{m \in \bbZ^4} \tilde{g}_m(\omega) \varphi(D-m) b \biggr), \\
  \phi^\omega[0] = (\phi_0^\omega, \phi_1^\omega) &:= \biggl( \sum_{m \in \bbZ^4} h_m(\omega) \varphi(D-m) \phi_0, \sum_{m \in \bbZ^4} \tilde{h}_m(\omega) \varphi(D-m) \phi_1 \biggr).
 \end{aligned}
\end{equation}
These quantities are to be understood as Cauchy limits in $L^2_\omega\bigl(\Omega; H^s_x(\bbR^4) \times H^{s-1}_x(\bbR^4)\bigr)$. The randomization almost surely does not regularize at the level of Sobolev spaces, see for instance \cite[Lemma B.1]{BT08_1}. 
It is crucial that the symmetry assumptions on the random variables $\{g_m\}_{m \in \bbZ}$, $\{\tilde{g}_m\}_{m \in \bbZ}$ together with the assumption that the bump function $\varphi$ is even, ensure that the randomization of the pair of real-valued $1$-forms $(a,b)$ is again real-valued and in Coulomb gauge.

We will frequently use the following truncation operators $T_n$ defined for all integers $n \geq 1$ by
\begin{align*}
 T_n \phi_0^\omega := \sum_{ \substack{ m \in \bbZ^4 \\ 2^{n-1} \leq |m| < 2^n} } g_m(\omega) \varphi(D-m) \phi_0,
\end{align*}
with analogous definitions for $T_n \phi_1^\omega$, $T_n a^\omega$, and $T_n b^\omega$, where we denote by $|m| = (m_1^2 + \ldots + m_4^2)^{\frac{1}{2}}$ the Euclidean norm of a vector $m = (m_1, \ldots, m_4) \in \bbZ^4$. In the same manner we introduce the truncation operators $T_{<n}$ for all integers $n \geq 1$ by
\begin{equation*}
 T_{<n} \phi_0^\omega:= \sum_{ \substack{ m \in \bbZ^4 \\ |m| < 2^n} } g_m(\omega) \varphi(D-m) \phi_0,
\end{equation*}
with corresponding definitions for $T_{<n} \phi_1^\omega$, $T_{<n} a^\omega$, and $T_{<n} b^\omega$. 
Moreover, we set 
\begin{align*}
 T_0 \phi_0^\omega := g_0(\omega) \varphi(D) \phi_0
\end{align*}
with analogous definitions for $T_0 \phi_1^\omega$, $T_0 a^\omega$, and $T_0 b^\omega$.
Finally, for every integer $n \geq 0$ we denote by 
\begin{equation*}
 \calF_n := \sigma \Bigl( g_m, \tilde{g}_m, h_m, \tilde{h}_m \, : \, |m| < 2^n \Bigr)
\end{equation*}
the $\sigma$-algebra generated by the Gaussians $g_m$, $\tilde{g}_m$, $h_m$, $\tilde{h}_m$ with $|m| < 2^n$.

\subsection{Main result}

In recent years there has been enormous progress in the development of a combination of probabilistic and deterministic techniques to prove the existence of strong local-in-time or even global-in-time solutions to nonlinear wave and Schr\"odinger equations almost surely (or with high probability) for random initial data of super-critical regularity. This approach was initiated in the pioneering work of Bourgain~\cite{B94_1, B96} for the periodic nonlinear Schrödinger equation in dimensions one and two, building upon the constructions of invariant measures in~\cite{Glimm_Jaffe} and~\cite{LRS}. Subsequently, the influential papers of Burq-Tzvetkov~\cite{BT08_1, BT08_2}, see also Oh~\cite{Oh09}, led to a burst of activity in this line of research by introducing a more general randomization method in the context of establishing almost sure local and global well-posedness results at super-critical regularities for nonlinear wave equations posed on compact Riemannian manifolds. 
We refer to a sample of recent random data results, primarily for nonlinear wave equations~\cite{BT14, BuKr19, LM14, LM16, DLM17, OhPoc16, Poc17, Bringmann18_1, Bringmann18_2, Bringmann18_3, CCMNS17, DNY19, DNY20, GKO18, KenigMend19} that are most closely related to this work. This list is by no means exhaustive and we also refer to the recent surveys \cite{BOP19, NS19} and references therein. We point out that the large majority of random data results so far is for equations with pure power-type nonlinearities.

Oversimplifying a bit here, in order to deal with the Cauchy problem for a nonlinear wave equation with super-critical random initial data, one typically decomposes the solution into the free wave evolution of the random data and into an inhomogeneous component satisfying a nonlinear wave equation with forcing terms. Using the randomization one then shows that almost surely (or with high probability) the free wave evolution of the rough random data enjoys improved (``redeeming'') space-time integrability properties that beat the scaling, and tend to allow one to solve the equation for the inhomogeneous component at a critical or sub-critical regularity. 
This type of linear-nonlinear decomposition can be attributed to the work of Bourgain~\cite{B96} in the field of dispersive PDEs, and is referred to as the Da
Prato-Debussche trick~\cite{PratoDeb02} in the field of stochastic parabolic PDEs. 
 
In the context of the energy-critical Maxwell-Klein-Gordon system of equations, a semilinear geometric wave equation with derivative nonlinearities, this standard linear-nonlinear decomposition is bound to fail due to certain low-high interactions in the equation for the scalar field $\phi$ that do not exhibit a smoothing effect when a rough input is at high frequency. 
Such a difficulty has already been observed by Bringmann~\cite{Bringmann18_2} in the context of a quadratic derivative nonlinear wave equation in three space dimensions and was overcome by building the corresponding problematic low-high interactions into the definition of the rough linear evolution of the random data. This step crucially relies on the fact that the high-frequency and the low-frequency parts are independent. 
Similar ideas for dealing with problematic low-high frequency interactions with the rough linear evolution at high frequency play a major role in the development of the theory of random averaging operators and of the theory of random tensors in the recent works of Deng-Nahmod-Yue~\cite{DNY19, DNY20}, too.
We also note that the treatment of related delicate low-high interactions are a key feature of the theory of paracontrolled calculus developed by Gubinelli-Imkeller-Perkowski~\cite{GIP15} to prove local well-posedness for singular parabolic stochastic PDEs, see also the theory of regularity structures put forth by Hairer~\cite{Hairer14}, the work of Kupiainen~\cite{Kupi16} using renormalization group techniques, and the approach of Otto-Weber~\cite{OttoWeber16}.

Already in the deterministic study of the Maxwell-Klein-Gordon equation at scaling-critical regularity, certain low-high interactions in the magnetic interaction term in the equation for $\phi$ are non-perturbative at scaling-critical regularity. A key idea of Rodnianski-Tao~\cite{RT} to overcome this issue was to incorporate these low-high interactions into the linear magnetic wave operator of the $\phi$ equation and to construct a corresponding parametrix to solve that linear magnetic wave equation. 

The probabilistic small data global well-posedness problem for the energy-critical Maxwell-Klein-Gordon equation relative to the Coulomb gauge for scaling super-critical random data features all of the obstacles described above. Our proof builds on the deterministic small data global existence results for the Maxwell-Klein-Gordon equation~\cite{RT, KST} at scaling-critical regularity, 
on the first two authors' induction on frequency procedure for the finite energy global regularity result for (MKG-CG)~\cite{KL15} (see also \cite{KS}),
and on the recent progress on almost sure well-posedness~\cite{Bringmann18_2, DNY19, DNY20}.

\medskip 

We are now in a position to present our main result.
The spaces $S^1$ and $Y^1$ in the following statement are at energy regularity and their precise definitions are provided in Section~\ref{sec:function_spaces}.

\begin{theorem} \label{thm:main}
 There exist small absolute constants $0 < \delta_\ast \ll 1$ and $0 < \varepsilon \ll 1$ with the following properties: For any $1-\delta_\ast < s < 1$, let $(a,b) \in H^s_x \times H^{s-1}_x$ be a pair of real-valued $1$-forms in Coulomb gauge, and let $(\phi_0, \phi_1) \in H^s_x \times H^{s-1}_x$. Denote by $(a^\omega, b^\omega)$ and by $(\phi_0^\omega, \phi_1^\omega)$ the associated random initial data as defined in~\eqref{equ:def_random_data}. Then there exists an event $\Sigma \subset \Omega$ with 
 \begin{equation*}
  \bbP(\Sigma^c) \lesssim \exp \Bigl( - c \frac{\varepsilon^2}{D^2} \Bigr), \quad D := \|(a,b)\|_{H^s_x \times H^{s-1}_x} + \| (\phi_0, \phi_1) \|_{H^s_x \times H^{s-1}_x}
 \end{equation*}
 such that for any $\omega \in \Sigma$, there exists a unique global solution 
 \begin{equation*}
  (A_x, A_0, \phi) \in \bigl( C^0_t H^s_x + S^1 \bigr) \times Y^1 \times \bigl( C^0_t H^s_x + S^1 \bigr)
 \end{equation*}
 to (MKG-CG) with initial data given by $A_x[0] = (a^\omega, b^\omega)$ and $\phi[0] = (\phi_0^\omega, \phi_1^\omega)$. 
 For every $\omega \in \Sigma$, $(A_x, A_0, \phi)$ is defined as the unique limit in $( C^0_t H^s_x + S^1 ) \times Y^1 \times ( C^0_t H^s_x + S^1 )$ of the sequence of canonical smooth approximations $\{ (A^{<n}_x, A^{<n}_0, \phi^{<n}) \}_{n \geq 0}$ to (MKG-CG) for frequency truncated random data $\{ (T_{<n} A_x^\omega[0], T_{<n} \phi^\omega[0] ) \}_{n \geq 0}$, and $(A_x, A_0, \phi)$ solves (MKG-CG) in the distributional sense.
\end{theorem}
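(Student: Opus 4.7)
The plan is to establish \thm{thm:main} by an induction on frequency coupled with a modified linear--nonlinear decomposition at each frequency shell. For every $n \geq 0$, the canonical smooth approximation $(A_x^{<n}, A_0^{<n}, \phi^{<n})$ to (MKG-CG) with truncated data $(T_{<n} A_x^\omega[0], T_{<n}\phi^\omega[0])$ is defined globally by the deterministic large-energy global well-posedness theory of Oh--Tataru and \cite{KL15}, since the truncated data are smooth and of finite energy for every fixed $\omega$ and $n$. The strategy is to control the successive increments $\phi^{<n+1}-\phi^{<n}$ and $A_x^{<n+1}-A_x^{<n}$ in the mixed norm $C^0_t H^s_x + S^1$ by a square-summable-in-$n$ bound on an event of probability at least $1-\exp(-c\varepsilon^2/D^2)$. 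The entire induction hinges on the fact that $(A_x^{<n}, \phi^{<n})$ is $\calF_n$-measurable while the shell data $(T_n a^\omega, T_n b^\omega, T_n \phi_0^\omega, T_n \phi_1^\omega)$ are independent of $\calF_n$; conditioning on $\calF_n$ then allows one to treat the previously constructed low-frequency solution as deterministic while exploiting fresh Gaussian randomness on the new shell.

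At step $n$ we write $\phi^{<n+1}-\phi^{<n} = \Phi_n^{\sharp} + \Psi_n$, with an analogous split for the Maxwell potential, where $\Phi_n^{\sharp}$ is a ``probabilistic parametrix'' lift of the shell data and $\Psi_n \in S^1$ is a smooth remainder. The naive choice $\Box \Phi_n^{\sharp} = 0$ is doomed: the covariant term $-2i A_{\mathrm{low}}^j \partial_j \phi_{\mathrm{high}}$ in the $\phi$-equation is non-perturbative at scaling-critical regularity and, worse, does not gain any smoothing when the high-frequency input is rough. Following the renormalization strategy of Rodnianski--Tao~\cite{RT} as further refined in \cite{KST, KL15}, and in the same spirit as the random-averaging/parametrix philosophy of Bringmann~\cite{Bringmann18_2} and Deng--Nahmod--Yue~\cite{DNY19, DNY20}, we set $\Phi_n^{\sharp} := e^{-i\psi_{<n}} \phi_n^{\mathrm{free}}$, where $\phi_n^{\mathrm{free}}$ is the flat free-wave evolution of the $T_n$-shell data and $\psi_{<n}$ is a phase constructed from $A_x^{<n}$ that approximately diagonalizes the magnetic d'Alembertian on the $n$-th shell. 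Since $\psi_{<n}$ is $\calF_n$-measurable while $\phi_n^{\mathrm{free}}$ is driven by the independent Gaussians $\{h_m, \tilde h_m : 2^{n-1}\leq |m|<2^n\}$, conditioning on $\calF_n$ turns $\Phi_n^{\sharp}$ into a linear Gaussian series with $\calF_n$-measurable coefficients, to which Khintchine--hypercontractivity and unit-scale orthogonality deliver improved $L^p_{t,x}$, Strichartz, and null-frame estimates that beat the scaling by a fixed power $\delta_\ast$ with exponentially small exceptional probability.

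With these bounds on $\Phi_n^{\sharp}$ (and its Maxwell analogue) in hand, the remainder $\Psi_n$ satisfies an equation of the schematic form $\Box_{A^{<n}} \Psi_n = \calG_n$, where $\calG_n$ collects null-form and cubic interactions between $(A_x^{<n}, \phi^{<n})$, the parametrix pieces, the renormalization commutator $[\Box, e^{-i\psi_{<n}}]\phi_n^{\mathrm{free}}$, and $\Psi_n$ itself. The Klainerman--Machedon null structures
\[
\calP_j \Im\bigl(\phi \,\overline{\nabla_x \phi}\bigr) = \partial^k \Delta^{-1} \calN_{kj}(\phi,\overline{\phi}), \qquad A^j \partial_j \phi = \calN_{kj}\bigl(\partial^k \Delta^{-1} A^j, \phi\bigr),
\]
combined with the probabilistic $\delta_\ast$-gain on every rough input, make $\calG_n$ perturbative within the $S^1$--$N^1$ bilinear framework of \cite{KST, KL15}, yielding $\|\Psi_n\|_{S^1} \lesssim \varepsilon \, 2^{-\delta_\ast n} D$ on the good event. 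The elliptic equation for $A_0$ is solved at each time slice and places $A_0 \in Y^1$. Summing the increments in $n$ across the intersection $\Sigma$ of the good events then produces the Cauchy limit $(A_x, A_0, \phi)$ in $(C^0_t H^s_x+S^1)\times Y^1\times(C^0_t H^s_x+S^1)$, and a standard limiting argument confirms that the limit solves (MKG-CG) distributionally.

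The main obstacle is the construction and uniform control of $\Phi_n^{\sharp}$. Concretely, one must verify that (i) the phase $\psi_{<n}$ built from the low-frequency magnetic potential $A_x^{<n}$ is bounded in the relevant $L^\infty$-type norms uniformly in $n$, despite $A_x^{<n}$ containing rough contributions at all shells below $2^n$; (ii) modulo $S^1$-negligible errors, the renormalized object $e^{-i\psi_{<n}} \phi_n^{\mathrm{free}}$ is $\calF_n$-conditionally a linear Gaussian series to which Gaussian concentration applies; and (iii) the commutator $[\Box, e^{-i\psi_{<n}}]\phi_n^{\mathrm{free}}$ can be placed in $N^1$ after being paired with the probabilistic gain, and is compatible with the bilinear null-form estimates needed to close the equation for $\Psi_n$. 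Carrying out (i)--(iii) uniformly along the frequency induction is precisely the ``delicate probabilistic parametrix construction'' advertised in the abstract.
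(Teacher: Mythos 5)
Your overall architecture --- induction on frequency, conditioning on $\calF_n$ so the new Gaussian shell decouples from the previously built low-frequency solution, an adapted rough evolution for the $\phi$-shell plus a smooth remainder solving a forced MKG-CG system in the $S^1$--$N$ framework --- is the same as the paper's. The genuine gap is at your points (i) and (iii), i.e.\ precisely at the construction of $\Phi_n^{\sharp}$. First, a phase built only from the magnetic potential $A_x^{<n}$ (as in the deterministic theory, where only the \emph{free-wave} part of $A_x$ enters) is not enough here: because the high-frequency input is merely rough (it costs $2^{\delta_\ast n}$ when placed in the critical space), even the ``strongly low-high'' interactions with the \emph{inhomogeneous} part of $A_x^{<n-1}$ and with the temporal component $A_0^{<n-1}$ are non-perturbative and must be absorbed into the renormalization; the paper's modified operator is $\Box + 2i P_{\leq (1-\gamma)n} A^{<n-1,\alpha}\partial_\alpha P_n$ with the \emph{entire} connection form, and the phase correspondingly contains $A_0^{<n-1}$ and $A_{x,s}^{<n-1}$ via space-time frequency splittings $\Pi^{\mp,\eta}_{\gg 2^k|\measuredangle|^2}$, $\Pi^{\mp,\eta}_{\lesssim 2^k|\measuredangle|^2}$. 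Second, once the phase contains these non-free components, your claim that the commutator error can simply ``be placed in $N^1$'' fails: the error contains terms of the schematic form $\Box\Delta_{\eta^\perp}^{-1}\Pi^{\mp,\eta}_{\lesssim 2^k|\measuredangle|^2}\bigl(P_k A_{x,s}^{<n-1}\cdot\xi \mp P_k A_0^{<n-1}|\xi|\bigr)e^{-i\psi}(\dots)$ which do \emph{not} gain regularity, only smallness. The paper handles this by an infinite iteration of the inhomogeneous parametrix (with left and right quantizations $e^{-i\psi}(t,x,D)\,\cdots\,e^{+i\psi}(D,y,s)$, not a single multiplication by $e^{-i\psi_{<n}}$), re-applying the parametrix to these ``rough'' errors, keeping track of them via small strings of frequencies, dominating frequencies and angular decompositions, and controlling the surviving ``delicate'' errors through a separate probabilistic error-control quantity $\calE\calC^n$ that has its own conditional moment bound and enters the smallness hypotheses of the induction step. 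Without some substitute for this mechanism, the equation for your remainder $\Psi_n$ cannot be closed at energy regularity, so the heart of the proof is missing as stated.

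Two smaller but real issues: your per-stage ``good events'' cannot be intersected naively, because the conditional Khintchine argument at stage $n$ needs almost-sure (not merely high-probability) bounds on the $\calF_{n-1}$-measurable coefficients; the paper resolves this by inserting probabilistic cutoffs $\mathds{1}_\varepsilon^{<n-1}$, $\chi_\varepsilon^{<n-1}$, $\chi_\varepsilon^n$ into the construction itself and only afterwards exhibiting the event $\Sigma$ on which no cutoff is active, which is also what produces the Gaussian tail bound via the moment estimates. In addition, since your $\Phi_n^{\sharp}$ does not match the shell data at $t=0$, you must show that the data error $T_n\phi^\omega[0]-\Phi_n^{\sharp}[0]$ gains a full $\delta_\ast$ of regularity (the analogue of the paper's Proposition~\ref{prop:prob_data_error}); this is not automatic and requires kernel/non-stationary-phase bounds on $e^{-i\psi}(0,x,D)e^{+i\psi}(D,y,0)-1$.
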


\subsection{Overview of proof ideas}

We give an outline of the main aspects of the proof of Theorem~\ref{thm:main}.

\medskip 

\noindent {\it Small energy global regularity for the energy-critical Maxwell-Klein-Gordon equation~\cite{KST}.} 

\noindent The proof of Theorem~\ref{thm:main} relies on the functional framework, the multilinear estimates, and a parametrix construction from the small energy global regularity result for (MKG-CG) established in joint work of the first author with Sterbenz and Tataru~\cite{KST}. The key difficulty in the treatment of the Maxwell-Klein-Gordon equation relative to the Coulomb gauge at scaling-critical regularity are low-high interactions in the magnetic interaction term in the equation for the scalar field $\phi$ of the following schematic form, where the free wave evolution of the spatial part of the connection form is at low frequency,
\begin{equation*}
 \Box \phi = A^{free, j}_{low} \partial_j \phi_{high} + \ldots 
\end{equation*}
Even for small initial data, these low-high frequency interactions in the magnetic interaction term turn out to be non-perturbative at scaling-critical regularity due to a logarithmic divergence (the inhomogeneous part of~$A_x$ turns out to satisfy an improved $\ell^1$ bound and its contribution can therefore be treated perturbatively). Rodnianski and Tao~\cite{RT} resolved this impasse in the context of proving critical small data global regularity for (MKG-CG) in dimensions $d \geq 6$ by incorporating the problematic low-high interactions into the linear wave operator and by deriving Strichartz estimates via a parametrix construction for the resulting paradifferential magnetic wave operator
\begin{equation*}
 \Box_A^p \equiv \Box + 2i \sum_{k \in \bbZ} P_{\leq k-C} A^{free,j} \partial_j P_k.
\end{equation*}
This approach was significantly further advanced in~\cite{KST} through the realization that the parametrix construction from~\cite{RT} is also compatible with the more delicate $X^{s,b}$ type and null frame spaces. In this work we have to slightly adapt the parametrix construction from~\cite{KST} to allow for a rough free wave evolution $A^{free}$, which is at scaling super-critical regularity, but enjoys redeeming space-time integrability properties thanks to the randomization, see Section~\ref{sec:renormalization} for the details. We also take the parametrix construction from~\cite{KST} into a novel modified ``probabilistic'' direction as outlined in the next paragraphs.

\medskip 

\noindent {\it Failure of the Bourgain-da Prato-Debussche linear-nonlinear decomposition and induction on frequency.}

\noindent If one tries to treat the Cauchy problem for the energy-critical Maxwell-Klein-Gordon equation with (small) random initial data at scaling super-critical regularity, the usual approach of decomposing the dynamical variables $A_x$ and $\phi$ into free wave evolutions of the rough random data and into inhomogeneous nonlinear components is bound to partially fail. Owing to the favorable null structure in the wave equation for $A_x$, it suffices to just decompose the spatial part $A_x$ of the connection form according to the standard Bourgain-da Prato-Debussche trick. However, in the low-high frequency interactions in the magnetic interaction term $A^j_{low} \partial_j \phi^{free}_{high}$ of the $\phi$ equation, when the rough free wave evolution of the random data for $\phi$ is at high frequency, despite the null structure one cannot gain regularity and treat the term at the scaling-critical energy regularity. The way out is to build this low-high interaction term into the definition of the rough linear evolution of the random data for the scalar field $\phi$. This in turn requires to construct the solutions to (MKG-CG) for (small) random initial data $( A_x^\omega[0], \phi^\omega[0])$ via an induction on frequency procedure. More specifically, (on a suitable event) we construct the solutions as the limit of the sequence of solutions $(A^{<n}_x, A^{<n}_0, \phi^{<n})$ to (MKG-CG) for frequency truncated random initial data $( T_{<n} A_x^\omega[0], T_{<n} \phi^\omega[0] )$. To this end we derive uniform bounds on the dyadic solution increments $(\calA^n_x, \calA^n_0, \Phi^n)$, $n \geq 0$, defined by
\begin{align*}
 A^{<n}_x &= A^{<n-1}_x + \calA^n_x, \quad n \geq 0, \\
 A^{<n}_0 &= A^{<n-1}_0 + \calA^n_0, \quad n \geq 0, \\
 \phi^{<n} &= \phi^{<n-1} + \Phi^n, \quad n \geq 0,
\end{align*}
where we set $A^{<-1}_x = A^{<-1}_0 = \phi^{<-1} = 0$.
For every $n \geq 1$,  we decompose the increments $\calA^n_x$ and $\Phi^n$ of the dynamical variables into
\begin{align*}
 \calA^n_x &= \calA^n_{x,r} + \calA^n_{x,s}, \\
 \Phi^n &= \Phi^n_r + \Phi^n_s,
\end{align*}
where $\calA^n_{x,r}$, $\Phi^n_r$ are the rough linear components and $\calA^n_{x,s}$, $\Phi^n_s$ are the (``smooth'') inhomogeneous components satisfying a forced Maxwell-Klein-Gordon system of equations (fMKG-CG\textsubscript{n}) stated precisely in Subsection~\ref{subsec:decomp_lin_nonlin}.
As alluded to above, it suffices to define the rough part $\calA^n_{x,r}$ as the free wave evolution of the (rough) random initial data $T_n A_x^\omega[0]$, that is 
\begin{equation*}
 \calA^n_{x,r}(t) := S(t)\bigl[ T_n a^\omega, T_n b^\omega \bigr] = \cos(t|\nabla|) T_n a^\omega + \frac{\sin(t|\nabla|)}{|\nabla|} T_n b^\omega, \quad n \geq 1.
\end{equation*}
Instead, the rough part $\Phi^n_r$ of the scalar field is defined as an \emph{approximate solution} to the linear magnetic wave equation
\begin{equation*}
 \Box_{A^{<n-1}}^{p, mod} \Phi^n_r \equiv \Bigl( \Box + 2i P_{\leq (1-\gamma)n} A^{<n-1, \alpha} \partial_\alpha P_n \Bigr) \Phi^n_r \approx 0, \quad \Phi^n_r[0] \approx T_n \phi^\omega[0], \quad n \geq 1.
\end{equation*}
Here, it is crucial that the entire connection form $A^{<n-1}$ from the prior induction stages is built into the linear magnetic wave operator on the left-hand side. Moreover, it is important for the whole argument that only the ``strongly low-high'' interactions are incorporated into the modified magnetic wave operator, which is specified by the small absolute constant $0 < \gamma \ll 1$.
The precise linear-nonlinear decomposition and the induction on frequency procedure are set up and explained in more detail in Subsections~\ref{subsec:decomp_lin_nonlin}--\ref{subsec:global_forcedMKG}.

Some care has to be taken to ensure that at every stage of the induction procedure various smallness requirements on the rough linear evolutions and on the nonlinear components (from prior stages of the induction) are satisfied. This is achieved by working with suitable probabilistic cutoffs in the proof of Theorem~\ref{thm:main} in Section~\ref{sec:proof_of_thm}. Their use is perhaps somewhat reminiscent of the truncation method of de Bouard and Debussche\cite{BouDeb99}.

\medskip 

\noindent {\it ``Probabilistic'' parametrix, redeeming functional framework, and generalized multilinear estimates.}

\noindent A key difficulty in the proof of Theorem~\ref{thm:main} is the construction of the adapted linear evolution $\Phi^n_r$ of the rough random data $T_n \phi^\omega[0]$ as a suitable approximate solution to the modified paradifferential magnetic wave equation $\Box_{A^{<n-1}}^{p, mod} \Phi^n_r \approx 0$. The adapted linear evolution $\Phi^n_r$ has to have two main properties. On the one hand $\Phi^n_r$ has to satisfy suitable redeeming space-time integrability properties (on a suitable event) in order to close all nonlinear estimates. On the other hand, the accrued renormalization error estimate $\Box_{A^{<n-1}}^{p, mod} \Phi^n_r$ has to gain regularity so that it can be treated as a ``smooth'' source term in the equation for $\Phi^n_s$.

The subtle iterative definition of $\Phi^n_r$ in terms of a modified ``probabilistic'' parametrix is carefully laid out in Subsection~\ref{subsec:prob_definition_Phi}.
Then we exploit the randomness and derive redeeming space-time integrability properties (on a suitable event) of the rough linear evolution~$\Phi^n_r$ in Subsection~\ref{subsec:prob_strichartz_phi}.
Here a delicate point is that the connection form $A^{<n-1}$ from the prior induction stages enters the definition of the parametrix for $\Phi^n_r$ and is also a random function.
The key point that makes this construction work is that the random data $T_n \phi^\omega[0]$ for the rough linear evolution $\Phi^n_r$ is independent of the random data $\bigl( T_{<n-1} A_x^\omega[0], T_{<n-1} \phi^\omega[0] \bigr)$ on which the connection form $A^{<n-1}$ depends.
Since $\Phi^n_r$ is only an approximate solution, we need to show that the resulting data error $T_n \phi^\omega[0] - \Phi^n_r[0]$ in fact gains regularity (on a suitable event) and that the resulting renormalization error $\Box_{A^{<n-1}}^{p, mod} \Phi^n_r$ can be treated as a ``smooth'' source term (on a suitable event). This is accomplished in Subsection~\ref{subsec:prob_data_error} and Subsection~\ref{subsec:prob_renormalization_error}. 

The precise definitions of the ``redeeming'' space-time integrability properties that the rough linear evolutions enjoy (on a suitable event) are provided in Subsection~\ref{subsec:redeeming_properties}. They are designed so that the relevant multilinear estimates from~\cite{KST} can be generalized to allow for rough inputs. These generalized multilinear estimates are derived in Section~\ref{sec:multilinear_estimates}.

\medskip 

\noindent {\it Acknowledgements: The authors are grateful to Patricia Alonso Ruiz for helpful discussions.}

\section{Preliminaries}

\subsection{Global small constants}

We work with a string of globally defined small constants satisfying
\begin{equation*}
 0 < \delta_\ast \ll \sigma \ll \gamma \ll \delta_2 \ll \delta_1 \ll \delta \ll 1,
\end{equation*}
where 
\begin{itemize}
 \item $\delta$ specifies the off-diagonal gain in multilinear estimates; 
 \item $\delta_1$ is used for the sum $\sum_{\ell < 0} 2^{\delta_1 \ell}$ in the definition of the redeeming $R_k L^2_t L^\infty_x$, $R_k L^2_t L^6_x$, and $R_k L^\infty_t L^\infty_x$ norms;
 \item $\delta_2$ is used for capturing the frequency localization to $\sim 2^n$ (up to tails) of the smooth nonlinear components $\calA^n_{x,s}$ and $\Phi^n_s$ at dyadic frequency level $n$;
 \item $\gamma$ specifies the frequency restriction $k \leq (1-\gamma)n$ to distinguish ``moderately low-high'' and ``strongly low-high'' interactions, it therefore plays a key role in the definition of the ``probabilistic'' phase function $\psi^{n, mod}_\pm$ in Section~\ref{sec:probabilistic_bounds};
 \item $\sigma$ is used for specifying the cutoff of small angle interactions in the definitions of the ``deterministic'' and ``probabilistic'' phase functions;
 \item $\delta_\ast$ specifies the Sobolev regularity $1-\delta_\ast < s < 1$ of the random data.
\end{itemize}

\subsection{Probability theory}

The derivation of the redeeming space-time integrability properties of the linear evolutions of the rough random data crucially relies on the classical Khintchine inequality.
\begin{lemma}[Khintchine's inequality] \label{lem:khintchine}
 For any choice of a positive integer $N$, a sequence $\{X_j\}_{j=1}^N$ of independent standard zero-mean Gaussian random variables, and a sequence $\{c_j\}_{j=1}^N \subset \bbC$, we have for $1 \leq p < \infty$ that
 \begin{equation*}
  \biggl( \bbE \, \biggl| \sum_{j=1}^N c_j X_j \biggr|^p \biggr)^{\frac1p} \lesssim \sqrt{p} \biggl( \sum_{j=1}^N |c_j|^2 \biggr)^{\frac12}.
 \end{equation*}
\end{lemma}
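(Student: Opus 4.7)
The plan is to prove Khintchine's inequality by reducing to the real scalar Gaussian case, where the sum remains Gaussian by the stability of the Gaussian distribution under independent linear combinations, and then to compute absolute moments explicitly. This is the standard route and avoids any need for moment generating function / Chernoff arguments that would be required for general sub-Gaussian random variables.

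First I would split the complex coefficients $c_j = a_j + i b_j$ with $a_j, b_j \in \mathbb{R}$ and bound
\begin{equation*}
 \biggl| \sum_{j=1}^N c_j X_j \biggr| \leq \biggl| \sum_{j=1}^N a_j X_j \biggr| + \biggl| \sum_{j=1}^N b_j X_j \biggr|.
\end{equation*}
By the triangle inequality in $L^p(\Omega)$ it then suffices to prove the estimate for real coefficients, noting that $\bigl(\sum a_j^2\bigr)^{1/2} + \bigl(\sum b_j^2\bigr)^{1/2} \lesssim \bigl(\sum |c_j|^2\bigr)^{1/2}$.

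Second, for real coefficients $a_j \in \mathbb{R}$ and independent standard real Gaussians $X_j$, the classical fact that the Gaussian family is closed under independent linear combinations gives that $S := \sum_{j=1}^N a_j X_j$ is itself a centered real Gaussian with variance $\sigma^2 = \sum_{j=1}^N a_j^2$. Hence $S$ has the same law as $\sigma Z$ for a standard normal $Z$, and
\begin{equation*}
 \bigl( \mathbb{E} \, |S|^p \bigr)^{1/p} = \sigma \bigl( \mathbb{E} \, |Z|^p \bigr)^{1/p}.
\end{equation*}

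Third, I would compute the absolute moments of $Z$ explicitly via the standard Gamma function formula
\begin{equation*}
 \mathbb{E} \, |Z|^p = \frac{2^{p/2}}{\sqrt{\pi}} \, \Gamma\!\left( \frac{p+1}{2} \right),
\end{equation*}
and invoke Stirling's approximation $\Gamma(x+1) \sim \sqrt{2 \pi x} \, (x/e)^x$ as $x \to \infty$ to obtain $\bigl(\mathbb{E}|Z|^p\bigr)^{1/p} \lesssim \sqrt{p}$ uniformly in $p \geq 2$. For $1 \leq p \leq 2$, the bound $(\mathbb{E}|Z|^p)^{1/p} \leq (\mathbb{E}|Z|^2)^{1/2} = 1 \lesssim \sqrt{p}$ follows from Jensen's inequality. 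Combining these with the reduction above yields the claimed inequality.

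There is no real obstacle here; the only mildly technical ingredient is Stirling's estimate to get the sharp $\sqrt{p}$ growth, which is the only step where one might be tempted to be sloppy. Note also that since only Gaussianity and independence are used and the conclusion depends only on $\sum |c_j|^2$, the stated inequality extends immediately to countable sums of independent Gaussians with square-summable coefficients by an approximation argument, which is how it will be applied in the sequel to the randomized initial data series.
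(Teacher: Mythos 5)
The paper states this lemma without proof, as it is a classical fact, so there is no in-paper argument to compare against; your proof is correct and is the standard one (Gaussian stability of independent linear combinations, explicit absolute moments via the Gamma function, Stirling for $p\ge 2$ and Jensen for $1\le p\le 2$). One small point to tighten: in the paper's application the random variables $\{g_m\},\{h_m\},\dots$ are \emph{complex-valued} standard Gaussians, so in your reduction you should split not only the coefficients $c_j$ but also the variables $X_j$ into real and imaginary parts (which are independent real Gaussians); then $\operatorname{Re}\bigl(\sum_j c_j X_j\bigr)$ and $\operatorname{Im}\bigl(\sum_j c_j X_j\bigr)$ are each real centered Gaussians with variance $\lesssim \sum_j |c_j|^2$, and the rest of your argument applies verbatim. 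With that cosmetic adjustment the proof is complete, and your closing remark about passing to countable sums by approximation is exactly how the lemma is used for the randomized data in the paper.
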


We use the following lemma to estimate the probability of certain events. Its proof is a simple consequence of Chebyshev's inequality.
\begin{lemma}[Tail estimate] \label{lem:probability_estimate}
 Let $X$ be a real-valued random variable on a probablility space $(\Omega, \calF, \bbP)$. Suppose that there exists $D > 0$ such that for every $1 \leq p < \infty$ we have 
 \begin{equation*}
  \bigl( \bbE \, |X|^p \bigr)^{\frac1p} \lesssim \sqrt{p} D.
 \end{equation*}
 Then there exist absolute constants $C, c > 0$ such that for every $\lambda \geq 0$ it holds that
 \begin{equation*}
  \bbP \bigl( |X| > \lambda \bigr) \leq C \exp \Bigl( - c \frac{\lambda^2}{D^2} \Bigr).
 \end{equation*}
\end{lemma}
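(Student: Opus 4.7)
The plan is to deduce the Gaussian-type tail bound from the moment bounds by applying Markov's inequality at the $p$-th moment and then optimizing the choice of $p$ in terms of $\lambda$.

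First, I would observe that for any $p \geq 1$, Markov's inequality applied to the nonnegative random variable $|X|^p$ gives
\begin{equation*}
 \bbP(|X| > \lambda) = \bbP\bigl(|X|^p > \lambda^p\bigr) \leq \frac{\bbE \, |X|^p}{\lambda^p}.
\end{equation*}
Invoking the hypothesis $(\bbE|X|^p)^{1/p} \leq K \sqrt{p}\, D$ for some absolute constant $K > 0$, this yields
\begin{equation*}
 \bbP(|X| > \lambda) \leq \left( \frac{K \sqrt{p}\, D}{\lambda} \right)^p.
\end{equation*}

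Next, I would choose $p$ to minimize the right-hand side. Taking the logarithm $f(p) = p \log(K\sqrt{p}\, D/\lambda)$ and differentiating gives the critical point $p^\ast = \lambda^2/(e K^2 D^2)$, at which $\log(K\sqrt{p^\ast}\, D/\lambda) = -\tfrac{1}{2}$, so $f(p^\ast) = -\lambda^2/(2 e K^2 D^2)$. Thus, provided $p^\ast \geq 1$, i.e.\ $\lambda \geq \sqrt{e}\, K D$, we obtain
\begin{equation*}
 \bbP(|X| > \lambda) \leq \exp\Bigl( - c \frac{\lambda^2}{D^2} \Bigr)
\end{equation*}
with $c = 1/(2 e K^2)$.

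Finally, for $\lambda < \sqrt{e}\, K D$ the asserted bound is vacuous upon choosing the absolute constant $C$ sufficiently large so that $C \exp(- c \lambda^2/D^2) \geq 1$ on this range; this simply amounts to taking $C \geq \exp(cK^2 e) = \exp(\tfrac{1}{2})$. Combining the two regimes yields the stated inequality with the absolute constants $C, c > 0$. There is no real obstacle here beyond the standard moment-method optimization; the only mild care needed is to handle the regime of small $\lambda$ trivially by adjusting $C$.
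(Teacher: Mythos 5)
Your proof is correct and follows essentially the same route as the paper, which only remarks that the lemma is a simple consequence of Chebyshev's (Markov's) inequality; your moment-optimization with $p^\ast \sim \lambda^2/D^2$ and the trivial adjustment of $C$ for small $\lambda$ is precisely the standard implementation of that remark.
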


\subsection{Frequency and sector projections}

In order to define several Littlewood-Paley projection operators, we pick a non-negative even bump function $\chi_0 \in C^\infty(\bbR)$ satisfying $\chi_0(y) = 1$ for $|y| \leq 1$ and $\chi_0(y) = 0$ for $|y| > 2$ and set $\chi(y) = \chi_0(y) - \chi_0(2 y)$. Then we introduce the standard Littlewood-Paley projection operators for $k \in \bbZ$ by
\[
 \widehat{P_k f}(\xi) = \chi \big( 2^{-k} |\xi| \big) \hat{f}(\xi).
\]
To measure proximity of the space-time Fourier support to the light cone we use the concept of modulation. For $j \in \bbZ$ we define the projection operators
\begin{align*}
 \calF \big(Q_j f\big)(\tau, \xi) &= \chi \big( 2^{-j} | |\tau| - |\xi| | \big) \, \calF(f)(\tau, \xi), \\
 \calF \big(Q_j^\pm f\big)(\tau, \xi) &= \chi \big( 2^{-j} | |\tau| - |\xi| | \big) \, \chi_{\{ \pm \tau > 0 \}} \, \calF(f)(\tau, \xi), 
\end{align*}
where $\calF$ denotes the space-time Fourier transform. On occasion, we also need multipliers $S_l$ to restrict the space-time frequency and correspondingly set for $l \in \bbZ$,
\[
 \calF \big(S_l f\big)(\tau, \xi) = \chi \big( 2^{-l} |(\tau, \xi)| \big) \, \calF(f)(\tau, \xi).
\]
Moreover, we use projection operators $P_l^\kappa$ to localize the homogeneous variable $\frac{\xi}{|\xi|}$ to caps $\kappa \subset \mathbb{S}^3$ of diameter~$\sim 2^l$ for integers $l < 0$ via smooth cutoff functions. We assume that for each such $l < 0$ these cutoffs form a smooth partition of unity subordinate to a uniformly finitely overlapping covering of $\mathbb{S}^3$ by caps $\kappa$ of diameter~$\sim 2^l$.

Finally, for any $\eta \in \bbS^3$ and any angle $0 < \theta \lesssim 1$, we define the sector projection $\Pi_{>\theta}^\eta$ in frequency space by the formula
\[
 \widehat{\Pi_{> \theta}^\eta f}(\xi) := \Bigl( 1 - \chi_0\Bigl(\frac{\angle (\xi, \eta)}{\theta}\Bigr) \Bigr) \Bigl( 1 - \chi_0\Bigl(\frac{\angle (-\xi, \eta)}{\theta}\Bigr) \Bigr) \hat{f}(\xi),
\]
where $\angle(\xi, \eta)$ is the angle between $\xi$ and $\eta$. Thus, $\Pi_{>\theta}^\eta$ restricts $f$ smoothly (except at the frequency origin) to the sector of frequencies $\xi$ whose angle with both $\eta$ and $-\eta$ is $\gtrsim \theta$. Similarly, we define the Fourier multipliers $\Pi_\theta^\eta$, $\Pi_{\leq \theta}^\eta$, and $\Pi^\eta_{\theta_1 > \cdot > \theta_2}$.

\section{Function spaces} \label{sec:function_spaces}

In this section we first recall the functional framework from~\cite{KST} that we will use throughout.  We also set up some notation for function spaces that will be convenient for the induction on frequency procedure in this work. 
Finally, we introduce the redeeming space-time integrability properties that the rough linear evolutions of the random data will enjoy and that beat the scaling.  

\subsection{Review of the functional framework from~\cite{KST}}

We use the same definitions and notations as in~\cite{KST} for the spaces $S_k$, $S^1$, $N$, $Y^1$, and $Z$. The (smooth) solutions of the nonlinear wave equations for the spatial part of the connection form and for the scalar field will be placed in the scaling-critical space $S^1$, while the inhomogeneous terms of the wave equations will be placed in the space $N$. The (smooth) elliptic variable $A_0$ will be measured in the $Y$ space.

We begin by introducing the convention that for any norm $\|\cdot\|_S$ and any $p \in [1,\infty)$, 
\[
 \|F\|_{\ell^p S} = \biggl( \sum_{k \in \bbZ} \|P_k F\|_S^p \biggr)^{\frac{1}{p}}.
\]
Then we define the $X^{s,b}$ type norms applied to functions at spatial frequency $\sim 2^k$,
\[
 \|F\|_{X^{s,b}_p} = 2^{s k} \biggl( \sum_{j\in\bbZ} \Bigl( 2^{b j} \|Q_j P_k F\|_{L^2_t L^2_x} \Bigr)^p \biggr)^{\frac{1}{p}}
\]
for $s, b \in \bbR$ and $p \in [1,\infty)$ with the obvious analogue for $p = \infty$.

We will mainly use three function spaces $N, N^\ast, \text{ and } S$. Their dyadic subspaces $N_k, N_k^\ast$ and $S_k$ satisfy 
\[
 N_k = L^1_t L^2_x + X_1^{0, -\frac{1}{2}}, \quad N_k^\ast = L^\infty_t L^2_x \cap X_\infty^{0, \frac{1}{2}}, \quad X_1^{0, \frac{1}{2}} \subseteq S_k \subseteq N_k^\ast.
\]
Then it holds that
\[
 \|F\|_N^2 = \sum_{k \in \bbZ} \|P_k F\|_{N_k}^2, \quad \|F\|_{N^\ast}^2 = \sum_{k \in \bbZ} \|P_k F\|_{N_k^\ast}^2.
\]
The space $S_k$ is defined by
\[
 \|\phi\|_{S_k}^2 = \|\phi\|_{S_k^{Str}}^2 + \|\phi\|_{S_k^{ang}}^2 + \|\phi\|_{X^{0,\frac{1}{2}}_\infty}^2,
\]
where
\begin{equation*}
 \begin{split}
  S_k^{Str} &= \bigcap_{\frac{1}{q} + \frac{3/2}{r} \leq \frac{3}{4}} 2^{(\frac{1}{q} + \frac{4}{r} - 2)k} L^q_t L^r_x, \\
  \|\phi\|_{S_k^{ang}}^2 &= \sup_{l < 0} \sum_\eta \|P_l^\eta Q_{< k + 2l} \phi \|_{S_k^\eta(l)}^2,
 \end{split}
\end{equation*}
and the angular sector norms $S_k^\eta(l)$ are defined below. The sum over $\eta$ in the definition of $S_k^{ang}$ is over a covering of $\bbS^3$ by caps of diameter $\sim 2^{l}$ with uniformly finite overlaps, and the symbols of $P_l^\eta$ form a smooth partition of unity subordinate to this covering.

\medskip

To introduce the angular sector norms $S_k^\eta(l)$ we first define the plane wave space
\[
 \|\phi\|_{PW^\pm_\eta(l)} = \inf_{\phi = \int \phi^{\eta'}} \int_{|\eta - \eta'| \leq 2^l} \|\phi^{\eta'}\|_{L^2_{\pm \eta'} L^\infty_{(\pm \eta')^\perp}} \, d\eta'
\]
and the null energy space
\[
 \|\phi\|_{NE} = \sup_\eta \|\slashed{\nabla}_\eta \phi\|_{L^\infty_\eta L^2_{\eta^\perp}},
\]
where the norms are with respect to $\ell_\eta^\pm = t \pm \eta \cdot x$ and the transverse variable, while $\slashed{\nabla}_\eta$ denotes spatial differentiation in the $(\ell_\eta^+)^\perp$ plane. Then we set 
\begin{equation*}
 \begin{split}
  \|\phi\|_{S_k^\eta(l)}^2 &= \|\phi\|_{S_k^{Str}}^2 + 2^{-2k} \|\phi\|_{NE}^2 + 2^{-3k} \sum_{\pm} \|Q^\pm \phi\|_{PW_\eta^{\mp}(l)}^2 \\
  &\quad \quad + \sup_{\substack{ k' \leq k, l' \leq 0, \\ k+2l \leq k' + l' \leq k+l} } \sum_{{\mathcal C}_{k'}(l')} \bigg( \|P_{{\mathcal C}_{k'}(l')} \phi\|_{S_k^{Str}}^2 + 2^{-2k} \|P_{{\mathcal C}_{k'}(l')} \phi\|_{NE}^2 \\
  &\quad \quad \quad \quad + 2^{-2k'-k} \|P_{{\mathcal C}_{k'}(l')} \phi\|_{L^2_t L^\infty_x}^2 + 2^{-3(k'+l')} \sum_{\pm} \|Q^{\pm} P_{{\mathcal C}_{k'}(l')} \phi \|_{PW_\eta^\mp(l)}^2 \bigg),
 \end{split}
\end{equation*}
where $P_{{\mathcal C}_{k'}(l')}$ is a projection operator to a radially directed block ${\mathcal C}_{k'}(l')$ of dimensions $2^{k'} \times (2^{k'+l'})^3$.

\medskip

Now we define
\[
 \|\phi\|_{S^1}^2 = \sum_{k \in \bbZ} \|\nabla_{t,x} P_k \phi\|_{S_k}^2 + \| \Box \phi \|^2_{\ell^1 L^2_t \dot{H}^{-\frac{1}{2}}_x}
\]
and the higher derivative norms
\[
 \|\phi\|_{S^N} := \|\nabla_{t,x}^{N-1} \phi\|_{S^1}, \quad N \geq 2.
\]
Moreover, we introduce
\[
 \|u\|_{S_k^\sharp} = \|\nabla_{t,x} u\|_{L^\infty_t L^2_x} + \|\Box u\|_{N_k}.
\]
Occasionally we need to separate the two characteristic cones $\{ \tau = \pm |\xi| \}$, for which we define
\begin{eqnarray*}
 N_{k,\pm},&  &N_k = N_{k,+} \cap N_{k, -} \\
 S_{k,\pm}^\sharp,&  &S_k^\sharp = S_{k,+}^\sharp + S_{k,-}^\sharp \\
 N_{k, \pm}^\ast,&  &N_k^\ast = N_{k,+}^\ast + N_{k,-}^\ast.
\end{eqnarray*}
We will also use an auxiliary space of $L^1_t L^\infty_x$ type,
\begin{equation*}
 \|\phi\|_{Z} = \sum_{k \in \bbZ} \|P_k \phi\|_{Z_k}, \quad \|\phi\|_{Z_k}^2 = \sup_{l < C} \sum_\eta 2^l \|P_l^\eta Q_{k+2l} \phi \|_{L^1_t L^\infty_x}^2.
\end{equation*}
Finally, to control the component $A_0$, we define
\[
 \|A_0\|_{Y^1}^2 = \|\nabla_{t,x} A_0\|_{L^\infty_t L^2_x}^2 + \|A_0\|_{L^2_t \dot{H}^{3/2}_x}^2 + \|\partial_t A_0\|_{L^2_t \dot{H}^{1/2}_x}^2
\]
and the higher derivative norms
\[
 \|A_0\|_{Y^N} = \|\nabla_{t,x}^{N-1} A_0\|_{Y^1}, \quad N \geq 2.
\]

The link between the $S$ and $N$ spaces is provided by the following energy estimate from \cite{KST},
\[
 \|\nabla_{t,x} \phi \|_{S} \lesssim \|\nabla_{t,x} \phi(0)\|_{L^2_x} + \| \Box \phi \|_{N}.
\]

We will also use the notation 
\begin{equation*}
 \|P_k \phi\|_{S^1_k} := \|\nabla_{t,x} P_k \phi\|_{S_k}, 
\end{equation*}
and we set 
\begin{equation*}
 \|\phi\|_{S^{1-\delta_\ast}_k} := 2^{- \delta_\ast k} \|\nabla_{t,x} P_k \phi\|_{S_k} = 2^{- \delta_\ast k} \|\phi\|_{S^1_k}.
\end{equation*}

\medskip 

Finally, in order to capture the frequency localization (up to tails) of the solution increments in our induction on frequency procedure, we introduce for any $n \geq 1$ the norms 
\begin{align*}
 \|\phi\|_{S^1[n]} &:= \sup_{k \in \bbZ} \, 2^{+\delta_2 |n-k|} \Bigl( \|P_k \phi\|_{S^1_k} + \|P_k \Box \phi\|_{L^2_t \dot{H}^{-\hf}_x} \Bigr), \\
 \|A_0\|_{Y^1[n]} &:= \sup_{k \in \bbZ} \, 2^{+\delta_2 |n-k|} \|P_k A_0\|_{Y^1}.
\end{align*}

\subsection{The redeeming ``probabilistic'' functional framework} \label{subsec:redeeming_properties}

Here we introduce the ``redeeming'' function spaces capturing the improved space-time integrability properties that the rough linear evolutions of the random data will enjoy (on a suitable event).
For any $k \geq 1$ we define the redeeming $R_k$ norm of a rough linear evolution localized to frequencies $\sim 2^k$ by 
\begin{equation}
 \begin{aligned}
  \|v\|_{R_k} &:= \|(v, 2^{-k} \nabla_{t,x} v)\|_{R_kL^2_t L^\infty_x} + \|(v, 2^{-k} \nabla_{t,x} v)\|_{R_k L^2_t L^6_x}  \\
  &\qquad + \|(v, 2^{-k} \nabla_{t,x} v)\|_{R_kL^\infty_t L^\infty_x} + \|(v, 2^{-k} \nabla_{t,x} v)\|_{R_kStr} + \|v\|_{S_k^{1-\delta_\ast}},
 \end{aligned} 
\end{equation}
where the components $R_k L^2_t L^\infty_x$, $R_k L^2_t L^6_x$, $R_k L^\infty_t L^\infty_x$, and $R_k Str$ are given by
\begin{align*}
 \|v\|_{R_kL^2_t L^\infty_x} &:= 2^{(\frac12-20\sigma)k} \sum_{l<0}2^{\delta_1 l} \biggl( \sum_{\kappa} \sum_{\substack{k'\leq k,\,l'\leq 0\\k+2l\leq k'+l'\leq k+l }}\sum_{\mathcal{C}_{k'}(l')}\gamma^{-2}(k',l')\big\|P_{\mathcal{C}_{k'}(l')}P_l^{\kappa}Q_{<k+2l} v \big\|_{L_t^2 L_x^\infty}^2 \biggr)^{\frac12}, \\
 &\qquad \text{with }  \gamma(k',l') := \bigl( \min\{2^{k'}, 1 \} \bigr)^{\frac12-} \cdot \bigl( \min\{2^{k'+l'}, 1 \} \bigr)^{\frac12-}, \\
 \|v\|_{R_kL^2_t L^6_x} &:= 2^{(\frac12-20\sigma)k} \sum_{l<0}2^{\delta_1 l} \biggl( \sum_{\kappa} \, \bigl\| P_l^{\kappa} Q_{<k+2l} v \bigr\|_{L_t^2 L_x^6}^2 \biggr)^{\frac12},  \\
 \|v\|_{R_kL^\infty_t L^\infty_x} &:= 2^{(1-20\sigma)k} \sum_{l < 0} 2^{\delta_1 l} \biggl( \sum_\kappa \bigl( \min \{ 2^{(\frac32 -) (k+l)}, 1 \} \bigr)^{-2} \bigl\| P_l^\kappa P_k v \bigr\|_{L^\infty_t L^\infty_x}^2 \biggr)^{\frac{1}{2}}, \\
 \|v\|_{R_k Str} &:= 2^{(1-20\sigma)k} \sum_{\frac{1}{q} + \frac{3}{2r} \leq \frac{3}{4}} 2^{-\frac{1}{q}k} \|v\|_{L^q_t L^r_x}.
\end{align*}
Let us briefly comment on the definition and the use of the different components of the redeeming $R_k$ norm. 
In the definitions of the $R_k L^2_t L^\infty_x$ and $R_k L^\infty_t L^\infty_x$ components, for each $l < 0$ the sum over $\kappa$ refers to a sum over caps $\kappa \subset \bbS^3$ of diameter $\sim 2^l$ with uniformly finite overlaps and the symbols $P_l^\kappa$ form a corresponding subordinate smooth partition of unity.

The $R_k L^2_t L^\infty_x$ and $R_k L^2_t L^6_x$ components are designed to be used in conjunction with the $S^{1-\delta_\ast}_k$ component to control rough linear evolution inputs in null form estimates. The factor $\gamma(k',l') $ helps gain additional smallness for very thin and/or short rectangular boxes $\mathcal{C}_{k'}(l')$, which come up many times in the null form estimates in~\cite{KST}. 

The $R_k L^\infty_t L^\infty_x$ component incorporates a gain from frequency localization to caps $\kappa \subset \bbS^3$ of diameter $\sim 2^l$. It plays an important role in $L^\infty$ estimates of the ``rough'' parts of the ``deterministic'' and the ``probabilistic'' phase functions, see Lemma~\ref{lem:det_phase_function_Linfty_bounds} and Lemma~\ref{lem:prob_phase_function_Linfty_bounds}. 

Finally, the $R_k Str$ component consists of finitely many wave-admissible exponent pairs and encompasses a Klainerman-Tataru gain from the unit-scale frequency localization of the ``atoms'' of the Wiener randomization. The $R_k Str$ bounds for the rough linear evolutions are used in many places, in particular they suffice to estimate all cubic nonlinearities in (MKG-CG) with rough linear evolutions as inputs.

\section{Induction on frequency procedure}

In this section we begin with the construction of solutions $(A_x, A_0, \phi)$ to the (MKG-CG) system of equations for scaling super-critical random initial data $A_x[0] = A_x^\omega[0]$, $\phi[0] = \phi^\omega[0]$ on an event with high probability. 
We will define the solutions $(A_x, A_0, \phi)$ as the limit in $( C^0_t H^s_x + S^1 ) \times Y^1 \times ( C^0_t H^s_x + S^1 )$ of a sequence $\bigl\{ (A^{<n}_x, A^{<n}_0, \phi^{<n}) \bigr\}_{n \geq 0}$ of solutions to (MKG-CG) with frequency-truncated data given by $A^{<n}_x[0] = T_{<n} A_x^\omega[0]$, $\phi^{<n}[0] = T_{<n} \phi^\omega[0]$. 
Since the frequency-truncated random data is smooth, we would in fact have global existence of the solutions $(A^{<n}_x, A^{<n}_0, \phi^{<n})$ for every $n \geq 0$, even for large data, by the (deterministic) global regularity results~\cite{KST, OT1, OT2, OT3, KL15} for the energy-critical (MKG-CG) equation. 
However, in order to show the convergence of this sequence on a suitable event, we need to establish refined uniform bounds on the sequence of solutions. To this end we construct the sequence inductively, adding in one dyadic frequency block of the random data at a time and 
decomposing the spatial parts of the connection form as well as the scalar field into suitable rough linear components and smooth nonlinear components.

The main result of this section is a (deterministic) global existence result for a \emph{forced Maxwell-Klein-Gordon system of equations} for the nonlinear components of the solution increments at each induction step, assuming that certain smallness assumptions on the forcing terms hold. The main work in the proof of Theorem~\ref{thm:main} then goes into establishing the existence of an event with high probability on which these smallness assumptions are satisfied at all induction stages so that the corresponding sequence of solutions converges.

\subsection{Decomposition of the nonlinearity}

We begin by examining the nonlinearities in the (MKG-CG) system of equations more carefully and we introduce some notation that will be useful in the following.
Recall that the (MKG-CG) system is given by
\begin{equation*}
\left\{ \begin{aligned}
 \Box A_j &= - \calP_j \Im ( \phi \overline{D_x \phi} ), \\
 D^\alpha D_\alpha \phi &= 0, \\
 \Delta A_0 &= - \Im ( \phi \overline{D_0 \phi} ),
\end{aligned} \right.
\end{equation*}
and that it suffices to prescribe initial data for $A_x[0]$ and $\phi[0]$, because the temporal component of the connection form $A_0$ is at any time determined in terms of $A_x$ and $\phi$ by an elliptic equation.

\medskip 

\noindent {\bf The $A_x$ equation.}
We decompose $A_j$ into its free wave evolution part and its nonlinear part
\begin{equation*}
 A_j = A_j^{free} + A_j^{nl}.
\end{equation*}
Then we write
\begin{equation*}
 A_j^{nl} = \bA_j(\phi, \phi, A),
\end{equation*}
where $\bA_j$ is extended to a symmetric quadratic form in the first two variables 
\begin{equation*}
 \bA_j(\phi^{(1)}, \phi^{(2)}, A) = \bA_j^2(\phi^{(1)}, \phi^{(2)}) + \bA_j^3(\phi^{(1)}, \phi^{(2)}, A)
\end{equation*}
with 
\begin{align*}
 \bA_j^2(\phi^{(1)}, \phi^{(2)}) &= -\hf \Box^{-1} \calP_j \Im \bigl( \phi^{(1)} \overline{\nabla_x \phi^{(2)}} + \overline{\nabla_x \phi^{(1)}} \phi^{(2)} \bigr), \\
 \bA_j^3(\phi^{(1)}, \phi^{(2)}, A) &= +\hf \Box^{-1} \calP_j \bigl( \phi^{(1)} \overline{\phi^{(2)}} A_x + \overline{\phi^{(1)}} \phi^{(2)} A_x \bigr).
\end{align*}
Recall that the quadratic part $\bA_j^2$ of the nonlinearity exhibits the favorable null structure 
\begin{equation*}
 \calP_j \bigl( \phi^{(1)} \overline{\nabla_x \phi^{(2)}} \bigr) = \partial^k \Delta^{-1} \calN_{kj}\bigl( \phi^{(1)}, \overline{\phi^{(2)}} \bigr).
\end{equation*}

\medskip 

\noindent {\bf The $A_0$ equation.} 
Here we introduce the notation
\begin{align*}
 A_0 &= \bA_0(\phi, \phi, A), \\
 \pt A_0 &= \pt \bA_0(\phi, \phi, A),
\end{align*}
where we set
\begin{align*}
 \bA_0(\phi, \phi, A) &= - \Delta^{-1} \Im ( \phi \overline{\pt \phi} ) + \Delta^{-1} ( \phi \overline{\phi} A_0 ) \equiv \bA_0^2(\phi, \phi) + \bA_0^3(\phi, \phi, A_0), \\
 \pt \bA_0(\phi, \phi, A) &= - \Delta^{-1} \partial^j \Im( \phi \overline{\partial_j \phi} ) + \Delta^{-1} \partial^j( \phi \overline{\phi} A_j ) \equiv \pt \bA_0^2(\phi, \phi) + \pt \bA_0^3(\phi, \phi, A_x).
\end{align*}
In the following we think of $\bA_0(\phi, \phi, A)$ and $\pt \bA_0(\phi, \phi, A)$ as being extended to symmetric quadratic forms in the first two variables.

\medskip 

\noindent {\bf The $\phi$ equation.} 
Expanding the covariant wave operator $D^\alpha D_\alpha$ leads to the following equation for the scalar field
\begin{equation*}
 \Box \phi = - 2i A^\alpha \partial_\alpha \phi + i (\pt A_0) \phi + A^\alpha A_\alpha \phi.
\end{equation*}
In the Coulomb gauge the magnetic interaction term $A^j \partial_j \phi$ exhibits the null structure
\begin{equation*}
 A^j \partial_j \phi = \calN_{kj} \bigl( \partial^k \Delta^{-1} A^j, \phi \bigr).
\end{equation*}
Even in the purely deterministic case, the low-high interactions in the magnetic interaction term involving the free wave part of $A_x$ turn out to be non-perturbative at energy regularity and have to be retained into the linear wave operator. 
In the current setting with scaling super-critical random data, the low-high interactions in the magnetic interaction term become even more problematic.
In preparation for a refined decomposition of the $\phi$ equation in the next subsection, we isolate the low-high interactions in the magnetic interaction term $A^\alpha \partial_\alpha \phi$ and correspondingly rewrite $D^\alpha D_\alpha \phi = 0$ as 
\begin{align*}
 \Bigl( \Box + 2i \sum_k (P_{\leq k-C} A^\alpha) \partial_\alpha P_k \Bigr) \phi &= - 2i \sum_k (P_{>k-C} A^\alpha) \partial_\alpha P_k \phi + i (\pt A_0) \phi + A^\alpha A_\alpha \phi \\
 &\equiv \bM^1(A, \phi) + \bM^2(A_0, \phi) + \bM^3(A, A, \phi).
\end{align*}
Additionally, we decompose the nonlinear term $\bM^1(A, \phi)$ into
\begin{align*}
 \bM^1(A, \phi) = - 2i \sum_k (P_{>k-C} A^\alpha) \partial_\alpha P_k \phi &= - 2i \sum_k (P_{>k-C} A^0) \partial_t P_k \phi - 2i \sum_k (P_{>k-C} A^j) \partial_j P_k \phi \\
 &\equiv \bM^1_0(A_0, \phi) + \bM^1_x(A_x, \phi). 
\end{align*}

\subsection{Decomposition into rough and smooth components} \label{subsec:decomp_lin_nonlin}

We now turn to setting up the precise construction of the sequence $\bigl\{ (A^{<n}_x, A^{<n}_0, \phi^{<n}) \bigr\}_{n \geq 0}$ of solutions to (MKG-CG) with frequency-truncated random initial data given by
\begin{equation*}
 A^{<n}_x[0] = T_{<n} A^\omega_x[0], \quad \phi^{<n}[0] = T_{<n} \phi^\omega[0].
\end{equation*}
The sequence will be constructed inductively. To this end we introduce the dyadic decompositions
\begin{align*}
 A^{<n}_x &= A^{<n-1}_x + \calA^n_x, \quad n \geq 0, \\
 A^{<n}_0 &= A^{<n-1}_0 + \calA^n_0, \quad n \geq 0, \\
 \phi^{<n} &= \phi^{<n-1} + \Phi^n, \quad n \geq 0,
\end{align*}
where we set $A^{<-1}_x = A^{<-1}_0 = \phi^{<-1} = 0$.

We let $(A^{<0}_x, A^{<0}_0, \phi^{<0})$ be the solution to (MKG-CG) with (smooth) random initial data $A_x^{<0}[0] = T_0 A_x^\omega[0]$, $\phi^{<0}[0] = T_0 \phi^\omega[0]$, which we obtain from the small energy global regularity result by~\cite{KST} if this frequency-truncated data has sufficiently small energy. 

Then having constructed $(A^{<n-1}_x, A^{<n-1}_0, \phi^{<n-1})$, we construct $(A^{<n}_x, A^{<n}_0, \phi^{<n})$ by solving the (MKG-CG) difference equations for $(\calA_x^n, \calA_0^n, \Phi^n)$ with random initial data sharply localized to frequencies~$\sim 2^n$. Specifically, the random initial data for the spatial parts of the connection forms $\calA^n_x$ is given by
\begin{align*}
 \calA_x^n[0] = T_n A_x^\omega[0] = \bigl( T_n a^\omega, T_n b^\omega \bigr), \quad n \geq 1,
\end{align*}
and the random initial data for the scalar fields $\Phi^n$ is given by
\begin{align*}
 \Phi^n[0] = \bigl( T_n \phi_0^\omega, T_n \phi_1^\omega \bigr), \quad n \geq 1.
\end{align*}
At each dyadic frequency level $n \geq 1$ we decompose the (spatial part of) the connection form $\calA^n_x$ as well as the scalar field $\Phi^n$ into a rough (linear) component and a smooth (nonlinear) component. In the following ``smooth'' refers to having scaling-critical energy regularity. Crucially, on a suitable event the rough evolutions will have redeeming space-time integrability properties that beat the scaling. It is worth pointing out that such a decomposition is not necessary for the temporal component $\calA^n_0$, because at any time it is determined by $\calA^n_x$ and $\Phi^n$ via an elliptic equation.  

For the spatial part of the connection form we use the standard Bourgain-Da Prato-Debussche decomposition and write
\begin{align*}
 \calA^n_x = \calA^n_{x,r} + \calA^n_{x,s},
\end{align*}
where the rough part $\calA^n_{x,r}$ is just defined as the linear wave evolution of the (rough) random initial data
\begin{equation*}
 \calA^n_{x,r}(t) := S(t)\bigl[ T_n a^\omega, T_n b^\omega \bigr] = \cos(t|\nabla|) T_n a^\omega + \frac{\sin(t|\nabla|)}{|\nabla|} T_n b^\omega, \quad n \geq 1.
\end{equation*}
We emphasize that $\calA^n_{x,r}$ is sharply localized to frequencies $\sim 2^n$. 

Instead for the scalar field $\Phi^n$, we introduce an adapted linear-nonlinear decomposition
\begin{equation*}
 \Phi^n = \Phi^n_r + \Phi^n_s,
\end{equation*}
where the rough part $\Phi^n_r$ is defined as an \emph{approximate solution} to the linear magnetic wave equation
\begin{equation*}
 \Bigl( \Box + 2i P_{\leq (1-\gamma)n} A^{<n-1, \alpha} \partial_\alpha P_n \Bigr) \Phi^n_r \approx 0, \quad \Phi^n_r[0] \approx T_n \phi^\omega[0], \quad n \geq 1.
\end{equation*}
Here, $0 < \gamma \ll 1$ is a small constant that enacts a ``strongly low-high'' frequency separation. 
This choice will emerge and will be explained further below as we will derive the system of equations for the smooth components $(\calA^n_{x,s}, \calA^n_0, \Phi^n_s)$. 
Observe that the entire connection form $A^{<n-1}$ from the prior induction stages is built into the linear magnetic wave operator on the left-hand side. 
While $A^{<n-1}$ is a random function depending on the random initial data $T_{<n} A_x^\omega[0]$, $T_{<n} \phi^\omega[0]$ from the induction on frequency stages $\leq n-1$, the key point that will make this construction work is that the latter are independent of the random data $T_n A_x^\omega[0]$, $T_n \phi^\omega[0]$ at the induction on frequency stage $n$.
The precise definition of $\Phi^n_r$ via a parametrix will be given in Section~\ref{sec:probabilistic_bounds}. At this point we stress that by construction $\Phi^n_r$ will also be sharply localized to frequencies ~$\sim 2^n$. Moreover, it will follow from Proposition~\ref{prop:prob_data_error} that the \emph{data error}, i.e. the initial data for the nonlinear component of the scalar field, gains smoothness and is at the better energy regularity (on a suitable event)
\begin{equation*}
 \Phi_s^n[0] = \bigl( T_n \phi_0^\omega, T_n \phi_1^\omega \bigr) - \Phi_r^n[0] \in \dot{H}^1 \times L^2.
\end{equation*}

In order to systematically use the subscripts $s$, respectively $r$, to indicate smooth, respectively rough components, it will be convenient to denote the smooth solution $(A^{<0}_x, A^{<0}_0, \phi^{<0})$ to (MKG-CG) for the lowest frequency random initial data block $T_0 A_x^\omega[0]$, $T_0 \phi^\omega[0]$ by
\begin{equation*}
 (\calA^0_{x, s}, \calA^0_0, \Phi^0_s) \equiv (A^{<0}_x, A^{<0}_0, \phi^{<0}).
\end{equation*}

After these preparations, we are now in a position to derive the system of ``forced Maxwell-Klein-Gordon equations in Coulomb gauge'' for the nonlinear components $(\calA_{x,s}^n, \calA_0^n, \Phi^n_s)$, $n \geq 1$.
Subtracting the equations for $A^{<n}_x$ and $A^{<n-1}_x$ from each other, we obtain that the nonlinear component $\calA^n_{x,s}$ satisfies the forced (wave) equation
\begin{align*}
 \calA^n_{x,s} &= \bA_x( \phi^{<n}, \phi^{<n}, A^{<n} ) - \bA_x(\phi^{<n-1}, \phi^{<n-1}, A^{<n-1}), \quad n \geq 1.
\end{align*}
In the same manner, we find that $\calA_0^n$ satisfies the forced (elliptic) equation
\begin{align*}
 \calA_0^n = \bA_0(\phi^{<n}, \phi^{<n}, A_0^{<n}) - \bA_0(\phi^{<n-1}, \phi^{<n-1}, A_0^{<n-1}), \quad n \geq 1.
\end{align*}
To determine the equation for $\Phi^n_s$, we first subtract the equations for $\phi^{<n}$ and $\phi^{<n-1}$ from each other to obtain that
\begin{align*}
 &\Bigl( \Box + 2i \sum_k P_{\leq k-C} A^{<n, \alpha} \partial_\alpha P_k \Bigr) \phi^{<n} - \Bigl( \Box + 2i \sum_k P_{\leq k-C} A^{<n-1, \alpha} \partial_\alpha P_k \Bigr) \phi^{<n-1} \\
 &= \bM^1(A^{<n}, \phi^{<n}) - \bM^1(A^{<n-1}, \phi^{<n-1}) \\
 &\quad + \bM^2(A^{<n}, \phi^{<n}) - \bM^2(A^{<n-1}, \phi^{<n-1}) \\
 &\quad + \bM^3(A^{<n}, A^{<n}, \phi^{<n}) - \bM^3(A^{<n-1}, A^{<n-1}, \phi^{<n-1}).
\end{align*}
Inserting the decompositions $\phi^{<n} = \phi^{<n-1} + \Phi^n$ and $A^{<n} = A^{<n-1} + \calA^n$, this gives 
\begin{align*}
 \Bigl( \Box + 2i \sum_k P_{\leq k-C} A^{<n-1, \alpha} \partial_\alpha P_k \Bigr) \Phi^n =& - 2i \sum_k P_{\leq k-C} \calA^{n,\alpha} \partial_\alpha P_k \phi^{<n} \\
 &+ \bM^1(A^{<n}, \phi^{<n}) - \bM^1(A^{<n-1}, \phi^{<n-1}) \\
 &+ \bM^2(A^{<n}, \phi^{<n}) - \bM^2(A^{<n-1}, \phi^{<n-1}) \\
 &+ \bM^3(A^{<n}, A^{<n}, \phi^{<n}) - \bM^3(A^{<n-1}, A^{<n-1}, \phi^{<n-1}).
\end{align*}
Next, we insert the presumptive decomposition of $\Phi^n$ into its rough and smooth components $\Phi^n = \Phi^n_r + \Phi^n_s$, where the precise definition of $\Phi^n_r$ will now emerge. Then we find that $\Phi^n_s$ is a solution to the following forced magnetic wave equation
\begin{align*}
 \Bigl( \Box + 2i \sum_k P_{\leq k-C} A^{<n-1, \alpha} \partial_\alpha P_k \Bigr) \Phi^n_s =& - \Bigl( \Box + 2i \sum_k P_{\leq k-C} A^{<n-1, \alpha} \partial_\alpha P_k \Bigr) \Phi^n_r \\
 &- 2i \sum_k P_{\leq k-C} \calA^{n,\alpha} \partial_\alpha P_k \phi^{<n} \\ 
 &+ \bM^1(A^{<n}, \phi^{<n}) - \bM^1(A^{<n-1}, \phi^{<n-1}) \\
 &+ \bM^2(A^{<n}, \phi^{<n}) - \bM^2(A^{<n-1}, \phi^{<n-1}) \\
 &+ \bM^3(A^{<n}, A^{<n}, \phi^{<n}) - \bM^3(A^{<n-1}, A^{<n-1}, \phi^{<n-1})
\end{align*}
with initial data
\begin{equation*}
 \Phi^n_s[0] = T_n \phi^\omega[0] - \Phi^n_r[0].
\end{equation*}
In order to derive a priori bounds for $\Phi^n_s$, it is more favorable to only retain the free wave evolution part of the spatial components of the connection form $A^{<n-1}$ in the linear magnetic wave operator on the left-hand side. The other parts can be treated as perturbative nonlinear source terms at energy regularity. Keeping in mind that $\Phi_r^n$ will be chosen such that it is sharply localized to frequencies $\sim 2^n$, this leads to the equation
\begin{align*}
 \Bigl( \Box + 2i \sum_k P_{\leq k-C} \bigl( A^{<n-1, j}_r + \calA^{0,free, j}_s \bigr) \partial_j P_k \Bigr) \Phi^n_s =& - \Bigl( \Box + 2i P_{\leq n-C} A^{<n-1, \alpha} \partial_\alpha P_n \Bigr) \Phi^n_r \\
 &- 2i \sum_k P_{\leq k-C} \calA^{n,\alpha} \partial_\alpha P_k \phi^{<n} \\ 
 &- 2i \sum_k P_{\leq k-C} \bigl( A^{<n-1, j}_s - \calA^{0,free, j}_s \bigr) \partial_j P_k \Phi^n_s \\
 &+ 2i \sum_k P_{\leq k-C} A^{<n-1}_0 \partial_t P_k \Phi^n_s \\
 &+ \bM^1(A^{<n}, \phi^{<n}) - \bM^1(A^{<n-1}, \phi^{<n-1}) \\
 &+ \bM^2(A^{<n}, \phi^{<n}) - \bM^2(A^{<n-1}, \phi^{<n-1}) \\
 &+ \bM^3(A^{<n}, A^{<n}, \phi^{<n}) - \bM^3(A^{<n-1}, A^{<n-1}, \phi^{<n-1})
\end{align*}
with initial data
\begin{equation*}
 \Phi^n_s[0] = T_n \phi^\omega[0] - \Phi^n_r[0].
\end{equation*}
For the \emph{paradifferential magnetic d'Alembertian} on the left-hand side of the above equation for $\Phi^n_s$, we introduce the convenient short-hand notation
\begin{equation*}
 \Box^p_{A^{<n-1}} := \Box + 2i \sum_k P_{\leq k-C} \bigl( A^{<n-1, j}_r + \calA^{0,free, j}_s \bigr) \partial_j P_k.
\end{equation*}
To derive a priori bounds for $\Phi^n_s$, in Section~\ref{sec:renormalization} we will establish linear estimates for the inhomogeneous magnetic wave equation $\Box_{A^{<n-1}}^p u = F$ that are compatible with the delicate functional framework of the $S^1$ and $N$ spaces. This part will be based on a ``deterministic'' parametrix construction.

It remains to examine the low-high magnetic interaction term $P_{\leq n-C} A^{<n-1, \alpha} \partial_\alpha P_n \Phi^n_r$ with the rough component $\Phi^n_r$ at high frequency. We further decompose it into a ``strongly low-high'' interaction term and a ``moderately low-high'' interaction term
\begin{equation*}
 P_{\leq n-C} A^{<n-1, \alpha} \partial_\alpha P_n \Phi^n_r = P_{\leq (1-\gamma)n} A^{<n-1, \alpha} \partial_\alpha P_n \Phi^n_r + P_{[(1-\gamma)n, n-C]} A^{<n-1, \alpha} \partial_\alpha P_n \Phi^n_r,
\end{equation*}
where $0 < \gamma \ll 1$ is a suitable small constant.
Using the redeeming space-time integrability properties of $\Phi^n_r$, the ``moderately low-high interactions'' $P_{[(1-\gamma)n, n-C]} A^{<n-1, \alpha} \partial_\alpha P_n \Phi^n_r$ will turn out to be still perturbative at energy regularity. 
However, it is not possible for the ``strongly low-high'' interaction term to gain regularity and become treatable at energy regularity. The way out is to build it into a \emph{modified paradifferential magnetic d'Alembertian}
\begin{equation*}
 \Box^{p, mod}_{A^{<n-1}} := \Box + 2i P_{\leq (1-\gamma)n} A^{<n-1, \alpha} \partial_\alpha P_n
\end{equation*}
that \emph{defines} the rough linear evolution $\Phi^n_r$. More precisely, we define $\Phi^n_r$ as an \emph{approximate solution} to the modified linear magnetic wave equation
\begin{equation} \label{equ:derivation_forced_system_mod_magnetic_wave_equ}
 \Box^{p, mod}_{A^{<n-1}} \phi = 0, \quad \phi[0] = T_n \phi^\omega[0], \quad n \geq 1.
\end{equation}
The subtle iterative construction of $\Phi^n_r$ in terms of a ``probabilistic'' parametrix is carried out in Section~\ref{sec:probabilistic_bounds}.
Importantly, while $\Phi^n_r$ is not an exact solution to~\eqref{equ:derivation_forced_system_mod_magnetic_wave_equ} and thus does not completely remove the ``strongly low-high'' interaction term, it will follow from Proposition~\ref{prop:prob_renormalization_error_estimate} that the accrued renormalization error $\Box_{A^{<n-1}}^{p, mod} \Phi^n_r$ gains regularity and can be treated as a smooth source term (on a suitable event). This argument relies on a suitable control of a certain redeeming error control quantity $\calE \calC^n$ defined in~\eqref{equ:definition_EC_n}.

\medskip 

To summarize, we have arrived at the following {\bf system of forced MKG-CG equations for $(\calA_{x,s}^n, \calA^n_0, \Phi^n_s)$ at frequency level $n \geq 1$},
\begin{equation} 
\left\{ \begin{aligned}
 \calA^n_{j,s} &= \bA_j( \phi^{<n}, \phi^{<n}, A^{<n} ) - \bA_j(\phi^{<n-1}, \phi^{<n-1}, A^{<n-1}) \\
 \calA_0^n &= \bA_0(\phi^{<n}, \phi^{<n}, A_0^{<n}) - \bA_0(\phi^{<n-1}, \phi^{<n-1}, A_0^{<n-1}) \\
 \Box^p_{A^{<n-1}} \Phi^n_s &= - \Box^{p, mod}_{A^{<n-1}} \Phi^n_r - 2i P_{[(1-\gamma)n, n-C]} A^{<n-1, \alpha} \partial_\alpha P_n \Phi^n_r \\
 &\quad - 2i \sum_k P_{\leq k-C} \calA^{n,\alpha} \partial_\alpha P_k \phi^{<n} \\ 
 &\quad - 2i \sum_k P_{\leq k-C} \bigl( A^{<n-1, j}_s - \calA^{0,free, j}_s \bigr) \partial_j P_k \Phi^n_s \\
 &\quad + 2i \sum_k P_{\leq k-C} A^{<n-1}_0 \partial_t P_k \Phi^n_s \\
 &\quad + \bM^1(A^{<n}, \phi^{<n}) - \bM^1(A^{<n-1}, \phi^{<n-1}) \\
 &\quad + \bM^2(A^{<n}, \phi^{<n}) - \bM^2(A^{<n-1}, \phi^{<n-1}) \\
 &\quad + \bM^3(A^{<n}, A^{<n}, \phi^{<n}) - \bM^3(A^{<n-1}, A^{<n-1}, \phi^{<n-1})
\end{aligned} \right. \tag{fMKG-CG\textsubscript{n}}
\end{equation}
with initial data for the scalar field $\Phi^n_s$ given by
\begin{equation*}
 \Phi^n_s[0] = T_n \phi^\omega[0] - \Phi^n_r[0].
\end{equation*}
It is important to keep in mind that the nonlinearities in (fMKG-CG\textsubscript{n}) contain $A^{<n-1}$, $\phi^{<n-1}$, $\calA^n_{x,r}$, and $\Phi^n_r$ as forcing terms. Additionally, the right-hand side of the magnetic wave equation for $\Phi^n_s$ features the ``probabilistic'' renormalization error term $\Box^{p, mod}_{A^{<n-1}} \Phi^n_r$ as another forcing term.

\subsection{Global existence for the forced MKG-CG system of equations} \label{subsec:global_forcedMKG}

We now present two global existence results on which the iterative construction of the sequence of (smooth) solutions $\{ (\calA_{x,s}^n, \calA^n_0, \Phi^n_s) \}_{n \geq 0}$ relies. These should be viewed and are formulated as purely deterministic global existence results at energy regularity under suitable smallness assumptions on the respective data and forcing terms. 

If the energy of the lowest frequency block $(T_0 A_x^\omega[0], T_0 \phi^\omega[0])$ of the random data $(A_x^\omega[0], \phi^\omega[0])$ is sufficiently small, we can invoke the small energy global regularity result for (MKG-CG) by~\cite{KST} and start the induction on frequency procedure by solving the standard Maxwell-Klein-Gordon system of equations (MKG-CG) with (smooth) initial data given by $(T_0 A_x^\omega[0], T_0 \phi^\omega[0])$. We denote this (smooth) solution by $(\calA_{x,s}^0, \calA_0^0, \Phi^0_s) \in S^1 \times Y^1 \times S^1$ and we set $\calA_r^0 = \Phi_r^0 = 0$. 
In order to capture the frequency localization of the solution $(\calA_{x,s}^0, \calA_0^0, \Phi^0_s)$ to frequencies $|\xi| \lesssim 1$ up to tails, we use the norms
\begin{align*}
 \|\calA_{x,s}^0\|_{S^1[0]} &:= \biggl( \sum_k \bigl( \max\{ 2^{\delta_2 k}, 1 \} \bigr)^2 \| P_k \calA^0_{x,s} \|_{S^1_k}^2 \biggr)^{\frac12} + \| \calA_{x,s}^{0,nl} \|_{\ell^1 S^1}, \\
 \|\calA_0^0\|_{Y^1[0]} &:=  \biggl( \sum_k \bigl( \max\{ 2^{\delta_2 k}, 1 \} \bigr)^2 \| P_k \calA_0^0 \|_{Y^1}^2 \biggr)^{\frac12}, \\
 \|\Phi^0_s\|_{S^1[0]} &:= \biggl( \sum_k \bigl( \max\{ 2^{\delta_2 k}, 1 \} \bigr)^2 \| P_k \calA^0_{x,s} \|_{S^1_k}^2 \biggr)^{\frac12}.
\end{align*}

\begin{proposition}[Induction base case] \label{prop:induction_base_case}
 There exist absolute constants $0 < \varepsilon \ll 1$ and $C_0 \geq 1$ with the following property: If 
 \begin{equation*}
  \| T_0 A_x^\omega[0] \|_{\dot{H}^1_x \times L^2_x} + \|T_0 \phi^\omega[0] \|_{\dot{H}^1_x \times L^2_x} \leq \varepsilon,
 \end{equation*}
 then there exists a unique global solution $(\calA_{x,s}^0, \calA_0^0, \Phi_s^0) \in S^1 \times Y^1 \times S^1$ to (MKG-CG) with initial data
 \begin{equation*}
  \calA_{x,s}^0[0] = T_0 A_x^\omega[0], \quad \Phi^0_s[0] = T_0 \phi^\omega[0].
 \end{equation*}
 Moreover, it holds that
 \begin{equation*}
  \|\calA_{x,s}^0\|_{S^1[0]} + \|\calA_0^0\|_{Y^1[0]} + \|\Phi^0_s\|_{S^1[0]} \leq C_0 \bigl( \| T_0 A_x^\omega[0] \|_{\dot{H}^1_x \times L^2_x} + \|T_0 \phi^\omega[0] \|_{\dot{H}^1_x \times L^2_x} \bigr).
 \end{equation*}
\end{proposition}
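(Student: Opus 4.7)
The proposition is a purely deterministic, smooth, small-energy statement for (MKG-CG), so the plan is to reduce it to the small-energy global regularity theorem of Krieger--Sterbenz--Tataru, plus a very mild persistence-of-regularity upgrade to capture the extra frequency weight $\max\{2^{\delta_2 k},1\}$ and the $\ell^1$ summability of the nonlinear part of the connection form.

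\textbf{Step 1: Data analysis.} Because $T_0$ is the single Littlewood--Paley-like block
\[
T_0 a^\omega = g_0(\omega)\,\varphi(D) a, \qquad T_0 \phi_0^\omega = h_0(\omega)\,\varphi(D)\phi_0,
\]
etc., the data $(T_0 A_x^\omega[0],T_0\phi^\omega[0])$ is sharply frequency-localized to $\{|\xi|\leq 1\}$, hence smooth. From this bandlimitedness plus the hypothesis of the proposition, one immediately gets, for any $N\ge 0$,
\[
\|T_0 A_x^\omega[0]\|_{\dot H^{1+N}_x\times \dot H^N_x} + \|T_0\phi^\omega[0]\|_{\dot H^{1+N}_x\times \dot H^N_x}\ \lesssim_N\ \varepsilon.
\]
Moreover, since $\varphi(D)$ commutes with $\partial^j$ and the underlying $(a,b)$ are divergence-free, the truncated data $(T_0 a^\omega, T_0 b^\omega)$ still satisfies the Coulomb gauge condition. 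Thus all hypotheses of the small-energy global regularity theorem are met.

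\textbf{Step 2: Apply the small-energy theorem of \cite{KST}.} For $\varepsilon$ sufficiently small one obtains a unique global solution $(\calA^0_{x,s},\calA^0_0,\Phi^0_s)\in S^1\times Y^1\times S^1$ to (MKG-CG) with initial data $\calA^0_{x,s}[0]=T_0 A_x^\omega[0]$, $\Phi^0_s[0]=T_0\phi^\omega[0]$, together with the bounds
\[
\|\calA^0_{x,s}\|_{\ell^2 S^1}+\|\Phi^0_s\|_{\ell^2 S^1}+\|\calA^0_0\|_{Y^1}\ \lesssim\ \varepsilon,
\qquad
\|\calA^{0,nl}_{x,s}\|_{\ell^1 S^1}\ \lesssim\ \varepsilon^2,
\]
the latter $\ell^1$-summability being the well-known improved bound for the quadratic/cubic Duhamel piece of the Maxwell system (cf.\ \cite{KST, KL15}). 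This already gives the $\|\calA^{0,nl}_{x,s}\|_{\ell^1 S^1}$ contribution to $\|\calA^0_{x,s}\|_{S^1[0]}$.

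\textbf{Step 3: Weighted $\ell^2$ bound.} It remains to upgrade the $\ell^2 S^1$ control to the weighted sums with factor $\max\{2^{\delta_2 k},1\}$. This is equivalent to also bounding $\|\cdot\|_{\ell^2 S^{1+\delta_2}}$ (and analogously for $Y^1$), which I would establish by running the very same KST contraction in the slightly stronger norm. The multilinear, null-form, and parametrix estimates in \cite{KST} are all product-type estimates in which the extra derivative $2^{\delta_2 k}$ can be placed on whichever factor has the highest output frequency; as $\delta_2\ll \delta$ is much smaller than the off-diagonal gain $\delta$ built into those estimates, the extra weight is absorbed without modifying the proofs. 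The bandlimited data of Step~1 supplies the necessary finite input norm $\lesssim \varepsilon$ in $\dot H^{1+\delta_2}\times \dot H^{\delta_2}$, and the fixed-point iteration then closes at the level $\lesssim \varepsilon$ in $\ell^2 S^{1+\delta_2}$ and $Y^{1+\delta_2}$.

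\textbf{Main obstacle.} The only nontrivial point is Step~3: one must check that the full KST functional framework (the $X^{s,b}$, null-frame, angular-sector, and parametrix pieces, together with the renormalization-error estimates for $\Box^p_A$) is stable under adding $\delta_2$ derivatives to a single high-frequency factor. This is standard persistence-of-regularity, but because the paradifferential parametrix couples different frequency scales it does require going through the KST estimates and observing that the frequency weights appearing there have room to accommodate an extra $2^{\delta_2 k}$ on the top-frequency input. No genuinely new estimate is needed. (Uniqueness in $S^1\times Y^1\times S^1$ is inherited directly from the KST small-energy theorem.)
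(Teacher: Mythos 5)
Your proposal is correct and takes essentially the same route as the paper: reduce to the Krieger--Sterbenz--Tataru small-energy global regularity theorem and use the frequency decay in $k$ coming from the compact frequency support of $T_0$-data to capture the $\max\{2^{\delta_2 k},1\}$ weight and the $\ell^1$ bound on $\calA_{x,s}^{0,nl}$. The only presentational difference is that the paper invokes the \emph{frequency envelope} form of~\cite[Theorem~1]{KST} directly -- an $\dot H^1 \times L^2$ envelope $c_k \simeq 2^{-2\delta_2|k|}\varepsilon$ for the $T_0$-data gives $\|P_k\Phi_s^0\|_{S^1}\lesssim c_k$, $\|P_k\calA_0^0\|_{Y^1}\lesssim c_k$, $\|P_k\calA_{x,s}^{0,nl}\|_{S_k}\lesssim c_k^2$ -- rather than re-running the contraction in a $\delta_2$-weighted space; your Step~3 is a re-derivation of the same persistence that the envelope statement already packages, and since $\delta_2\ll\delta$ this is unproblematic.
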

\begin{proof}
 We may freely assume that the constant $0 < \varepsilon \leq 1$ is sufficiently small so that we can invoke the small energy global regularity result by Sterbenz-Tataru and the first author~\cite[Theorem 1]{KST}. From~\cite{KST} we obtain the following refined information about the solution $(\calA_{x,s}^0, \calA_0^0, \Phi_s^0)$.
Let $\{ c_k \}_{k \in \bbZ}$ be an $\dot{H}^1_x \times L^2_x$ frequency envelope for the initial data $(T_0 A^\omega[0], T_0 \phi^\omega[0])$ defined by
\begin{equation*}
 c_k := \sum_{\ell \in \bbZ} 2^{-2\delta_2 |k-\ell|} \Bigl( \bigl\| P_\ell T_0 A^\omega[0] \bigr\|_{\dot{H}^1_x \times L^2_x} + \bigl\|P_\ell T_0 \phi^\omega[0] \bigr)\bigr\|_{\dot{H}^1_x \times L^2_x} \Bigr).
\end{equation*}
Then it holds that
\begin{align*}
 \bigl\| P_k \calA_{x,s}^{0, nl} \bigr\|_{S_k} \lesssim c_k^2, \quad \| P_k \Phi_s^0 \|_{S^1} \lesssim c_k, \quad \| P_k \calA_{0,s}^0 \|_{Y^1} \lesssim c_k.
\end{align*}
In particular, it follows that
\begin{equation*}
 \|\calA_{x,s}^0\|_{S^1[0]} + \|\calA_0^0\|_{Y^1[0]} + \|\Phi^0_s\|_{S^1[0]} \leq C_0 \bigl( \| T_0 A_x^\omega[0] \|_{\dot{H}^1_x \times L^2_x} + \|T_0 \phi^\omega[0] \|_{\dot{H}^1_x \times L^2_x} \bigr)
\end{equation*}
for some absolute constant $C_0 \geq 1$. 
\end{proof}

The main result of this section is the following (deterministic) induction step global existence result at energy regularity for the system of forced MKG-CG equations (fMKG-CG\textsubscript{n}) at stage $n \geq 1$. Note that the corresponding solution $(\calA_{x,s}^n, \calA_0^n, \Phi^n_s)$ to (fMKG-CG\textsubscript{n}) is localized to frequencies~$\sim 2^n$ up to tails, as quantified by the estimate~\eqref{equ:induction_step_solution_bound} below.
The statement assumes smallness of the redeeming error control quantity $\calE \calC^n$ defined in~\eqref{equ:definition_EC_n}, which is used to bound the accrued renormalization error $\Box_{A^{<n-1}}^{p, mod} \Phi^n_r$.

\begin{proposition}[Induction step] \label{prop:induction_step}
 There exist absolute constants $0 < \varepsilon \ll 1$ and $C_0 \geq 1$ with the following property: Let $n \geq 1$ be arbitrary. Suppose that the linear rough components from previous stages satisfy 
 \begin{equation} \label{equ:induction_step_cond1}
  \sum_{m=1}^{n-1} \| \Phi^m_r \|_{R_m} + \sum_{m=1}^{n-1} \| \calA_{x,r}^m \|_{R_m} \leq \varepsilon 
 \end{equation}
 and that the smooth components from previous stages satisfy 
 \begin{equation} \label{equ:induction_step_cond2}
  \sum_{m=0}^{n-1} \| \Phi^m_s \|_{S^1[m]} + \sum_{m=0}^{n-1} \| \calA_{x,s}^m \|_{S^1[m]} + \sum_{m=0}^{n-1} \|\calA_0^m\|_{Y^1[m]} \leq C_0 \varepsilon.
 \end{equation}
 Assume that 
 \begin{equation} \label{equ:induction_step_cond3}
  \bigl\| \Phi_s^n[0] \bigr\|_{\dot{H}^1_x \times L^2_x} + \|\Phi_r^n\|_{R_n} + \|\calA_{x,r}^n\|_{R_n} + \|T_n \phi^\omega[0]\|_{H^{1-\delta_\ast}_x \times H^{-\delta_\ast}_x} + \calE \calC^n \leq \varepsilon.
 \end{equation}
 Then there exists a unique global solution $(\calA_{x,s}^n, \calA_0^n, \Phi_s^n) \in S^1 \times Y^1 \times S^1$ to ($\text{fMKG-CG\textsubscript{n}}$) satisfying
 \begin{equation} \label{equ:induction_step_solution_bound}
  \begin{aligned}
   &\|\Phi^n_s\|_{S^1[n]} + \| \calA_{x,s}^n \|_{S^1[n]} + \| \calA_0^n \|_{Y^1[n]} \\
   &\qquad \qquad \qquad \leq C_0 \Bigl( \| \Phi_n^s[0] \|_{\dot{H}^1_x \times L^2_x} + \|\Phi_r^n\|_{R_n} + \|\calA_{x,r}^n\|_{R_n} + \|T_n \phi^\omega[0]\|_{H^{1-\delta_\ast}_x \times H^{-\delta_\ast}_x} + \calE \calC^n \Bigr).
  \end{aligned}
 \end{equation}
\end{proposition}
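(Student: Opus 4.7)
The plan is to set up a Picard-type fixed point argument for the unknown triple $(\calA^n_{x,s}, \calA^n_0, \Phi^n_s)$ in a small ball of the product space endowed with the frequency-localized norms $S^1[n] \times Y^1[n] \times S^1[n]$, exploiting the \emph{frequency support $\sim 2^n$ up to $2^{-\delta_2|n-k|}$ tails} of the unknowns. Given an input triple in such a ball, one defines the iteration map $\Psi$ by: (i) solving the elliptic equation for $\calA^n_0$ via $\Delta^{-1}$; (ii) solving the wave equation for $\calA^n_{x,s}$ by using the $\ell^2$ energy estimate $\|\nabla_{t,x}\calA^n_{x,s}\|_S \lesssim \|\calA^n_{x,s}[0]\|_{\dot H^1\times L^2} + \|\Box\calA^n_{x,s}\|_N$ (noting that $\calA^n_{x,s}[0]=0$ since $\calA^n_x[0]=\calA^n_{x,r}[0]$ by construction); and (iii) solving the paradifferential magnetic wave equation $\Box^p_{A^{<n-1}}\Phi^n_s=F^n$ via the linear parametrix from Section~\ref{sec:renormalization}, which requires a perturbative control on $A^{<n-1,j}_r + \calA^{0,free,j}_s$ — precisely the content of \eqref{equ:induction_step_cond1} and \eqref{equ:induction_step_cond2}.

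The main body of work is to bound each forcing term in the $N_k$- or $\ell^1 L^2_t \dot H^{-1/2}_x$-norm with the $[n]$-frequency-envelope. One splits the right-hand sides into contributions already controlled at stages $\leq n-1$ plus contributions involving the new increment. First, the terms $\bA_j(\phi^{<n},\phi^{<n},A^{<n}) - \bA_j(\phi^{<n-1},\phi^{<n-1},A^{<n-1})$ and $\bA_0(\dots)-\bA_0(\dots)$ are bilinear/trilinear in differences; writing the differences and invoking the multilinear estimates of Section~\ref{sec:multilinear_estimates} generalized to admit rough inputs (via the $R_m$ norms on $\Phi^m_r$, $\calA^m_{x,r}$), one gains a factor of $\ep$ from \eqref{equ:induction_step_cond1}-\eqref{equ:induction_step_cond3} together with an off-diagonal decay $2^{-\delta|k-n|}$ or $2^{-\delta|m-n|}$ that is $\ell^1$-summable over $m\leq n-1$, thanks to the choice $\delta_2\ll\delta$. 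The cubic terms in $\bM^1,\bM^2,\bM^3$ are handled similarly, with the magnetic null structure $\calN_{kj}$ providing the necessary angular gain. The ``moderately low-high'' term $P_{[(1-\gamma)n,n-C]}A^{<n-1,\alpha}\partial_\alpha P_n\Phi^n_r$ is perturbative because the frequency gap $\gamma n$ combined with the $R_n$ integrability beats the scaling by a power $2^{-c\gamma n}$; the remaining magnetic pieces $(A^{<n-1,j}_s-\calA^{0,free,j}_s)\partial_j P_k\Phi^n_s$ and $A^{<n-1}_0\partial_t P_k\Phi^n_s$ are absorbed into a smallness factor $C_0\ep$ from \eqref{equ:induction_step_cond2}, as they produce a constant strictly less than $1$ by choice of $\ep$.

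To close the data error and initial condition, note that $\Phi^n_s[0]=T_n\phi^\omega[0]-\Phi^n_r[0]$ enters directly, bounded by \eqref{equ:induction_step_cond3}; and the genuine \emph{renormalization error} $\Box^{p,mod}_{A^{<n-1}}\Phi^n_r$ is exactly the quantity estimated (after projection into $N$) by the redeeming control $\calE\calC^n$ of \eqref{equ:definition_EC_n}, which by hypothesis is $\leq\ep$. Combining these estimates yields
\[
\|\Psi(\calA^n_{x,s},\calA^n_0,\Phi^n_s)\|_{S^1[n]\times Y^1[n]\times S^1[n]} \leq C_0\bigl(\|\Phi^n_s[0]\|_{\dot H^1\times L^2}+\|\Phi^n_r\|_{R_n}+\|\calA^n_{x,r}\|_{R_n}+\|T_n\phi^\omega[0]\|_{H^{1-\delta_\ast}\times H^{-\delta_\ast}}+\calE\calC^n\bigr) + O(\ep)\cdot\|(\calA^n_{x,s},\calA^n_0,\Phi^n_s)\|,
\]
so $\Psi$ preserves a ball of radius $\simeq$ the RHS of \eqref{equ:induction_step_solution_bound} and, by an entirely analogous difference estimate, is a contraction on it. The fixed point is the unique global solution; global existence is automatic once the a priori bound is closed at energy regularity.

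The principal obstacle is the $\Phi^n_s$ equation. Three interwoven difficulties arise there: (a) the operator on the left is the genuine paradifferential magnetic wave operator with the full $A^{<n-1,j}_r+\calA^{0,free,j}_s$ built in, so one must rely on the Section~\ref{sec:renormalization} parametrix rather than a free-wave energy estimate, and the $R_m$-bound \eqref{equ:induction_step_cond1} on the rough connections is exactly what that parametrix consumes; (b) the right-hand side contains the \emph{renormalization error} $\Box^{p,mod}_{A^{<n-1}}\Phi^n_r$, whose smallness is not tautological and must be routed through $\calE\calC^n$; (c) because $\Phi^n_r$ is not placed in $S^1$ but only in the weaker redeeming $R_n$ norm (plus the $S^{1-\delta_\ast}$ component), every multilinear estimate in which $\Phi^n_r$ or $\calA^n_{x,r}$ appears as an input must be the \emph{generalized} version in which one $S^1$-slot is replaced by $R_n$; ensuring that each such replacement still produces the required off-diagonal gain $2^{-\delta|m-n|}$ is the technical heart of the argument, and relies on the exponents $\sigma,\gamma,\delta_1,\delta_2$ being ordered as in the paper's hierarchy $\delta_\ast\ll\sigma\ll\gamma\ll\delta_2\ll\delta_1\ll\delta\ll 1$.
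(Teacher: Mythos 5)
Your overall scheme is the same as the paper's: an iteration for $(\calA^n_{x,s},\calA^n_0,\Phi^n_s)$ in $S^1[n]\times Y^1[n]\times S^1[n]$, closed by the parametrix-based linear estimate for $\Box^p_{A^{<n-1}}$ (Proposition~\ref{prop:main_linear_estimate}), the generalized multilinear estimates with $R_m$-inputs, and the renormalization error estimate routed through $\|T_n\phi^\omega[0]\|_{H^{1-\delta_\ast}\times H^{-\delta_\ast}}+\calE\calC^n$ (note: $\calE\calC^n$ alone controls only the ``delicate'' errors; the ``mild'' ones cost the $H^{1-\delta_\ast}$ data norm, which you do include in your final display).

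There is, however, one concrete gap in the way you set up the fixed point. Your enumeration of forcing terms omits the low-high interaction $-2i\sum_k P_{\leq k-C}\calA^{n,\alpha}\partial_\alpha P_k\phi^{<n}$, in which the \emph{new} increment $\calA^n$ sits at low frequency against $\phi^{<n}$ at high frequency, and your contraction bound
$\|\Psi(\cdot)\|\leq C_0(\text{data})+O(\ep)\|(\calA^n_{x,s},\calA^n_0,\Phi^n_s)\|$ implicitly treats $\calA^n_{x,s},\calA^n_0$ as \emph{generic} elements of the ball when estimating this term. That fails: for a generic $S^1[n]$ connection the strongly low-high piece $P_{\leq(1-\gamma)k}\calA^{n,j}_s\partial_j P_k\phi^{<n}$ cannot be placed in $N_k$ by bilinear estimates alone (this is exactly the non-perturbative interaction that forces the paradifferential setup in the first place). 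One must reinsert the defining equations \eqref{equ:nonlinear_est_lowhigh_defAx}--\eqref{equ:nonlinear_est_lowhigh_defA0} for $\calA^n_{x,s}$ and $\calA^n_0$ to expose the secondary trilinear null forms $\calQ_1,\calQ_2,\calQ_3$ and use \eqref{equ:generalization_136_KST}--\eqref{equ:generalization_138_KST}; this is why the corresponding bound \eqref{equ:nonlinear_est_lowhighAnphinm1} in Proposition~\ref{prop:nonlinear_est_lowhigh_phi_less_n} has a cubic right-hand side, and why the paper stresses that its Picard scheme is ``really a two-step iteration'': at each stage the equations for $\calA^{n,(\ell)}_{x,s},\calA^{n,(\ell)}_0$ are substituted into the $\Phi^{n,(\ell)}_s$-equation estimates rather than only their norms being used. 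Your argument goes through once the map is defined and estimated in this two-step fashion (iterate triples whose $\calA$-components solve their own equations with the previous iterate on the right), but as literally formulated --- a Banach fixed point on an arbitrary ball with output bounded linearly in the input norms --- the key low-high estimate is unavailable. A further small point: in the smooth-rough strongly low-high case one also needs $\delta_2\gamma>\delta_\ast$ to compensate the $2^{\delta_\ast k}$ loss from placing $P_k\phi^{<n}_r$ in $S^1_k$, a constraint your exponent hierarchy does not record.
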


For the proof of Theorem~\ref{thm:main} it is crucial to observe that in view of the conditions~\eqref{equ:induction_step_cond1}--\eqref{equ:induction_step_cond3} and in view of the bound~\eqref{equ:induction_step_solution_bound} on the solutions $(\calA_{x,s}^n, \calA^n_0, \Phi^n_s)$, we can invoke the induction base case Proposition~\ref{prop:induction_base_case} and keep iterating the induction step Proposition~\ref{prop:induction_step} for all $n \geq 1$ on an event on which we have that
\begin{align*}
 &\| (T_0 A_x^\omega[0], T_0 \phi^\omega[0]) \|_{\dot{H}^1_x \times L^2_x} + \sum_{m=1}^\infty \| \Phi^m_r \|_{R_m} + \sum_{m=1}^{\infty} \| \calA_{x,r}^m \|_{R_m} \\
 &\qquad + \sum_{m=1}^\infty \| \Phi^m_s[0] \|_{\dot{H}^1 \times L^2_x} + \sum_{m=1}^\infty \|T_m \phi^\omega[0]\|_{H^{1-\delta_\ast}_x \times H^{-\delta_\ast}_x} + \sum_{m=1}^\infty \calE \calC^m \leq \varepsilon.
\end{align*}

We now outline the proof of Proposition~\ref{prop:induction_step} using the nonlinear estimates established in the next Subsection~\ref{subsec:nonlinear_estimates}

\begin{proof}[Proof of Proposition~\ref{prop:induction_step}]
 As in the proof of the small energy global regularity result for (MKG-CG) in~\cite{KST}, the scheme of the proof is a Picard iteration. Here it is important to keep in mind that in terms of estimates this is really a two-step iteration, because to obtain good bounds, the equations for $\calA^n_{x,s}$ and $\calA^n_0$ have to be reinserted. The nonlinear estimates in Subsection~\ref{subsec:nonlinear_estimates} have to be understood in this sense. In the following we use superscripts $(\ell)$ to denote the Picard iterates.
 
 We initialize the Picard iteration by setting
 \begin{equation*}
  \bigl( \calA_{x,s}^{n, (0)}, \calA_0^{n, (0)}, \Phi^{n, (0)}_s \bigr) = (0, 0, 0).
 \end{equation*}
 Then for any $\ell \geq 1$, we define $\bigl( \calA_{x,s}^{n, (\ell)}, \calA_0^{n, (\ell)}, \Phi^{n, (\ell)}_s \bigr)$ as the solution to the system (fMKG-CG\textsubscript{n}) where $\bigl( \calA_{x,s}^{n}, \calA_0^{n}, \Phi^{n}_s \bigr)$ on the left-hand sides is replaced by $\bigl( \calA_{x,s}^{n, (\ell)}, \calA_0^{n, (\ell)}, \Phi^{n, (\ell)}_s \bigr)$ and $\bigl( \calA_{x,s}^{n}, \calA_0^{n}, \Phi^{n}_s \bigr)$ on the right-hand sides is replaced by $\bigl( \calA_{x,s}^{n, (\ell-1)}, \calA_0^{n, (\ell-1)}, \Phi^{n, (\ell-1)}_s \bigr)$, and with initial data for the scalar field
 \begin{equation*}
  \Phi^{n, (\ell)}_s[0] = (T_n \phi_0^\omega, T_n \phi_1^\omega) - \Phi^n_r[0].
 \end{equation*}

 We first derive the bound 
 \begin{equation} \label{equ:induction_step_prop_first_iterate_bound}
  \begin{aligned}
   &\| \Phi^{n,(1)}_s \|_{S^1[n]} + \|\calA_{x,s}^{n, (1)}\|_{S^1[n]} + \|\calA_0^{n,(1)}\|_{Y^1[n]} \\ 
   &\qquad \leq C_1 \bigl( \| \Phi^n_s[0] \|_{\dot{H}^1_x \times L^2_x} + \|\Phi^n_r\|_{R_n} + \|\calA_{x,r}^n\|_{R_n} + \|T_n \phi^\omega[0]\|_{H^{1-\delta_\ast}_x \times H^{-\delta_\ast}_x} + \calE \calC^n \bigr) 
  \end{aligned}
 \end{equation}
 for some absolute constant $C_1 > 0$ such that $C_1 \varepsilon \ll 1$.
 This bound follows from the nonlinear estimates in the next Subsection~\ref{subsec:nonlinear_estimates} by observing that, since $\Phi^{n,(0)}_s = 0$ and $\calA_{x,s}^{n,(0)} = 0$ vanish on the right-hand sides of the equations for $\Phi^{n,(1)}_s$, $\calA_{x,s}^{n,(1)}$, $\calA_0^{n,(1)}$, all (non-vanishing) multilinear terms have at least one copy of $\Phi^n_r$ or $\calA_{x,r}^n$ in one slot (while the forcing terms $A^{<n-1}$ or $\phi^{<n-1}$ sitting in one or more of the other slots just give an additional $\varepsilon$ of smallness). The term $\| \Phi^n_s[0] \|_{\dot{H}^1_x \times L^2_x}$ on the right-hand side of~\eqref{equ:induction_step_prop_first_iterate_bound} just comes from the initial data for $\Phi^{n,(1)}_s$, while the terms $\|T_n \phi^\omega[0]\|_{H^{1-\delta_\ast}_x \times H^{-\delta_\ast}_x}$ and $\calE \calC^n$ are a consequence of the bound provided by Proposition~\ref{prop:prob_renormalization_error_estimate} on the renormalization error term $\Box_{A^{<n-1}}^{p,mod} \Phi^n_r$ on the right-hand side of the equation for $\Phi^{n,(0)}_s$.
 In particular, by the assumptions in the statement of Proposition~\ref{prop:induction_step} the bound~\eqref{equ:induction_step_prop_first_iterate_bound} implies 
 \begin{equation}
  \| \Phi^{n,(1)}_s \|_{S^1[n]} + \|\calA_{x,s}^{n, (1)}\|_{S^1[n]} + \|\calA_0^{n,(1)}\|_{Y^1[n]} \leq C_1 \varepsilon.
 \end{equation}
 
 Next, we establish that for all $\ell \geq 2$ we have 
 \begin{equation} \label{equ:induction_step_prop_difference_bound}
  \begin{aligned}
   &\| \Phi^{n, (\ell)}_s - \Phi^{n, (\ell-1)}_s \|_{S^1[n]} + \|\calA_{x, s}^{n,(\ell)} - \calA_{x, s}^{n, (\ell-1)}\|_{S^1[n]} + \| \calA^{n, (\ell)}_0 - \calA^{n, (\ell-1)}_0 \|_{Y^1[n]} \\
   &\quad \leq C_\ast \Bigl( 2 C_1 \bigl( \| \Phi^n_s[0] \|_{\dot{H}^1_x \times L^2_x} + \|\Phi^n_r\|_{R_n} + \|\calA_{x,r}^n\|_{R_n} + \|\Psi_r^{n,\pm}\|_{S_n^{1-\delta_\ast}} \bigr) \Bigr)^\ell \leq C_\ast  (2 C_1 \varepsilon)^\ell
  \end{aligned}
 \end{equation}
 for some absolute constant $C_\ast \geq 1$.
 The proof of~\eqref{equ:induction_step_prop_difference_bound} proceeds inductively. The induction base case $\ell=1$ is provided by~\eqref{equ:induction_step_prop_first_iterate_bound}. To carry out the induction step $\ell-1 \to \ell$ we note that by summing up~\eqref{equ:induction_step_prop_first_iterate_bound} and~\eqref{equ:induction_step_prop_difference_bound} we may add to our induction hypothesis (as long as $C_1 \varepsilon \ll 1$ is sufficiently small) that for $\tilde{\ell} = 1, 2, \ldots, \ell-1$, it holds that 
 \begin{equation} \label{equ:induction_step_prop_additional_induction_assumption}
  \begin{aligned}
   &\| \Phi^{n,(\tilde{\ell})}_s \|_{S^1[n]} + \|\calA_{x,s}^{n, (\tilde{\ell})}\|_{S^1[n]} + \|\calA_0^{n,(\tilde{\ell})}\|_{Y^1[n]} \\
   &\qquad \leq 2 C_1 \bigl( \| \Phi^n_s[0] \|_{\dot{H}^1_x \times L^2_x} + \|\Phi^n_r\|_{R[n]} + \|\calA_{x,r}^n\|_{R[n]} + \|T_n \phi^\omega[0]\|_{H^{1-\delta_\ast}_x \times H^{-\delta_\ast}_x} + \calE \calC^n \bigr). 
  \end{aligned}
 \end{equation}
 The bound~\eqref{equ:induction_step_prop_difference_bound} follows from the nonlinear estimates in Subsection~\ref{subsec:nonlinear_estimates} by observing that the equations for $\Phi^{n, (\ell)}_s - \Phi^{n, (\ell-1)}_s$, $\calA^{n, (\ell)}_{x,s} - \calA^{n, (\ell-1)}_{x,s}$, and $\calA^{n, (\ell)}_0 - \calA^{n, (\ell-1)}_0$ have zero initial data and do not involve forcing terms that come up to linear order on the right-hand sides. Moreover, in all multilinear terms on the right-hand sides there will be at least one copy of $\Phi^{n,(\ell-1)}_s - \Phi^{n,(\ell-2)}_s$, $\calA^{n,(\ell-1)}_{x,s} - \calA^{n, (\ell-2)}_{x,s}$, or $\calA^{n, (\ell-1)}_0 - \calA^{n, (\ell-2)}_0$ in one slot, while the other slots at least give an additional $\varepsilon$ of smallness.
 
 Then~\eqref{equ:induction_step_prop_difference_bound} implies that the sequence $\bigl\{ ( \calA_{x,s}^{n, (\ell)}, \calA_0^{n, (\ell)}, \Phi^{n, (\ell)}_s ) \bigr\}_{\ell \geq 0}$ converges in $S^1 \times Y^1 \times S^1$ to a solution $(\calA^n_{x,s}, \calA^n_0, \Phi^n_s)$ to (fMKG-CG\textsubscript{n}). Moreover, assuming that $2C_1 \leq C_0$, from~\eqref{equ:induction_step_prop_additional_induction_assumption} it follows that this solution satisfies
 \begin{align*}
  &\| \Phi^{n}_s \|_{S^1[n]} + \|\calA_{x,s}^{n}\|_{S^1[n]} + \|\calA_0^{n}\|_{Y^1[n]} \\
  &\qquad \leq C_0 \bigl( \| \Phi^n_s[0] \|_{\dot{H}^1_x \times L^2_x} + \|\Phi^n_r\|_{R_n} + \|\calA_{x,r}^n\|_{R_n} + \|T_n \phi^\omega[0]\|_{H^{1-\delta_\ast}_x \times H^{-\delta_\ast}_x} + \calE \calC^n \bigr).
 \end{align*}
\end{proof}

\subsection{The main nonlinear estimates} \label{subsec:nonlinear_estimates}

In this subsection we establish all estimates for the source terms that appear in the forced MKG-CG system of equations (fMKG-CG\textsubscript{n}), $n \geq 1$. The derivations rely on the frequency-localized multilinear estimates in~\cite[Section 12]{KST} and their generalized versions established in Section~\ref{sec:multilinear_estimates}, which allow for rough inputs satisfying redeeming bounds.

We begin with the source terms of the $A_x$ equation.
\begin{proposition}[The $A_x$ equation] \label{prop:nonlinear_est_Ax}
 For arbitrary $n \geq 1$ the following estimates hold
 \begin{align}
   &\sup_{k\in\bbZ} \, 2^{+\delta_2 |k-n|} \bigl\| P_k \Box \bigl( \bA_x^2(\phi^{<n}, \phi^{<n}) - \bA_x^2(\phi^{<n-1}, \phi^{<n-1}) \bigr) \bigr\|_{N_k \cap L^2_t \dot{H}^{-\frac12}_x}   \label{equ:nonlinear_est_Ax2} \\
   &\quad \lesssim \bigl( \|\Phi^n_s\|_{S^1[n]} + \|\Phi^n_r\|_{R_n} \bigr) \biggl( \sum_{m=0}^n \|\Phi^m_s\|_{S^1[m]} + \sum_{m=1}^n \|\Phi^m_r\|_{R_m} \biggr) \nonumber \\
   &\sup_{k\in\bbZ} \, 2^{+\delta_2 |k-n|} \bigl\| P_k \Box \bigl( \bA_x^3(\phi^{<n}, \phi^{<n}, A_x^{<n}) - \bA_x^3(\phi^{<n-1}, \phi^{<n-1}, A_x^{<n-1}) \bigr) \bigr\|_{N_k \cap L^2_t \dot{H}^{-\frac12}_x}   \label{equ:nonlinear_est_Ax3} \\
   &\quad \lesssim \bigl( \|\Phi^n_s\|_{S^1[n]} + \|\calA_{x,s}^n\|_{S^1[n]} + \|\Phi^n_r\|_{R_n} + \|\calA_{x,r}^n\|_{R_n} \bigr) \times \nonumber \\
   &\quad \quad \quad \times \biggl( \sum_{m=0}^n \|\Phi^m_s\|_{S^1[m]} + \sum_{m=0}^n \|\calA_{x,s}^m\|_{S^1[m]} + \sum_{m=1}^n \|\Phi^m_r\|_{R_m} + \sum_{m=1}^n \|\calA_{x,r}^m\|_{R_m} \biggr). \nonumber
 \end{align}
\end{proposition}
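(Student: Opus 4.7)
The proof will proceed by telescoping the differences, decomposing each increment into its rough and smooth parts, and reducing to the frequency-localized bilinear/trilinear estimates from~\cite[Section 12]{KST} together with their generalizations allowing rough inputs (from Section~\ref{sec:multilinear_estimates}). I first recall that $\phi^{<n} = \phi^{<n-1} + \Phi^n$ with $\Phi^n = \Phi^n_r + \Phi^n_s$, and $A_x^{<n} = A_x^{<n-1} + \calA^n_x$ with $\calA^n_x = \calA^n_{x,r} + \calA^n_{x,s}$. Using the symmetry of $\bA_x^2$, resp.\ $\bA_x^3$, in their first two arguments, each of the two differences in \eqref{equ:nonlinear_est_Ax2} and \eqref{equ:nonlinear_est_Ax3} can be written as a sum of bilinear (resp.\ trilinear) expressions in which \emph{at least one slot} is the dyadic increment at level $n$, namely $\Phi^n$ for the quadratic factor or $\calA^n_x$ for the cubic factor, while the remaining slots are either $\phi^{<n-1}$, $\phi^{<n}$, $A_x^{<n-1}$, or $A_x^{<n}$, which we further decompose dyadically as sums of $\Phi^m_r, \Phi^m_s, \calA^m_{x,r}, \calA^m_{x,s}$ with $m \leq n$.

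For \eqref{equ:nonlinear_est_Ax2} the key identity is $\Box \bA_x^2(\phi^{(1)}, \phi^{(2)}) \approx \partial^k \Delta^{-1} \calN_{kj}(\phi^{(1)}, \overline{\phi^{(2)}})$ (symmetrized), so that we are reduced to a null form estimate in $N_k \cap L^2_t \dot H^{-1/2}_x$. Inserting Littlewood--Paley decompositions in each factor, I will invoke the null form bound
\begin{equation*}
\bigl\| P_k \partial^k \Delta^{-1} \calN_{kj}\bigl(P_{k_1} u^{(1)}, P_{k_2} \overline{u^{(2)}}\bigr) \bigr\|_{N_k \cap L^2_t\dot H^{-1/2}_x} \lesssim 2^{-\delta(|k-k_1|+|k-k_2|)} \|P_{k_1} u^{(1)}\|_X \|P_{k_2} u^{(2)}\|_Y
\end{equation*}
from~\cite[Section 12]{KST} when both inputs are smooth (with $X,Y = S^1_{k_i}$), and from Section~\ref{sec:multilinear_estimates} when one or both inputs are rough (with the corresponding slot replaced by the $R_{k_i}$ redeeming norm, invoking the $S^{1-\delta_\ast}_{k_i}$ piece of $R_{k_i}$ together with the improved Strichartz/$L^2L^\infty$ components). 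The off-diagonal gain $\delta$ together with the hierarchy $\delta_\ast \ll \sigma \ll \gamma \ll \delta_2 \ll \delta$ allows to sum over $k_1, k_2$ against the frequency envelopes $2^{-\delta_2|k_i - m_i|}$ encoded by the $S^1[m_i]$ norms, and against the sharp $k_i \sim m_i$ localization of the rough inputs. Finally, taking the $\sup_k 2^{\delta_2|k-n|}$ splits the sum into the new $\Phi^n$-slot (responsible for the prefactor $\|\Phi^n_s\|_{S^1[n]} + \|\Phi^n_r\|_{R_n}$) times the second factor, which is a geometric sum over $m \leq n$ producing the right-hand side of \eqref{equ:nonlinear_est_Ax2}.

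The estimate \eqref{equ:nonlinear_est_Ax3} is handled analogously but is structurally easier because $\bA_x^3$ carries no derivative and we have an additional $\Box^{-1}$ providing two orders of smoothing. After dyadic decomposition one has to bound expressions of the form $\Box^{-1}\calP_j(P_{k_1}\phi \cdot P_{k_2}\overline{\phi} \cdot P_{k_3} A_x)$ at output frequency~$k$. These are controlled by the trilinear estimates in~\cite[Section 12]{KST}, whose generalizations to rough inputs in Section~\ref{sec:multilinear_estimates} follow from interpolating the $R_k Str$ Strichartz component of the $R_k$ norm against $S^1$ Strichartz bounds; the derivative-free structure means that the weak regularity defect $2^{-\delta_\ast k}$ in the rough slot is more than absorbed by the available $\Box^{-1}$ smoothing. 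Again the off-diagonal gain dominates $\delta_2$, allowing to sum over the three input frequencies against the envelopes from $S^1[m]$ and the sharp supports from $R_m$, and to put the ``new'' $\calA^n$ or $\Phi^n$ slot outside while the remaining two slots combine into the large factor on the right-hand side of \eqref{equ:nonlinear_est_Ax3}.

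The main obstacle in this book-keeping is the rough-rough bilinear case in \eqref{equ:nonlinear_est_Ax2}, where both factors in the null form $\calN_{kj}$ are of $R_m$-type. Here one cannot afford to simply use the $S^{1-\delta_\ast}_k$ component for both slots (this would cost two copies of $2^{-\delta_\ast k}$ and would not yield the scaling-critical $N_k \cap L^2_t\dot H^{-1/2}_x$ bound). Instead, one must place the high-frequency factor in $S^{1-\delta_\ast}$ and the low-frequency factor in the $R_k L^2_t L^\infty_x$ or $R_k L^2_t L^6_x$ components of $R_k$, which are designed precisely for this null form interaction via the thin-block decomposition $\calC_{k'}(l')$ and the gain $\gamma(k',l')$. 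This is the content of the generalized null form estimates in Section~\ref{sec:multilinear_estimates}, and once these are in hand the summation over the two dyadic scales goes through by the same envelope argument as above.
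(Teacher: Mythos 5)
Your proposal is correct and follows essentially the same route as the paper: telescope the difference using the symmetry of $\bA_x^2$, exploit the null structure $\calP_j(\phi\overline{\nabla_x\phi}) = \partial^k\Delta^{-1}\calN_{kj}(\phi,\overline\phi)$, apply the generalized null form estimate~\eqref{equ:generalization_131_KST} (with $S^1_{k_i}$ or $R_{k_i}$ in each slot) case by case in smooth/rough interactions, and sum using the off-diagonal gain $\delta \gg \delta_2$ together with the sharp frequency localization of $\Phi^n_r$ and the $S^1[n]$ envelopes; the deferral of the rough-input mechanism to Section~\ref{sec:multilinear_estimates} is exactly what the paper does. The only cosmetic difference is that the paper dispatches the cubic estimate~\eqref{equ:nonlinear_est_Ax3} and the high-modulation $L^2_t\dot H^{-1/2}_x$ bounds purely via the generic product estimates of Lemma~\ref{lem:core_generic_product_est}, rather than dedicated trilinear null form bounds, consistent with your observation that these terms are structurally easier.
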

\begin{proof}
We provide the details for the proof of~\eqref{equ:nonlinear_est_Ax2}. The proof of~\eqref{equ:nonlinear_est_Ax3} for the simpler cubic terms is analogous, only that it just relies on the core generic product estimates from Lemma~\ref{lem:core_generic_product_est}. Correspondingly, we omit the details for~\eqref{equ:nonlinear_est_Ax3}. 
 
The proof of~\eqref{equ:nonlinear_est_Ax2} can be further reduced to the following two bilinear estimates
\begin{align}
 \sup_{k \in \bbZ} \, 2^{+\delta_2 |k-n|} \bigl\| P_k \Box \bA_x^2( \Phi^n, \Phi^n ) \bigr\|_{N_k \cap L^2_t \dot{H}^{-\hf}_x} &\lesssim \bigl( \|\Phi^n_s\|_{S^1[n]} + \|\Phi^n_r\|_{R_n} \bigr)^2, \label{equ:nonlinear_est_Ax2_one} \\
 \sup_{k \in \bbZ} \, 2^{+\delta_2 |k-n|} \bigl\| P_k \Box \bA_x^2( \Phi^n, \phi^{<n-1} ) \bigr\|_{N_k \cap L^2_t \dot{H}^{-\hf}_x} &\lesssim \bigl( \| \Phi^n_s \|_{S^1[n]} + \|\Phi^n_r\|_{R_n} \bigr) \biggl( \sum_{m=0}^{n-1} \| \Phi^m_s \|_{S^1[m]} + \sum_{m=1}^{n-1} \|\Phi^m_r\|_{R_m} \biggr). \label{equ:nonlinear_est_Ax2_two}
\end{align}
Their proofs are analogous and so we focus on~\eqref{equ:nonlinear_est_Ax2_one}, beginning with the $N_k$ bounds. Recall that the $\Box \bA_x^2$ term has the favorable null form structure
\begin{equation*}
 \calP_j \bigl( \Phi^n \overline{\nabla_x \Phi^n} \bigr) = \partial^k \Delta^{-1} \calN_{kj}\bigl( \Phi^n, \overline{\Phi^n} \bigr).
\end{equation*}
Here we have to distinguish smooth-smooth, smooth-rough, and rough-rough interactions. These cases can all be dealt with analogously using the generalized bilinear null form estimate~\eqref{equ:generalization_131_KST} and keeping in mind that the rough component $\Phi_r^n$ is sharply localized to frequencies $\sim 2^n$, while the smooth component $\Phi^n_s$ is localized to frequencies $|\xi| \sim 2^n$ up to exponential tails captured by the $S^1[n]$ norm. We also recall the smallness relations $0 < \delta_2 \ll \delta \ll 1$, which are important here to close the estimates.

\medskip 

\noindent {\bf smooth-smooth interactions:}
Let $k \in \bbZ$ be arbitrary. In the high-low case we obtain from the bilinear null form estimate~\eqref{equ:generalization_131_KST} that
\begin{align*}
 2^{+\delta_2 |k-n|} \sum_{k_2 \leq k-C} \bigl\| P_k \Box \bA_x^2( P_k \Phi^n_s, P_{k_2} \Phi^n_s ) \bigr\|_{N_k} &\lesssim \sum_{k_2 \leq k-C} 2^{-\delta(k-k_2)} \bigl( 2^{+\delta_2 |k-n|} \|P_k \Phi^n_s\|_{S^1_k} \bigr) \|P_{k_2} \Phi^n_s \|_{S^1_{k_2}} \\
 &\lesssim \|\Phi^n_s\|_{S^1[n]}^2.
\end{align*}
The low-high case is the same and in the high-high case we find that
\begin{align*}
 &2^{+\delta_2|k-n|} \sum_{k_1 \geq k+O(1)} \sum_{k_2 = k_1 + O(1)} \bigl\| P_k \Box \bA_x^2( P_{k_1} \Phi^n_s, P_{k_2} \Phi^n_s ) \bigr\|_{N_k} \\
 &\lesssim 2^{+\delta_2|k-n|} \sum_{k_1 \geq k+O(1)} \sum_{k_2 = k_1 + O(1)} 2^{-\delta(k_1-k)} \| P_{k_1} \Phi^n_s \|_{S^1_{k_1}} \|P_{k_2} \Phi^n_s \|_{S^1_{k_2}} \\
 &\lesssim 2^{+\delta_2|k-n|} \sum_{k_1 \geq k+O(1)} 2^{-\delta(k_1-k)} 2^{-\delta_2 |n-k_1|} \|\Phi^n_s\|_{S^1[n]}^2 \\
 &\lesssim \sum_{k_1 \geq k+O(1)} 2^{-(\delta-\delta_2)(k_1-k)} \|\Phi^n_s\|_{S^1[n]}^2 \\
 &\lesssim \|\Phi^n_s\|_{S^1[n]}^2.
\end{align*}

\medskip 

\noindent {\bf smooth-rough interactions:}
If $k \geq n+C$, only the following high-low interaction is possible
\begin{align*}
 2^{+\delta_2(k-n)} \bigl\| P_k \Box \bA_x^2( P_k \Phi^n_s, P_n \Phi^n_r ) \bigr\|_{N_k} &\lesssim 2^{+\delta_2(k-n)} 2^{-\delta(k-n)} \|P_k \Phi^n_s\|_{S^1_k} \|\Phi^n_r\|_{R_n} \\
 &\lesssim \|\Phi^n_s\|_{S^1[n]} \|\Phi^n_r\|_{R_n}.
\end{align*}
Instead if $k \leq n+C$, we may have the high-high interaction 
\begin{align*}
 2^{+\delta_2(n-k)} \bigl\| P_k \Box \bA_x^2( P_n \Phi^n_s, P_n \Phi^n_r ) \bigr\|_{N_k} &\lesssim 2^{+\delta_2(n-k)} 2^{-\delta(n-k)} \|P_n \Phi^n_s\|_{S^1_n} \|\Phi_r^n\|_{R_n} \\
 &\lesssim \|\Phi^n_s\|_{S^1[n]} \|\Phi^n_r\|_{R_n}, 
\end{align*}
and specifically for $k = n + \calO(1)$ the low-high interaction 
\begin{align*}
 \sum_{k_1 \leq n-C} \bigl\| P_n \Box \bA_x^2( P_{k_1} \Phi^n_s, P_n \Phi^n_r ) \bigr\|_{N_k} &\lesssim \sum_{k_1 \leq n-C} 2^{-\delta(n-k_1)} \|P_{k_1} \Phi^n_s\|_{S^1_{k_1}} \|\Phi^n_r\|_{R_n} \lesssim \|\Phi^n_s\|_{S^1[n]} \|\Phi^n_r\|_{R_n}.
\end{align*}

\medskip 

\noindent {\bf rough-rough interactions:}
Due to the sharp frequency localization of the rough components, here the output frequency must satisfy $k \leq n +C$ and only the following high-high interaction is possible
\begin{align*}
 2^{+\delta_2 (n-k)} \bigl\| P_k \Box \bA_x^2( P_n \Phi^n_r, P_n \Phi^n_r ) \bigr\|_{N_k} &\lesssim 2^{+\delta_2(n-k)} 2^{-\delta(n-k)} \|\Phi^n_r\|_{R_n} \|\Phi^n_r\|_{R_n} \lesssim \|\Phi^n_r\|_{R_n}^2.
\end{align*}

\medskip 

The high-modulation bounds for~\eqref{equ:nonlinear_est_Ax2_one} can be proved similarly using the core generic product estimates from Lemma~\ref{lem:core_generic_product_est}.
\end{proof}

The next proposition treats the source terms of the $A_0$ equation.
\begin{proposition}[The $A_0$ equation] \label{prop:nonlinear_est_A0}
For any $n \geq 1$ we have that 
\begin{equation} \label{equ:nonlinear_est_A0}
 \begin{aligned}
  &\sup_{k \in \bbZ} \, 2^{+\delta_2|k-n|} \, \bigl\| P_k \bigl( \bA_0\bigl( \phi^{<n}, \phi^{<n}, A^{<n} \bigr) - \bA_0\bigl( \phi^{<n-1}, \phi^{<n-1}, A^{<n-1} \bigr) \bigr) \bigr\|_{Y^1} \\
  &\quad \lesssim \bigl( \|\Phi^n_s\|_{S^1[n]} + \|\calA_{x,s}^n\|_{S^1[n]} + \|\calA^n_0\|_{Y^1[n]} + \|\Phi^n_r\|_{R_n} + \|\calA_{x,r}^n\|_{R_n} \bigr) \times \\
  &\quad \quad \quad \times \biggl( \sum_{m=0}^n \|\Phi^m_s\|_{S^1[m]} + \sum_{m=0}^n \|\calA_{x,s}^m\|_{S^1[m]} + \sum_{m=0}^n \|\calA_0^m\|_{Y^1[m]} + \sum_{m=1}^n \|\Phi^m_r\|_{R_m} + \sum_{m=1}^n \|\calA_{x,r}^m\|_{R_m} \biggr)^2.
 \end{aligned}
\end{equation}
\end{proposition}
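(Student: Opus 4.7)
The proof follows the same overall scheme as Proposition~\ref{prop:nonlinear_est_Ax}. I would begin by telescoping the left-hand side of~\eqref{equ:nonlinear_est_A0} via
\begin{equation*}
 \phi^{<n} = \phi^{<n-1} + \Phi^n, \quad A_0^{<n} = A_0^{<n-1} + \calA_0^n, \quad A_x^{<n} = A_x^{<n-1} + \calA_x^n,
\end{equation*}
which expresses the difference as a sum of bilinear expressions coming from $\bA_0^2(\phi,\phi)=-\Delta^{-1}\Im(\phi\overline{\partial_t\phi})$ and trilinear expressions coming from $\bA_0^3(\phi,\phi,A_0)=\Delta^{-1}(\phi\overline{\phi}A_0)$, in each of which at least one slot carries an increment ($\Phi^n$, $\calA_0^n$, or $\calA_x^n$) while the remaining slots carry either $\phi^{<n}$, $\phi^{<n-1}$, $A^{<n}$, or $A^{<n-1}$. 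The remaining slots are then further expanded via
\begin{equation*}
 \phi^{<n} = \sum_{m=0}^n \bigl( \Phi^m_r + \Phi^m_s \bigr), \quad A_x^{<n} = \sum_{m=0}^n \bigl( \calA^m_{x,r} + \calA^m_{x,s} \bigr), \quad A_0^{<n} = \sum_{m=0}^n \calA_0^m,
\end{equation*}
with the convention $\Phi^0_r = \calA^0_{x,r} = 0$, after which the $Y^1$ bound reduces to a sum of multilinear estimates on each resulting piece.

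For the $\bA_0^2$ contribution, I would reduce to bilinear $Y^1$ bounds of the schematic form
\begin{equation*}
 \sup_{k\in\bbZ} \, 2^{+\delta_2|k-n|} \, \bigl\| P_k \Delta^{-1} \Im\bigl( \psi^{(1)} \overline{\partial_t \psi^{(2)}} \bigr) \bigr\|_{Y^1}
\end{equation*}
with $\psi^{(1)} \in \{ \Phi^n_r, \Phi^n_s \}$ and $\psi^{(2)} \in \{ \Phi^m_r, \Phi^m_s : 0 \leq m \leq n \}$, plus the symmetric variant. These are covered by the bilinear $Y^1$ estimates of \cite[Section 12]{KST} in the smooth-smooth case and by their generalizations in Section~\ref{sec:multilinear_estimates} when a rough factor is present; all come with an off-diagonal gain $2^{-\delta (k_{\max}-k_{\min})}$. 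A case analysis in the three frequency configurations (high-low, low-high, high-high) then absorbs the envelope weight $2^{+\delta_2|k-n|}$ using $\delta_2 \ll \delta$ and the sharp frequency localization of each $R_m$-controlled rough piece at $\sim 2^m$, exactly as in the proof of Proposition~\ref{prop:nonlinear_est_Ax}.

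For the trilinear $\bA_0^3$ contribution $\Delta^{-1}(\phi\overline{\phi}A_0)$, together with its spatial-gradient variant $\Delta^{-1}\partial^j(\phi\overline{\phi}A_j)$ arising when we control $\partial_t A_0$ in $Y^1$, there is no null structure and I would rely on iterated applications of the core generic product estimates of Lemma~\ref{lem:core_generic_product_est} and their rough-input generalizations from Section~\ref{sec:multilinear_estimates}. Since exactly one of the three slots is an increment while the other two slots are full solutions, the natural bound recovers precisely the cumulative factor squared on the right-hand side of~\eqref{equ:nonlinear_est_A0}. The frequency analysis again distinguishes which slot sits highest: if the increment is highest, one gains an envelope weight directly; if a lower-stage piece is highest, the frequency envelope of that piece combined with the off-diagonal gain from the product estimate absorbs $2^{+\delta_2|k-n|}$. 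The main obstacle is the trilinear case in which two of the three slots carry rough components, because the $Y^1$ norm, unlike $N_k$, contains the strong $L^\infty_t L^2_x$ component of $\nabla_{t,x}A_0$ and thus requires the interplay between the redeeming Strichartz component $R_m Str$ and the $S^{1-\delta_\ast}_m$ regularity of the rough inputs; in each such sub-case, however, at least one slot is still a smooth scaling-critical piece or a single rough factor whose Strichartz bound alone compensates, so the generalized multilinear estimates of Section~\ref{sec:multilinear_estimates} suffice to close the bound.
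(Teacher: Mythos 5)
Your proposal follows essentially the same route as the paper: the paper's own proof of Proposition~\ref{prop:nonlinear_est_A0} just telescopes exactly as in Proposition~\ref{prop:nonlinear_est_Ax} and invokes the generalized core product estimates of Lemma~\ref{lem:core_generic_product_est} (together with the elliptic $A_0$ bounds from Subsection~4.2 of~\cite{KST}), which is precisely the scheme you spell out, including the frequency-envelope bookkeeping and the treatment of rough inputs via the redeeming norms. The only slip is bibliographic rather than mathematical: the relevant bilinear $Y^1$-type product bounds in~\cite{KST} are those of their Section~4.2 on the elliptic variable $A_0$, not the null-form estimates of their Section~12.
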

\begin{proof}
 The proof of~\eqref{equ:nonlinear_est_A0} proceeds analgously to the one of Proposition~\ref{prop:nonlinear_est_A0}, using the generalized core product estimates from Lemma~\ref{lem:core_generic_product_est}, see also Subsection~4.2 in~\cite{KST} and~\cite[Proposition~4.2]{KST}. 
\end{proof}

\noindent {\bf The $\phi$ equation.}
We now turn to the heart of the matter, namely the magnetic wave equation for the scalar field. The derivation of a priori bounds for $\Phi^n_s$ hinges on the following linear estimate for the inhomogeneous magnetic wave equation $\Box_{A^{<n-1}}^p \phi = F$. We recall the definition of the paradifferential magnetic wave operator
\begin{equation*}
 \Box^p_{A^{<n-1}} := \Box + 2i \sum_k P_{\leq k-C} \bigl( A^{<n-1, j}_r + \calA^{0,free, j}_s \bigr) \partial_j P_k,
\end{equation*}
where $A^{<n-1}_{x,r} = \sum_{m=1}^{n-1} \calA_{x,r}^m$. In Proposition~\ref{prop:main_linear_estimate} in Section~\ref{sec:renormalization} we use a ``deterministic'' parametrix construction to establish the following main linear estimate that provides the link between the $S^1$ and $N$ spaces.

\medskip 

\noindent {\it Main linear estimate: Let $n \geq 1$ and assume that
\begin{equation*}
 \sum_{m=1}^{n-1} \|\calA_{x,r}^m\|_{R_m} + \|\calA_{x,s}^{0, free}\|_{S^1[0]} \leq 2 C_0 \varepsilon. 
\end{equation*}
For any $(f, g) \in \dot{H}^1_x \times L^2_x$ and any $F \in N \cap \ell^1 L^2_t \dot{H}^{-\frac12}_x$, there exists a unique global solution to the linear magnetic wave equation $\Box_{A^{<n-1}}^p \phi = F$ with initial data $\phi[0] = (f,g)$ and it holds that
\begin{equation} \label{equ:main_linear_estimate_section_phi_equation}
 \|\phi\|_{S^1} \lesssim \|f\|_{\dot{H}^1_x} + \|g\|_{L^2_x} + \| F \|_{N \cap \ell^1 L^2_t \dot{H}^{-\hf}_x}.
\end{equation}
}

We now turn to estimating the terms on the right-hand side of the equation for $\Phi^n_s$ in (fMKG-CG\textsubscript{n}). Here we first consider the most delicate terms, namely the error term $\Box_{A^{<n-1}}^{p, mod} \Phi^n_r$ and the low-high magnetic interaction term $-2i \sum_k P_{\leq k-C} \calA^{n,\alpha} \partial_\alpha P_k \phi^{<n}$, where the high-frequency input $\phi^{<n}$ can be rough. Subsequently, we will describe how to deal with the other terms on the right-hand side of the equation for $\Phi^n_s$.  

The error term $\Box_{A^{<n-1}}^{p, mod} \Phi^n_r$ acts as a forcing term (at linear order) on the right-hand side of the equation for~$\Phi^n_s$ and correspondingly has to be sufficiently small in $N_n \cap L^2_t \dot{H}^{-\frac12}_x$ to run the induction step global existence Proposition~\ref{prop:induction_step}. We defer the treatment of $\Box_{A^{<n-1}}^{p, mod} \Phi^n_r$ to Subsection~\ref{subsec:prob_renormalization_error} since it relies on the precise definition of the adapted rough linear evolution $\Phi^n_r$. 
There we show in Proposition~\ref{prop:prob_renormalization_error_estimate} that the desired smallness follows if $\|T_n \phi^\omega[0]\|_{H^{1-\delta_\ast}_x \times H^{-\delta_\ast}_x}$ and the redeeming error control quantity $\calE \calC^n$ are sufficiently small, which in turn gives rise to the corresponding smallness assumptions in the statement of the induction step global existence Proposition~\ref{prop:induction_step}.
 
The next proposition provides the bounds for the low-high interaction term $-2i \sum_k P_{\leq k-C} \calA^{n,\alpha} \partial_\alpha P_k \phi^{<n}$.

\begin{proposition} \label{prop:nonlinear_est_lowhigh_phi_less_n}
 Let $n \geq 1$ be arbitrary. Assume that $\calA_{x,s}^n$ and $\calA^n_0$ are given by
 \begin{align}
  \calA^n_{x,s} &= \bA_x( \phi^{<n}, \phi^{<n}, A^{<n} ) - \bA_x(\phi^{<n-1}, \phi^{<n-1}, A^{<n-1}), \label{equ:nonlinear_est_lowhigh_defAx} \\
  \calA_0^n &= \bA_0(\phi^{<n}, \phi^{<n}, A_0^{<n}) - \bA_0(\phi^{<n-1}, \phi^{<n-1}, A_0^{<n-1}). \label{equ:nonlinear_est_lowhigh_defA0}
 \end{align}
 Then it holds that
 \begin{align}
  &\sup_{k\in\bbZ} \, 2^{+\delta_2 |n-k|} \bigl\| P_{\leq k-C} \calA^{n,\alpha} \partial_\alpha P_k \phi^{<n} \bigr\|_{N_k \cap L^2_t \dot{H}^{-\frac12}_x} \nonumber \\
  &\quad \lesssim \Bigl( \|\calA_{x,s}^n\|_{S^1[n]} + \|\calA_{x,r}^n\|_{R_n} + \|\calA_0^n\|_{Y^1[n]} \Bigr) \biggl( \sum_{m=0}^{n} \|\Phi^m_s\|_{S^1[m]} + \sum_{m=1}^{n} \|\Phi^m_r\|_{R_m} \biggr) \label{equ:nonlinear_est_lowhighAnphinm1}  \\
  &\quad \quad \quad + \Bigl( \|\Phi^n_s\|_{S^1[n]} + \|\Phi^n_r\|_{R_n} + \|\calA^n_{x,s}\|_{S^1[n]} + \|\calA_0^n\|_{Y^1[n]} + \|\calA^n_{x,r}\|_{R_n} \Bigr) \times \nonumber \\ 
  &\quad \quad \quad \quad \quad \times \biggl( \sum_{m=0}^n \Bigl( \|\Phi^m_s\|_{S^1[m]} + \|\calA_{x,s}^m\|_{S^1[m]} + \|\calA_0^m\|_{Y^1[m]} \Bigr) + \sum_{m=1}^n \Bigl( \|\Phi^m_r\|_{R_m} + \|\calA_{x,r}\|_{R_m} \Bigr) \biggr)^2. \nonumber
 \end{align}
\end{proposition}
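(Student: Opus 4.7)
The plan is to mimic the strategy of the analogous nonlinear estimate for the magnetic interaction term in \cite[Sections~4 and 12]{KST}, adapted to the induction-on-frequency framework via the norms $S^1[m]$, $Y^1[m]$, $R_m$, and relying on the generalized multilinear estimates from Section~\ref{sec:multilinear_estimates} which allow rough inputs in $R_m$-slots. The two families of terms on the right-hand side of~\eqref{equ:nonlinear_est_lowhighAnphinm1} will be produced by two complementary strategies: the \emph{first} (bilinear) family by estimating $\calA^n$ directly in its solution-space norm, and the \emph{second} (trilinear/quadrilinear) family by reinserting the definitions~\eqref{equ:nonlinear_est_lowhigh_defAx}--\eqref{equ:nonlinear_est_lowhigh_defA0} of $\calA^n$ to expose a \emph{nested} Coulomb null structure. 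To begin, I split
\begin{equation*}
 P_{\leq k-C}\calA^{n,\alpha}\partial_\alpha P_k\phi^{<n} = P_{\leq k-C}\calA_{x,r}^{n,j}\partial_j P_k\phi^{<n} + P_{\leq k-C}\calA_{x,s}^{n,j}\partial_j P_k\phi^{<n} + P_{\leq k-C}\calA_0^{n}\,\partial_t P_k\phi^{<n},
\end{equation*}
and dyadically decompose $P_k\phi^{<n}=\sum_{m=0}^{n} P_k(\Phi^m_s+\Phi^m_r)$ (with $\Phi^0_r\equiv 0$), using the Coulomb null form structure $\calA^{n,j}_x\partial_j=\calN_{kj}(\partial^k\Delta^{-1}\calA^{n,j}_x,\cdot\,)$ on the two spatial summands.

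For the rough spatial piece $\calA^n_{x,r}$, sharp localization at frequency $\sim 2^n$ forces the output frequency $k\geq n+\mathcal{O}(1)$, so the generalized bilinear null form estimate from Section~\ref{sec:multilinear_estimates} (with $\calA^n_{x,r}$ placed in the $R_n$ slot and $P_k\Phi^m$ placed in $S^1_k$ or $R_m$ according to the smooth/rough split) delivers an off-diagonal factor $2^{-\delta(k-n)}$ that dominates the weight $2^{\delta_2|k-n|}$ because $\delta_2\ll\delta$. Summation over $m\in\{0,\dots,n\}$ yields the $\|\calA^n_{x,r}\|_{R_n}\cdot\sum_m\|\Phi^m\|$ portion of the first summand. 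The temporal piece $\calA_0^n\partial_t P_k\phi^{<n}$ is treated analogously without any null form, using the $\dot H^{3/2}_x$-type smoothing built into $Y^1[n]$ to absorb $\partial_t$, together with the core product estimates of Lemma~\ref{lem:core_generic_product_est} and the frequency-tail control provided by $Y^1[n]$; this furnishes the $\|\calA^n_0\|_{Y^1[n]}$ piece of the first summand.

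The decisive term is the smooth spatial piece $P_{\leq k-C}\calA^{n,j}_{x,s}\partial_j P_k\phi^{<n}$. In the frequency regimes where the off-diagonal null form gain for $\calN_{kj}(\partial^k\Delta^{-1}\calA^{n,j}_{x,s},P_k\phi^{<n})$ comfortably beats the $2^{\delta_2|k-n|}$ weight, a direct bilinear estimate with $\calA^n_{x,s}\in S^1[n]$ and $P_k\Phi^m\in S^1[m]\cup R_m$ produces the $\|\calA^n_{x,s}\|_{S^1[n]}\cdot\sum_m\|\Phi^m\|$ portion of the first summand. In the remaining balanced regimes a naive $S^1[n]$-bound on $\calA^n_{x,s}$ is insufficient, and I invoke the two-step iteration flagged in the proof of Proposition~\ref{prop:induction_step}: I reinsert~\eqref{equ:nonlinear_est_lowhigh_defAx}, expand
\begin{equation*}
 \calA^n_{x,s}=\bA_x^2(\phi^{<n},\phi^{<n})-\bA_x^2(\phi^{<n-1},\phi^{<n-1})+\bA_x^3(\phi^{<n},\phi^{<n},A^{<n})-\bA_x^3(\phi^{<n-1},\phi^{<n-1},A^{<n-1}),
\end{equation*}
and dyadically reorganise so that exactly one slot carries the level-$n$ increment (coming from either $\Phi^n$ or $\calA^n$) while the remaining slots run over $\phi^{<n}$ and $A^{<n}$. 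Each resulting trilinear (from $\bA_x^2$) or quadrilinear (from $\bA_x^3$) expression now carries a \emph{nested} Coulomb null structure — the inner $\calN$ from $\bA_x^2$ together with the outer $\calN$ from $\calA^{n,j}_{x,s}\partial_j$ — and the generalized trilinear/quadrilinear null form estimates of Section~\ref{sec:multilinear_estimates}, with rough inputs placed into their respective $R_m$-slots, supply off-diagonal gain that absorbs the $2^{\delta_2|k-n|}$ weight in every remaining frequency configuration. Summing over the dyadic indices and, for the quadrilinear contributions from $\bA_x^3$, absorbing one additional factor via the smallness of the solution norms, yields precisely the cubic structure of the second summand on the right-hand side of~\eqref{equ:nonlinear_est_lowhighAnphinm1}, with the distinguished level-$n$ factor running through $\|\Phi^n_s\|_{S^1[n]}$, $\|\Phi^n_r\|_{R_n}$, $\|\calA^n_{x,s}\|_{S^1[n]}$, $\|\calA^n_0\|_{Y^1[n]}$, and $\|\calA^n_{x,r}\|_{R_n}$.

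The main obstacle is precisely this balanced-frequency smooth-spatial case: without the reinsertion trick the bilinear null form gain is too weak to beat the weight $2^{\delta_2|k-n|}$, and one must expose the full nested null form structure inherited from the wave equation for $\calA^n_{x,s}$; the generalized multilinear estimates of Section~\ref{sec:multilinear_estimates} are tailored to this exact purpose and allow rough inputs at any slot. Finally, the $L^2_t\dot H^{-1/2}_x$ high-modulation component of the norm is controlled in the standard way using the core product estimates of Lemma~\ref{lem:core_generic_product_est}, exactly as at the end of the proof of Proposition~\ref{prop:nonlinear_est_Ax}, and introduces no new difficulties.
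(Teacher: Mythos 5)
Your overall architecture (direct bilinear estimates where off-diagonal gains suffice, reinsertion of the equation for $\calA^n_{x,s}$ in the remaining regimes) is in the right spirit, but the proposal misses the two mechanisms on which the proof actually turns. First, the residual part of the smooth spatial low-high term, schematically $\calH^\ast\calN^{lh,k}_{str}\bigl(\calH\calA^n_{x,s},\phi^{<n}\bigr)$, cannot be closed by the ``nested'' null structure you invoke (inner $\calN$ from $\bA^2_x$ plus the outer Coulomb null form): the trilinear null forms $\calQ_1,\calQ_2,\calQ_3$ for which the generalized estimates \eqref{equ:generalization_136_KST}--\eqref{equ:generalization_138_KST} are designed arise only from the \emph{combination} $-\bA^2_j(\phi,\phi)\,\partial_j\phi+\bA^2_0(\phi,\phi)\,\partial_t\phi$, i.e.\ after one also reinserts \eqref{equ:nonlinear_est_lowhigh_defA0} and pairs the worst part of the spatial term with the corresponding worst part of the temporal term $P_{\leq k-C}\calA^n_0\,\partial_t P_k\phi^{<n}$. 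Your plan handles the temporal piece separately, ``without any null form'', via the $Y^1[n]$ smoothing and Lemma~\ref{lem:core_generic_product_est}; this works only for its good portions (moderately low-high, high modulation), but the portion $\calH^\ast\calN^{lh,k}_{0,str}\bigl(\calH\calA^n_0,\cdot\bigr)$ must be retained and combined with the spatial remainder — discarding this cancellation leaves an expression that neither the bilinear nor the trilinear estimates of Section~\ref{sec:multilinear_estimates} control.

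Second, you never confront the loss attached to the rough high-frequency factor. When $P_k\phi^{<n}$ is rough it is controlled only in $S^{1-\delta_\ast}_k$ (through $R_k$), so every step requiring the critical $S^1_k$ norm — the bilinear peeling-off estimates and the third slot of \eqref{equ:generalization_136_KST}--\eqref{equ:generalization_138_KST}, which admit rough inputs only in the first two slots — costs a factor $2^{\delta_\ast k}$. Your criterion ``does the null form gain beat $2^{\delta_2|k-n|}$'' does not produce the compensation for this loss. The actual mechanism is a further split of the smooth-rough interactions: on the moderately low-high range $P_{[(1-\gamma)k,\,k-C]}$ one avoids $S^1_k$ entirely and works with the redeeming $R_k$ bounds together with modulation and angular decompositions (this is where the condition $\gamma+\delta_\ast<\tfrac12$ enters), while the reinsertion argument is confined to the strongly low-high range $P_{\leq(1-\gamma)k}$, where the loss $2^{\delta_\ast k}$ is absorbed using the off-diagonal decay, the frequency separation $k_1\leq(1-\gamma)k$, and the choice $\delta_2\gamma>\delta_\ast$. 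Without this split and these parameter relations the argument does not close precisely in the regime that makes the proposition delicate. (Two smaller points: the rough-rough spatial interaction simply vanishes by the sharp frequency localizations, and the cubic contributions $\bA^3_x$ require only the generic product estimates, not quadrilinear null form estimates.)
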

Observe that one of the bounds on the right-hand side of~\eqref{equ:nonlinear_est_lowhighAnphinm1} is in fact cubic. The reason is that when $\calA^n$ is smooth, certain parts of this low-high interaction can only be bounded by exploiting a subtle trilinear null form structure, which emerges upon reinserting the equations~\eqref{equ:nonlinear_est_lowhigh_defAx}--\eqref{equ:nonlinear_est_lowhigh_defA0} for $\calA^n_{x,s}$ and $\calA^n_0$.
\begin{proof}
The high-modulation bound is as usual a consequence of the generalized generic product estimates from Lemma~\ref{lem:core_generic_product_est}, so it suffices to discuss the delicate $N_k$ bound. 
We further expand this low-high term into
\begin{equation} \label{equ:magn_int_expand}
 \begin{aligned}
  P_{\leq k-C} \calA^{n,\alpha} \partial_\alpha P_k \phi^{<n} = P_{\leq k-C} \calA^{n,j} \partial_j P_k \phi^{<n} + P_{\leq k-C} \calA^n_0 \partial_t P_k \phi^{<n}.
 \end{aligned}
\end{equation}
Then we have to distinguish different types of interactions depending on whether $\calA^{n}_x$ and $\phi^{<n}$ are rough or smooth. Recall that the entire temporal component $\calA^n_0$ is smooth in the sense that it belongs to the space~$Y^1[n]$. 

\medskip 

\noindent {\bf rough-rough interactions:} 
Due to the sharp frequency localization of the rough evolution $\calA^n_{x,r}$ to frequencies $|\xi| \sim 2^n$ and of $\phi^{<n}_r$ to frequencies $|\xi| \leq 2^{n-1} + \calO(1)$, the first term on the RHS of~\eqref{equ:magn_int_expand} in fact vanishes $P_{\leq k-C} \calA^{n,j}_r \partial_j P_k \phi^{<n}_r = 0$, $k \in \bbZ$, in the case of rough-rough interactions.

\medskip 

\noindent {\bf rough-smooth interactions:} 
In view of the sharp frequency localization of the rough evolution $\calA^n_{x,r}$ to frequencies $|\xi| \sim 2^n$, we have $P_{\leq k-C} \calA^{n,j}_r \partial_j P_k \phi^{<n}_s = 0$ for $k-C < n$ and 
\begin{equation*}
 P_{\leq k-C} \calA^{n,j}_r \partial_j P_k \phi^{<n}_s = P_n \calA^{n,j}_r \partial_j P_k \phi^{<n}_s \quad \text{ for } k \geq n+C.
\end{equation*}
In the latter case we use the generalized null form estimate~\eqref{equ:generalization_132_KST} (and the fact that $\calA^n_{x,r}$ is just the free wave evolution of the random data $(T_n a^\omega, T_n b^\omega)$) to obtain for any $k > n+C$ that 
\begin{align*}
 \bigl\| P_n \calA^{n,j}_r \partial_j P_k \phi^{<n}_s \bigr\|_{N_k} &\simeq \bigl\| \calN \bigl( \Delta^{-1} \nabla \calA^n_{x,r}, P_k \phi^{<n}_s \bigr) \bigr\|_{N_k} \\
 &\lesssim 2^n 2^{-n} \|\calA_{x,r}^n\|_{R_n} \|P_k \phi^{<n}_s\|_{S^1_k} \\
 &\lesssim \|\calA_{x,r}^n\|_{R_n} \biggl( \sum_{m=0}^{n} 2^{-\delta_2|k-m|} \| \Phi^m_s \|_{S^1[m]} \biggr) \\
 &\lesssim 2^{-\delta_2|k-n|} \|\calA_{x,r}^n\|_{R_n} \biggl( \sum_{m=0}^{n} \| \Phi^m_s \|_{S^1[m]} \biggr).
\end{align*}

\medskip 

\noindent {\bf smooth-smooth interactions:} 
This case can be handled by proceeding as in Steps 3--5 in~\cite[Subsection 4.3]{KST} using the generalized multilinear estimates from Section~\ref{sec:multilinear_estimates}. 
The idea is to first ``peel off'' the good parts of $P_{\leq k-C} \calA^{n,j}_s \partial_j P_k \phi^{<n}_s$ and of $P_{\leq k-C} \calA^n_0 \partial_t P_k \phi^{<n}_s$ that can be handled by bilinear (null form) estimates). Then we reinsert the equations for $\calA^n_{x,s}$ and $\calA^n_0$ to unveil the core trilinear null forms, which can be bounded using the generalized trilinear null form estimates~\eqref{equ:generalization_136_KST}--\eqref{equ:generalization_138_KST}. Since we implement a version of this procedure in the treatment of the ``strongly low-high'' case of the smooth-rough interactions below, we omit the details for the smooth-smooth interactions.

\medskip 

\noindent {\bf smooth-rough interactions:} 
Due to the sharp frequency localization of $\phi^{<n}_r$ to frequencies $1 \lesssim |\xi| \lesssim 2^{n}$, we may assume that $0 \leq k \leq n$. Then we distinguish \emph{``moderately low-high interactions''} $P_{[(1-\gamma) k, k-C]} \calA^{n,\alpha} \partial_\alpha P_k\phi^{<n}$ and \emph{``strongly low-high interactions''} $P_{\leq (1-\gamma)k} \calA^{n,\alpha} \partial_\alpha P_k \phi^{<n}$ where the small constant $0 < \gamma \ll 1$ is chosen sufficiently small (depending on the size of $\delta_\ast$ and $\sigma$). 

\medskip 

\noindent \underline{\it ``Moderately low-high interactions'': the bound for $P_{[(1-\gamma) k, k-C]} \calA^{n,j}_s \partial_j P_k \phi^{<n}_r$}: 
We decompose this term schematically according to the various possibilities for the modulations, assuming for now that we are in the ``elliptic situation'' where all modulations are less than $(1-\gamma) k$
\begin{equation} \label{equ:nonlinear_est_moderately_lh_expand}
\begin{aligned}
P_{[(1-\gamma) k, k-C]} \calA^{n,j}_s P_k \partial_j \phi^{<n}_r &= \sum_{l\leq (1-\gamma) k} Q_{<l-10} \big( P_{[(1-\gamma)k, k-C]} Q_l \calA^{n,j}_s \partial_j P_k Q_{<l-10} \phi^{<n}_r \big) \\
&\quad + \sum_{l \leq (1-\gamma)k} Q_{l} \big( P_{[(1-\gamma)k, k-10]} Q_{<l-10} \calA^{n, j}_s \partial_j P_k Q_{<l-10} \phi^{<n}_r \big) \\
&\quad + \sum_{l \leq (1-\gamma)k}Q_{<l-10}\big(P_{[(1-\gamma)k, k-10]} Q_{<l-10} \calA^{n,j}_s \partial_j P_k Q_{l} \phi^{<n}_r \big).
\end{aligned}
\end{equation}
Observe that in all cases the angle between the inputs may be localized to size $\sim 2^{\frac{l-k_1}{2}}$, where $(1-\gamma) k \leq k_1 \leq k-C$ is the low frequency of the input $\calA^{n,j}_s$. Then using the null form, we can bound the first term on the right by 
\begin{align*}
&\sum_{l\leq (1-\gamma) k} \bigl\| Q_{<l-10} \big( P_{[(1-\gamma)k, k-C]} Q_l \calA^{n,j}_s \partial_j P_k Q_{<l-10} \phi^{<n}_r \big) \bigr\|_{L^1_t L^2_x} \\
&\quad \lesssim \sum_{l\leq (1-\gamma) k} \sum_{k_1 = (1-\gamma)k}^{k-C} \sum_{\kappa}2^{\frac{l-k_1}{2}}  2^{k}\big\|P_{k_1,\kappa} Q_l \calA^n_{x,s} \big\|_{L^2_t L^2_x} \big\|P_{k,\kappa}Q_{<l-10}\phi^{<n}_r\big\|_{L_t^2 L_x^\infty}.
\end{align*}
Thanks to the redeeming bound and a simple application of Bernstein's inequality 
\begin{equation*}
 \biggl( \sum_{\kappa} \big\|P_{k,\kappa}Q_{<l-10} \phi^{<n}_r \big\|_{L_t^2 L_x^\infty}^2 \biggr)^{\frac12} \lesssim 2^{-\frac{k}{2+}}\cdot 2^{(0+)(l-k)}\cdot \| P_k \phi^{<n}_r \|_{R_k}, 
\end{equation*}
we obtain by Cauchy-Schwarz in $\kappa$ that 
\begin{align*}
 &\sum_{l\leq (1-\gamma) k} \sum_{k_1 = (1-\gamma)k}^{k-C} 2^{\frac{l-k_1}{2-}}  2^{k} \big\|P_{k_1} Q_l \calA^n_{x,s} \big\|_{L^2_t L^2_x} 2^{-\frac{k}{2+}} \| P_k \phi^{<n}_r \|_{R_k} \\
 &\quad \lesssim \sum_{l\leq (1-\gamma) k} \sum_{k_1 = (1-\gamma)k}^{k-C} 2^{\frac{l-k_1}{2-}} 2^{k-k_1} 2^{-\frac12 l} \biggl( 2^{k_1} \sup_{l'} 2^{\frac12 l'}  \big\|P_{k_1} Q_{l'} \calA^n_{x,s} \big\|_{L^2_t L^2_x} \biggr) 2^{-\frac{k}{2+}} \| P_k \phi^{<n}_r \|_{R_k}.
\end{align*}
We can then complete the preceding estimate by Cauchy-Schwarz
\begin{align*}
 &\lesssim \sum_{k_1 = (1-\gamma)k}^{k-10}2^{-\delta_2|k_1-n|} 2^{k-k_1} 2^{-\frac{k}{2+}} \big\|\calA^n_{x,s}\big\|_{S^1[n]} \biggl( \sum_{m=0}^n \bigl\| \Phi^m_r \big\|_{R_m} \biggr) \\
 &\lesssim 2^{-\delta_2|k-n|}  \big\|\calA^n_{x,s} \big\|_{S^1[n]} \biggl(\sum_{m=0}^n \bigl\| \Phi^m_r\big\|_{R_m} \biggr),
\end{align*}
provided $\gamma+\delta_*<\frac12$. The bound if $l\geq (1-\gamma)k$ is similar except that one no longer needs to localize to angular sectors. 

The remaining terms above are bounded similarly. For example, consider the last of the three terms in~\eqref{equ:nonlinear_est_moderately_lh_expand}, again assuming $l \leq (1-\gamma)k$, i.e. 
\begin{align*}
\sum_{l < (1-\gamma)n} Q_{<l-10}\big(P_{[k(1-\gamma), k-10]}Q_{<l-10}\calA^{n,j}_s\partial^j P_kQ_{l}\phi^{<n}_r\big) 
& = \sum_{l < (1-\gamma)n} \sum_{k_1 = (1-\gamma)k}^{k-10} Q_{<l-10}\big(P_{k_1} Q_{<l-10} \calA^{n,j}_s \partial^j P_kQ_{l}\phi^{<n}_r\big).
\end{align*}
Here we place the large frequency term $P_nQ_{l}\phi^{<n}_r$ into $L_t^2 L_x^3$, and the low frequency term into $L_t^2 L_x^6$. Precisely, we may localize the factors to caps $\kappa_{1,2}$ of radius $\sim 2^{\frac{l-k_1}{2}}$ and either aligned or anti-aligned,
\begin{align*}
&\sum_{l<(1-\gamma)n}\sum_{k_1 = (1-\gamma)k}^{k-10} Q_{<l-10}\big(P_{k_1}Q_{<l-10}\calA^{n,j}_s \partial^j P_kQ_{l}\phi^{<n}_r\big) \\&=  \sum_{l<(1-\gamma)n}\sum_{k_1 = (1-\gamma)k}^{k-10}\sum_{\kappa_1\sim \pm \kappa_2}  Q_{<l-10}\big(P^{\kappa_1}P_{k_1}Q_{<l-10}\calA^{n,j}_s \partial^j P^{\kappa_2}P_kQ_{l}\phi^{<n}_r\big),\\
\end{align*}
and using interpolation we have the bound 
\begin{align*}
 \big(\sum_{\kappa_2}\big\|\partial^j P^{\kappa_2}P_kQ_{l}\phi^{<n}_r\big\|_{L_t^2 L_x^3}^2\big)^{\frac12} &\lesssim \big(2^{1-\frac{k}{2+}}\big)^{\frac13}\cdot \big(2^{\delta_*k - \frac{l}{2}}\big)^{\frac23}\cdot \big\|P_k\phi^{<n}_r\big\|_{R_k}\\
 \big(\sum_{\kappa_1}\big\|P^{\kappa_1}P_{k_1}Q_{<l-10}\calA^{n,j}_s\big\|_{L_t^2 L_x^6}^2\big)^{\frac12} &\lesssim 2^{-\delta_2|k_1-n|}\cdot 2^{-\frac{k_1}{6}}\cdot \big\|\calA^{n,j}_s\big\|_{S^1[n]}
\end{align*}
Observe that the null form gains $2^{\frac{l-k_1}{2}}$, and we infer that 
\begin{align*}
 &\biggl\|\sum_{l<(1-\gamma)n}\sum_{k_1 = (1-\gamma)k}^{k-10}\sum_{\kappa_1\sim \pm \kappa_2}  Q_{<l-10}\big(P^{\kappa_1}P_{k_1}Q_{<l-10}\calA^{n,j}_s \partial^j P^{\kappa_2}P_kQ_{l}\phi^{<n}_r\big) \biggr\|_{L_t^1 L_x^2}\\
 &\lesssim \sum_{l<(1-\gamma)n}\sum_{k_1 = (1-\gamma)k}^{k-10} 2^{\frac{l-k_1}{2}}\cdot \big(2^{1-\frac{k}{2+}}\big)^{\frac13}\cdot \big(2^{\delta_*k - \frac{l}{2}}\big)^{\frac23}\cdot 2^{-\delta_2|k_1-n|}\cdot 2^{-\frac{k_1}{6}} \big\|\calA^{n,j}_s\big\|_{S^1[n]} \big\|P_k\phi^{<n}_r\big\|_{R_k} \\
 &\lesssim 2^{-\delta_2|k-n|} \big\|\calA^n_{x,s} \big\|_{S^1[n]} \biggl( \sum_{m=0}^{n-1} \bigl\| \Phi^m_r\big\|_{R_m} \biggr).
\end{align*}
The case of large modulations $l>(1-\gamma)k$ is again handled in a simpler fashion, without having to take recourse to angular decompositions. 

\medskip 

\noindent \underline{\it ``Moderately low-high interactions'': the bound for $P_{[(1-\gamma) k, k-C]} \calA^{n}_0 \partial_t P_k \phi^{<n}_r$}: 
In view of the frequency localization of $\phi^{<n}_r$ to $1 \lesssim |\xi| \lesssim 2^{n}$, we may assume that $0 \leq k \leq n$. Then we easily obtain that 
\begin{align*}
 \bigl\| P_{[(1-\gamma) k, k-C]} \calA^{n}_0 \partial_t P_k \phi^{<n}_r \bigr\|_{L^1_t L^2_x} &\lesssim \sum_{(1-\gamma) k \leq k_1 \leq k-C} \| P_{k_1} \calA^n_0 \|_{L^2_t L^2_x} \| P_k \partial_t \phi^{<n}_r \|_{L^2_t L^\infty_x} \\
 &\lesssim \sum_{(1-\gamma) k \leq k_1 \leq k-C} 2^{-\frac{3}{2} k_1} \| P_{k_1} \calA^n_0 \|_{Y^1} 2^k 2^{-\frac{1}{2+}k} \bigl( 2^{-k} \|P_k \partial_t \phi^{<n}_r\|_{R_k} \bigr) \\
 &\lesssim \sum_{(1-\gamma) k \leq k_1 \leq k-C} 2^{-\frac{3}{2} k_1} 2^{-\delta_2|k_1-n|} \|\calA^n_0\|_{Y^1[n]} 2^k 2^{-\frac{1}{2+}k} \bigl( 2^{-k} \|P_k \partial_t \phi^{<n}_r\|_{R_k} \bigr) \\
 &\lesssim 2^{-\delta_2 |k-n|} \|\calA^n_0\|_{Y^1[n]}\bigl( 2^{-k} \|P_k \partial_t \phi^{<n}_r\|_{R_k} \bigr).
\end{align*}

\medskip 

\noindent \underline{\it ``Strongly low-high interactions'':} Here the idea is to proceed analogously to the treatment of the low-high interactions in~\cite[Subsection 4.3]{KST}. We first peel off the ``good parts'' of $P_{\leq (1-\gamma)k} \calA^{n,j}_s \partial_j P_k \phi^{<n}_r$ and of $P_{\leq (1-\gamma)k} \calA^{n}_0 \partial_t P_k \phi^{<n}_r$, using generalized bilinear null form estimates. Then we reinsert the equations for $\calA^n_{x,s}$ and $\calA^n_0$ to unveil the crucial trilinear null forms, which we bound using the generalized trilinear null form estimates~\eqref{equ:generalization_136_KST}--\eqref{equ:generalization_138_KST}. However, in this approach we can only place the rough high-frequency term $P_k \phi^{<n}_r$ in $S^1_k$. This costs $2^{\delta_\ast k}$ below, which we can then compensate (for suitable choice of $\gamma$ depending on $\delta_\ast, \delta_2$) using the off-diagonal decay in all multilinear estimates combined with the off-diagonal decay of $P_{k_1} \calA^n_{x,s}$ and $P_{k_1} \calA^n_0$ (and $P_{k_1} \Phi^n_s$) away from frequency $|\xi| \sim 2^n$ and the fact that we are in the strongly low-high case $k_1 \leq (1-\gamma)k$, $k \geq 1$. We now turn to the details.

In what follows we try to closely mimic the notation in~\cite{KST}. To decompose the nonlinearity and to ``peel off its good parts'', it will be useful to introduce the following notation. For any bilinear operator $\calM(D_{t,x}, D_{t,y})$ we set 
\begin{align*}
 \calH_k \calM(\phi, \psi) &:= \sum_{j < k+C} Q_j P_k \calM \bigl( Q_{<j-C} \phi, Q_{<j-C} \psi \bigr), \\
 \calH_k^\ast \calM(\phi, \psi) &:= \sum_{j < k+C} Q_{<j-C} \calM \bigl(Q_j P_k \phi, Q_{<j-C} \psi \bigr).
\end{align*}
Moreover, we introduce the following short-hand notation for the ``strongly low-high'' interaction terms (for the spatial components of the connection form)
\begin{align*}
 \calN_{str}^{lh, k}\bigl(\calA^n_{x,s}, \phi_r^{<n} \bigr) &:= P_{\leq (1-\gamma) k} \calA_s^{n, j} \partial_j P_k \phi^{<n}_r \\
 &= \sum_{i \neq j} \calN_{ij} \bigl( \nabla_i \Delta^{-1} P_{\leq (1-\gamma) k} \calA_{j,s}^n, P_k \phi^{<n}_r \bigr),
\end{align*}
where in the last line we recalled the null structure of this interaction term in the Coulomb gauge. In order to decompose the ``strongly low-high'' interaction term for the temporal component of the connection form, we introduce the short-hand notation
\begin{equation*}
 \calN_{0, str}^{lh, k} \bigl(\calA_0^n, \phi^{<n}_r) := P_{\leq (1-\gamma) k} Q_{<k-C} \calA^n_0 \partial_t P_k \phi^{<n}_r.
\end{equation*}

We can bound $\calN_{str}^{lh, k}$ via a bilinear null form estimate for the most part, except for $high \times low \to low$ modulation interactions. We group the latter into an expression denoted by
\begin{equation} \label{equ:calHast_def}
 \calH^\ast \calN_{str}^{lh, k}\bigl(\calA^n_{x,s}, \phi_r^{<n} \bigr) := \sum_{k' \leq k-C} \calH^\ast_{k'} \calN_{str}^{lh, k}\bigl(\calA^n_{x,s}, \phi_r^{<n} \bigr).
\end{equation}
Then by the bilinear null form estimate (132) from~\cite{KST} (and recalling that $1 \leq k \leq n$) we obtain for the difference that
\begin{align*}
 &2^{+\delta_2 |k-n|} \bigl\| \calN_{str}^{lh, k}\bigl(\calA^n_{x,s}, \phi_r^{<n} \bigr) - \calH^\ast \calN_{str}^{lh, k}\bigl(\calA^n_{x,s}, \phi_r^{<n} \bigr) \bigr\|_{N_k} \\
 &\quad \lesssim 2^{+\delta_2 |k-n|} \sum_{k_1 \leq (1-\gamma) k} \|P_{k_1} \calA_{x,s}^n\|_{S^1_{k_1}} \|P_k \phi^{<n}_r\|_{S^1_k} \\
 &\quad \lesssim 2^{+\delta_2 |k-n|} \sum_{k_1 \leq (1-\gamma) k} 2^{-\delta_2 |k_1-n|} \|\calA_{x,s}^n\|_{S^1[n]} 2^{\delta_\ast k} \|P_k \phi^{<n}_r\|_{S^{1-\delta_\ast}_k} \\
 &\quad \lesssim 2^{-\delta_2 \gamma k} 2^{\delta_\ast k} \|\calA_{x,s}^n\|_{S^1[n]} \|P_k \phi^{<n}_r\|_{R_k},
\end{align*}
which is of the desired form as long as we choose $\delta_2 \gamma > \delta_\ast$.

The remaining term $\calH^\ast \calN_{str}^{lh, k}\bigl(\calA^n_{x,s}, \phi_r^{<n} \bigr)$ can for the most part be estimated using the stronger $Z$ norm except for the following delicate part of $\calA^n_{x,s}$ given by
\begin{equation*}
 \calH \calA_{j,s}^n := \sum_{\substack{k, k_i \\ k < \min\{k_1, k_2\}-C}} \calH_k \bigl( \bA^2_j( P_{k_1} \phi^{<n}, P_{k_2} \phi^{<n}) - \bA^2_j( P_{k_1} \phi^{<n-1}, P_{k_2} \phi^{<n-1}) \bigr).
\end{equation*}
Then by the bilinear null form estimate (133) from~\cite{KST} we obtain for the difference that
\begin{equation} \label{equ:nonlinear_est_lowhigh_pickup_Z}
 \begin{aligned}
  &2^{+\delta_2|k-n|} \bigl\| \calH^\ast \calN_{str}^{lh, k}\bigl(\calA^n_{x,s}, \phi_r^{<n} \bigr) - \calH^\ast \calN_{str}^{lh, k}\bigl( \calH \calA^n_{x,s}, \phi_r^{<n} \bigr) \bigr\|_{N_k} \\
  &\quad \lesssim 2^{+\delta_2 |k-n|} \sum_{k_1 \leq (1-\gamma) k} \bigl\| P_{k_1} \bigl( \calA^n_{x,s} - \calH \calA_{x,s}^n \bigr) \bigr\|_{Z} 2^{\delta_\ast k} \|P_k \phi_r^{<n}\|_{S^{1-\delta_\ast}_k}.
 \end{aligned}
\end{equation}
In order to bound $\bigl\| P_{k_1} \bigl( \calA^n_{x,s} - \calH \calA_{x,s}^n \bigr) \bigr\|_{Z}$ we use the generalized bilinear null form estimate~\eqref{equ:generalization_134_KST} for the quadratic contributions to $\calA^n_{x,s}$ and the generalized generic product estimates from Lemma~\ref{lem:core_generic_product_est} for the cubic contributions to $\calA_{x,s}^n$. Since at least one of the inputs for the difference $\calA^n_{x,s} - \calH \calA_{x,s}^n$ must be $\Phi^n$ or $\calA^n$, their localization around frequency $\sim 2^n$ combined with the off-diagonal decay of all multilinear estimates involved as well as the ``strongly low-high'' separation, allows to compensate the factor $2^{\delta_\ast k}$ (as long as $\delta_2 \gamma > \delta_\ast$). In this manner we can bound the right-hand side of~\eqref{equ:nonlinear_est_lowhigh_pickup_Z} by
\begin{align*}
 &\bigl( \|\Phi^n_{s}\|_{S^1[n]} + \|\Phi^n_r\|_{R_n} + \|\calA^n_{x,s}\|_{S^1[n]} + \|\calA^n_{x,r}\|_{R_n} \bigr) \times \\
 &\qquad \qquad \qquad \times \biggl( \sum_{m=0}^n \Bigl( \|\Phi^m_s\|_{S^1[m]} + \|\calA_{x,s}^m\|_{S^1[m]} \Bigr) + \sum_{m=1}^n \Bigl( \|\Phi^m_r\|_{R_m} + \|\calA_{x,r}\|_{R_m} \Bigr) \biggr) \|P_k \phi^{<n}_r\|_{R_k}. 
\end{align*}

At this point we are left to bound the term $\calH^\ast \calN_{str}^{lh, k}\bigl( \calH \calA^n_{x,s}, \phi_r^{<n} \bigr)$. This only turns out to be possible after exploiting cancellations that occur by combining it with an analogous contribution from the low-high interactions $\calN_{0,str}^{lh,k}(\calA_0^n, \phi^{<n}_r)$ involving the temporal component of the connection form. Proceeding as in Step~4 of Subsection~4.3 in~\cite{KST} and using the generalized generic product estimates from Lemma~\ref{lem:core_generic_product_est} (as well as the more microlocal generalized product estimate~\eqref{equ:generalization_141_KST}), we may peel off the good parts of $\calN_{0, str}^{lh,k}$ in a similar manner as above, until we are left with the following part
\begin{equation*}
 \calH^\ast \calN_{0,str}^{lh,k} \bigl( \calH \calA_0^n, \phi^{<n}_r \bigr),
\end{equation*}
where 
\begin{align*}
 \calH^\ast \calN_{0, str}^{lh,k} \bigl( \calH \calA_0^n, \phi^{<n}_r \bigr) &:= \sum_{k' < k-C} \calH^\ast_{k'} \calN_{0, str}^{lh,k} \bigl( \calH \calA_0^n, \phi^{<n}_r \bigr), \\
 \calH \calA_0^n &:= \sum_{\substack{k, k_i \\ k < \min\{k_1, k_2\}-C}} \calH_k \bigl( \bA^2_0(P_{k_1} \phi^{<n}, P_{k_2} \phi^{<n}) - \bA^2_0(P_{k_1} \phi^{<n-1}, P_{k_2} \phi^{<n-1}) \bigr).
\end{align*}

Finally, we collect the portion of $\calN_{str}^{lh,k}$ and the portion of $\calN_{0, str}^{lh, k}$ that have not been estimated yet, and combine them into the expression
\begin{equation*}
 - \calH^\ast \calN_{str}^{lh, k}\bigl( \calH \calA^n_{x,s}, \phi_r^{<n} \bigr) + \calH^\ast \calN_{0,str}^{lh,k} \bigl( \calH \calA_0^n, \phi^{<n}_r \bigr),
\end{equation*}
which exhibits a striking trilinear null structure. As detailed in the appendix of~\cite{KST}, we may write
\begin{equation*}
 -\bA^2_j \bigl( \phi^{(1)}, \phi^{(2)} \bigr) \partial_j \phi^{(3)} + \bA^2_0 \bigl( \phi^{(1)}, \phi^{(2)} \bigr) \partial_t \phi^{(3)} = \bigl( \calQ_1 + \calQ_2 + \calQ_3 \bigr)(\phi^{(1)}, \phi^{(2)}, \phi^{(3)})
\end{equation*}
with 
\begin{align*}
 \calQ_1\bigl( \phi^{(1)}, \phi^{(2)}, \phi^{(3)} \bigr) &= - \Box^{-1} \Im \bigl( \phi^{(1)} \partial_\alpha \phi^{(2)} \bigr) \cdot \partial^\alpha \phi^{(3)}, \\
 \calQ_2\bigl( \phi^{(1)}, \phi^{(2)}, \phi^{(3)} \bigr) &= \Delta^{-1} \Box^{-1} \partial_t \partial_\alpha \Im \bigl( \phi^{(1)} \partial^\alpha \phi^{(2)} \bigr) \cdot \partial_t \phi^{(3)}, \\
 \calQ_3\bigl( \phi^{(1)}, \phi^{(2)}, \phi^{(3)} \bigr) &= \Delta^{-1} \Box^{-1} \partial_\alpha \partial^j \Im \bigl( \phi^{(1)} \partial_j \phi^{(3)} \bigr) \cdot \partial^\alpha \phi^{(3)}.
\end{align*}
Using the generalized trilinear null form estimates~\eqref{equ:generalization_136_KST}--\eqref{equ:generalization_138_KST}, we then obtain for $1 \leq k \leq n$ the desired bound 
\begin{equation*}
 \begin{aligned}
  &2^{+\delta_2 |k-n|} \bigl\| P_k \bigl( -\calH^\ast \calN_{str}^{lh, k}\bigl( \calH \calA^n_{x,s}, \phi_r^{<n} \bigr) + \calH^\ast \calN_{0,str}^{lh,k} \bigl( \calH \calA_0^n, \phi^{<n}_r \bigr) \bigr) \bigr\|_{N_k} \\
  &\quad \lesssim \bigl( \|\Phi^n_s\|_{S^1[n]} + \|\Phi^n_r\|_{R_n} \bigr) \biggl( \sum_{m=0}^n \|\Phi^m_s\|_{S^1[m]} + \sum_{m=1}^n \|\Phi^m_r\|_{R_m} \biggr) \|P_k \phi^{<n}_r\|_{S^{1-\delta_\ast}_k}. 
 \end{aligned}
\end{equation*}
Here it is again crucial that while we place $P_k \phi^{<n}_r$ into $S^1_k$ at a loss of a factor $2^{\delta_\ast k}$, we can compensate this loss using the off-diagonal decay in the trilinear estimates along with the fact that at least one of the first two inputs of the trilinear expressions must be $\Phi^n$, which is localized around frequency $\sim 2^n$. 
\end{proof}

It remains to describe how to deal with the other terms on the right-hand side of the equation for~$\Phi^n_s$ in (fMKG-CG\textsubscript{n}). The ``moderately low-high'' interaction term $- 2i P_{[(1-\gamma)n, n-C]} A^{<n-1, \alpha} \partial_\alpha P_n \Phi^n_r$ with the rough component $\Phi^n_r$ at high frequency and the entire connection form $A^{<n-1}$ from prior induction stages at low frequency can be estimated analogously to the ``moderately low-high'' interactions in the proof of Proposition~\ref{prop:nonlinear_est_lowhigh_phi_less_n}. 
\begin{proposition}
For arbitrary $n \geq 1$ it holds that
\begin{equation*} 
  \bigl\| P_{[(1-\gamma)n, n-C]} A^{<n-1, \alpha} \partial_\alpha P_n \Phi^n_r \bigr\|_{N_n \cap L^2_t \dot{H}^{-\frac12}_x} \lesssim \biggl( \sum_{m=0}^{n-1} \bigl( \|\calA_{x,s}^m\|_{S^1[m]} + \|\calA_0^m\|_{Y^1[m]} \bigr) + \sum_{m=1}^{n-1} \|\calA_{x,r}^m\|_{R_m} \biggr) \|\Phi^n_r\|_{R_n}. 
\end{equation*} 
\end{proposition}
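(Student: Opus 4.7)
The proof follows the template of the moderately low-high analysis in Proposition~\ref{prop:nonlinear_est_lowhigh_phi_less_n}, with $\calA^n$ there replaced by the connection form $A^{<n-1}$ accumulated from earlier induction stages and with $\phi^{<n}_r$ replaced by $\Phi^n_r$. First split
\begin{equation*}
A^{<n-1,\alpha} = A^{<n-1,j}_s + A^{<n-1,j}_r + A^{<n-1}_0, \qquad A^{<n-1,j}_s = \sum_{m=0}^{n-1}\calA^m_{x,s}, \quad A^{<n-1,j}_r = \sum_{m=1}^{n-1}\calA^m_{x,r},
\end{equation*}
and dyadically localize the low factor to frequency $2^{k_1}$ with $k_1 \in [(1-\gamma)n, n-C]$. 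Because each $\calA^m_{x,r}$ is sharply supported at frequency $\sim 2^m$, the rough piece satisfies $P_{k_1} A^{<n-1,j}_r = \calA^{k_1}_{x,r}$ up to $\calO(1)$ shifts, so its contribution at a given $k_1$ is controlled by a single term $\|\calA^{k_1}_{x,r}\|_{R_{k_1}}$. The auxiliary $L^2_t \dot H^{-1/2}_x$ high-modulation bound is immediate from the generic product estimates of Lemma~\ref{lem:core_generic_product_est}, and the analysis reduces to the $N_n$ component.

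For the spatial contributions $P_{k_1}A^{<n-1,j}\,\partial_j P_n\Phi^n_r$, exploit the Coulomb null structure $A^j\partial_j\phi = \calN_{kj}(\partial^k\Delta^{-1}A^j,\phi)$ and decompose in the output modulation $l$, with the two inputs restricted to angular caps of size $\sim 2^{(l-k_1)/2}$, exactly as in~\eqref{equ:nonlinear_est_moderately_lh_expand}. Place $P_n\Phi^n_r$ into the sectorial $L^2_t L^\infty_x$ (or $L^2_t L^6_x$ in the parallel splitting) norm controlled by the redeeming $R_n L^2_t L^\infty_x$ (resp.\ $R_n L^2_t L^6_x$) component, while $P_{k_1}A^{<n-1,j}$ goes into $L^2_t L^2_x$ via either the $X^{0,1/2}_\infty$ piece of $S^1_{k_1}$ in the smooth case or the $S^{1-\delta_\ast}_{k_1}$ component of $R_{k_1}$ in the rough case. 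The null form gain $2^{(l-k_1)/2}$ combined with Bernstein in the transverse directions produces the off-diagonal factor $2^{-\delta(n-k_1)}$, which together with the frequency-envelope tails $2^{-\delta_2|k_1-m|}$ closes the sum over $k_1\in[(1-\gamma)n,n-C]$ and over $m$.

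For the temporal term no null structure is needed: H\"older gives
\begin{equation*}
\bigl\|P_{k_1}A^{<n-1}_0\,\partial_t P_n\Phi^n_r\bigr\|_{L^1_t L^2_x} \lesssim \bigl\|P_{k_1}A^{<n-1}_0\bigr\|_{L^2_t L^\infty_x}\,\bigl\|\partial_t P_n\Phi^n_r\bigr\|_{L^2_t L^2_x}.
\end{equation*}
Bernstein upgrades the $L^2_t \dot H^{3/2}_x$ control built into $Y^1$ to $\|P_{k_1}A^{<n-1}_0\|_{L^2_t L^\infty_x}\lesssim 2^{k_1/2}\|P_{k_1}A^{<n-1}_0\|_{Y^1}$, while $\|\partial_t P_n\Phi^n_r\|_{L^2_t L^2_x}$ is bounded using the redeeming $R_n L^2_t L^\infty_x$ (or $S^{1-\delta_\ast}_n$) component of $R_n$ with a strong $2^{-(1/2-20\sigma)n}$ gain. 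The resulting prefactor $2^{(k_1-n)/2 + 20\sigma n}$ is $\lesssim 2^{-(\gamma/2 - 20\sigma)n}$, which is summable by $\sigma\ll\gamma$; summing over $m$ through the $\delta_2$-envelope tails on $\|\calA^m_0\|_{Y^1[m]}$ then produces the factor $\sum_{m=0}^{n-1}\|\calA^m_0\|_{Y^1[m]}$.

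The one delicate point, exactly as in Proposition~\ref{prop:nonlinear_est_lowhigh_phi_less_n}, is that the range $k_1\in [(1-\gamma)n, n-C]$ has width $\sim \gamma n$, so no constant loss per $k_1$-scale can be tolerated. The geometric gains above, together with the smallness hierarchy $0<\sigma\ll\gamma\ll\delta_2\ll\delta\ll 1$, absorb this range width and yield the stated estimate. Crucially, unlike in the strongly low-high analysis in the proof of Proposition~\ref{prop:nonlinear_est_lowhigh_phi_less_n}, no $2^{\delta_\ast n}$ loss appears here, because $\Phi^n_r$ is placed directly through the redeeming $L^2_t L^\infty_x$ and $L^2_t L^6_x$ components of $R_n$, which already beat the scaling.
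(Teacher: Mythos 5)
Your overall strategy is the one the paper intends: it proves this proposition only by reference, saying the term ``can be estimated analogously to the moderately low-high interactions'' in Proposition~\ref{prop:nonlinear_est_lowhigh_phi_less_n}, and your spatial analysis (modulation decomposition as in~\eqref{equ:nonlinear_est_moderately_lh_expand}, angular caps of size $2^{(l-k_1)/2}$, the null-form gain, the rough high-frequency factor $\Phi^n_r$ in the sectorial $R_nL^2_tL^\infty_x$/$R_nL^2_tL^6_x$ components, the low factor in modulation-weighted $L^2_tL^2_x$) follows that template faithfully; your handling of the rough-rough case, which does not arise in Proposition~\ref{prop:nonlinear_est_lowhigh_phi_less_n}, via the $S^{1-\delta_\ast}_{k_1}$ component of $R_{k_1}$ at an affordable $2^{\delta_\ast n}$ loss is also sound, since $\gamma+\delta_\ast\ll\tfrac12$.

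However, your temporal estimate has a genuine gap. You apply H\"older as $\|P_{k_1}A^{<n-1}_0\|_{L^2_tL^\infty_x}\,\|\partial_t P_n\Phi^n_r\|_{L^2_tL^2_x}$ and claim the second factor is controlled by the redeeming $R_nL^2_tL^\infty_x$ or $S^{1-\delta_\ast}_n$ components. No component of $R_n$ controls a global-in-time, unlocalized $L^2_tL^2_x$ norm: the pair $(2,2)$ is far from wave-admissible, and $X^{0,\frac12}_\infty$ only bounds a supremum of modulation-localized pieces (the sum over low modulations diverges); indeed $\|\partial_t\Phi^n_r(t)\|_{L^2_x}$ is essentially constant in time for the (approximately) free evolution, so its global $L^2_t$ norm is infinite. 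In addition, the claimed gain $2^{(k_1-n)/2+20\sigma n}\lesssim 2^{-(\gamma/2-20\sigma)n}$ misreads the range: $n-k_1$ can be as small as $C$, so $2^{(k_1-n)/2}$ gives no decay in $n$. The fix is to reverse the pairing, exactly as in the temporal part of the proof of Proposition~\ref{prop:nonlinear_est_lowhigh_phi_less_n}: place $P_{k_1}A_0^{<n-1}$ in $L^2_tL^2_x$ via $\|P_{k_1}A_0^{<n-1}\|_{L^2_tL^2_x}\lesssim 2^{-\frac32 k_1}\|P_{k_1}A_0^{<n-1}\|_{Y^1}$, and $\partial_tP_n\Phi^n_r$ in $L^2_tL^\infty_x$ via the redeeming norm, $\|\partial_tP_n\Phi^n_r\|_{L^2_tL^\infty_x}\lesssim 2^{n}2^{-\frac{1}{2+}n}\bigl(2^{-n}\|\partial_tP_n\Phi^n_r\|_{R_n}\bigr)$; the net factor $2^{-\frac32 k_1+\frac12 n+}\lesssim 2^{(-1+\frac32\gamma+)n}$ is then summable over $k_1\in[(1-\gamma)n,n-C]$ and over the stages $m$ regardless of the size of the gap $n-k_1$.
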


One more low-high interaction term appears on the right-hand side of the equation for $\Phi^n_s$ that only involves smooth-smooth interactions with the smooth component $\Phi^n_s$ at high frequency and all smooth components of the connection form $A^{<n-1}$ from prior induction stages at low frequency. The treatment of this low-high interaction term essentially exactly follows the approach in Subsection 4.3 of~\cite{KST}. First the good parts are peeled off using bilinear estimates, and then the equations for $A^{<n-1}_{x,s}$ and $A^{<n-1}_0$ are inserted to unveil the crucial trilinear null forms. (This scheme was basically detailed in the treatment of the ``strongly low-high'' smooth-rough interactions in Proposition~\ref{prop:nonlinear_est_lowhigh_phi_less_n} above, although there additional work is needed to compensate the derivative loss when placing the rough component $\phi^{<n}_r$ at high frequency into the critical space $S^1$.)
\begin{proposition}
 Let $n \geq 1$. Assume that $A^{<n-1}_{x, s} - \calA^{0,free}_{x,s}$ and $A^{<n-1}_0$ are given by
  \begin{align*}
  A^{<n-1}_{x, s} - \calA^{0,free}_{x,s} &= \bA_x( \phi^{<n-1}, \phi^{<n-1}, A^{<n-1}), \\
  A^{<n-1}_0 &= \bA_0( \phi^{<n-1}, \phi^{<n-1}, A_0^{<n-1} ).
 \end{align*} 
 Then it holds that
 \begin{equation*}
  \begin{aligned}
   &\sup_{k \in \bbZ} \, 2^{+\delta_2|k-n|} \bigl\| P_{\leq k-C} \bigl( A^{<n-1, j}_s - \calA^{0,free, j}_s \bigr) \partial_j P_k \Phi^n_s + P_{\leq k-C} A^{<n-1}_0 \partial_t P_k \Phi^n_s \bigr\|_{N_k \cap L^2_t \dot{H}^{-\frac12}_x} \\
   &\quad \lesssim \biggl( \sum_{m=0}^{n-1} \Bigl( \|\calA_{x,s}^m\|_{S^1[m]} + \|\calA_0^m\|_{Y^1[m]} \Bigr) + \sum_{m=1}^{n-1} \|\calA_{x,r}^m\|_{R_m} \biggr) \|\Phi^n_s\|_{S[n]} \\
   &\quad \quad \quad + \biggl( \sum_{m=0}^{n-1} \Bigl( \|\Phi^m_s\|_{S^1[m]} + \|\calA_{x,s}^m\|_{S^1[m]} + \|\calA_0^m\|_{Y^1[m]} \Bigr) + \sum_{m=1}^{n-1} \Bigl( \|\Phi^m_r\|_{R_m} + \|\calA_{x,r}\|_{R_m} \Bigr) \biggr)^2 \|\Phi^n_s\|_{S[n]}.
  \end{aligned}
 \end{equation*}
\end{proposition}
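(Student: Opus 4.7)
The strategy is to replay the treatment of smooth--smooth low--high interactions from Subsection~4.3 of \cite{KST}, now using the generalized multilinear estimates from Section~\ref{sec:multilinear_estimates} to accommodate the rough components $\calA^m_{x,r}$ (and, upon reinsertion of the formulas, $\Phi^m_r$) that arise through the cubic pieces of $\bA_x$ and $\bA_0$. The high-modulation bound in $L^2_t \dot{H}^{-\frac12}_x$ follows directly from the generalized core product estimates of Lemma~\ref{lem:core_generic_product_est}, so the work is concentrated on the $N_k$ bound.

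For any fixed output frequency $k$ we decompose modulationally in the spirit of \eqref{equ:nonlinear_est_moderately_lh_expand} and first dispose of all configurations admitting a bilinear null-form bound. The Coulomb null structure $\calP_j(\phi \overline{\nabla_x \phi}) = \partial^k \Delta^{-1} \calN_{kj}(\phi, \overline{\phi})$ permits an application of \eqref{equ:generalization_131_KST} with $P_{k_1}(A^{<n-1,j}_s - \calA^{0,free,j}_s)$ placed in $S^1_{k_1}$ and $P_k \Phi^n_s$ in $S^1_k$; the temporal interaction $P_{\leq k-C} A^{<n-1}_0 \partial_t P_k \Phi^n_s$ is handled similarly, exploiting the two half-derivatives built into the $Y^1$ norm. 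Substituting a frequency envelope for the connection form (which incorporates $\|\calA^m_{x,s}\|_{S^1[m]}$, $\|\calA^m_0\|_{Y^1[m]}$, and, through the cubic piece $\bA^3_x$, the rough norms $\|\calA^m_{x,r}\|_{R_m}$) and summing in $k_1$ against the off-diagonal decay of the bilinear estimates yields the linear-in-connection-form bound on the RHS.

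The remaining ``high $\times$ low $\to$ low'' modulation configuration falls outside the scope of the bilinear null-form estimate. As in \eqref{equ:nonlinear_est_lowhigh_pickup_Z} we pass to the $Z$ norm on the low-frequency factor and invoke \eqref{equ:generalization_134_KST} together with Lemma~\ref{lem:core_generic_product_est} to reduce matters to the genuinely delicate piece where the magnetic, respectively temporal, factor equals the low-low quadratic-output portion $\calH\calA^{<n-1}_{x,s}$, respectively $\calH A^{<n-1}_0$, of $\bA^2_x$, respectively $\bA^2_0$. These two remaining pieces must be combined through the identity from the appendix of \cite{KST},
\begin{equation*}
-\bA^2_j(\phi^{(1)}, \phi^{(2)}) \partial_j \phi^{(3)} + \bA^2_0(\phi^{(1)}, \phi^{(2)}) \partial_t \phi^{(3)} = (\calQ_1 + \calQ_2 + \calQ_3)(\phi^{(1)}, \phi^{(2)}, \phi^{(3)}),
\end{equation*}
which exposes three genuine trilinear null forms, with the high-frequency slot filled by $\Phi^n_s$ and the two low-frequency slots filled by $\phi^{<n-1} = \sum_{m=0}^{n-1}(\Phi^m_r + \Phi^m_s)$. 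Applying the generalized trilinear null-form estimates \eqref{equ:generalization_136_KST}--\eqref{equ:generalization_138_KST} then produces the quadratic-in-connection-form bound on the RHS.

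The main obstacle is this last step: one must verify that the trilinear null-form estimates of Section~\ref{sec:multilinear_estimates} remain effective once rough evolutions $\Phi^m_r$ are inserted into the two non-derivative scalar slots of $\calQ_1, \calQ_2, \calQ_3$, and that the off-diagonal decay combines with the frequency envelopes for $\calA^m_{x,s}$, $\calA^m_0$, and $\Phi^m_s$ to restore the weight $2^{+\delta_2|k-n|}$ at output frequency $n$. Crucially, since the high-frequency input $\Phi^n_s$ is itself smooth, no derivative loss of the type encountered in the strongly low-high smooth--rough case of Proposition~\ref{prop:nonlinear_est_lowhigh_phi_less_n} is incurred, so no compensation via the frequency-separation parameter $\gamma$ is required here.
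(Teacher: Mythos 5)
Your plan coincides with the paper's own treatment, which simply refers back to the peel-off-then-reinsert scheme of Subsection~4.3 of \cite{KST} (as detailed for the ``strongly low-high'' smooth-rough case in Proposition~\ref{prop:nonlinear_est_lowhigh_phi_less_n}): bilinear null form estimates for the good modulation configurations, the $Z$-norm reduction via \eqref{equ:generalization_134_KST} and Lemma~\ref{lem:core_generic_product_est}, then the trilinear null forms $\calQ_1, \calQ_2, \calQ_3$ estimated by \eqref{equ:generalization_136_KST}--\eqref{equ:generalization_138_KST}, and your observation that no $\gamma$-compensation is needed because the high-frequency input $\Phi^n_s$ is smooth matches the paper's parenthetical remark. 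The only nit is that the bilinear peeling step should invoke \eqref{equ:generalization_132_KST} (together with the $Z$-against-$S^1$ estimate (133) of \cite{KST}) rather than \eqref{equ:generalization_131_KST}, since the full low-high term is only bilinearly bounded after subtracting the $\calH^\ast$ portion — which your subsequent reduction implicitly acknowledges.
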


Finally, we dispose of the easier multilinear terms $\bM^1$, $\bM^2$, and $\bM^3$. 
\begin{proposition}
For any $n \geq 1$ we have 
 \begin{align}
  &\sup_{k \in \bbZ} \, 2^{+\delta_2 |k-n|} \bigl\| P_k \bigl( \bM^1(A^{<n}, \phi^{<n}) - \bM^1(A^{<n-1}, \phi^{<n-1}) \bigr) \bigr\|_{N_k \cap L^2_t \dot{H}^{-\frac12}_x} \nonumber \\
  &\quad \lesssim \bigl( \|\Phi^n_s\|_{S^1[n]} + \|\calA_{x,s}^n\|_{S^1[n]} + \|\calA^n_0\|_{Y^1[n]} + \|\Phi^n_r\|_{R_n} + \|\calA_{x,r}^n\|_{R_n} \bigr) \times \label{equ:nonlinear_est_M1} \\
  &\quad \quad \quad \times \biggl( \sum_{m=0}^n \Bigl( \|\Phi^m_s\|_{S^1[m]} + \|\calA_{x,s}^m\|_{S^1[m]} + \|\calA^m_0\|_{Y^1[m]} \Bigr) + \sum_{m=1}^n \Bigl( \|\Phi^m_r\|_{R_m} + \|\calA_{x,r}^m\|_{R_m} \Bigr) \biggr) \nonumber \\
  &\sup_{k \in \bbZ} \, 2^{+\delta_2 |k-n|} \bigl\| P_k \bigl( \bM^2(A^{<n}, \phi^{<n}) - \bM^2(A^{<n-1}, \phi^{<n-1}) \bigr) \bigr\|_{N_k \cap L^2_t \dot{H}^{-\frac12}_x} \nonumber \\
  &\quad \lesssim \bigl( \|\Phi^n_s\|_{S^1[n]} + \|\calA^n_0\|_{Y^1[n]} + \|\Phi^n_r\|_{R_n} \bigr) \biggl( \sum_{m=0}^n \Bigl( \|\Phi^m_s\|_{S^1[m]} + \|\calA^m_0\|_{Y^1[m]} \Bigr) + \sum_{m=1}^n \|\Phi^m_r\|_{R_m} \biggr) \label{equ:nonlinear_est_M2} \\
  &\sup_{k \in \bbZ} \, 2^{+\delta_2 |k-n|} \bigl\| P_k \bigl( \bM^3(A^{<n}, A^{<n}, \phi^{<n}) - \bM^3(A^{<n-1}, A^{<n-1}, \phi^{<n-1}) \bigr) \bigr\|_{N_k \cap L^2_t \dot{H}^{-\frac12}_x} \nonumber \\
  &\quad \lesssim \bigl( \|\Phi^n_s\|_{S^1[n]} + \|\calA_{x,s}^n\|_{S^1[n]} + \|\calA^n_0\|_{Y^1[n]} + \|\Phi^n_r\|_{R_n} + \|\calA_{x,r}^n\|_{R_n} \bigr) \times \label{equ:nonlinear_est_M3} \\
  &\quad \quad \quad \times \biggl( \sum_{m=0}^n \Bigl( \|\Phi^m_s\|_{S^1[m]} + \|\calA_{x,s}^m\|_{S^1[m]} + \|\calA^m_0\|_{Y^1[m]} \Bigr) + \sum_{m=1}^n \Bigl( \|\Phi^m_r\|_{R_m} + \|\calA_{x,r}^m\|_{R_m} \Bigr) \biggr)^2. \nonumber
 \end{align}
\end{proposition}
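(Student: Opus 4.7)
The plan is to mirror the scheme of Proposition~\ref{prop:nonlinear_est_Ax}: multilinearly expand each difference so that at least one factor is a dyadic increment $\calA^n = \calA^n_r + \calA^n_s$, $\calA^n_0$, or $\Phi^n = \Phi^n_r + \Phi^n_s$, while the remaining factors come from $A^{<n-1}$, $\phi^{<n-1}$ and are telescoped into the sums $\sum_{m<n}\|\cdot\|_{\star[m]}$. In each resulting multilinear expression I would distribute the Littlewood-Paley frequencies, split according to smooth-smooth, smooth-rough, and rough-rough interactions, and apply the generalized multilinear estimates from Section~\ref{sec:multilinear_estimates}: the generic product estimates from Lemma~\ref{lem:core_generic_product_est} for the non-null pieces and the generalized bilinear null form estimates \eqref{equ:generalization_131_KST}--\eqref{equ:generalization_134_KST} for the null pieces. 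The two norms $N_k$ and $L^2_t \dot{H}^{-\frac12}_x$ can be handled simultaneously since the high-modulation bound always reduces to a generic product estimate. The weight $2^{+\delta_2|k-n|}$ is recovered exactly as in the proof of Proposition~\ref{prop:nonlinear_est_Ax}: the off-diagonal decay $2^{-\delta(\max k_i - \min k_i)}$ in the multilinear bounds is combined with the sharp localization of the rough components to frequency $\sim 2^n$ and with the $\ell^\infty_{\delta_2}$ tails of the smooth components about frequency $\sim 2^n$, the summability being ensured by $\delta_2 \ll \delta$.

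For \eqref{equ:nonlinear_est_M1}, the magnetic piece $\bM^1_x(A_x, \phi) = -2i\sum_k (P_{>k-C}A_x^j)\partial_j P_k \phi$ must be treated through its Coulomb-gauge null structure $A_x^j \partial_j \phi = \calN_{kj}(\partial^k\Delta^{-1}A_x^j, \phi)$. The frequency restriction $P_{>k-C}$ excludes exactly the strongly low-high regime, so the output frequency is always controlled by the larger input frequency and the generalized bilinear null form estimates apply uniformly across smooth-smooth, smooth-rough, and rough-rough configurations. The temporal piece $\bM^1_0(A_0, \phi) = -2i \sum_k (P_{>k-C} A_0)\partial_t P_k \phi$ only involves the smooth variable $A_0$ and can be closed using the $Y^1[m]$ control on $A_0$ combined with Lemma~\ref{lem:core_generic_product_est}; the constraint $P_{>k-C}$ is what allows one to avoid the delicate low-high asymmetry already faced for $P_{\leq k-C}A^{<n-1}_0 \partial_t P_k \Phi^n_s$ in the forced equation.

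The bounds \eqref{equ:nonlinear_est_M2} and \eqref{equ:nonlinear_est_M3} are more routine. Indeed, $\bM^2(A_0, \phi) = i(\partial_t A_0)\phi$ is a derivative-free bilinear product, and $\bM^3(A, A, \phi) = A^\alpha A_\alpha \phi$ is a derivative-free cubic. After telescoping the differences into dyadic increments, both reduce to iterated applications of the generic product estimates from Lemma~\ref{lem:core_generic_product_est}, using the $L^\infty_t L^2_x$, $L^2_t L^\infty_x$, and $L^2_t L^6_x$ ingredients supplied by $S^1[m]$, $Y^1[m]$, and $R_m$, together with Bernstein on low-frequency factors.

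The main obstacle lies in the smooth-rough case of $\bM^1_x$ in which the rough input sits at the high frequency and must be placed in $S^{1-\delta_\ast}_k$ instead of $S^1_k$, producing a loss $2^{\delta_\ast k}$ that has to be absorbed by the off-diagonal gain of the null form estimate. Because the window $P_{>k-C}$ in $\bM^1$ keeps the low input frequency within $O(1)$ of $k$, the required gain is available as soon as $\delta_\ast \ll \delta_2 \ll \delta$, which is built into the hierarchy of small constants fixed at the beginning of the paper.
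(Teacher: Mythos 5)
Your plan coincides with the paper's proof: the high-modulation bounds and the $\bM^1_0$, $\bM^2$, $\bM^3$ contributions are handled by the generalized generic product estimates of Lemma~\ref{lem:core_generic_product_est} (with $A_0$ measured in $Y^1[m]$, exactly as you say), while the $N_k$ bound for $\bM^1_x$ follows from the generalized bilinear null form estimate~\eqref{equ:generalization_131_KST}, with the weight $2^{\delta_2|k-n|}$ recovered from the off-diagonal gains and the frequency localization of the increments, as in Proposition~\ref{prop:nonlinear_est_Ax}. So the proposal is correct and takes essentially the same route.

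One correction to your final paragraph: the "main obstacle" you describe is not actually present. The estimate~\eqref{equ:generalization_131_KST} is stated with inputs measured in $S^1_{k_i}+R_{k_i}$, so a rough factor at high frequency is placed directly in its redeeming $R_{k}$ norm with no $2^{\delta_\ast k}$ loss; the $S^{1-\delta_\ast}_k$ placement (and the resulting need to recover $2^{\delta_\ast k}$ from off-diagonal decay) is only forced in the trilinear ``strongly low-high'' analysis of Proposition~\ref{prop:nonlinear_est_lowhigh_phi_less_n}, where the equations for the connection form must be reinserted — a configuration that the restriction $P_{>k-C}$ in $\bM^1$ excludes. Moreover, the compensation mechanism you invoke would not work as stated: in the diagonal case where the $A$ factor, the rough $\phi$ factor and the output are all at comparable frequencies there is no off-diagonal gain available, so a $2^{\delta_\ast k}$ loss could not be absorbed there. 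Fortunately this detour is unnecessary, and the argument of your first two paragraphs already closes the estimate.
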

\begin{proof}
 The high-modulation bounds for~\eqref{equ:nonlinear_est_M1}--\eqref{equ:nonlinear_est_M3} all follow readily using the generalized generic product estimates from Lemma~\ref{lem:core_generic_product_est}, and so it remains to discuss the $N_k$ bounds.
 The corresponding proof of~\eqref{equ:nonlinear_est_M1} for the $\bM^1_x$ component follows from the generalized core bilinear null form estimate~\eqref{equ:generalization_131_KST}, while the proof for the $\bM^1_0$ component follows analogously to the proof of the estimate (56) in~\cite{KST}, using the generalized generic product estimates from Lemma~\ref{lem:core_generic_product_est}. Finally, the corresponding bounds for~\eqref{equ:nonlinear_est_M2} and \eqref{equ:nonlinear_est_M3} just rely on Strichartz-type estimates and Sobolev embeddings, and therefore follow using the generalized generic product estimates from Lemma~\ref{lem:core_generic_product_est}. 
\end{proof}

\section{The ``deterministic'' parametrix} \label{sec:renormalization}

The goal of this section is to establish a key linear estimate for the linear magnetic wave equation $\Box_{A^{<n-1}}^p \phi = F$, which establishes a link between the $S^1$ and $N$ spaces. 
We recall that the paradifferential magnetic wave operator $\Box_{A^{<n-1}}^p$ is given by
\begin{equation*}
 \Box^p_{A^{<n-1}} := \Box + 2i \sum_k P_{\leq k-C} \bigl( A^{<n-1, j}_r + \calA^{0,free, j}_s \bigr) \partial_j P_k.
\end{equation*}
All results in this section are deterministic in the sense that they hold as long as $A^{<n-1}_{x,r}$ and $\calA^{0,free}_x$ satisfy suitable smallness assumptions.
\begin{proposition}[Main linear estimate for $\phi$ equation] \label{prop:main_linear_estimate}
Let $n \geq 1$ and assume that
\begin{equation*}
 \sum_{m=1}^{n-1} \|\calA_{x,r}^m\|_{R_m} + \|\calA_{x,s}^{0, free}\|_{S^1[0]} \lesssim \varepsilon. 
\end{equation*}
Then for any $(f, g) \in \dot{H}^1_x \times L^2_x$ and any $F \in N \cap \ell^1 L^2_t \dot{H}^{-\frac12}_x$, there exists a unique global solution to the linear magnetic wave equation $\Box_{A^{<n-1}}^p \phi = F$ with initial data $\phi[0] = (f,g)$ and it holds that
\begin{equation} 
 \|\phi\|_{S^1} \lesssim \|f\|_{\dot{H}^1_x} + \|g\|_{L^2_x} + \| F \|_{N \cap \ell^1 L^2_t \dot{H}^{-\hf}_x}.
\end{equation}
\end{proposition}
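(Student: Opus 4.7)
The strategy is to adapt the frequency-localized parametrix construction of \cite{RT, KST} to accommodate the rough magnetic potential $A^{<n-1,j}_r + \calA^{0,free,j}_s$. The smallness assumption $\sum_{m=1}^{n-1}\|\calA_{x,r}^m\|_{R_m} + \|\calA_{x,s}^{0,free}\|_{S^1[0]} \lesssim \varepsilon$ plays the role of the small energy assumption on the free wave part of the connection form in \cite{KST}, while the redeeming norm $R_m$ defined in Subsection~\ref{subsec:redeeming_properties} has been designed precisely so that its components supply the space-time dispersive bounds required by the KST parametrix, even though the underlying inputs are at scaling super-critical regularity.

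As a first step, I would reduce to a frequency-localized problem: using the standard $S^1$/$N$ energy inequality and the fact that the LP-commutator $[P_k, P_{\leq k-C}(A^{<n-1,j}_r + \calA^{0,free,j}_s)\partial_j]$ is controlled in $N_k$ by bilinear estimates with one rough input, it suffices to prove the dyadic bound
\begin{equation*}
 \|P_k\phi\|_{S_k^1} \lesssim \|P_k f\|_{\dot{H}^1_x} + \|P_k g\|_{L^2_x} + \|P_k F\|_{N_k \cap L^2_t \dot{H}^{-1/2}_x}
\end{equation*}
for the frequency-localized equation $\Box (P_k\phi) + 2i P_{\leq k-C}(A^{<n-1,j}_r + \calA^{0,free,j}_s)\,\partial_j(P_k\phi) = P_k F$. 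The $L^2_t\dot{H}^{-1/2}_x$ component of the $S^1$ bound on the left follows at the end by testing the equation against the $L^2_t\dot{H}^{1/2}_x$ dual, using the $S^1$ bound already obtained for $P_k\phi$.

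Next, for each $k\in\bbZ$ and each sign $\pm$, I would construct a deterministic phase function $\psi^{n,\pm}_k$ by the KST angular-sector recipe: write
\begin{equation*}
 \psi^{n,\pm}_k(t,x) = \sum_{k' \leq k-C}\sum_{\kappa}\psi^{n,\pm}_{k,k',\kappa}(t,x),
\end{equation*}
where $\kappa$ ranges over caps of diameter $\sim 2^{(k'-k)/2}$ on $\bbS^3$ separated by an angle $\gtrsim 2^\sigma \cdot 2^{(k'-k)/2}$ from $\pm \eta$ (with the usual $\sigma$-cutoff of small-angle interactions), and $\psi^{n,\pm}_{k,k',\kappa}$ is the solution along null directions of a transport equation driven by the sector-localized piece $P_{k'}^{\kappa}\Pi_{>\sigma}^{\eta}(A^{<n-1,j}_r + \calA^{0,free,j}_s)$. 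The design is such that the conjugation $\phi \mapsto e^{\mp i\psi^{n,\pm}_k}\phi$ approximately commutes with $\Box_{A^{<n-1}}^p$ on the positive/negative characteristic cone modulo an error that can be closed in $N_k$.

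The crucial technical step is to verify that the rough-input phase function $\psi^{n,\pm}_k$ satisfies all of the $L^\infty_{t,x}$, null-frame $L^2$, and $PW$-type bounds that were proved in \cite[Section 5]{KST} for smooth $S^1$ inputs. The smooth piece $\calA^{0,free}_{x,s}$ is treated verbatim as in \cite{KST}. For the rough piece $A^{<n-1}_{x,r} = \sum_{m=1}^{n-1}\calA^m_{x,r}$, each $\calA^m_{x,r}$ is a sharply frequency-localized free wave, so in particular its modulation vanishes on scale $\lesssim 2^m$ and only $k'=m+\calO(1)$ contributes. The required dispersive bounds then reduce to: the $R_m L^2_t L^\infty_x$ norm (with the $\gamma(k',l')$ weight) to control $\nabla\psi^{n,\pm}_k$ in the dual of $PW$; the $R_m L^\infty_t L^\infty_x$ norm (with the angular weight) for pointwise control of $\psi^{n,\pm}_k$ and hence of $e^{\mp i\psi^{n,\pm}_k}-1$; the $R_m L^2_t L^6_x$ and $R_m Str$ components for the remaining dispersive inputs; and the $S_m^{1-\delta_\ast}$ component, at a small regularity loss of $2^{\delta_\ast m}$, for the low-modulation contributions. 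The factor $2^{\delta_\ast m}$ is compensated in every case by the off-diagonal decay $2^{-\delta(k-k')}$ built into the KST bounds together with the restriction $k' \leq k-C$.

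The main obstacle, and the place where one must be careful, is that the KST construction repeatedly uses that $P_{k'}A^{free}$ enjoys $S^1$ bounds with a small constant; here the analogous small constant comes from the hypothesis $\sum_m\|\calA_{x,r}^m\|_{R_m} \lesssim \varepsilon$, but the norms involved are genuinely different and one has to check each phase function estimate of \cite{KST} remains valid when the $S^1$-type quantity of the input is replaced by its $R_m$ counterpart. With those phase function bounds in hand, the renormalization proceeds exactly as in \cite{KST}: one writes $P_k\phi = e^{-i\psi^{n,+}_k}\phi^{+} + e^{-i\psi^{n,-}_k}\phi^{-}$, shows that each $\phi^{\pm}$ solves an inhomogeneous wave equation with source controllable in $N_k$, applies the standard $\dot{H}^1\times L^2 \to S_k$ energy estimate, and finally inverts the conjugation using the $L^\infty_{t,x}$ control on $\psi^{n,\pm}_k$. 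Uniqueness follows from the same estimate applied to the difference of two solutions.
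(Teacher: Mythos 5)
Your overall strategy is the paper's: Proposition~\ref{prop:main_linear_estimate} is proved by adapting the frequency-localized parametrix of \cite{RT, KST}, building the phase from the low-frequency part of $A^{<n-1,j}_r+\calA^{0,free,j}_s$ with small-angle interactions cut off, and the genuinely new work is exactly what you identify: checking that the pointwise and decomposable phase-function bounds of \cite{KST} survive when $S^1$ control of the input is replaced by the redeeming $R_m$ control (in the paper this is Lemma~\ref{lem:det_phase_function_Linfty_bounds}, the difference bounds that follow it, and Lemma~\ref{lem:det_decomposable_est}); the mapping properties of the renormalization operators (Proposition~\ref{prop:det_renormalization_mapping_properties}) then follow as in Sections 6--11 of \cite{KST}, and the smooth piece $\calA^{0,free}_{x,s}$ is handled verbatim. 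So in spirit your plan matches the paper.

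The step that would fail as written is your renormalization and its inversion. You take, for each $k$ and sign $\pm$, a phase $\psi^{n,\pm}_k(t,x)$ depending only on $(t,x)$, posit an exact splitting $P_k\phi=e^{-i\psi^{n,+}_k}\phi^{+}+e^{-i\psi^{n,-}_k}\phi^{-}$, and propose to invert the conjugation using $L^\infty_{t,x}$ control on the phase. For Maxwell--Klein--Gordon the phase must depend on the frequency \emph{direction} $\eta=\xi/|\xi|$ of the scalar field (the operators $L^\eta_\mp$, $\Delta_{\eta^\perp}^{-1}$ and the cutoff $\Pi^\eta_{>2^{\sigma(j-k)}}$ all depend on $\eta$), so the renormalization is a pseudodifferential operator $e^{-i\psi^{n,<k}_{\pm}}_{<k-C}(t,x,D)$, not a multiplication operator -- this is precisely why \cite{RT, KST} work with a parametrix rather than an honest gauge change, and it is also how \cite{KST} actually proceeds, contrary to your description. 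Consequently there is no exact splitting: one defines the approximate solution~\eqref{equ:definition_phi_app_k} using both left and right quantizations and iterates away the error, which requires the conjugation estimate~\eqref{equ:det_renormalization_error_est}; and invertibility is not a pointwise statement but the operator estimate that $e^{-i\psi^{n,<k}_{\pm}}_{<k-C}(t,x,D)\,e^{+i\psi^{n,<k}_{\pm}}_{<k-C}(D,y,s)-I$ is $O(\varepsilon)$ on $N_k$, $L^2$, $N_k^\ast$, whose proof (like the $N_k$- and $S_k^\sharp\to S_k$-bounds) rests on the decomposable calculus of Lemma~\ref{lem:decomposability_lemma_KST} together with the symbol and difference bounds for the phase, not on $L^\infty$ bounds alone. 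Once you replace the multiplicative conjugation by these renormalization operators and supply the decomposable estimates for the rough phase -- which is where the $R_mL^2_tL^\infty_x$, the angularly weighted $R_mL^\infty_tL^\infty_x$, and the $S^{1-\delta_\ast}_m$ components are used, with the $2^{\delta_\ast m}$ loss absorbed by the frequency gap $j\le k-C$ as you anticipate -- your argument becomes the paper's proof.
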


The proof of Proposition~\ref{prop:main_linear_estimate} proceeds as in~\cite{KST}. We first define an approximate solution via a parametrix construction. Then we obtain an exact solution satisfying the desired linear estimate by iterating away the error. To this end we build approximate solutions $\phi_{app,k}$ at each spatial frequency $k \in \bbZ$ to the frequency localized problems 
\begin{equation} \label{equ:approx_sol_magn_wave_freq_k}
 \bigl( \Box + 2i P_{\leq k-C} ( A^{<n-1, j}_r + \calA_{s}^{0, free, j} )\partial_j P_k \bigr) \phi = P_k F, \quad P_k \phi[0] = (P_k f, P_k g)
\end{equation}
and assemble these to a full approximate solution $\phi_{app} := \sum_{k \in \bbZ} \phi_{app, k}$. The approximate solution $\phi_{app, k}$ at frequency $k \in \bbZ$ is essentially defined as
\begin{equation} \label{equ:definition_phi_app_k}
 \begin{aligned}
  \phi_{app, k}(t,x) &:= \frac{1}{2} \sum_\pm e^{-i \psi_{\pm}^{n,<k}}_{<k-C}(t,x,D) \frac{e^{\pm i t |D|}}{i|D|} e^{+i\psi_{\pm}^{n, <k}}_{<k-C}(D,y,0) \bigl( i |D| P_k f \pm P_k g \bigr) \\
  &\quad \, \pm \frac{1}{2} \sum_\pm e^{-i\psi_{\pm}^{n, <k}}_{<k-C}(t,x,D) \frac{K^\pm}{i|D|} e^{+i\psi_{\pm}^{n, <k}}_{<k-C}(D,y,s) P_k F,
 \end{aligned}
\end{equation}
where the phase $\psi_{\pm}^{n, <k}(t,x,\xi)$ is defined in Subsection~\ref{subsec:det_phase_function} below and where $K^\pm G$ are the Duhamel terms
\begin{equation*}
 K^\pm G(t) = \int_0^t e^{\pm i (t-s)|D|} G(s) \, \ud s.
\end{equation*}
The renormalization operators $e^{-i \psi_{\pm}^{n,<k}}_{<k-C}(t,x,D)$ and $e^{+i\psi_{\pm}^{n, <k}}_{<k-C}(D,y,s)$ denote the left and right quantization of the symbol $e^{+i\psi_{\pm}^{n, <k}}_{<k-C}(t,x, \xi)$, where the subscript $<k-C$ denotes space-time $(t,x)$-frequency localization to frequencies $\leq k-C$, pointwise in $\xi$. 

The definition of the phase function $\psi_\pm^{n,<k}(t,x,\xi)$ in Subsection~\ref{subsec:det_phase_function} below is the exact analogue of the corresponding definition of the phase function introduced in~\cite[Section 6]{KST}. However, here we build the rough free wave evolution $A_{x,r}^{<n-1}$ into the phase that does not belong to the critical $S^1$ space and only enjoys the redeeming spacetime bounds of the $R_k$ spaces.
Despite the different (redeeming) bounds on these rough components of the phase function, the construction from~\cite{KST} turns out to (largely) go through. 
Following Section 6 in~\cite{KST} the proof of Proposition~\ref{prop:main_linear_estimate} reduces to establishing the following mapping properties of the frequency-localized renormalization operators $e^{\pm i \psi^{n,<k}_{\pm}}_{<k-C}(t,x,D)$, which are the same as the ones in~\cite{KST}.
\begin{proposition} \label{prop:det_renormalization_mapping_properties} 
Let $n \geq 1$ and assume that
\begin{equation*}
 \sum_{m=1}^{n-1} \|\calA_{x,r}^m\|_{R_m} + \|\calA_{x,s}^{0, free}\|_{S^1[0]} \lesssim \varepsilon. 
\end{equation*}
For every $k \in \bbZ$ the frequency-localized renormalization operators $e^{\pm i \psi_{\pm}^{n, <k}}_{<k-C}(t,x,D)$ have the following mapping properties with $Z \in \{ N_k, L^2, N_k^\ast \}$:
\begin{align}
&e^{\pm i \psi_{\pm}^{n, <k}}_{<k-C}(t,x,D) \colon Z \longrightarrow Z, \\
&\partial_t e^{\pm i \psi_{\pm}^{n, <k}}_{<k-C}(t,x,D) \colon Z \longrightarrow \varepsilon Z, \\
&e^{-i \psi_{\pm}^{n, <k}}_{<k-C}(t,x,D) e^{+i \psi_{\pm}^{n, <k}}_{<k-C}(D, y, s) - I \colon Z \longrightarrow \varepsilon Z, \\
&e^{-i \psi_{\pm}^{n, <k}}_{<k-C}(t,x,D) \Box - \Box^p_{A^{<n-1}_{<k}} e^{-i \psi_{\pm}^{n, <k}}_{<k-C}(t,x,D) \colon N_{k,\pm}^\ast \longrightarrow \varepsilon N_{k,\pm}, \label{equ:det_renormalization_error_est} \\
&e^{-i \psi_{\pm}^{n, <k}}_{<k-C}(t,x,D) \colon S_k^{\sharp} \longrightarrow S_k. 
\end{align}
\end{proposition}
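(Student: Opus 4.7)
The plan is to follow the overall structure of~\cite[Sections 6--8]{KST}. The crucial new feature is that the phase function $\psi^{n,<k}_\pm(t,x,\xi)$ incorporates the entire rough free wave evolution $A^{<n-1}_{x,r} = \sum_{m=1}^{n-1} \calA^m_{x,r}$, which lives at scaling super-critical regularity and only satisfies the redeeming $R_m$ bounds. First I would split
\begin{equation*}
 \psi^{n,<k}_\pm = \psi^{n,<k}_{\pm,s} + \psi^{n,<k}_{\pm,r},
\end{equation*}
where $\psi^{n,<k}_{\pm,s}$ is built from $\calA^{0,free}_{x,s}$ and $\psi^{n,<k}_{\pm,r}$ from $A^{<n-1}_{x,r}$. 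Since $\|\calA^{0,free}_{x,s}\|_{S^1[0]} \lesssim \varepsilon$ is exactly at energy regularity, the contribution of $\psi^{n,<k}_{\pm,s}$ to each of the renormalization operators is controlled verbatim by the arguments of~\cite{KST}. The entire problem therefore reduces to establishing the five listed mapping properties for the rough operator $e^{\pm i\psi^{n,<k}_{\pm,r}}_{<k-C}(t,x,D)$.

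Next I would derive the pointwise and mixed Lebesgue bounds on $\psi^{n,<k}_{\pm,r}$ and its derivatives. The defining formula for the phase in~\cite[Section 6]{KST} involves the half-wave projection of $A^{<n-1}_{x,r}$ together with an angular cutoff of type $\Pi^{\xi/|\xi|}_{>\cdot}$; the inverse of the derivative that is lost in this construction is paid for by the factor $2^{-(1-20\sigma)k_1}$ built into the $R_m L^\infty_t L^\infty_x$ weight, so summing dyadically in the low frequency $k_1 \leq k-C$ yields
\begin{equation*}
 \|\psi^{n,<k}_{\pm,r}\|_{L^\infty_{t,x}} + \sum_{1\leq|\alpha|\leq 10} 2^{-|\alpha|k}\|\partial_{t,x}^{\alpha} \psi^{n,<k}_{\pm,r}\|_{L^\infty_{t,x}} \lesssim \varepsilon,
\end{equation*}
with analogous $L^q_t L^r_x$ estimates obtained from the $R_m Str$ component, which encodes the Klainerman--Tataru gains furnished by the unit-scale atoms of the Wiener randomization. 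Granting these bounds, the boundedness on $Z \in \{N_k, L^2_x, N^\ast_k\}$, the $\varepsilon$-smallness gained from a $\partial_t$, the approximate inverse identity, and the $S^\sharp_k \to S_k$ mapping all follow by the symbol-calculus and Taylor-expansion arguments of~\cite[Sections 6--7]{KST}, in which one never uses more than these Lebesgue estimates on the phase.

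The main obstacle will be the renormalization error estimate~\eqref{equ:det_renormalization_error_est}. Following~\cite[Section 8]{KST} this reduces to checking that $\psi^{n,<k}_{\pm,r}$ is an approximate solution of the eikonal transport equation
\begin{equation*}
 \Bigl(\partial_t \pm \tfrac{\xi}{|\xi|}\cdot \nabla_x\Bigr)\psi^{n,<k}_{\pm,r}(t,x,\xi) = \tfrac{\xi^j}{|\xi|} P_{<k-C}\bigl(A^{<n-1}_{x,r}\bigr)_j + \mathrm{err},
\end{equation*}
with $\mathrm{err}$ bounded in $N_{k,\pm}$ by $\varepsilon$ times the $N^\ast_{k,\pm}$ norm of the input. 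The angular cutoff $\Pi^{\xi/|\xi|}_{>\cdot}$ built into the definition of the phase forces the transport derivative on the left to fall on the ``transverse'' component of $A^{<n-1}_{x,r}$, which gains one factor of the angle, equivalently of the modulation. After a further $S_l$ or $Q_j$ decomposition of the input, the error is then estimated in $L^2_t L^2_x$ using the $R_m L^2_t L^\infty_x$ and $R_m L^2_t L^6_x$ components of the redeeming norm; the $\gamma(k',l')$-weights and the $2^{\delta_1 l}$ prefactors provide just enough room to sum dyadically in $k_1 \leq k-C$ and in the modulation parameter with a total gain of $\lesssim \varepsilon$ coming from the smallness of $\sum_{m=1}^{n-1}\|\calA^m_{x,r}\|_{R_m}$. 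Combining this with the analogous smooth estimate from~\cite{KST} for $\psi^{n,<k}_{\pm,s}$ completes the proof of~\eqref{equ:det_renormalization_error_est}, and thus of the proposition.
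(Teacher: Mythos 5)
Your overall strategy — split $\psi^{n,<k}_\pm$ into its smooth and rough parts, treat the smooth part as in \cite{KST}, and reduce everything to symbol bounds on the rough part obtained from the redeeming $R_m$ norms — is the same reduction the paper makes. The gap is in \emph{which} symbol bounds you propose to prove. You claim that plain $L^\infty_{t,x}$ bounds on $\psi^{n,<k}_{\pm,r}$ and its $\partial_{t,x}$-derivatives, together with mixed $L^q_tL^r_x$ bounds from the $R_mStr$ component, are all that the \cite{KST} machinery consumes ("one never uses more than these Lebesgue estimates on the phase"). That is not the case. The fixed-time $L^2$ (and hence $N_k$, $N_k^\ast$) boundedness of $e^{\pm i\psi}_{<k-C}(t,x,D)$, the approximate-inverse property, and the disposal/averaging arguments in \cite[Sections 7--8]{KST} are driven by bounds on \emph{differences} of the phase at two space-time points together with $\partial_{|\xi|}^l\partial_\eta^\alpha$-derivatives, of the form $|\psi(t,x,\xi)-\psi(s,y,\xi)|\lesssim \varepsilon\log(1+2^k(|t-s|+|x-y|))$ and $|\partial_{|\xi|}^l\partial_\eta^\alpha(\psi(t,x,\xi)-\psi(s,y,\xi))|\lesssim \varepsilon(1+2^k(|t-s|+|x-y|))^{\sigma(|\alpha|+1)}$, i.e.\ the analogues of \cite[Lemma 7.4]{KST}, which in this paper are \eqref{equ:det_phase_function_difference_psis}--\eqref{equ:det_phase_function_difference_xi_derivative_psis}; each $\partial_\eta$ costs a factor $\theta^{-1}$ on an angular sector, so one needs the sector-localized pointwise bounds with the angular gain $\min\{(\theta 2^j)^{\frac32-},1\}$ coming from the $R_jL^\infty_tL^\infty_x$ component (Lemma~\ref{lem:det_phase_function_Linfty_bounds}), not just an unlocalized $L^\infty$ bound. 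None of this appears in your plan, and without it the $Z\to Z$ and $S_k^\sharp\to S_k$ mappings do not follow.

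Second, what actually plugs into the null-frame and conjugation estimates of \cite{KST} are \emph{decomposable} $DL^q_tL^r_x$ bounds (the calculus of Lemma~\ref{lem:decomposability_lemma_KST}), whose definition carries the square sum over angular caps and up to $40$ $\xi$-derivatives. For the rough part these are not a consequence of the $R_mStr$ component alone: the paper's proof of \eqref{equ:decomposable_estimate_det_phase_angle}--\eqref{equ:decomposable_estimate_det_phase_qinfty} (Lemma~\ref{lem:det_decomposable_est}) exploits the Coulomb gauge for one factor of $\theta$, the $\Delta_{\eta^\perp}^{-1}$ loss of $\theta^{-2}2^{-j}$, and then \emph{interpolates} the cap-square-summed $R_jL^2_tL^\infty_x$ component (with its $\gamma(k',l')$ weights) against a sharp-admissible Strichartz bound measured in $S^{1-\delta_\ast}_j$, with Bernstein on sectors; it is also only valid on a restricted range of exponents ($\frac2q+\frac3r\le\frac3{2+}$, $q>4+$ for $r=\infty$), precisely to avoid losing derivatives from the super-critical input. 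Your sketch of the conjugation error \eqref{equ:det_renormalization_error_est} via the approximate eikonal equation is in the right spirit (the paper indeed runs the proof of (82) in \cite{KST} using the redeeming $RL^2_tL^\infty_x$ norm), but as written it rests on the same unproved decomposable bounds, and note also that the transport operator hitting $\psi_\pm$ should be $L_\mp^\eta$, not $L_\pm^\eta$. To close the argument you need to state and prove the three families of estimates the paper isolates: sector-localized $L^\infty$ bounds with $\xi$-derivatives, difference bounds, and decomposable estimates for $\psi^{n,<k}_{\pm,r}$.
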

The proof of Proposition~\ref{prop:det_renormalization_mapping_properties} proceeds exactly as in Sections~6--11 in~\cite{KST} once we have established certain pointwise and decomposable estimates for the ``deterministic'' phase functions $\psi_{\pm}^{n, <k}(t,x,\xi)$. This is accomplished in Subsection~\ref{subsec:det_bounds_phase_function} below. 
We remark that the proof of the conjugation estimate~\eqref{equ:det_renormalization_error_est} is essentially identical to the corresponding proof of the conjugation estimate (82) in~\cite{KST}, only that we use the redeeming $R L^2_t L^\infty_x$ norm for the rough evolution $A_{x,r}^{<n-1}$.

\subsection{The ``deterministic'' phase function} \label{subsec:det_phase_function}
We begin with a heuristic motivation for the choice of the phase function, see~\cite[Section 6]{KST} or \cite[Section 7]{RT} for a more detailed account. It is reasonable to expect that a linear magnetic wave equation of the form $(\Box + 2i A^j \partial_j) \phi = 0$ can be approximately conjugated to $\Box$ via some phase correction $e^{i\psi}$ with $\nabla \psi \approx A$. To define an approximate solution to $(\Box + 2i A^j \partial_j) \phi = 0$, let us therefore consider distorted waves of the form
\begin{equation*}
 \phi(t,x) = e^{-i\psi_\pm(t,x)} e^{\pm i t |\xi| + i x \cdot \xi}
\end{equation*}
and compute 
\begin{equation*}
 \begin{aligned}
  \bigl( \Box + 2i A^j \partial_j \bigr) \phi &= 2 \bigl( \pm |\xi| (\partial_t \psi_{\pm}) - \xi \cdot (\nabla_x \psi_\pm) + A_x \cdot \xi \bigr) \phi \\
  &\quad + \bigl( -2 A_x \cdot (\nabla_x \psi_\pm) + |\nabla_x \psi_\pm|^2 - i (\Box \psi_\pm) \bigr) \phi.
 \end{aligned}
\end{equation*}
While the terms in the second parenthesis can be expected to be error terms, we would ideally like to choose the phase correction $\psi_\pm$ so that the expression in the first parenthesis vanishes.
Introducing the differential operators
\begin{align*}
 L_\pm^{\eta} := \pm \partial_t + \eta \cdot \nabla_x, \quad \Delta_{\eta^\perp} := \Delta - (\eta \cdot \nabla_x)^2, \quad \eta := \frac{\xi}{|\xi|} \in \bbS^3,
\end{align*}
we may formulate this requirement more succinctly as
\begin{equation*}
 L_\mp^\eta \psi_\pm = A_x \cdot \frac{\xi}{|\xi|} = A_x \cdot \eta.
\end{equation*}
Applying $L_\pm^\eta$, noting that $L_\pm^\eta L_\mp^\eta = - \Box - \Delta_{\eta^\perp}$, and neglecting $\Box$ (since we may assume that $\Box A_x = 0$), we obtain for fixed $\xi$ that formally we would like to choose
\begin{equation*}
 \psi_\pm = - \Delta_{\eta^\perp}^{-1} L_\pm^\eta \bigl( A_x \cdot \eta \bigr), \quad \eta := \frac{\xi}{|\xi|}.
\end{equation*}
Unfortunately, this symbol is too singular due to the degeneracy of $\Delta_{\eta^\perp}^{-1}$ when $\phi$ and $A$ have parallel frequencies. 
Nevertheless, a viable choice is to smoothly cut off small angle interactions in the above expression for $\psi_\pm$ and to observe that the arising additional error terms turn out to be manageable, because one can gain from the small interaction angle. For general initial data $\int e^{ix \cdot \xi} \hat{f}(\xi) \, \ud \xi$, we obtain by linearity the approximate solution 
\begin{equation*}
 \phi(t,x) = \int e^{-i \psi_\pm(t,x,\xi)} e^{\pm i t |\xi| + i x \cdot \xi} \hat{f}(\xi) \, \ud \xi,
\end{equation*}
in other words we apply the pseudodifferential renormalization operator $e^{-i\psi_\pm}(t,x,D)$. 

\medskip 

Let us now turn to the exact choice of the phase correction for our magnetic wave operator $\Box_{A^{<n-1}}^p$, where $n \geq 1$ is arbitrary. In view of the above considerations, for every frequency $k \in \bbZ$ we are led to define the ``deterministic'' phase function by
\begin{align*}
 \psi_{\pm}^{n, <k}(t,x,\xi) := \sum_{0 \leq j \leq k-C} \psi_{\pm, j}^{n, <k, r}(t,x,\xi) + \sum_{j \leq k-C} \psi_{\pm, j}^{n, <k, s}(t,x,\xi), 
\end{align*}
where its rough part is defined as
\begin{align*}
 \psi_{\pm, j}^{n, <k, r}(t,x,\xi) :=& - L_\pm^\eta \Delta_{\eta^\perp}^{-1} \biggl( \Pi_{ > 2^{\sigma (j-k)}}^{\eta}  P_j A_{x, r}^{<n-1} \cdot \eta \biggr), \qquad \eta := \frac{\xi}{|\xi|} \in \bbS^{3},
\end{align*}
and its smooth part is defined as
\begin{align*}
 \psi_{\pm, j}^{n, <k, s}(t,x,\xi) :=& - L_\pm^\eta \Delta_{\eta^\perp}^{-1} \biggl( \Pi_{ > 2^{\sigma (j-k)}}^{\eta}  P_j \calA_{x, s}^{0, free} \cdot \eta \biggr), \qquad \eta := \frac{\xi}{|\xi|} \in \bbS^{3}.
\end{align*}

\subsection{Pointwise and decomposable estimates for the ``deterministic'' phase function} \label{subsec:det_bounds_phase_function}
First, we establish some $L^\infty$ bounds on the ``deterministic'' phase function that are used throughout this section.
To this end it is helpful to introduce some notation for the sector projection of $\psi_{\pm, j}^{n, <k}$ in frequency space for an angle $0 < \theta \lesssim 1$,
\begin{equation*}
 \psi_{\pm, j, (\theta)}^{n, <k}(t,x,\xi) = \bigl( \Pi_\theta^\eta \psi_{\pm, j}^{n, <k} \bigr)(t,x,\xi), \qquad \eta := \frac{\xi}{|\xi|}.
\end{equation*}

\begin{lemma} \label{lem:det_phase_function_Linfty_bounds}
 Let $n \geq 1$. For the rough part of the ``deterministic'' phase function we have for any $k \in \bbZ$, any $0 \leq j \leq k-C$, and for any $1 \gtrsim \theta > 2^{\sigma (j-k)}$ that
 \begin{align}
  \bigl| \psi_{\pm, j, (\theta)}^{n, <k, r}(t,x,\xi) \bigr| &\lesssim 2^{-(2-20\sigma)j} \theta^{-1-\delta_1} \min\{ (\theta 2^j)^{\frac{3}{2}-}, 1 \} \| P_j A_{x,r}^{<n-1} \|_{R_j}, \\
  \bigl| \psi_{\pm, j}^{n, <k, r}(t,x,\xi) \bigr| &\lesssim 2^{-(1-20\sigma-\delta_1)j} \| P_j A_{x,r}^{<n-1} \|_{R_j}, \\
  \bigl| \nabla_{t,x} \psi_{\pm, j, (\theta)}^{n, <k, r}(t,x,\xi) \bigr| &\lesssim 2^{-(1-20\sigma)j} \theta^{-1-\delta_1} \min\{ (\theta 2^j)^{\frac{3}{2}-}, 1 \} \| P_j A_{x,r}^{<n-1} \|_{R_j},  \\
  \bigl| \nabla_{t,x} \psi_{\pm, j}^{n, <k, r}(t,x,\xi) \bigr| &\lesssim 2^{+(20\sigma+\delta_1)j} \| P_j A_{x,r}^{<n-1} \|_{R_j}.
 \end{align}
 For the smooth part of the ``deterministic'' phase function we have for any $k \in \bbZ$, any $j \leq k-C$, and any $1 \gtrsim \theta > 2^{\sigma (j-k)}$ that
 \begin{align}
  \bigl| \psi_{\pm, j, (\theta)}^{n, <k, s}(t,x,\xi) \bigr| &\lesssim \theta^{\frac{1}{2}} 2^j \| P_j \calA_{x,s}^{0, free} \|_{L^\infty_t L^2_x}, \\
  \bigl| \psi_{\pm, j}^{n,<k, s}(t,x,\xi) \bigr| &\lesssim 2^j \| P_j \calA_{x,s}^{0, free} \|_{L^\infty_t L^2_x},  \\
  \bigl| \nabla_{t,x} \psi_{\pm, j, (\theta)}^{n,<k, s}(t,x,\xi) \bigr| &\lesssim \theta^{\frac{1}{2}} 2^j \| \nabla_{t,x} P_j \calA_{x,s}^{0, free}\|_{L^\infty_t L^2_x}, \\
  \bigl| \nabla_{t,x} \psi_{\pm, j}^{n, <k, s}(t,x,\xi) \bigr| &\lesssim 2^j \| \nabla_{t,x} P_j \calA_{x,s}^{0, free} \|_{L^\infty_t L^2_x}.
 \end{align}
 Finally, for derivatives of the rough part of the phase function with respect to the frequency variable we have for any multi-index $\alpha$ with $|\alpha| \geq 1$, any $l \geq 0$, any $k \in \bbZ$, any $0 \leq j \leq k-C$, and any $1 \gtrsim \theta > 2^{\sigma (j-k)}$ that
 \begin{align}
  \bigl| \partial_{|\xi|}^l \partial_\eta^\alpha \psi_{\pm, j, (\theta)}^{n,<k, r}(t,x,\xi) \bigr| &\lesssim \theta^{-1-|\alpha|} 2^{-(2-20\sigma)j} \| P_j A_{x,r}^{<n-1} \|_{R_j}, \\ 
  \bigl| \partial_{|\xi|}^l \partial_\eta^\alpha \psi_{\pm, j}^{n, <k, r}(t,x,\xi) \bigr| &\lesssim 2^{\sigma(1+|\alpha|)(k-j)} 2^{-(2-20\sigma)j} \| P_j A_{x,r}^{<n-1} \|_{R_j}, \\   
  \bigl| \partial_{|\xi|}^l \partial_\eta^\alpha \nabla_{t,x} \psi_{\pm, j, (\theta)}^{n, <k, r}(t,x,\xi) \bigr| &\lesssim \theta^{-1-|\alpha|} 2^{-(1-20\sigma)j} \| P_j A_{x,r}^{<n-1} \|_{R_j}, \\
  \bigl| \partial_{|\xi|}^l \partial_\eta^\alpha \nabla_{t,x} \psi_{\pm, j}^{n, <k, r}(t,x,\xi) \bigr| &\lesssim 2^{\sigma(1+|\alpha|)(k-j)} 2^{-(1-20\sigma)j} \| P_j A_{x,r}^{<n-1} \|_{R_j}.
 \end{align}
 Similarly, for derivatives of the smooth part of the phase function with respect to the frequency variable we have for any multi-index $\alpha$ with $|\alpha| \geq 1$, any $l \geq 0$, any $k \in \bbZ$, any $j \leq k-C$, and any $1 \gtrsim \theta > 2^{\sigma (j-k)}$ that
 \begin{align}
  \bigl| \partial_{|\xi|}^l \partial_\eta^\alpha \psi_{\pm, j, (\theta)}^{n, <k, s}(t,x,\xi) \bigr| &\lesssim \theta^{\frac{1}{2}-|\alpha|} 2^j \| P_j \calA_{x,s}^{0, free} \|_{L^\infty_t L^2_x}, \\ 
  \bigl| \partial_{|\xi|}^l \partial_\eta^\alpha \psi_{\pm, j}^{n, <k, s}(t,x,\xi) \bigr| &\lesssim 2^{\sigma (|\alpha|-\frac{1}{2}) (k-j)} 2^j \| P_j \calA_{x,s}^{0, free} \|_{L^\infty_t L^2_x}, \\ 
  \bigl| \partial_{|\xi|}^l \partial_\eta^\alpha \nabla_{t,x} \psi_{\pm, j, (\theta)}^{n,<k, s}(t,x,\xi) \bigr| &\lesssim \theta^{\frac{1}{2}-|\alpha|} 2^j \| \nabla_{t,x} P_j \calA_{x,s}^{0, free} \|_{L^\infty_t L^2_x}, \\
  \bigl| \partial_{|\xi|}^l \partial_\eta^\alpha \nabla_{t,x} \psi_{\pm, j}^{n,<k, s}(t,x,\xi) \bigr| &\lesssim 2^{\sigma (|\alpha|-\frac{1}{2}) (k-j)} 2^j \| \nabla_{t,x} P_j \calA_{x,s}^{0, free} \|_{L^\infty_t L^2_x}.
 \end{align}
\end{lemma}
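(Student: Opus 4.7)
The plan is to reduce all claimed bounds to (a) a symbolic analysis of the Fourier multiplier $L_\pm^\eta \Delta_{\eta^\perp}^{-1}$ restricted to the relevant frequency support, and (b) an $L^\infty_t L^\infty_x$ control of the angularly localized input, extracted from the $R_j L^\infty_t L^\infty_x$ component of the $R_j$ norm for the rough part and from Bernstein's inequality applied to the fixed-time $L^2_x$ norm for the smooth part. Since the symbol $\psi_{\pm,j}^{n,<k}(t,x,\xi)$ depends on $\xi$ only through $\eta = \xi/|\xi| \in \bbS^3$, we have $\partial_{|\xi|}\psi \equiv 0$, so only the case $l = 0$ requires work; the cases $l \geq 1$ in the frequency-derivative estimates are trivial.

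For the rough part with angular restriction $\theta > 2^{\sigma(j-k)}$, I would write
\begin{equation*}
 \psi_{\pm, j, (\theta)}^{n, <k, r}(t,x,\xi) = - L_\pm^\eta \Delta_{\eta^\perp}^{-1}\bigl( \Pi_\theta^\eta P_j A_{x, r}^{<n-1} \cdot \eta \bigr)(t,x),
\end{equation*}
so that the $x$-Fourier support is restricted to the annular sector $\{|\xi'|\sim 2^j,\ \angle(\xi',\pm\eta) \sim \theta\}$. On this support I would bound the composite multiplier symbol $(\pm \tau + \eta \cdot \xi')/(-|\xi'_\perp|^2)$ via the standard symbolic calculus, taking advantage of the free wave identity $\Box A^{<n-1}_{x,r} = 0$ and the null structure of $L_\pm^\eta$ on the opposite light-cone component (which gains a factor of $\theta^2$), in order to extract the required power of $\theta$. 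Meanwhile, the $L^\infty_t L^\infty_x$ bound on the angularly localized piece
\begin{equation*}
 \bigl\|\Pi_\theta^\eta P_j A_{x,r}^{<n-1}\bigr\|_{L^\infty_t L^\infty_x} \lesssim 2^{-(1-20\sigma)j}\theta^{-\delta_1}\min\bigl\{(\theta 2^j)^{\frac32-},1\bigr\}\|P_j A_{x,r}^{<n-1}\|_{R_j}
\end{equation*}
follows by reading off the $R_j L^\infty_t L^\infty_x$ component of the $R_j$ norm with dyadic parameter $2^l \sim \theta$, noting that the annular sector at angle $\theta$ from $\eta$ is covered by $\calO(1)$ caps of angular diameter $\theta$. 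Multiplying the symbol bound by this $L^\infty$ estimate produces the pointwise bound on $|\psi_{\pm, j, (\theta)}^{n,<k,r}|$, and the corresponding gradient estimate follows from the additional factor $2^j$ produced by a spatial or temporal derivative acting on a function with spatial frequency $\sim 2^j$. The unrestricted estimates on $|\psi_{\pm, j}^{n,<k,r}|$ and $|\nabla_{t,x}\psi_{\pm, j}^{n,<k,r}|$ are then recovered by summing dyadically in $\theta \in [2^{\sigma(j-k)}, 1]$; the sum is dominated by the crossover scale $\theta \sim 2^{-j}$ and converges geometrically thanks to $\delta_1 < \tfrac12$.

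The smooth part bounds are treated analogously, with the $R_j L^\infty_t L^\infty_x$ input replaced by the fixed-time Bernstein inequality
\begin{equation*}
 \bigl\| \Pi_\theta^\eta P_j \calA_{x,s}^{0,free} \bigr\|_{L^\infty_x} \lesssim \bigl(2^{4j}\theta^3\bigr)^{\frac12} \bigl\| P_j \calA_{x,s}^{0,free} \bigr\|_{L^2_x},
\end{equation*}
which, combined with the symbol analysis above, accounts for the $\theta^{1/2}$ factor in the sector estimates (and the $2^j$ factor in the gradient estimates). For the derivatives $\partial_\eta^\alpha$ with $|\alpha| \geq 1$, the Leibniz rule distributes each derivative over (i) the $\eta$-dependent operators $L_\pm^\eta$ and $\Delta_{\eta^\perp}^{-1}$, (ii) the angular cutoffs $\Pi_\theta^\eta$ or $\Pi_{>2^{\sigma(j-k)}}^\eta$, and (iii) the explicit factor $\eta$. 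Each derivative acting on an angularly localized object at scale $\theta$ costs a factor of $\theta^{-1}$, producing the $\theta^{-|\alpha|}$ loss in the sector estimates and, in the unrestricted estimates, the loss $2^{\sigma(1+|\alpha|)(k-j)}$ for the rough part and $2^{\sigma(|\alpha|-\frac12)(k-j)}$ for the smooth part corresponding to the smallest admissible scale $\theta = 2^{\sigma(j-k)}$.

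The main obstacle is the careful bookkeeping of the powers of $\theta$ arising from (i) the symbol of the composite operator $L_\pm^\eta \Delta_{\eta^\perp}^{-1}$ (where the free wave identity and the null structure play a subtle role in distinguishing the same-sign and opposite-sign cone contributions to $A_{x,r}^{<n-1}$), (ii) the $L^\infty_t L^\infty_x$ angular bounds from the $R_j$ norm with its built-in Bernstein-type normalization via the $\min\{(\theta 2^j)^{3/2-},1\}$ factor, and (iii) the $\partial_\eta$ derivatives on angular cutoffs and on the $\eta$-dependent multipliers. The overall strategy mirrors the phase-function analysis of~\cite[Section 7]{KST}, with the key novelty being the use of the $R_j L^\infty_t L^\infty_x$ component of the redeeming $R_j$ norm in place of the $S^1$-based estimates used there.
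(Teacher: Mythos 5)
Your overall reduction (symbol bound for $L_\pm^\eta \Delta_{\eta^\perp}^{-1}$ on the sector, times an $L^\infty_t L^\infty_x$ bound for the angularly localized input coming from the $R_j L^\infty_t L^\infty_x$ component in the rough case and from Bernstein in the smooth case, then dyadic summation in $\theta$ and a $\theta^{-1}$ cost per $\partial_\eta$ derivative) is exactly the paper's strategy, and your observation that $\partial_{|\xi|}$ acts trivially is also used there. However, there is a genuine gap in where you get the crucial single power of $\theta$. You propose to extract it from ``the free wave identity $\Box A^{<n-1}_{x,r}=0$ and the null structure of $L_\pm^\eta$ on the opposite light-cone component (which gains a factor of $\theta^2$).'' That gain is not available uniformly: the symbol $|\pm\tau+\eta\cdot\xi'|$ is of size $\theta^2 2^j$ only when the half-wave sign and the side of the sector (near $\eta$ versus near $-\eta$) are matched; for the complementary pairing (e.g.\ the $\tau=+|\xi'|$ half-wave on the sub-sector near $\eta$ for $L_+^\eta$) one only has $|\pm\tau+\eta\cdot\xi'|\sim 2^j$, and the free wave $A^{<n-1}_{x,r}$ (and likewise $\calA^{0,free}_{x,s}$) contains both half-waves and both sub-sectors. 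With only the crude bounds $|L_\pm^\eta|\lesssim 2^j$ and $|\Delta_{\eta^\perp}^{-1}|\lesssim(\theta 2^j)^{-2}$ your sector estimates come out a full factor $\theta^{-1}$ worse than claimed, both for the rough part ($\theta^{-2-\delta_1}$ instead of $\theta^{-1-\delta_1}$) and for the smooth part ($\theta^{-1/2}2^j$ instead of $\theta^{1/2}2^j$).

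The missing ingredient is the Coulomb gauge: since $\partial^j A_j=0$, the Fourier transform of $A$ is orthogonal to $\xi'$, so on the sector where $\angle(\xi',\pm\eta)\sim\theta$ one has $|\widehat{A}(\xi')\cdot\eta|\lesssim\theta\,|\widehat{A}(\xi')|$; this is precisely the inner product with $\eta$ built into the definition of $\psi_\pm^{n,<k}$, and it supplies the one power of $\theta$ that upgrades $\theta^{-2}2^{-j}$ to $\theta^{-1}2^{-j}$ (the paper uses no gain whatsoever from $L_\pm^\eta$). Your write-up never invokes the divergence-free condition, so as stated the $\theta$-bookkeeping does not close. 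A secondary, more minor point: for the $\partial_\eta^\alpha$ sector bounds you should, as the paper does, switch to the plain $R_jStr$ bound $\|P_jA^{<n-1}_{x,r}\|_{L^\infty_t L^\infty_x}\lesssim 2^{-(1-20\sigma)j}\|P_jA^{<n-1}_{x,r}\|_{R_j}$ rather than carry along the cap-localized $\theta^{-\delta_1}\min\{(\theta 2^j)^{\frac32-},1\}$ factors; otherwise multiplying your zero-derivative sector bound by $\theta^{-|\alpha|}$ yields an extra $\theta^{-\delta_1}$ loss and, after summation, $2^{\sigma(1+\delta_1+|\alpha|)(k-j)}$ rather than the stated $2^{\sigma(1+|\alpha|)(k-j)}$.
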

\begin{proof}
For the rough part $\psi_{\pm, j, (\theta)}^{n,<k,r}$ of the ``deterministic'' phase function we use the Coulomb gauge condition to gain an additional factor of $\theta$ and the redeeming $L^\infty_t L^\infty_x$ norm with angular gains from our $R_j$ space to obtain that
\begin{align*}
 \bigl| \psi_{\pm, j, (\theta)}^{n, <k, r}(t,x,\xi) \bigr| &\lesssim \sup_{\eta} \, \bigl\| L_\pm^\eta \Delta_{\eta^\perp}^{-1} \Pi_\theta^\eta P_j A_{x,r}^{<n-1} \cdot \eta \bigr\|_{L^\infty_t L^\infty_x} \\
 &\lesssim \theta^{-2} 2^{-j} \sup_{\eta} \,  \bigl\| \Pi_\theta^\eta P_j A_{x,r}^{<n-1} \cdot \eta \bigr\|_{L^\infty_t L^\infty_x} \\
 &\lesssim \theta^{-1} 2^{-j} \bigl\| P_j A_{x,r}^{<n-1} \bigr\|_{L^\infty_t L^\infty_x} \\
 &\lesssim 2^{-(2-20\sigma)j} \theta^{-1-\delta_1} \min\{ (\theta 2^j)^{(\frac{3}{2}-)}, 1 \} \bigl\| P_j A_{x,r}^{<n-1} \bigr\|_{R_j}.
\end{align*}
Then the other bounds on the rough part of the phase function follow upon summing over the dyadic angles $1 \gtrsim \theta \gtrsim 2^{\sigma (j-k)}$ and upon taking an additional $\nabla_{t,x}$ derivative. 

Next, we turn to estimating the smooth part $\psi_{\pm, j, (\theta)}^{n,<k,s}$ of the ``deterministic'' phase function. We again exploit the Coulomb gauge condition to gain another factor of $\theta$ and then use the Bernstein estimate $\Pi_{\theta}^\eta P_j L^2_x \to (\theta^3 2^{4j})^{\frac{1}{2}} L^\infty_x$ to obtain that
\begin{align*}
 \bigl| \psi_{\pm, j, (\theta)}^{n, <k, s}(t,x,\xi) \bigr| &\lesssim \sup_{\eta} \, \Bigl\| L_\pm^\eta \Delta_{\eta^\perp}^{-1} \Pi_\theta^{\eta} \bigl( P_j \calA_{x,s}^{0, free} \cdot \eta \bigr) \Bigr\|_{L^\infty_t L^\infty_x} \\
 &\lesssim \theta^{-2} 2^{-j} \sup_{\eta} \, \bigl\| \Pi_\theta^{\eta} P_j \calA_{x,s}^{0, free} \cdot \eta \bigr\|_{L^\infty_t L^\infty_x} \\
 &\lesssim \theta^{-1} 2^{-j} \bigl\| \Pi_\theta^{\eta} P_j \calA_{x,s}^{0, free} \bigr\|_{L^\infty_t L^\infty_x} \\
 &\lesssim \theta^{\frac{1}{2}} 2^j \|P_j \calA_{x,s}^{0, free} \|_{L^\infty_t L^2_x}.
\end{align*}
The other bounds on the smooth part of the phase function then again follow upon summing over the dyadic angles $1 \gtrsim \theta \gtrsim 2^{\sigma (j-k)}$ and upon taking an additional $\nabla_{t,x}$ derivative. 
 
Finally, the estimates for $\partial_{|\xi|}^l \partial_\eta^\alpha$ derivatives of the phase function follow analogously, upon noting that differentiating with respect to $\eta := \frac{\xi}{|\xi|}$ yields additional $\theta^{-1}$ factors, while differentiating with respect to the radial frequency variable $|\xi|$ is harmless since the definition of the phase function only involves $\eta$. In the corresponding estimates for the rough part of the phase function it suffices to just use the standard redeeming $L^\infty_t L^\infty_x$ Strichartz norm, because an additional gain in the angle cannot ultimately compensate the additional $\theta^{-1}$ factors produced by differentiating with respect to the angular frequency variable.
\end{proof}

Next we establish $L^\infty$ bounds for differences of two ``deterministic'' phase functions. The following lemma is the analogue of Lemma~7.4 in~\cite{KST}.
\begin{lemma}[Additional symbol bounds for differences of ``deterministic'' phase functions]
 Let $n \geq 1$ and assume that
 \begin{equation*}
  \sum_{m=1}^{n-1} \|\calA_{x,r}^m\|_{R_m} + \|\calA_{x,s}^{0, free}\|_{S^1[0]} \lesssim \varepsilon. 
 \end{equation*}
 Then we have for any $k \in \bbZ$, any multi-index $\alpha$ with $1 \leq |\alpha| + 1 < \sigma^{-1}$, and any $l \geq 0$ that
 \begin{align}
  \bigl| \psi_{\pm}^{n, <k}(t,x,\xi) - \psi_{\pm}^{n, <k}(s,y,\xi) \bigr| &\lesssim \varepsilon \log \bigl( 1 + 2^k (|t-s| + |x-y|) \bigr), \label{equ:det_phase_function_difference_psis} \\
  \bigl| \partial_{|\xi|}^l \partial_\eta^\alpha \bigl( \psi_{\pm}^{n, <k}(t,x,\xi) - \psi_{\pm}^{n, <k}(s,y,\xi) \bigr) \bigr| &\lesssim \varepsilon \bigl( 1 + 2^k (|t-s| + |x-y|) \bigr)^{\sigma (|\alpha|+1) }. \label{equ:det_phase_function_difference_xi_derivative_psis}
 \end{align}
\end{lemma}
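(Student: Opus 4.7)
The plan is to mimic the proof of Lemma~7.4 in~\cite{KST}, adapted to accommodate the rough part of the phase which satisfies only the redeeming $R_j$ bounds. We dyadically decompose
\begin{equation*}
 \psi_{\pm}^{n,<k}(t,x,\xi) - \psi_{\pm}^{n,<k}(s,y,\xi) = \sum_{0 \leq j \leq k-C} \Delta \psi_{\pm,j}^{n,<k,r} + \sum_{j \leq k-C} \Delta \psi_{\pm,j}^{n,<k,s},
\end{equation*}
where $\Delta \psi_{\pm,j}^{n,<k,\sharp} := \psi_{\pm,j}^{n,<k,\sharp}(t,x,\xi) - \psi_{\pm,j}^{n,<k,\sharp}(s,y,\xi)$ for $\sharp \in \{r,s\}$. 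Setting $d := |t-s| + |x-y|$, for each fixed $j$ the mean value theorem combined with the pointwise bounds from Lemma~\ref{lem:det_phase_function_Linfty_bounds} yields the two-sided estimate
\begin{equation*}
 \bigl| \Delta \psi_{\pm,j}^{n,<k,r} \bigr| \lesssim 2^{-(1-20\sigma-\delta_1)j} \min\{1, 2^j d\} \, \|P_j A_{x,r}^{<n-1}\|_{R_j}, \quad \bigl| \Delta \psi_{\pm,j}^{n,<k,s} \bigr| \lesssim \min\{1, 2^j d\} \, \|P_j \calA_{x,s}^{0,free}\|_{L^\infty_t \dot H^1_x}.
\end{equation*}
The first factor comes from the direct $L^\infty$ bound, while the second comes from $d$ times the $\nabla_{t,x}\psi_{\pm,j}$ bound, and the two are combined via the min. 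For the derivative estimates, we use the bounds on $\partial_{|\xi|}^l \partial_\eta^\alpha \psi_{\pm, j}^{n,<k,\sharp}$ from Lemma~\ref{lem:det_phase_function_Linfty_bounds}, which introduce an additional factor of $2^{\sigma(1+|\alpha|)(k-j)}$ in each summand.

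Next I would carry out the sum over $j$. Using that the rough part is sharply frequency localized so that $P_j A_{x,r}^{<n-1} = \calA^j_{x,r}$ and hence $\sum_{j \geq 1} \|P_j A_{x,r}^{<n-1}\|_{R_j} \leq \sum_{m=1}^{n-1} \|\calA^m_{x,r}\|_{R_m} \lesssim \varepsilon$, and similarly (with a small loss in the tail sum absorbed by $\delta_2$) that $\sum_j \|P_j \calA_{x,s}^{0,free}\|_{L^\infty_t \dot H^1_x} \lesssim \varepsilon$ from the assumed $S^1[0]$ bound, one splits the $j$-range at the crossover $j^\ast := \lfloor \log_2(1/d) \rfloor$. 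The contribution of scales $j \leq j^\ast$ uses the MVT bound and contributes at most $d \cdot 2^{(20\sigma+\delta_1)j^\ast} \cdot \varepsilon \lesssim \varepsilon$, whereas the contribution of scales $j^\ast < j \leq k-C$ uses the pointwise bound and contributes at most $\varepsilon$. The counting of the scales $j$ that sit near the crossover produces exactly $\lesssim \log(1 + 2^k d)$ logarithmic multiplicity, yielding~\eqref{equ:det_phase_function_difference_psis}.

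For~\eqref{equ:det_phase_function_difference_xi_derivative_psis}, the same splitting applies, but each dyadic piece now carries the extra factor $2^{\sigma(1+|\alpha|)(k-j)}$. Accumulating these factors across the $O(\log(1 + 2^k d))$ scales near the crossover $j^\ast$ produces the polynomial bound $(1 + 2^k d)^{\sigma(|\alpha|+1)}$, where we use $|\alpha| + 1 < \sigma^{-1}$ to ensure the resulting exponent is small enough that the remaining $j$-sum is summable (absorbing the small $20\sigma + \delta_1$ slack from the $R_j$ pointwise bound).

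The main obstacle is to verify that the redeeming $R_j$-based pointwise bounds on the \emph{rough} part of the phase function are strong enough to substitute, in this argument, for the $L^\infty_t \dot H^1_x$-type bounds used in~\cite{KST}. This is precisely what Lemma~\ref{lem:det_phase_function_Linfty_bounds} provides: the price to pay is the $2^{(20\sigma+\delta_1)j}$-type loss, which is strictly less than $2^j$ and therefore harmless for summation after the crossover split, thanks to the smallness $\sigma, \delta_1 \ll 1$ and the standing constraint $|\alpha|+1 < \sigma^{-1}$.
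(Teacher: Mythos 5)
Your proposal follows essentially the same route as the paper's proof: dyadically decompose the phase, combine the pointwise and $\nabla_{t,x}$ bounds of Lemma~\ref{lem:det_phase_function_Linfty_bounds} via the mean value theorem, and split at the crossover scale $2^{-j^\ast}\sim |t-s|+|x-y|$, the logarithm arising because each of the $\sim \log(1+2^k d)$ smooth-part scales above the crossover contributes $O(\varepsilon)$ (the rough part sums to $O(\varepsilon)$ outright thanks to its $2^{-(1-20\sigma-\delta_1)j}$ decay), and the polynomial factor arising from the geometric sum of $2^{\sigma(|\alpha|+1)(k-j)}$ dominated by $j=j^\ast$ — which is exactly the paper's argument, organized by cases in $T$ there. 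Two bookkeeping corrections to your sketch: you neither have nor need $\ell^1$-summability of $\|P_j \calA_{x,s}^{0,free}\|_{L^\infty_t \dot{H}^1_x}$ (for $j<0$ the $S^1[0]$ norm only gives $\ell^2$; a uniform per-scale bound $\lesssim \varepsilon$ is all the argument uses, and indeed the absence of $\ell^1$ decay is precisely why the log appears), and the hypothesis $|\alpha|+1<\sigma^{-1}$ is needed not for convergence of the $j$-sums (which converge regardless) but to absorb the lowest-scale MVT contribution $2^{\sigma(|\alpha|+1)k}\,d$ into $(1+2^k d)^{\sigma(|\alpha|+1)}$, i.e. the paper's case $T \lesssim 2^{-k}$.
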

\begin{proof}
 In the following we use the shorthand notation $T := |x-y| + |t-s|$.
 We establish the first estimate~\eqref{equ:det_phase_function_difference_psis} separately for the rough and the smooth part of the phase function. 
 Recall that the rough component $A_{x,r}^{<n-1}$ is sharply localized to frequencies $1 \lesssim |\xi| \lesssim 2^{n-1}$. It therefore suffices to consider $k \geq 0$ for the rough part and we just bound by
 \begin{align*}
  \bigl| \psi_{\pm}^{n, <k, r}(t,x,\xi) - \psi_{\pm}^{n, <k, r}(s,y,\xi) \bigr| &\lesssim \sum_{0 \leq j \leq k-C} \sup_\eta \, \bigl\| \psi_{\pm, j}^{n, <k, r}(t,x,\xi) \bigr\|_{L^\infty_t L^\infty_x} \\
  &\lesssim \sum_{0 \leq j \leq k-C} 2^{-(1-20\sigma-\delta_1)j} \| P_j A_{x,r}^{<n-1} \|_{R_j} \\
  &\lesssim \varepsilon. 
 \end{align*}
 Instead, to bound the smooth part, for arbitrary $k \in \bbZ$ we pick some $j_0 \leq k-C$ and decompose into
 \begin{align*}
  &\bigl| \psi_{\pm}^{n, <k, s}(t,x,\xi) - \psi_{\pm}^{n, <k, s}(s,y,\xi) \bigr| \\
  &\quad \lesssim \sum_{j \leq j_0} \sup_{\eta} \, \bigl\| \nabla_{t,x} \psi_{\pm, j}^{n, <k, s} \bigr\|_{L^\infty_t L^\infty_x} \bigl( |t-s| + |x-y| \bigr) + \sum_{j_0 \leq j \leq k-C} \sup_\eta \, \bigl\| \psi_{\pm, j}^{n, <k, s}(t,x,\xi) \bigr\|_{L^\infty_t L^\infty_x} \\
  &\quad \lesssim \sum_{j \leq j_0} 2^j \| P_j \nabla_{t,x} \calA_{x,s}^{0, free} \|_{L^\infty_t L^2_x} T + \sum_{j_0 \leq j \leq k-C} 2^j \| P_j \calA_{x,s}^{0, free} \|_{L^\infty_t L^2_x} \\
  &\quad \lesssim 2^{j_0} T \varepsilon + \bigl| k - j_0 \bigr| \varepsilon.
 \end{align*}
 Then choosing $k-j_0 \sim \log_2(2^k T)$ yields the desired estimate.
 
 We also establish the second estimate~\eqref{equ:det_phase_function_difference_xi_derivative_psis} separately for the rough and the smooth part of the phase function. 
 For the rough part, we distinguish several cases. If $T \lesssim 2^{-k}$, we bound by
 \begin{align*}
  \bigl| \partial_{|\xi|}^l \partial_\eta^\alpha \bigl( \psi_{\pm}^{n,<k, r}(t,x,\xi) - \psi_{\pm}^{n,<k, r}(s,y,\xi) \bigr) \bigr| &\lesssim \sum_{0 \leq j \leq k-C} \sup_\eta \, \bigl\| \nabla_{t,x} \partial_{|\xi|}^l \partial_\eta^\alpha \psi_{\pm, j}^{n,<k,r} \bigr\|_{L^\infty_t L^\infty_x} \bigl( |t-s| + |x-y| \bigr) \\
  &\lesssim \sum_{0 \leq j \leq k-C} 2^{\sigma(1+|\alpha|)(k-j)} 2^{-(1-20\sigma)j} \| P_j A_{x,r}^{<n-1} \|_{R_j} T \\
  &\lesssim 2^{\sigma (|\alpha| + 1)k} T \varepsilon \\
  &\lesssim \varepsilon.
 \end{align*}
 Instead, if $2^{-k} \ll T \lesssim 1$, we pick some $0 \leq j_0 \leq k-C$ and decompose into
 \begin{align*}
  &\bigl| \partial_{|\xi|}^l \partial_\eta^\alpha \bigl( \psi_{\pm}^{n, <k, r}(t,x,\xi) - \psi_{\pm}^{n,<k, r}(s,y,\xi) \bigr) \bigr| \\
  &\lesssim \sum_{0 \leq j \leq j_0} \sup_{\eta} \, \bigl\| \nabla_{t,x} \partial_{|\xi|}^l \partial_\eta^\alpha \psi_{\pm, j}^{n,<k, r} \bigr\|_{L^\infty_t L^\infty_x} \bigl( |t-s| + |x-y| \bigr) + \sum_{j_0 \leq j \leq k-C} \sup_{\eta} \, \bigl\| \partial_{|\xi|}^l \partial_\eta^\alpha \psi_{\pm, j}^{n, <k, r} \bigr\|_{L^\infty_t L^\infty_x} \\
  &\lesssim \sum_{0 \leq j \leq j_0} 2^{\sigma(1+|\alpha|)(k-j)} 2^{-(1-20\sigma)j} \| P_j A_{x,r}^{<n-1} \|_{R_j} T + \sum_{j_0 \leq j \leq k-C} 2^{\sigma(1+|\alpha|)(k-j)} 2^{-(2-20\sigma)j} \| P_j A_{x,r}^{<n-1} \|_{R_j} \\
  &\lesssim 2^{\sigma(|\alpha|+1)k} T \varepsilon + 2^{\sigma(|\alpha|+1)(k-j_0)} \varepsilon \\
  &\simeq 2^{\sigma(|\alpha|+1)(k-j_0)} \bigl( 2^{j_0} T + 1 \bigr) \varepsilon.
 \end{align*} 
 Choosing $2^{-j_0} \sim T$, we obtain the desired estimate.
 Finally, if $T \gtrsim 1$, we just bound by
 \begin{align*}
  \bigl| \partial_{|\xi|}^l \partial_\eta^\alpha \bigl( \psi_{\pm}^{n,<k, r}(t,x,\xi) - \psi_{\pm}^{n,<k, r}(s,y,\xi) \bigr) \bigr| &\lesssim \sum_{0 \leq j \leq k-C} \sup_\eta \, \bigl\| \partial_{|\xi|}^l \partial_\eta^\alpha \psi_{\pm, j}^{n,<k,r} \bigr\|_{L^\infty_t L^\infty_x} \\
  &\lesssim \sum_{0 \leq j \leq k-C} 2^{\sigma(1+|\alpha|)(k-j)} 2^{-(2-20\sigma)j} \| P_j A_{x,r}^{<n-1} \|_{R_j} \\ 
  &\lesssim 2^{\sigma (|\alpha| + 1)k} \varepsilon \lesssim \bigl( 2^k T \bigr)^{\sigma (|\alpha| + 1)} \varepsilon.
 \end{align*}
 For the smooth part we pick some $j_0 \leq k-C$ and decompose into
 \begin{align*}
  &\bigl| \partial_{|\xi|}^l \partial_\eta^\alpha \bigl( \psi_{\pm}^{n,<k, s}(t,x,\xi) - \psi_{\pm}^{n,<k, s}(s,y,\xi) \bigr) \bigr| \\
  &\quad \lesssim \sum_{j \leq j_0} \sup_\eta \, \bigl\| \nabla_{t,x} \partial_{|\xi|}^l \partial_\eta^\alpha \psi_{\pm, j}^{n,<k, s} \bigr\|_{L^\infty_t L^\infty_x} \bigl( |t-s| + |x-y| \bigr) + \sum_{j_0 \leq j \leq k-C} \sup_{\eta} \, \bigl\| \partial_{|\xi|}^l \partial_\eta^\alpha \psi_{\pm, j}^{n,<k, s} \bigr\|_{L^\infty_t L^\infty_x} \\
  &\quad \lesssim \sum_{j \leq j_0} 2^{\sigma (|\alpha|-\frac{1}{2}) (k-j)} 2^j \| \nabla_{t,x} P_j \calA_{x,s}^{0, free} \|_{L^\infty_t L^2_x} T + \sum_{j_0 \leq j \leq k-C} 2^{\sigma (|\alpha|-\frac{1}{2}) (k-j)} 2^j \| P_j \calA_{x,s}^{0, free} \|_{L^\infty_t L^2_x}  \\
  &\quad \lesssim \sum_{j \leq j_0} 2^{\sigma(|\alpha|+\frac{1}{2})(k-j)} 2^j T \varepsilon + \sum_{j_0 \leq j \leq k-C} 2^{\sigma(|\alpha|+\frac{1}{2})(k-j)} \varepsilon \\
  &\quad \lesssim 2^{\sigma(|\alpha|+\frac{1}{2})(k-j_0)} \bigl( 2^{j_0} T + 1 \bigr) \varepsilon.
 \end{align*}
 Choosing $2^{-j_0} \sim T$, we arrive at the desired estimate
 \begin{align*}
  \bigl| \partial_{|\xi|}^l \partial_\eta^\alpha \bigl( \psi_{\pm}^{n,<k, s}(t,x,\xi) - \psi_{\pm}^{n,<k, s}(s,y,\xi) \bigr) \bigr| &\lesssim \bigl( 2^k T \bigr)^{\sigma(|\alpha|+\frac{1}{2})} \varepsilon.
 \end{align*}
\end{proof}

Finally, we obtain certain decomposable estimates for the ``deterministic'' phase function. This is the analogue of Lemma~7.3 in~\cite{KST}, but here we have to restrict the allowed ranges of Strichartz exponents slightly in order not to lose derivatives.
We first briefly recall the definition of decomposable function spaces from \cite{RT, Krieger-Sterbenz, KST} and the basic decomposable calculus.

\medskip

Let $c(t,x,D)$ be a pseudodifferential operator whose symbol $c(t,x,\xi)$ is homogeneous of degree $0$ in $\xi$. Assume that $c$ has a representation 
\[
 c = \sum_{\theta \in 2^{-\bbN}} c^{(\theta)}.
\]
Let $1 \leq q,r \leq \infty$. For every $\theta \in 2^{-\bbN}$, we define 
\begin{equation*}
 \|c^{(\theta)}\|_{D_\theta (L^q_t L^r_x)} := \Biggl\| \biggl( \sum_{l=0}^{40} \sum_{\Gamma_\theta^\nu} \sup_{\eta \in \Gamma_\theta^\nu} \, \bigl\| b_\theta^\nu(\eta) (\theta \nabla_\xi)^l c^{(\theta)} \bigr\|_{L^r_x(\bbR^4)}^2 \biggr)^{1/2} \Biggr\|_{L^q_t(\bbR)},
\end{equation*}
where $\{\Gamma_\theta^\nu\}_{\nu \in \mathbb{S}^3}$ is a uniformly finitely overlapping covering of $\mathbb{S}^3$ by caps of diameter $\sim \theta$ and $\{ b_\theta^\nu \}_{\nu \in \mathbb{S}^3}$ is a smooth partition of unity subordinate to the covering $\{\Gamma_\theta^\nu\}_{\nu \in \mathbb{S}^3}$. Then we define the decomposable norm
\begin{equation*}
 \|c\|_{D L^q_t L^r_x} := \inf_{c = \sum_{\theta } c^{(\theta)}} \sum_{\theta \in 2^{-\mathbb{N}}} \|c^{(\theta)}\|_{D_\theta(L^q_t L^r_x)}.
\end{equation*}
We will frequently use the following decomposability lemma from~\cite{KST}. 
\begin{lemma}[Decomposability lemma, {\cite[Lemma 7.1]{KST}}] \label{lem:decomposability_lemma_KST}
 Let $P(t,x,D)$ be a pseudodifferential operator with symbol $p(t,x,\xi)$. Suppose that $P$ satisfies the fixed-time estimate
 \[
  \sup_{t \in \bbR} \|P(t,x,D)\|_{L^2_x \to L^2_x} \lesssim 1.
 \]
 Let $1 \leq q, q_1, q_2, r, r_1 \leq \infty$ such that $\frac{1}{q} = \frac{1}{q_1} + \frac{1}{q_2}$ and $\frac{1}{r} = \frac{1}{r_1} + \frac{1}{2}$. Then for any symbol $c(t,x,\xi) \in D L^{q_1}_t L^{r_1}_x$ that is zero homogeneous in $\xi$, we have
 \begin{equation*}
  \|(c p)(t,x,D) \phi\|_{L^{q}_t L^{r}_x} \lesssim \|c\|_{D L^{q_1}_t L^{r_1}_x} \|\phi\|_{L^{q_2}_t L^2_x}.
 \end{equation*}
\end{lemma}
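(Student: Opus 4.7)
The plan is to follow the standard decomposability arguments used in parametrix constructions for magnetic wave equations (cf.\ \cite{RT, KST}). First, using the infimum in the definition of $\|c\|_{DL^{q_1}_t L^{r_1}_x}$ together with the triangle inequality in $L^q_t L^r_x$, it suffices to prove the estimate for a single symbol $c^{(\theta)}$ at angular scale $\theta \in 2^{-\bbN}$, with $\|c^{(\theta)}\|_{D_\theta(L^{q_1}_t L^{r_1}_x)}$ on the right, and then sum trivially over dyadic $\theta$.

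Next, I would insert the smooth partition of unity $\sum_\nu b_\theta^\nu(\xi/|\xi|) = 1$ to write $c^{(\theta)} = \sum_\nu c^{(\theta)} b_\theta^\nu$. For each cap $\Gamma_\theta^\nu$, parametrize $\xi/|\xi|$ on a slight thickening by rescaled angular coordinates $\omega \in [-1,1]^3$ and expand in an angular Fourier series
\begin{equation*}
c^{(\theta)}(t,x,\xi)\, b_\theta^\nu(\xi/|\xi|) \;=\; \sum_{\mu \in \bbZ^3} a_{\theta,\nu,\mu}(t,x)\, e^{i\mu\cdot\omega(\xi)}\, \widetilde{b}_\theta^\nu(\xi/|\xi|),
\end{equation*}
where $\widetilde{b}_\theta^\nu \equiv 1$ on the support of $b_\theta^\nu$. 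Repeated integration by parts in $\omega$ converts $\mu$-weights into factors of $\theta\nabla_\xi$ acting on $c^{(\theta)}$, and yields for $0 \leq N \leq 40$ the coefficient bound
\begin{equation*}
\|a_{\theta,\nu,\mu}\|_{L^{q_1}_t L^{r_1}_x} \lesssim (1+|\mu|)^{-N} \sum_{l \leq N} \sup_{\eta \in \Gamma_\theta^\nu} \bigl\| b_\theta^\nu(\eta) (\theta\nabla_\xi)^l c^{(\theta)}(\cdot,\cdot,\eta) \bigr\|_{L^{q_1}_t L^{r_1}_x},
\end{equation*}
which reproduces exactly the ingredients of the $D_\theta$ norm.

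With this expansion the operator factors pointwise in $(t,x)$ as
\begin{equation*}
(c^{(\theta)} b_\theta^\nu p)(t,x,D)\phi(t,x) \;=\; \sum_\mu a_{\theta,\nu,\mu}(t,x)\cdot \bigl[P(t,x,D) M_{\theta,\nu,\mu}(D)\phi\bigr](t,x),
\end{equation*}
where $M_{\theta,\nu,\mu}(D)$ is a pure Fourier multiplier with symbol $e^{i\mu\cdot\omega(\xi)}\widetilde{b}_\theta^\nu(\xi/|\xi|)$ bounded pointwise by $1$. By Plancherel $M_{\theta,\nu,\mu}(D)$ is contractive on $L^2_x$, so combining with the hypothesis on $P$ yields $\sup_t \|P(t,x,D) M_{\theta,\nu,\mu}(D)\|_{L^2_x \to L^2_x} \lesssim 1$ uniformly in $\mu,\nu,\theta$. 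I would then apply H\"older's inequality with the prescribed exponents $\tfrac{1}{q} = \tfrac{1}{q_1}+\tfrac{1}{q_2}$, $\tfrac{1}{r} = \tfrac{1}{r_1}+\tfrac{1}{2}$ to split the product and absorb $P M_{\theta,\nu,\mu}\phi$ into $\|\phi\|_{L^{q_2}_t L^2_x}$. Summing absolutely in $\mu$ via the $(1+|\mu|)^{-40}$ decay, then using Minkowski's inequality in $L^{q_1}_t$ together with Cauchy-Schwarz in $\nu$ on fixed time slices to recover the $\ell^2_\nu$ sum, reassembles exactly the norm $\|c^{(\theta)}\|_{D_\theta(L^{q_1}_t L^{r_1}_x)}$.

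The main technical obstacle is the second step: certifying that the rapid decay of the angular Fourier coefficients is controlled by the \emph{pointwise supremum} over each cap of the derivatives $(\theta\nabla_\xi)^l c^{(\theta)}$, rather than an integrated angular quantity, as required by the formulation of $D_\theta$. This forces one to enlarge each cap $\Gamma_\theta^\nu$ slightly when defining $\widetilde{b}_\theta^\nu$ and to estimate derivatives in $\omega$ pointwise. A secondary point is that the composition $P M_{\theta,\nu,\mu}$ must be handled via Plancherel directly on $L^2_x$ and \emph{not} via a pseudodifferential composition calculus, since no smoothness of $p(t,x,\xi)$ in $(t,x)$ is assumed beyond the uniform $L^2_x \to L^2_x$ bound on $P$.
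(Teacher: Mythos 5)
The paper does not actually prove this lemma: it is recalled from \cite[Lemma 7.1]{KST}, whose proof is the cap-wise angular Fourier series argument going back to Rodnianski--Tao. Your proposal follows exactly that route: reduce to a single dyadic angle $\theta$ via the infimum and the triangle inequality, expand $c^{(\theta)} b_\theta^\nu$ in a Fourier series in the rescaled angular variable, factor the operator as $\sum_\mu a_{\theta,\nu,\mu}(t,x)\, P(t,x,D) M_{\theta,\nu,\mu}(D)$ (which is legitimate in the left quantization since the multiplier symbol depends only on $\xi$), and close with H\"older and the rapid decay in $\mu$ furnished by the $40$ symbol derivatives. So the approach is the same as the one the paper points to.

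One step is not closed as written. After H\"older at fixed time you face $\sum_\nu \|a_{\theta,\nu,\mu}(t,\cdot)\|_{L^{r_1}_x}\,\|P(t,x,D)M_{\theta,\nu,\mu}(D)\phi(t)\|_{L^2_x}$, and the $D_\theta$ norm controls only the $\ell^2_\nu$ sum of the first factors. Cauchy--Schwarz in $\nu$ therefore needs the square-function bound $\bigl(\sum_\nu\|P(t,x,D)M_{\theta,\nu,\mu}(D)\phi(t)\|_{L^2_x}^2\bigr)^{1/2}\lesssim\|\phi(t)\|_{L^2_x}$, which follows from the uniform $L^2_x\to L^2_x$ bound on $P$ applied to each localized piece \emph{together with} the finite overlap (almost orthogonality) of the enlarged angular cutoffs, i.e. $\sum_\nu\|\widetilde b_\theta^\nu(D/|D|)\phi\|_{L^2_x}^2\lesssim\|\phi\|_{L^2_x}^2$. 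Your argument only invokes the uniform bound $\sup_t\|P M_{\theta,\nu,\mu}\|_{L^2_x\to L^2_x}\lesssim 1$ and then ``absorbs'' each piece into $\|\phi\|_{L^{q_2}_t L^2_x}$; doing that before the $\nu$-summation makes the second Cauchy--Schwarz factor $(\#\{\nu\})^{1/2}\|\phi\|_{L^2_x}\sim\theta^{-3/2}\|\phi\|_{L^2_x}$, a loss that is fatal as $\theta\to 0$. A secondary imprecision: your coefficient bound places $\sup_{\eta\in\Gamma_\theta^\nu}$ outside the $L^{q_1}_t L^{r_1}_x$ norm, whereas Minkowski at fixed $t$ gives (and the definition of $D_\theta$ requires) the supremum of the $L^{r_1}_x$ norm inside the $L^{q_1}_t$ norm; the correct version is exactly what is needed for the reassembly. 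With these two points repaired, the proof is complete and coincides with the standard one.
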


We now turn to proving decomposable estimates for the ``deterministic'' phase function.
\begin{lemma}[Decomposable estimates for the ``deterministic'' phase function] \label{lem:det_decomposable_est}
 Let $n \geq 1$ and assume that
 \begin{equation*}
  \sum_{m=1}^{n-1} \|\calA_{x,r}^m\|_{R_m} + \|\calA_{x,s}^{0, free}\|_{S^1[0]} \lesssim \varepsilon. 
 \end{equation*}
 Let $k \in \bbZ$ and $j \leq k-C$. For $2 \leq q < \infty$ and $\frac{2}{q} + \frac{3}{r} \leq \frac{3}{2+}$ we have that 
 \begin{equation} \label{equ:decomposable_estimate_det_phase_angle}
  \bigl\| \bigl( \psi_{\pm, j, (\theta)}^{n,<k}, 2^{-j} \nabla_{t,x} \psi_{\pm, j, (\theta)}^{n,<k} \bigr) \bigr\|_{D L^q_t L^r_x} \lesssim 2^{-(\frac{1}{q} + \frac{4}{r})j} \theta^{\frac{1}{2+}-\frac{2}{q}-\frac{3}{r}} \varepsilon.
 \end{equation}
 For $4+ < q \leq \infty$ it holds that
 \begin{equation} \label{equ:decomposable_estimate_det_phase_qinfty}
  \bigl\| \bigl( \psi_{\pm, j}^{n,<k}, 2^{-j} \nabla_{t,x} \psi_{\pm, j}^{n,<k} \bigr) \bigr\|_{D L^q_t L^\infty_x} \lesssim 2^{- \frac{1}{q} j} \varepsilon.
 \end{equation}
\end{lemma}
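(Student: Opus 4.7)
The plan is to split $\psi_{\pm,j,(\theta)}^{n,<k} = \psi_{\pm,j,(\theta)}^{n,<k,r} + \psi_{\pm,j,(\theta)}^{n,<k,s}$ and estimate each summand separately, following the blueprint of \cite[Lemma 7.3]{KST}. For the smooth part, the argument should be almost identical to that in \cite{KST}, since $\calA_{x,s}^{0,free}$ is a free wave at scaling-critical $\dot H^1$ regularity with $\|\calA_{x,s}^{0,free}\|_{S^1[0]}\lesssim \varepsilon$ and therefore enjoys the full range of wave-admissible Strichartz and null frame estimates built into $S^1$. The three gains used there are available here as well: a factor $\theta^{-2}2^{-2j}$ from $\Delta_{\eta^\perp}^{-1}$ restricted to an angular sector of size~$\theta$, a factor $2^j$ from $L_\pm^\eta$, and the crucial extra factor~$\theta$ from the Coulomb gauge relation $\partial^\ell A_\ell=0$, leaving an effective symbol of size $\theta^{-1}2^{-j}$ applied to $P_j\calA_{x,s}^{0,free}\cdot\eta$. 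Invoking the Bernstein-type improvement of Strichartz for angularly localized data and summing over the $\sim \theta^{-3}$ caps via Cauchy--Schwarz reproduces the claimed $\theta^{\frac{1}{2+}-\frac2q-\frac3r}$ weight.

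For the rough part $\psi_{\pm,j,(\theta)}^{n,<k,r}$, the same symbolic manipulations apply, but the $L^q_tL^r_x$ bounds on $P_jA_{x,r}^{<n-1}$ now have to be read off the $R_j\,Str$ component of the redeeming norm. Since
\[
\|P_jA_{x,r}^{<n-1}\|_{L^q_tL^r_x}\lesssim 2^{-(1-20\sigma)j}\,2^{j/q}\,\|P_jA_{x,r}^{<n-1}\|_{R_j}\quad\text{whenever}\quad \tfrac1q+\tfrac3{2r}\leq \tfrac34,
\]
the stated condition $\tfrac2q+\tfrac3r\leq \tfrac3{2+}$ is precisely this wave-admissibility restriction, with the ``$+$'' absorbing the $20\sigma$ losses so that a clean factor $\varepsilon$ is recovered on the right. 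After inserting this into the same angular reorganization used in the smooth case, the rough contribution reproduces the same scaling in $j$ and $\theta$ as the smooth one.

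The second decomposable estimate~\eqref{equ:decomposable_estimate_det_phase_qinfty} should be obtained by summing the first estimate over the dyadic angles $\theta\gtrsim 2^{\sigma(j-k)}$ to remove the angular cutoff, and then passing from $L^q_tL^r_x$ to $L^q_tL^\infty_x$ by Bernstein in $x$; the resulting additional factor of $2^{4j/r}$ is balanced against the scaling of the first estimate, and the endpoint $q=\infty$ is supplied directly by the $R_jL^\infty_tL^\infty_x$ component of the redeeming norm. The interpolation between these two endpoints supplies the range $q>4+$, where the $+$ is forced by the strict inequality in the $R_j\,Str$ admissibility condition.

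The main technical obstacle is the bookkeeping inside the $D_\theta(L^q_tL^r_x)$ norm: one must verify that angular derivatives $\partial_\eta^l$ applied to the symbol produce only the harmless factors $\theta^{-l}$ absorbed by the $l$ allowed derivatives in $D_\theta$ (see Lemma~\ref{lem:det_phase_function_Linfty_bounds}), and that the square-sum over caps of size~$\theta$ combined with the sharp Bernstein improvement of Strichartz on angularly localized frequency regions conspires to yield precisely the weight $\theta^{\frac1{2+}-\frac2q-\frac3r}$ predicted by scaling. Since this is exactly the structure already exploited in \cite[Lemma~7.3]{KST}, the only genuinely new point is the substitution of the rough Strichartz bounds furnished by $R_j$ for the critical $S^1$ bounds, which merely amounts to restricting the admissible exponent pairs as in the statement.
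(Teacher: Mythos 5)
Your treatment of the smooth part and of the second estimate is essentially the paper's (defer to \cite[Lemma 7.3]{KST} for $\calA_{x,s}^{0,free}$; sum over dyadic angles for $4+<q<\infty$, which is exactly why the positivity of the exponent forces $q>4+$; use the $R_jL^\infty_tL^\infty_x$ component at $q=\infty$). The gap is in the rough part. The decomposable norm $D_\theta(L^q_tL^r_x)$ is not a plain Strichartz norm: after the symbol manipulations one must control the \emph{square-sum over the $\sim\theta^{-3}$ caps} of sector-localized norms, namely $\bigl(\sum_\eta\|\Pi_\theta^\eta P_jA^{<n-1}_{x,r}\|^2_{L^q_tL^r_x}\bigr)^{1/2}$, and moreover one needs a genuine gain in $\theta$ from this sector localization to reach the weight $\theta^{\frac{1}{2+}-\frac2q-\frac3r}$. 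The $R_jStr$ component you propose to substitute carries neither feature: it is a global bound with no cap structure, and for $r>2$ square-summing sector-localized pieces against it via Cauchy--Schwarz costs a factor on the order of $\theta^{-3/2}$ from the number of caps, which, combined with the $\theta^{-1}$ symbol size surviving after the Coulomb-gauge gain, lands far below the claimed weight. In the smooth case the ``angular reorganization'' works precisely because $S^1$-controlled free waves admit square-summed, Bernstein-improved Strichartz bounds on angular sectors (through the $L^2_x$-based structure of $S_k$); that input is exactly what is missing for the rough evolution if one only invokes $R_jStr$.

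The paper closes this gap differently: for the rough part it interpolates, with weight $\mu$ close to $1$, between (i) the cap-square-summed redeeming $R_jL^2_tL^\infty_x$ component, which was \emph{designed} with the angular square-sum built in and costs only $\theta^{-\delta_1}$ while gaining $2^{-(\frac12-20\sigma)j}$, and (ii) a sharp-admissible pair $(\tilde q,r_0)$ estimated through the $S^{1-\delta_\ast}_j$ component of $R_j$ together with Bernstein on a sector of size $\theta$ (this is where the factor $\theta^{\frac32-\frac2{\tilde q}-\frac3{\tilde r}}$ and hence the $-\frac2q-\frac3r$ in the final exponent come from), accepting a $2^{\delta_\ast j}$ loss that the choice of $\mu$ close to $1$ renders harmless. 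Your identification of the condition $\frac2q+\frac3r\le\frac3{2+}$ with the $R_jStr$ admissibility range is also slightly misleading: the ``$+$'' in the paper absorbs the $\mu<1$, $\delta_1$, $\sigma$, $\delta_\ast$ losses of this interpolation, not merely the $20\sigma$ in $R_jStr$. To repair your argument you should replace the $R_jStr$ step by this two-endpoint interpolation (or supply an independent proof that the rough evolution satisfies square-summed sector-localized Strichartz bounds with the requisite $\theta$ gain, which is not contained in the $R_j$ norm as defined).
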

\begin{proof}
 We establish the decomposable estimate~\eqref{equ:decomposable_estimate_det_phase_angle} separately for the rough and the smooth part of the phase function. We begin with the rough part. 
 As in~\cite[Lemma 7.3]{KST}, we interchange integration and the $\eta$ summation to obtain that
 \begin{align*}
  \bigl\| \bigl( \psi_{\pm, j, (\theta)}^{n,<k, r}, 2^{-j} \nabla_{t,x} \psi_{\pm, j, (\theta)}^{n,<k, r} \bigr) \bigr\|_{D L^q_t L^r_x} &\lesssim \theta^{-2} 2^{-j} \biggl( \sum_\eta \, \bigl\| \Pi_\theta^\eta P_j A_{x,r}^{<n-1} \cdot \eta \bigr\|_{L^q_t L^r_x}^2 \biggr)^{\frac{1}{2}} \\
  &\lesssim \theta^{-1} 2^{-j} \biggl( \sum_\eta \, \bigl\| \Pi_\theta^\eta P_j A_{x,r}^{<n-1} \bigr\|_{L^q_t L^r_x}^2 \biggr)^{\frac{1}{2}},
 \end{align*}
 where in the last step we used the Coulomb gauge to gain another factor of $\theta$. Next we pick an admissible Strichartz pair $(\tilde{q}, \tilde{r})$ with $\tilde{q} > q$ and $\tilde{r} < r$ such that an interpolate between $(\tilde{q}, \tilde{r})$ and $(2,\infty)$ gives $(q,r)$. 
 Then we obtain for some $\mu > 0$ that 
 \begin{align*}
  &\bigl\| \bigl( \psi_{\pm, j, (\theta)}^{n,<k, r}, 2^{-j} \nabla_{t,x} \psi_{\pm, j, (\theta)}^{n,<k, r} \bigr) \bigr\|_{D L^q_t L^r_x} \\
  &\quad \lesssim \theta^{-1} 2^{-j} \biggl( \sum_\eta \, \bigl\| \Pi_\theta^\eta P_j A_{x,r}^{<n-1} \bigr\|_{L^{\tilde{q}}_t L^{\tilde{r}}_x}^2 \biggr)^{\frac{\mu}{2}} \biggl( \sum_\eta \, \bigl\| \Pi_\theta^\eta P_j A_{x,r}^{<n-1} \bigr\|_{L^2_t L^\infty_x}^2 \biggr)^{\frac{1-\mu}{2}}. 
 \end{align*}
 Now let $r_0 > 6$ such that $(\tilde{q}, r_0)$ is a sharp admissible Strichartz pair in four space dimensions, i.e. $\frac{2}{q} + \frac{3}{r_0} = \frac{3}{2}$. Bernstein's estimate on an angular sector of size $\theta$ gives the inequality $\Pi_\theta^\eta P_j L^{r_0}_x \subset \theta^{3(\frac{1}{r_0}-\frac{1}{\tilde{r}})} 2^{4(\frac{1}{r_0}-\frac{1}{\tilde{r}})j} L^{\tilde{r}}_x$. Using also the redeeming $R_j L^2_t L^\infty_x$ norm, we find that 
 \begin{align*}
  &\bigl\| \bigl( \psi_{\pm, j, (\theta)}^{n,<k, r}, 2^{-j} \nabla_{t,x} \psi_{\pm, j, (\theta)}^{n, <k, r} \bigr) \bigr\|_{D L^q_t L^r_x} \\
  &\quad \lesssim \theta^{-1} 2^{-j} \biggl( \sum_\eta \, \Bigl( \theta^{\frac{3}{2} -\frac{2}{\tilde{q}} - \frac{3}{\tilde{r}}} 2^{4(\frac{1}{r_0}-\frac{1}{\tilde{r}})j} \bigl\| \Pi_\theta^\eta P_j A_{x,r}^{<n-1} \bigr\|_{L^{\tilde{q}}_t L^{r_0}_x} \Bigr)^2 \biggr)^{\frac{\mu}{2}} \bigl( 2^{-(\frac{1}{2}-20\sigma) j} \theta^{-\delta_1} \| P_j A_{x,r}^{<n-1} \|_{R_j} \bigr)^{1-\mu} \\
  &\quad \lesssim \theta^{-1} 2^{-j} \biggl( 2^{\delta_\ast j} \theta^{\frac{3}{2} -\frac{2}{\tilde{q}} - \frac{3}{\tilde{r}}} 2^{(1-\frac{1}{\tilde{q}}-\frac{4}{\tilde{r}})j} \bigl\| P_j A_{x,r}^{<n-1} \bigr\|_{S^{1-\delta_\ast}_j} \biggr)^\mu \bigl( 2^{-(\frac{1}{2}-20\sigma) j} \theta^{-\delta_1} \| P_j A_{x,r}^{<n-1} \|_{R_j} \bigr)^{1-\mu} \\
  &\quad \lesssim \theta^{\frac{\mu}{2}-(1-\mu)\delta_1-\frac{2}{q}-\frac{3}{r}} 2^{(\mu \delta_\ast + (1-\mu)20\sigma - (1-\mu) )j} 2^{-(\frac{1}{q} + \frac{4}{r})j} \varepsilon.
 \end{align*}
 Choosing $0 < \mu < 1$ close to $1$, gives the decomposable estimate~\eqref{equ:decomposable_estimate_det_phase_angle} for the rough part of the phase function. 
 
 For the smooth part of the phase function we remark that since $\calA_{x,s}^{0, free}$ belongs to the critical $S^1$ space, the corresponding bound follows exactly as in Lemma 7.3 in~\cite{KST}. 
 
 Finally, the second estimate~\eqref{equ:decomposable_estimate_det_phase_qinfty} in the statement of the lemma follows immediately for $4+ < q < \infty$ from the first estimate~\eqref{equ:decomposable_estimate_det_phase_angle} by summing over the dyadic angles $2^{\sigma (j-k)} \lesssim \theta \lesssim 1$. In the important case $q=\infty$ the second estimate~\eqref{equ:decomposable_estimate_det_phase_qinfty} can be proved directly using the redeeming $R_j L^\infty_t L^\infty_x$ norm with angular gains to bound the contributions of the rough part of the phase function.
\end{proof}

\section{The ``probabilistic'' parametrix} \label{sec:probabilistic_bounds}

In this section we turn to the precise definition of the adapted linear evolutions $\Phi^n_r$ of the rough random data $T_n \phi^\omega[0]$, $n \geq 1$, as approximate solutions to the modified linear magnetic wave equation
\begin{equation} \label{equ:prob_section_magnetic_wave_equation}
 \Box_{A^{<n-1}}^{p, mod} \Phi^n_r \equiv \bigl( \Box + 2i P_{\leq (1-\gamma) n} A^{<n-1, \alpha} \partial_\alpha P_n \bigr) \Phi^n_r \approx 0, \quad \Phi^n_r[0] \approx T_n \phi^\omega[0].
\end{equation}
We emphasize that $A^{<n-1}$ is the entire connection form of the solution $(A^{<n-1}, A^{<n-1}_0, \phi^{<n-1})$ to (MKG-CG) with random initial data $A_x^{<n-1}[0] = T_{<n-1} A_x^\omega[0]$, $\phi^{<n-1}[0] = T_{<n-1} \phi^\omega[0]$ that was constructed in the prior induction stages $\leq n-1$. 

We first carefully develop the iterative definition of $\Phi^n_r$ in terms of a modified ``probabilistic'' parametrix and prove mapping properties of the associated modified renormalization operators. Then we turn to the derivation of the redeeming space-time integrability properties of the rough linear evolutions $\Phi^n_r$ of the random data $T_n \phi^\omega[0]$. Finally, we discuss the delicate renormalization error estimate  and show that (on a suitable event) the error $\Box_{A^{<n-1}}^{p, mod} \Phi^n_r$ gains regularity and can be treated as a ``smooth'' source term.

\subsection{Definition of the adapted rough linear evolution $\Phi^n_r$} \label{subsec:prob_definition_Phi}

Similarly to the ``deterministic'' parametrix construction, our construction of the adapted linear evolution $\Phi^n_r$ will be based on modified renormalization operators $e^{\pm i \psi_{\pm}^{n, mod}}_{<n-C}(t,x,D)$. 
We begin by motivating heuristically the choice of the modified phase function~$\psi_{\pm}^{n, mod}$ in the context of the modified linear magnetic wave equation~\eqref{equ:prob_section_magnetic_wave_equation}. 
To this end let us again consider distorted waves of the form 
\begin{equation*}
 \phi(t,x) = e^{-i\psi_\pm(t,x)} e^{\pm i t |\xi| + i x \cdot \xi}
\end{equation*}
and compute how a magnetic wave operator of the form $(\Box + 2i A^\alpha \partial_\alpha)$ acts on them. 
In what follows it is important to keep in mind that the spatial part $A_x$ of the connection form is no longer a free wave, but that it also has inhomogeneous parts, and that the temporal component $A_0$ of the connection form is also built into the magnetic wave operator.
We find that
\begin{equation*}
 \begin{aligned}
  \bigl( \Box + 2i A^\alpha \partial_\alpha \bigr) \phi &= 2 \bigl( - |\xi| (L_{\mp}^\eta \psi_{\pm}) + A_x \cdot \xi \mp A_0 |\xi| \bigr) \phi \\
  &\quad + \bigl( 2 A^\alpha (\partial_\alpha \psi_\pm) - (\partial_t \psi_\pm)^2 + |\nabla_x \psi_\pm|^2 - i (\Box \psi_\pm) \bigr) \phi.
 \end{aligned}
\end{equation*}
The terms in the second parentheses are again expected to be error terms, while we would like to achieve as much cancellation as possible in the first parentheses. 
In order to largely cancel out the term $\mp A_0 |\xi|$, we would like to formally build a component $\mp (L_{\mp}^\eta)^{-1} A_0$ into the definition of the phase function. However, to deal with the degeneracy of the symbol $L_{\mp}^{\eta}$, we have to refine this choice depending on the size of the symbol of $L_\mp^\eta$. 
This leads to an analogous choice to largely cancel out the inhomogeneous part of the term $A_x \cdot \xi$. For the homogeneous (rough) part we use the same definition as for the ``deterministic'' phase function.

To arrive at the precise definitions we need to introduce some additional notation. We denote by $(\tau, \zeta)$ space-time Fourier variables. For $\eta \in \bbS^3$ and $|\zeta| \sim 2^k$ we introduce the space-time frequency regions
\begin{align*}
 S^{\pm, \eta}_{\gg 2^k |\measuredangle|^2} &:= \bigl\{ | \pm \tau + \eta \cdot \zeta | \gg 2^k |\measuredangle(\eta, \zeta)|^2 \bigr\}, \\
 S^{\pm, \eta}_{\lesssim 2^k |\measuredangle|^2} &:= \bigl\{ | \pm \tau + \eta \cdot \zeta | \lesssim 2^k |\measuredangle(\eta, \zeta)|^2 \bigr\}.
\end{align*}
Then we denote by $\Pi^{\pm, \eta}_{\gg 2^k |\measuredangle|^2}$ and by $\Pi^{\pm, \eta}_{\lesssim 2^k |\measuredangle|^2}$ smooth projections to these space-time frequency regions such that $\Pi^{\pm, \eta}_{\gg 2^k |\measuredangle|^2} + \Pi^{\pm, \eta}_{\lesssim 2^k |\measuredangle|^2} = 1$. Additionally, for any dyadic $\lambda \in 2^{\bbZ}$ we denote by $\Pi^{\pm, \eta}_\lambda$ a smooth projection to the frequency region $\{ |\pm \tau + \eta \cdot \zeta| \sim \lambda \}$.

\bigskip 

Now we are in the position to define for every integer $n \geq 1$ the modified ``probabilistic'' phase function
\begin{equation} \label{equ:definition_prob_phase_function}
 \psi_{\pm}^{n, mod}(t,x,\xi) := \sum_{0 \leq k \leq (1-\gamma) n} \psi_{\pm, k}^{n, mod, r}(t,x,\xi) + \sum_{k \leq (1-\gamma)n} \psi_{\pm, k}^{n, mod, s}(t,x,\xi), 
\end{equation}
where its rough part is given by
\begin{align*}
 \psi_{\pm, k}^{n, mod, r}(t,x,\xi) :=& - L_\pm^\eta \Delta_{\eta^\perp}^{-1} \biggl( \Pi_{> 2^{\sigma \min\{k, -n\}}}^{\eta}  P_k A_{x, r}^{<n-1} \cdot \eta \biggr), \qquad \eta := \frac{\xi}{|\xi|} \in \bbS^{3},
\end{align*}
while its smooth part is defined as 
\begin{align*}
 \psi_{\pm, k}^{n, mod, s}(t,x,\xi) :=& + (L_{\mp}^\eta)^{-1} \Pi_{\gg 2^k |\measuredangle|^2}^{\mp, \eta} \Pi_{> 2^{\sigma \min\{k, -n\}}}^{\eta} \bigl( P_k A_{x, s}^{<n-1} \cdot \eta \bigr), \qquad \eta := \frac{\xi}{|\xi|} \in \bbS^{3} \\
 &- L_\pm^\eta \Delta_{\eta^\perp}^{-1} \Pi_{\lesssim 2^k |\measuredangle|^2}^{\mp, \eta} \Pi_{> 2^{\sigma \min\{k, -n\}}}^{\eta} \bigl( P_k A_{x, s}^{<n-1} \cdot \eta \bigr) \\
 &\mp (L_{\mp}^\eta)^{-1} \Pi_{\gg 2^k |\measuredangle|^2}^{\mp, \eta} \Pi_{> 2^{\sigma \min\{k, -n\}}}^{\eta} \bigl( P_k A_0^{<n-1} \bigr) \\
 &\mp L_\pm^\eta \Delta_{\eta^\perp}^{-1} \Pi_{\lesssim 2^k |\measuredangle|^2}^{\mp, \eta} \Pi_{> 2^{\sigma \min\{k, -n\}}}^{\eta} \bigl( P_k A_0^{<n-1} \bigr). 
\end{align*}

\medskip 

For given initial conditions $(f,g)$ and a given source term $F$, we define an approximate solution to the linear magnetic wave equation $\Box_{A^{<n-1}}^{p,mod} u = F$ with initial data $u[0] = (f,g)$ by the parametrix
\begin{equation} \label{equ:definition_parametrix_modified}
 \begin{aligned}
  \phi_{app, n}^{mod} \bigl[ f, g; F \bigr] &:= \frac{1}{2} \sum_\pm e^{-i \psi_{\pm}^{n, mod}}_{<n-C}(t,x,D) \frac{e^{\pm i t |D|}}{i|D|} e^{+i \psi_{\pm}^{n, mod}}_{<n-C}(D,y,0) \bigl( i |D| f \pm g \bigr) \\
  &\quad \quad + \frac{1}{2} \sum_\pm \pm e^{-i\psi_{\pm}^{n, mod}}_{<n-C}(t,x,D) \frac{K^\pm}{i|D|} e^{+i\psi_{\pm}^{n, mod}}_{<n-C}(D,y,s) F,
 \end{aligned}
\end{equation}
where $K^\pm F$ are the Duhamel terms
\begin{equation*}
 K^\pm F(t) = \int_0^t e^{\pm i (t-s)|D|} F(s) \, \ud s.
\end{equation*}

\medskip 

We define the adapted rough linear evolution $\Phi^n_r$ as an infinite sum whose components are defined iteratively
\begin{equation} \label{equ:definition_Phi_n_rough}
 \Phi^n_r = \sum_{\ell = 0}^\infty \Phi^{n, [\ell]}_r.
\end{equation}
The zeroth term $\Phi^{n, [0]}_r$ is defined in terms of the homogeneoux parametrix
\begin{equation*}
 \Phi^{n,[0]}_r := \phi_{app, n}^{mod} \bigl[ T_n \phi_0^\omega, T_n \phi_1^\omega; 0 \bigr].
\end{equation*}
It would be desirable if it sufficed to take $\Phi^{n,[0]}_r$ as our choice for an approximate solution in the sense that $\Phi^{n,[0]}_r$ would be amenable to the probabilistic redeeming bounds and that the entire error $\Box_{A^{<n-1}}^{p, mod} \Phi^{n, [0]}_r$ would gain regularity.  However, $\Box_{A^{<n-1}}^{p, mod} \Phi^{n, [0]}_r$ produces several types of error terms that we colloquially group into ``mild'', ``delicate'', and ``rough'' error terms
\begin{equation*}
 \Box_{A^{<n-1}}^{p, mod} \Phi^{n, [0]}_r = \calE^{n, [0]}_{mild} + \calE^{n, [0]}_{del} + \calE^{n, [0]}_{rough}.
\end{equation*}
These are defined precisely further below.
Unfortunately, the ``rough'' error terms do not gain regularity and therefore cannot be treated as smooth source terms. But at least, they gain smallness. The way out is therefore to try to iterate these ``rough'' error terms away so that we end up with an infinite sum of ``mild'' and ``delicate'' error terms that can all be treated as smooth source terms.
Correspondingly, we inductively define the $\ell$-th iterate $\Phi^{n, [\ell]}_r$, by
\begin{equation*}
 \Phi^{n, [\ell]}_r := \phi_{app,n}^{mod} \bigl[ 0, 0; - \calE^{n, [\ell-1]}_{rough} \bigr], \quad \ell \geq 1,
\end{equation*}
which produces an error of the form
\begin{equation*}
 \calE^{n, [\ell-1]}_{rough} + \Box_{A^{<n-1}}^{p, mod} \Phi^{n, [\ell]}_r = \calE^{n, [\ell]}_{mild} + \calE^{n, [\ell]}_{del} + \calE^{n, [\ell]}_{rough}, \quad \ell \geq 1.
\end{equation*}
Then we have 
\begin{equation} \label{equ:overall_accrued_error}
 \Box_{A^{<n-1}}^{p,mod} \Phi^n_r = \sum_{\ell =0}^\infty \calE^{n, [\ell]}_{mild} + \sum_{\ell=0}^\infty \calE^{n, [\ell]}_{del}.
\end{equation}

\medskip 

Our next goal is to arrive at the precise definitions of the higher iterates $\Phi^{n,[\ell]}_r$.
To this end we first compute the errors accrued by the zeroth iterate 
\begin{equation*}
 \Phi^{n,[0]}_r := \phi_{app, n}^{mod} \bigl[ T_n \phi_0^\omega, T_n \phi_1^\omega; 0 \bigr] = \frac{1}{2} \sum_\pm e^{-i \psi_{\pm}^{n, mod}}_{<n-C}(t,x,D) \frac{e^{\pm i t |D|}}{i|D|} e^{+i \psi_{\pm}^{n, mod}}_{<n-C}(D,y,0) \bigl( i |D| T_n \phi_0^\omega \pm T_n \phi_1^\omega \bigr).
\end{equation*}
We obtain schematically that
\begin{align*}
 \Box_{A^{<n-1}}^{p, mod} \Phi^{n,[0]}_r &= \sum_{k \leq (1-\gamma) n} \sum_\pm 2 \Bigl[ \bigl( -|\xi| (L_\mp^\eta \psi_{\pm, k}^{n, mod}) + P_k A_x^{<n-1} \cdot \xi \mp P_k A_0^{<n-1} |\xi| \bigr) e^{-i \psi_\pm^{n, mod}} \Bigr]_{<n-C} \calR_r^{n, \pm, [0]} \\
  &\quad \quad + \sum_\pm 2\partial_te^{- i\psi_{\pm}^{n,mod}}_{<n-C}(i\partial_t\pm|\xi|) \calR_r^{n, \pm, [0]} \\
  &\quad \quad + \sum_\pm \Bigl[ \bigl(- (\partial_t\psi_{\pm}^{n,mod})^2 + (\partial_x\psi_{\pm}^{n,mod})^2 \bigr) e^{- i\psi_{\pm}^{n,mod}} \Bigr]_{<n-C} \calR_r^{n, \pm, [0]} \\
  &\quad \quad + \sum_\pm \Bigl[-i (\Box \psi_{\pm}^{n,mod}) e^{- i\psi_{\pm}^{n,mod}} \Bigr]_{<n-C} \calR_r^{n, \pm, [0]} \\
  &\quad \quad - \sum_\pm 2i A^{<n-1, \alpha}_{\leq (1-\gamma)n} \Bigl[ (\partial_\alpha \psi_{\pm}^{n,mod}) e^{- i\psi_{\pm}^{n,mod}} \Bigr]_{<n-C} \calR_r^{n, \pm, [0]} \\
  &\quad \quad + \sum_\pm 2i \Bigl[ A^{<n-1, \alpha}_{\leq (1-\gamma)n}, S_{<n-C} \Bigr] \xi_{\alpha} \calR_r^{n, \pm, [0]} \\
  &\equiv \text{Diff}^{[0]}_1 + \ldots + \text{Diff}^{[0]}_6
\end{align*}
with 
\begin{align*}
 \calR_r^{n, \pm, [0]} := \frac{1}{2} \frac{e^{\pm i t |D|}}{i|D|} e^{+i \psi_{\pm}^{n, mod}}_{<n-C}(D,y,0) \bigl( i |D| T_n \phi_0^\omega \pm T_n \phi_1^\omega \bigr).
\end{align*}
At this point we anticipate that the error terms $\text{Diff}^{[0]}_2$, ..., $\text{Diff}^{[0]}_6$ will be manageable since effectively a derivative falls on a low frequency term, which then allows to gain regularity thanks to the frequency separation.
The main error term $\text{Diff}^{[0]}_1$ in the first line on the right-hand side of $\Box_{A^{<n-1}}^{p, mod} \Phi^{n, [0]}_r$ can be further decomposed as
\begin{align*}
 \text{Diff}^{[0]}_1 &= \sum_{k \leq (1-\gamma) n} \sum_\pm 2 \Bigl[ \bigl( -|\xi| (L_\mp^\eta \psi_{\pm, k}^{n, mod}) + P_k A_x^{<n-1} \cdot \xi \mp P_k A_0^{<n-1} |\xi| \bigr) e^{-i \psi_\pm^{n, mod}} \Bigr]_{<n-C} \calR_r^{n, \pm, [0]} \\
 &= \sum_{0 \leq k \leq (1-\gamma) n} \sum_\pm 2 \Bigl[ \Pi_{\leq 2^{-\sigma n}} \bigl( P_k A_{x,r}^{<n-1} \cdot \xi \bigr) e^{-i \psi_\pm^{n, mod}} \Bigr]_{<n-C} \calR_r^{n, \pm, [0]} \\
 &\quad + \sum_{k \leq (1-\gamma) n} \sum_\pm 2 \Bigl[ \Pi_{\leq 2^{\sigma \min\{k, -n\}}} \bigl( P_k A_{x,s}^{<n-1} \cdot \xi \mp P_k A_0^{<n-1} |\xi| \bigr) e^{-i \psi_\pm^{n, mod}} \Bigr]_{<n-C} \calR_r^{n, \pm, [0]} \\
 &\quad + \sum_{k \leq (1-\gamma) n} \sum_\pm 2 \Bigl[ - \Box \Delta_{\eta^\perp}^{-1} \Pi_{\lesssim 2^k |\measuredangle|^2}^{\mp, \eta} \Pi_{> 2^{\sigma \min\{k, -n\}}}^{\eta} \bigl( P_k A_{x, s}^{<n-1} \cdot \xi \mp P_k A_0^{<n-1} |\xi| \bigr) e^{-i \psi_\pm^{n, mod}} \Bigr]_{<n-C} \calR_r^{n, \pm, [0]} \\
 &\equiv \text{Diff}^{[0]}_{1, (a)} + \text{Diff}^{[0]}_{1, (b)} + \text{Diff}^{[0]}_{1, rough}.
\end{align*}
We emphasize that in view of the definition of the modified ``probabilistic'' phase function $\psi^{n, mod}_\pm$, a crucial cancellation of the large angle part occurs so that the first two error terms come with tight angle cutoffs. We anticipate that owing to the tighter angle cutoffs $\Pi_{\leq 2^{-\sigma n}}$ and $\Pi_{\leq 2^{\sigma \min\{k, -n\}}}$, the errors $\text{Diff}^{[0]}_{1,(a)}$ and $\text{Diff}^{[0]}_{1,(b)}$ will turn out to gain regularity.
Correspondingly, we group together all the error terms $\text{Diff}^{[0]}_{1,(a)}$, $\text{Diff}^{[0]}_{1,(b)}$, and $\text{Diff}^{[0]}_{j}$, $2 \leq j \leq 6$, into the collection of ``mild'' error terms
\begin{equation} \label{equ:definition_mild_errors_stage0}
 \calE^{n, [0]}_{mild} = \text{Diff}^{[0]}_{1,(a)} + \text{Diff}^{[0]}_{1,(b)} + \sum_{j=2}^6 \text{Diff}^{[0]}_{j}.
\end{equation}

Unfortunately, only certain parts of the error term $\text{Diff}^{[0]}_{1, rough}$ can gain regularity (which we will call ``delicate'' errors and which we will denote by $\calE^{n, [0]}_{del}$), while the remaining parts will have to be iterated away. In order to arrive at precise definitions, it is helpful to first also determine the structure of the errors accrued by the higher iterates 
\begin{equation*}
 \Phi^{n, [\ell]}_r(t,x) := \frac{1}{2} \sum_\pm \pm \, e^{-i\psi_{\pm}^{n, mod}}_{<n-C}(t,x,D) \frac{K^\pm}{i|D|} e^{+i\psi_{\pm}^{n, mod}}_{<n-C}(D,y,s) \Bigl( - \calE_{rough}^{n, [\ell-1]} \Bigr), \quad \ell \geq 1.
\end{equation*}
Here we compute that schematically 
\begin{align*}
 &\calE^{n, [\ell-1]}_{rough} + \Box_{A^{<n-1}}^{p, mod} \Phi^{n,[\ell]}_r \\
 &= \sum_{k \leq (1-\gamma) n} \sum_\pm 2 \Bigl[ \bigl( -|\xi| (L_\mp^\eta \psi_{\pm, k}^{n, mod}) + P_k A_x^{<n-1} \cdot \xi \mp P_k A_0^{<n-1} |\xi| \bigr) e^{-i \psi_\pm^{n, mod}} \Bigr]_{<n-C} \calR_r^{n, \pm, [\ell]} \\
 &\quad \quad + \sum_\pm 2\partial_te^{- i\psi_{\pm}^{n,mod}}_{<n-C}(i\partial_t\pm|\xi|) \calR_r^{n, \pm, [\ell]}  \\
 &\quad \quad + \sum_\pm \Bigl[ \bigl(- (\partial_t\psi_{\pm}^{n,mod})^2 + (\partial_x\psi_{\pm}^{n,mod})^2 \bigr) e^{- i\psi_{\pm}^{n,mod}} \Bigr]_{<n-C} \calR_r^{n, \pm, [\ell]}  \\
 &\quad \quad + \sum_\pm \Bigl[-i (\Box \psi_{\pm}^{n,mod}) e^{- i\psi_{\pm}^{n,mod}} \Bigr]_{<n-C} \calR_r^{n, \pm, [\ell]}  \\
 &\quad \quad - \sum_\pm 2i A^{<n-1, \alpha}_{\leq (1-\gamma)n} \Bigl[ (\partial_\alpha \psi_{\pm}^{n,mod}) e^{- i\psi_{\pm}^{n,mod}} \Bigr]_{<n-C} \calR_r^{n, \pm, [\ell]}  \\
 &\quad \quad + \sum_\pm 2i \Bigl[ A^{<n-1, \alpha}_{\leq (1-\gamma)n}, S_{<n-C} \Bigr] \xi_{\alpha} \calR_r^{n, \pm, [\ell]} \\
 &\quad \quad + \calE_{rough}^{n, [\ell-1]} \pm \frac12 \sum_\pm e^{-i\psi_{\pm}^{n, mod}}_{<n-C}(t,x,D) \frac{\partial_t \pm i|D| + 2i A_0^{<n-1}}{i|D|} e^{+i\psi_{\pm}^{n, mod}}_{<n-C}(D,y,t) \Bigl( - \calE_{rough}^{n, [\ell-1]} \Bigr) \\
 &\equiv \text{Diff}^{[\ell]}_1 + \ldots + \text{Diff}^{[\ell]}_6 + \text{Diff}^{[\ell]}_7, 
\end{align*}
where
\begin{align*}
 \calR_r^{n, \pm, [\ell]} &= \pm \frac12 \frac{K^\pm}{i|D|} e^{+i\psi_{\pm}^{n, mod}}_{<n-C}(D,y,s) \Bigl( - \calE_{rough}^{n, [\ell-1]} \Bigr).
\end{align*}
We anticipate that the error terms $\text{Diff}^{[\ell]}_2$, ..., $\text{Diff}^{[\ell]}_6$ will again be manageable and gain regularity since effectively a derivative falls on a low frequency term. The additional error term $\text{Diff}^{[\ell]}_7$ arising for the inhomogeneous parametrix will also be manageable since we can gain regularity from the difference to the previous error $\calE_{rough}^{n, [\ell-1]}$, see Prop.~\ref{prop:prob_data_error}.
Then we may again further decompose the main error term $\text{Diff}^{[\ell]}_1$ in the first line on the right-hand side of the accrued error $\calE^{n, [\ell-1]}_{rough} + \Box_{A^{<n-1}}^{p, mod} \Phi^{n,[\ell]}_r$  as
\begin{align*}
 \text{Diff}^{[\ell]}_1 &= \sum_{k \leq (1-\gamma) n} \sum_\pm 2 \Bigl[ \bigl( -|\xi| (L_\mp^\eta \psi_{\pm, k}^{n, mod}) + P_k A_x^{<n-1} \cdot \xi \mp P_k A_0^{<n-1} |\xi| \bigr) e^{-i \psi_\pm^{n, mod}} \Bigr]_{<n-C} \calR_r^{n, \pm, [\ell]} \\
 &= \sum_{0 \leq k \leq (1-\gamma) n} \sum_\pm 2 \Bigl[ \Pi_{\leq 2^{-\sigma n}} \bigl( P_k A_{x,r}^{<n-1} \cdot \xi \bigr) e^{-i \psi_\pm^{n, mod}} \Bigr]_{<n-C} \calR_r^{n, \pm, [\ell]} \\
 &\quad + \sum_{k \leq (1-\gamma) n} \sum_\pm 2 \Bigl[ \Pi_{\leq 2^{\sigma \min\{k, -n\}}} \bigl( P_k A_{x,s}^{<n-1} \cdot \xi \mp P_k A_0^{<n-1} |\xi| \bigr) e^{-i \psi_\pm^{n, mod}} \Bigr]_{<n-C} \calR_r^{n, \pm, [\ell]} \\
 &\quad + \sum_{k \leq (1-\gamma) n} \sum_\pm 2 \Bigl[ - \Box \Delta_{\eta^\perp}^{-1} \Pi_{\lesssim 2^k |\measuredangle|^2}^{\mp, \eta} \Pi_{> 2^{\sigma \min\{k, -n\}}}^{\eta} \bigl( P_k A_{x, s}^{<n-1} \cdot \xi \mp P_k A_0^{<n-1} |\xi| \bigr) e^{-i \psi_\pm^{n, mod}} \Bigr]_{<n-C} \calR_r^{n, \pm, [\ell]} \\
 &\equiv \text{Diff}^{[\ell]}_{1, (a)} + \text{Diff}^{[\ell]}_{1, (b)} + \text{Diff}^{[\ell]}_{1, rough}.
\end{align*}
As in the treatment of the error $\Box_{A^{<n-1}}^{p, mod} \Phi^{n,[0]}_r$ accrued by the zeroth iterate, we anticipate that the errors $\text{Diff}^{[\ell]}_{1, (a)}$ and $\text{Diff}^{[\ell]}_{1, (b)}$ gain regularity thanks to the tighter angle cutoff. 
Then we again group together the terms $\text{Diff}^{[\ell]}_{1,(a)}$, $\text{Diff}^{[\ell]}_{1,(b)}$, and $\text{Diff}^{[\ell]}_{j}$, $1 \leq j \leq 7$, into the collection of ``mild'' error terms
\begin{equation} \label{equ:definition_mild_errors_stagel}
 \calE^{n, [\ell]}_{mild} = \text{Diff}^{[\ell]}_{1,(a)} + \text{Diff}^{[\ell]}_{1,(b)} + \sum_{j=2}^7 \text{Diff}^{[\ell]}_{j}.
\end{equation}
It remains to systematically define the ``delicate'' errors $\calE_{del}^{n, [\ell]}$ at every stage $\ell \geq 0$, which are those parts of $\text{Diff}^{[\ell]}_{1, rough}$ that gain regularity. 

To this end we need to introduce some more notation and terminology. We shall call a ``string of frequencies of length $\ell$'', $\ell \geq 0$, an expression
\begin{equation*}
 (\underline{k}) = (k_1 k_2 k_3 \ldots k_{3\ell +1} k_{3\ell+2} k_{3\ell+3}), 
\end{equation*}
where each $k_j$, $1 \leq j \leq 3\ell+3$, is either a positive integer $\leq (1-\gamma)n$ or the symbol $\leq 0$.
Given a string of frequencies $(\underline{k})$ of length $\ell \geq 1$, we denote by 
\begin{equation*}
 (\underline{k}') = (k_1 k_2 k_3 \ldots k_{3\ell -2} k_{3\ell-1} k_{3\ell})
\end{equation*}
the associated ``truncated string''.
Moreover, if a string of frequencies $(\underline{k})$ of length $\ell \geq 0$ has all frequencies $k_j \leq 3\sigma n$, $1 \leq j \leq 3\ell+3$, then we call $(\underline{k})$ a ``small string of length $\ell$''.

We also introduce the concept of an ascending sequence of ``dominating frequencies'' associated with a small string of frequencies. Specifically, given a small string of frequencies $(\underline{k}) = (k_1k_2k_3 \ldots k_{3\ell+1} k_{3\ell+2} k_{3\ell+3})$, we select an ascending sequence of ``dominating frequencies'' $r_1 < r_2 < \ldots < r_q$ as follows: 
We set $r_1 = k_1$ if $k_1>0$ and $r_1 = 0$ otherwise. Then we let $r_2$ be the first frequency among $k_2, k_3,\ldots, k_{3\ell+3}$ that is larger\footnote{By definition any positive frequency dominates the symbol $\leq 0$.} than $r_1$. Then if $r_2 = k_p$, we let $r_3$ be the first frequency among $k_{p+1},k_{p+2},\ldots, k_{3\ell+3}$ that is larger than $r_2$ and so on. Finally, we denote by $b_j$, $1 \leq j \leq q$, the length of the string starting\footnote{If $r_1=0$ we mean by this the string starting at $k_1$.} at $r_j$ and ending right before $r_{j+1}$\footnote{For the last dominating frequency $r_q$, we let $b_q$ be the length of the string starting at $r_q$ and ending with $k_{3\ell+3}$. Hence, if $r_q = k_{3\ell+3}$, we have $r_q = 1$.}. Then we say that the small string $(\underline{k})$ with dominating frequencies $r_1 < \ldots < r_q$ consists of segments of length $b_1, b_2,\ldots, b_q$, and we have $3 \ell + 3 = \sum b_j$. 
In addition, we introduce the notation
\begin{equation*}
 Z^{\eta, \mp}_k := - \Box \Delta_{\eta^\perp}^{-1} \Pi_{\lesssim 2^k |\measuredangle|^2}^{\mp, \eta} \Pi_{> 2^{\sigma \min\{k, -n\}}}^{\eta}.
\end{equation*}
Then we set
\begin{align*}
 \Psi^{n, (k_1 k_2 k_3)}_r &:= \sum_\pm Z_{k_3}^{\eta, \mp} \bigl( P_{k_3} A_{x, s}^{<n-1} \cdot \xi \mp P_{k_3} A_0^{<n-1} |\xi| \bigr) \times \\
 &\quad \quad \quad \times P_{k_2} \bigl( e^{-i \psi_\pm^{n, mod}} \bigr)(t,x,D) \frac{e^{\pm i t |D|}}{i|D|} P_{k_1} \bigl( e^{+i \psi_{\pm}^{n, mod}} \bigr)(D,y,0) \bigl( i |D| T_n \phi_0^\omega \pm T_n \phi_1^\omega \bigr)
\end{align*}
and for a given string $(\ulk)$ of length $\ell \geq 1$, we define inductively
\begin{align*}
 \Psi^{n, (\underline{k})}_r &:= \sum_\pm \pm Z_{k_{3\ell+3}}^{\eta, \mp} \bigl( P_{k_{3\ell+3}} A_{x, s}^{<n-1} \cdot \xi \mp P_{k_{3\ell+3}} A_0^{<n-1} |\xi| \bigr) \times \\
 &\quad \quad \quad \quad \times P_{k_{3\ell+2}} \bigl( e^{-i \psi_\pm^{n, mod}} \bigr)(t,x,D) \frac{K^{\pm}}{i|D|} P_{k_{3\ell+1}} \bigl( e^{+i \psi_{\pm}^{n, mod}} \bigr)(D,y,s) \Psi^{n, (\underline{k}')}_r.
\end{align*}
Moreover, for a given string $(\underline{k})$ of length $\ell$ and a given $\Psi^{n, (\underline{k})}_r$, we also introduce the notation
\begin{equation} \label{equ:definition_Phi_string}
 \Phi^{n, \pm, (\underline{k})}_r := P_{k_{3\ell+2}} \bigl( e^{-i \psi_\pm^{n, mod}} \bigr)(t,x,D) \frac{K^{\pm}}{i|D|} P_{k_{3\ell+1}} \bigl( e^{+i \psi_{\pm}^{n, mod}} \bigr)(D,y,s) \Psi^{n, (\underline{k}')}_r.
\end{equation}

\medskip 

We may from now on suppress the space-time localizations $[\ldots]_{<n-C}$, since they are given by convolution with $L^1$-bounded kernels and all spaces used are translation invariant. Using the above notation, we may then schematically write the worst part $\text{Diff}_{1, rough}^{[0]}$ of the error $\Box_{A^{<n-1}}^{p, mod} \Phi^{n, [0]}_r$ accrued at the zeroth stage as
\begin{align*}
 \text{Diff}_{1, rough}^{[0]} = \sum_{k_1, k_2, k_3 \leq (1-\gamma) n} \Psi^{n, (k_1 k_2 k_3)}_r.
\end{align*}
A key observation will be that whenever (at least) one of the frequencies $k_1$, $k_2$, or $k_3$ is $\geq 3 \sigma n$, the corresponding frequency localized error $\Psi^{n, (\underline{k})}_r$ gains smoothness and we can treat it as a smooth source term, see the proof of Proposition~\ref{prop:prob_renormalization_error_estimate}. We refer to such a situation as a ``terminating situation''. This means that $\Psi^{n, (\underline{k})}_r$ is only a rough error term for ``small strings'' $(k_1, k_2, k_3)$, and these rough error terms have to be iterated away by applying the inhomogeneous parametrix to them again. 

Thus, since only ``small strings'' have to be iterated away, at this point we anticipate that at stage $\ell \geq 1$ the worst part $\text{Diff}_{1, rough}^{[\ell]}$ of the error $\Box_{A^{<n-1}}^{p, mod} \Phi^{n, [\ell]}_r$ accrued at the $\ell$-th stage is approximately of the schematic form
\begin{align*}
 \text{Diff}_{1,rough}^{[\ell]} &\approx \sum_{ \substack{k_i \leq (1-\gamma)n \\ 3\ell+1\leq i \leq 3\ell+3 } } \sum_{\substack{\text{small strings } (\underline{k}') \\ \text{of length } \ell-1}} \sum_\pm \pm Z_{k_{3\ell+3}}^{\eta, \mp} \bigl( P_{k_{3\ell+3}} A_{x, s}^{<n-1} \cdot \xi \mp P_{k_{3\ell+3}} A_0^{<n-1} |\xi| \bigr) \times \\
 &\qquad \qquad \qquad \qquad \qquad \qquad \qquad \times P_{k_{3\ell+2}} \bigl( e^{-i \psi_\pm^{n, mod}} \bigr)(t,x,D) \frac{K^{\pm}}{i|D|} P_{k_{3\ell+1}} \bigl( e^{+i \psi_{\pm}^{n, mod}} \bigr)(D,y,s) \Psi^{n, (\underline{k}')}_r.
\end{align*}
Then at stage $\ell$ a ``terminating event'' occurs whenever (at least) one of the frequencies $k_{3\ell+1}$, $k_{3\ell+2}$, or $k_{3\ell+3}$ is at a higher frequency $\geq 3 \sigma n$. Correspondingly, the rough error terms accrued at the $\ell$th stage are approximately $\Psi^{n, (\underline{k})}_r$ for all small strings $(\underline{k})$ of length $\ell$.

In fact, we have to refine this definition by taking into account the angular cutoffs in the operators $Z^{\eta, \mp}_{k}$ and in the phases $\psi^{n, mod}_\pm$. What makes this somewhat delicate is that we do not carry out this refinement one function at a time, but only for the collection of all of them. 
To this end we introduce the notation for $0 \leq k \leq 3 \sigma n$ and integer-valued $- \sigma n \leq \alpha < 0$,
\begin{equation*}
 Z^{\eta, \mp}_{k, \alpha} := - \Box \Delta_{\eta^\perp}^{-1} \Pi_{\lesssim 2^k |\measuredangle|^2}^{\mp, \eta} \Pi_{2^\alpha}^{\eta},
\end{equation*}
and then expand $Z^{\eta, \mp}_k$ as
\begin{equation*}
 Z^{\eta, \mp}_k = \sum_{-\sigma n \leq \alpha < 0} Z^{\eta, \mp}_{k, \alpha}.
\end{equation*}
Keeping in mind that the inductive definition of $\Psi^{n, (\underline{k})}_r$ for a string $(\underline{k})$ of length $\ell$ implies the presence of $\ell+1$ such operators $Z^{\eta, \mp}_{k_j}$, and expanding each of these out, we encounter a string of operators $Z^{\eta, \mp}_{k_3, \alpha_0}, \ldots, Z^{\eta, \mp}_{k_{3\ell+3}, \alpha_\ell}$. Freezing the angles $\alpha_j$, we denote the corresponding contribution by $\Psi^{n, (\underline{k}), (\underline{\alpha})}_r$. 
We will see that if the sum of the angles is sufficiently small $\sum_{j} \alpha_j \leq -\frac{\sigma n}{100}$, the corresponding term gains regularity and does not have to be passed on to the next iteration stage, see the proof of Proposition~\ref{prop:prob_renormalization_error_estimate}.

Similarly, we also reduce the phases in the exponentials $e^{\pm i\psi^{n,mod}_{\pm}}$ to sufficiently large angular separation from the Fourier support of the high frequency factor. To this end we use the decomposition 
\begin{align*}
 e^{\pm i\psi^{n,mod}_{\pm}} &= e^{\pm i\Pi_{>-10}\psi^{n,mod}_{\pm}} +  \bigl( e^{\pm i\psi^{n,mod}_{\pm}} - e^{\pm i\Pi_{>-10}\psi^{n,mod}_{\pm}} \bigr) \\
 &= e^{\pm i\Pi_{>-10}\psi^{n,mod}_{\pm}}  \mp \int_{-\sigma n}^{-10} i\frac{\partial}{\partial h}\big(\Pi_{>h}\psi^{n,mod}_{\pm}\big)e^{\pm i\Pi_{>h}\psi^{n,mod}_{\pm}}\,dh,
\end{align*}
where we recall that $\Pi_{>a}$ denotes a smooth cutoff localizing the angular separation of the Fourier support to direction $\eta := \frac{\xi}{|\xi|}$ to an angle $\gtrsim 2^a$.
We substitute this formula for each instance of $e^{\pm i\psi^{n,mod}_{\pm}}$ in $\Psi^{n, (\ulk), (\underline{\alpha})}_r$.
If we use the integral part 
\[
\mp \int_{-\sigma n}^{-10} i\frac{\partial}{\partial h}\big(\Pi_{>h}\psi^{n,mod}_{\pm}\big)e^{\pm i\Pi_{>h}\psi^{n,mod}_{\pm}} \, \ud h
\]
in $m$ instances of these exponentials in $\Psi^{n, (\ulk), (\underline{\alpha})}_r$, we can write the corresponding expression as an iterated integral of the form 
\[
 \int_{-\sigma n}^{-10} \int_{-\sigma n}^{-10} \cdots  \int_{-\sigma n}^{-10} \ldots \, \ud h_1 \ud h_2 \ldots \ud h_m, 
\]
and we denote it by $\Psi^{n, (\ulk), (\underline{\alpha}), (\underline{h})}$. We anticipate that the contribution of the integral 
\[
 \int_{-\sigma n}^{-10} \int_{-\sigma n}^{-10} \cdots \int_{-\sigma n}^{-10} \chi_{ \{ \sum h_j \leq -\frac{\sigma}{10}n \} } \ldots \, \ud h_1 \ud h_2 \ldots \ud h_m
\]
is a smooth source term, see the proof of Proposition~\ref{prop:prob_renormalization_error_estimate}.
 
Hence, we arrive at the following precise definition of the ``rough'' error accrued at every stage $\ell \geq 0$
\begin{equation} \label{equ:definition_rough_error_stage_l}
 \calE^{n, [\ell]}_{rough} := \sum_{\sum_j h_j \geq -\frac{\sigma n}{10}} \sum_{\sum_j \alpha_j \geq -\frac{\sigma n}{100} } \sum_{\substack{\text{small strings } (\underline{k}) \\ \text{of length } \ell}}    \Psi^{n, (\underline{k}), (\underline{\alpha}), (\underline{h})}_r,
\end{equation}
which then gets iterated away by applying the inhomogeneous parametrix again.
Correspondingly, the ``delicate'' error terms accrued at stage $\ell \geq 0$ are precisely defined by
\begin{equation}
 \calE_{del}^{n, [\ell]} := \text{Diff}_{1,rough}^{n, [\ell]} - \calE_{rough}^{n, [\ell]}.
\end{equation}

\medskip

In what follows we will frequently make use of the short-hand notation
\begin{equation} \label{equ:definition_kappa}
  \kappa_{n-1} := \sum_{m=1}^{n-1} \bigl( \|\calA_{x,r}^m\|_{R_m} + \|\Phi^m_r\|_{R_m} \bigr) + \sum_{m=0}^{n-1} \bigl( \|\calA_{x,s}^m\|_{S^1[m]} + \|\calA_0^m\|_{Y^1[m]} + \|\Phi^m_s\|_{S^1[m]} \bigr).
\end{equation}

\subsection{The ``probabilistic'' phase function} \label{subsec:prob_phase_function}

We now turn to establishing mapping properties of the associated ``probabilistic'' renormalization operators $e^{\pm i \psi_{\pm}^{n, mod}}_{<n-C}(t,x,D)$. It is helpful to recall that the spatial and temporal parts of the connection form $A^{<n-1}$ are composed of
\begin{align*}
 A^{<n-1}_x = \sum_{m=0}^{n-1} \calA^m_{x,s} + \sum_{m=1}^{n-1} \calA^m_{x,r}, \quad A^{<n-1}_0 = \sum_{m=0}^{n-1} \calA^m_0, 
\end{align*}
and that the rough evolution $A_{x,r}^{<n-1} = \sum_{m=1}^{n-1} \calA^m_{x,r}$ is sharply localized to frequencies $1 \lesssim |\xi| \lesssim 2^{n-1}$. 
The next proposition on the mapping properties of $e^{\pm i \psi_{\pm}^{n, mod}}_{<n-C}(t,x,D)$ is purely deterministic in the sense that it only relies on certain smallness assumptions about the components of $A^{<n-1}$ and that their randomness does not play a role here.

\begin{proposition} \label{prop:prob_renormalization_mapping_properties} 
Let $n \geq 1$. Assume that
\begin{equation*}
 \sum_{m=1}^{n-1} \|\calA_{x,r}^m\|_{R_m} + \sum_{m=0}^{n-1} \|\calA_{x,s}^m\|_{S^1[m]} + \sum_{m=0}^{n-1} \|\calA_0^m\|_{Y^1[m]} \lesssim \varepsilon. 
\end{equation*} 
Then the frequency-localized ``probabilistic'' renormalization operator $e^{\pm i \psi_{\pm}^{n, mod}}_{<n-C}(t,x,D)$ has the following mapping properties with $Z \in \{ N_n, L^2, N_n^\ast \}$:
\begin{align} 
&e^{\pm i \psi_{\pm}^{n, mod}}_{<n-C}(t,x,D) \colon Z \longrightarrow Z, \\
&\partial_t e^{\pm i \psi_{\pm}^{n, mod}}_{<n-C}(t,x,D) \colon Z \longrightarrow \varepsilon Z, \\
&e^{-i \psi_{\pm}^{n, mod}}_{<n-C}(t,x,D) e^{+i \psi_{\pm}^{n, mod}}_{<n-C}(D, y, s) - I \colon Z \longrightarrow \varepsilon Z, \\
&e^{-i \psi_{\pm}^{n, mod}}_{<n-C}(t,x,D) \colon S_n^{\sharp} \longrightarrow S_n. 
\end{align}
\end{proposition}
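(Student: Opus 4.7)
The plan is to mimic the strategy used for the ``deterministic'' parametrix in Section~\ref{sec:renormalization}. Once one has the analogues of the pointwise bounds of Lemma~\ref{lem:det_phase_function_Linfty_bounds}, the difference bounds for translates, and the decomposable estimates of Lemma~\ref{lem:det_decomposable_est}, the four mapping properties in the proposition follow \emph{verbatim} from the arguments in Sections~8--11 of~\cite{KST}. Indeed, those arguments only use the phase function through such pointwise/decomposable inputs, and the critical smallness parameter $\varepsilon$ is extracted from the smallness of the $R_m$, $S^1[m]$, and $Y^1[m]$ norms on the right-hand side of each estimate on $\psi_{\pm}^{n, mod}$. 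The frequency cutoff $k \leq (1-\gamma)n$ only helps since it produces an off-diagonal separation at every output frequency~$\sim 2^n$.

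The rough component $\psi_{\pm, k}^{n, mod, r}$ is structurally identical to $\psi_{\pm, k}^{n, <k, r}$ from the ``deterministic'' phase, only with a \emph{tighter} angular cutoff $\Pi^{\eta}_{>2^{\sigma \min\{k, -n\}}}$ in place of $\Pi^{\eta}_{>2^{\sigma(k-n)}}$. Since all estimates in Subsection~\ref{subsec:det_bounds_phase_function} are monotone in the angular cutoff, the rough-part bounds from Lemmas~\ref{lem:det_phase_function_Linfty_bounds} and~\ref{lem:det_decomposable_est} transfer with no change. The smooth component $\psi_{\pm, k}^{n, mod, s}$ is the genuinely new piece. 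I would split it into its ``hyperbolic'' summand, involving $\Pi_{\lesssim 2^k|\measuredangle|^2}^{\mp,\eta} L_{\pm}^\eta \Delta_{\eta^\perp}^{-1}$, and its ``elliptic'' summand, involving $\Pi_{\gg 2^k|\measuredangle|^2}^{\mp,\eta} (L_{\mp}^\eta)^{-1}$. The hyperbolic summand is handled exactly as in the deterministic smooth-part bounds (the extra modulation cutoff only improves the estimate by a bounded factor), using $\|P_j A_{x,s}^{<n-1}\|_{L^\infty_t L^2_x} + 2^{-j}\|P_j A_0^{<n-1}\|_{L^\infty_t L^2_x} \lesssim 2^{-j}\kappa_{n-1}$. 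For the elliptic summand I would dyadically decompose in the size $\lambda \gtrsim 2^k \theta^2$ of the symbol of $L_{\mp}^\eta$ via $\Pi^{\mp,\eta}_\lambda$; on each such piece $(L_{\mp}^\eta)^{-1}$ reduces to multiplication by a factor of size $\lambda^{-1}$, which is dominated by $2^{-k}\theta^{-2}$ — i.e.\ exactly the same gain as $\Delta_{\eta^\perp}^{-1}$ — so the same pointwise and decomposable bounds apply after summing the harmless geometric series in $\lambda$. The Coulomb gauge $\partial^j A_{j,s}^{<n-1} = 0$ is still exploited to gain an extra angle factor from $A_{x,s}^{<n-1}\cdot\eta$; the temporal contribution has no such structure, but the $Y^1$ norm supplies an additional $2^{-j}$, which compensates.

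For the difference bounds analogous to~\eqref{equ:det_phase_function_difference_psis}--\eqref{equ:det_phase_function_difference_xi_derivative_psis}, the two-cutoff argument (choose $j_0$ so that $2^{-j_0} \sim |t-s| + |x-y|$) transfers directly: for frequencies $\leq j_0$ we use the $\nabla_{t,x}\psi$ bound and the translation distance $T$, for frequencies $\geq j_0$ we sum the raw $L^\infty_t L^\infty_x$ bound. The elliptic dyadic decomposition in $\lambda$ enters only through an overall constant, since the symbol of $L_\mp^\eta$ on these pieces is smooth on scale $\lambda$ and hence its symbol quantization commutes with translations up to acceptable error. The decomposable estimates require the same Bernstein interpolation between a sharp Strichartz norm and a redeeming $L^2_t L^\infty_x$ norm that appears in the proof of Lemma~\ref{lem:det_decomposable_est}; for the smooth pieces of $\psi^{n,mod}_\pm$ one uses instead the full $S^1_j$ Strichartz range for $P_j A_{x,s}^{<n-1}$ and the $Y^1$ bounds $\|P_j A_0^{<n-1}\|_{L^2_t \dot H^{3/2}_x} + \|\partial_t P_j A_0^{<n-1}\|_{L^2_t \dot H^{1/2}_x} \lesssim \|A_0^{<n-1}\|_{Y^1[m]}$ (summed over $m$), which give directly the required decomposable bounds on the divergence-free, $A_0$, and elliptic pieces without any loss of derivatives.

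The main obstacle will be the elliptic summand involving $(L_\mp^\eta)^{-1}\Pi^{\mp,\eta}_{\gg 2^k |\measuredangle|^2}$, which does not appear in~\cite{KST}. The difficulty is not the size estimate but the requirement that the resulting symbol still belong to the decomposable classes at the same angular scale~$\theta$ as the hyperbolic piece. The key point to check is that after restriction to $\Pi^{\mp,\eta}_\lambda$ with $\lambda \gtrsim 2^k\theta^2$, the operator $\lambda^{-1}\Pi^{\mp,\eta}_\lambda$ has a kernel which is essentially a bump on the dual scale, and in particular commutes up to small error with the angular projection $\Pi^\eta_\theta$ and with the $(t,x)$ translations used in the decomposable norm. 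Once this is verified, by the same Lemma~\ref{lem:decomposability_lemma_KST} the mapping properties follow from the $S^{\sharp}_n \to S_n$ argument in Sections~10--11 of~\cite{KST}, with the off-diagonal gain $2^{-\gamma n}$ compensating any small losses introduced by the elliptic/hyperbolic splitting.
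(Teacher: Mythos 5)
Your overall strategy (prove the analogues of the pointwise, difference, and decomposable bounds for $\psi_{\pm}^{n,mod}$ and then quote the KST machinery) is exactly the paper's route, and your treatment of the rough part and of the spatial smooth part $A_{x,s}^{<n-1}$ (Coulomb gauge gain of one power of $\theta$, then spatial Bernstein from $L^\infty_t L^2_x$) is fine. The genuine gap is in the temporal contribution $A_0^{<n-1}$ to the \emph{pointwise} bounds of the smooth part. Count the powers of $\theta$ on a sector of angle $\theta$: the inverse operator costs $\theta^{-2}2^{-k}$ on both the elliptic piece (since $\lambda^{-1}\lesssim 2^{-k}\theta^{-2}$, as you note) and the hyperbolic piece (since $|L_\pm^\eta|\lesssim 2^k$ and $\Delta_{\eta^\perp}^{-1}$ gains $(2^k\theta)^{-2}$); the spatial Bernstein estimate $\Pi_\theta^\eta P_k\colon L^2_x\to(\theta^3 2^{4k})^{1/2}L^\infty_x$ gives $\theta^{3/2}$; and for $A_0$ there is \emph{no} Coulomb-gauge gain. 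With your proposed input $\|P_kA_0^{<n-1}\|_{L^\infty_tL^2_x}\lesssim 2^{-k}\kappa_{n-1}$ this yields $\bigl|\psi_{\pm,k,(\theta)}^{n,mod,s}\bigr|\lesssim\theta^{-1/2}\kappa_{n-1}$ for the $A_0$ terms, not the $\theta^{+1/2}$ that is needed. Summing over dyadic angles down to the cutoff $\theta\gtrsim 2^{\sigma\min\{k,-n\}}$ then produces a loss of order $2^{\sigma n/2}$ per frequency block, which grows without bound in $n$ and destroys the $O(\varepsilon)$ size of the phase, hence the $\varepsilon$-smallness in the mapping properties $\partial_t e^{\pm i\psi}\colon Z\to\varepsilon Z$ and $e^{-i\psi}e^{+i\psi}-I\colon Z\to\varepsilon Z$, as well as the logarithmic difference bound on which the $S^\sharp_n\to S_n$ argument rests. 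The ``off-diagonal gain $2^{-\gamma n}$'' you invoke at the end is not available inside the phase estimates themselves, so it cannot absorb this.

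The repair is precisely where the paper's proof of the $L^\infty$ bounds differs from yours: for the $A_0^{<n-1}$ contributions one must \emph{keep} the modulation localization when applying Bernstein, i.e.\ decompose dyadically in $\lambda=|{\mp}\tau+\eta\cdot\zeta|$ and use the space--time Bernstein estimate $\Pi^{\mp,\eta}_{\lambda}\Pi_\theta^\eta P_k\colon L^2_tL^2_x\to(\lambda\,\theta^3 2^{4k})^{1/2}L^\infty_tL^\infty_x$, feeding in $\|P_kA_0^{<n-1}\|_{L^2_tL^2_x}\lesssim 2^{-\frac32 k}\|A_0^{<n-1}\|_{Y^1}$ (the $L^2_t\dot H^{3/2}_x$ component of $Y^1$). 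The extra factor $\lambda^{1/2}\gtrsim 2^{k/2}\theta$ converts $\theta^{-1/2}$ into $\theta^{1/2}$, and both the elliptic sum ($\lambda\gg 2^k\theta^2$, gain $\lambda^{-1}$) and the hyperbolic sum ($\lambda\lesssim 2^k\theta^2$, gain $2^k\theta^{-2}2^{-2k}$) then yield the bound $\theta^{1/2}2^{\frac32 k}\|P_kA_0^{<n-1}\|_{L^2_tL^2_x}$, which is summable over angles and frequencies. (Your decomposable-estimate discussion already uses the $L^2_t$-based $Y^1$ norms for $A_0$ and is essentially consistent with the paper's Lemma on decomposable bounds; it is the pointwise bounds where your proposal, as written, fails.)
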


The proof of the mapping properties in Proposition~\ref{prop:prob_renormalization_mapping_properties} again proceeds as in Sections~6--11 in~\cite{KST} once we have established certain pointwise and decomposable estimates for the ``probabilistic'' phase functions $\psi_{\pm}^{n, mod}(t,x,\xi)$ in Lemma~\ref{lem:prob_phase_function_Linfty_bounds}, Lemma~\ref{lem:prob_difference_phase_fct}, and Lemma~\ref{lem:prob_decomposable_est} below. The proofs of the latter are also just deterministic in the sense that they only rely on certain smallness assumptions about the components of $A^{<n-1}$. We note that the delicate proof of the error estimate for $\Box_{A^{<n-1}}^{p,mod} \Phi^n_r$ is deferred to Subsection~\ref{subsec:prob_renormalization_error} below. We start off with $L^\infty$ bounds on the ``probabilistic'' phase function. 
\begin{lemma} \label{lem:prob_phase_function_Linfty_bounds}
 Let $n \geq 1$. For the rough part of the ``probabilistic'' phase function we have for any $0 \leq k \leq n$ and for any $1 \gtrsim \theta > 2^{\sigma \min\{k, -n\}}$ that
 \begin{align*}
  \bigl| \psi_{\pm, k, (\theta)}^{n, mod, r}(t,x,\xi) \bigr| &\lesssim 2^{-(2-)k} 2^{+2\delta_\ast k} \theta^{-1-\delta_1} \min\{ (\theta 2^k)^{\frac{3}{2}-}, 1 \} \| P_k A_{x,r}^{<n-1} \|_{R_k}, \\
  \bigl| \psi_{\pm, k}^{n, mod, r}(t,x,\xi) \bigr| &\lesssim 2^{-((1-) - 2\delta_\ast - \delta_1)k} \| P_k A_{x,r}^{<n-1} \|_{R_k}, \\
  \bigl| \nabla_{t,x} \psi_{\pm, k, (\theta)}^{n, mod, r}(t,x,\xi) \bigr| &\lesssim 2^{-(1-)k} 2^{+2\delta_\ast k} \theta^{-1-\delta_1} \min\{ (\theta 2^k)^{\frac{3}{2}-}, 1 \} \| P_k A_{x,r}^{<n-1} \|_{R_k}, \\
  \bigl| \nabla_{t,x} \psi_{\pm, k}^{n, mod, r}(t,x,\xi) \bigr| &\lesssim 2^{-(0+)k} 2^{+(2\delta_\ast + \delta_1)k} \| P_k A_{x,r}^{<n-1} \|_{R_k}.
 \end{align*}
 For the smooth part of the ``probabilistic'' phase function we have for any $k \in \bbZ$ and for any $1 \gtrsim \theta > 2^{\sigma \min\{k, -n\}}$ that
 \begin{align*}
  \bigl| \psi_{\pm, k, (\theta)}^{n, mod, s}(t,x,\xi) \bigr| &\lesssim \theta^{\frac{1}{2}} \bigl( 2^k \| P_k A_{x,s}^{<n-1} \|_{L^\infty_t L^2_x} + 2^{\frac{3}{2} k} \|P_k A_0^{<n-1}\|_{L^2_t L^2_x} \bigr), \\
  \bigl| \psi_{\pm, k}^{n, mod, s}(t,x,\xi) \bigr| &\lesssim 2^k \| P_k A_{x,s}^{<n-1} \|_{L^\infty_t L^2_x} + 2^{\frac{3}{2} k} \|P_k A_0^{<n-1}\|_{L^2_t L^2_x},  \\
  \bigl| \nabla_{t,x} \psi_{\pm, k, (\theta)}^{n, mod, s}(t,x,\xi) \bigr| &\lesssim \theta^{\frac{1}{2}} 2^k \bigl( \| \nabla_{t,x} P_k A_{x,s}^{<n-1} \|_{L^\infty_t L^2_x} + 2^{\frac{1}{2} k} \|\nabla_{t,x} P_k A_0^{<n-1}\|_{L^2_t L^2_x} \bigr), \\
  \bigl| \nabla_{t,x} \psi_{\pm, k}^{n, mod, s}(t,x,\xi) \bigr| &\lesssim 2^k \bigl( \| \nabla_{t,x} P_k A_{x,s}^{<n-1} \|_{L^\infty_t L^2_x} + 2^{\frac{1}{2} k} \|\nabla_{t,x} P_k A_0^{<n-1}\|_{L^2_t L^2_x} \bigr).
 \end{align*}
 Finally, for derivatives of the rough part of the phase function with respect to the frequency variable we have for any multi-index $\alpha$ with $|\alpha| \geq 1$, any $l \geq 0$, any $0 \leq k \leq n$, and any $1 \gtrsim \theta > 2^{\sigma \min\{k, -n\}}$ that
 \begin{align*}
  \bigl| \partial_{|\xi|}^l \partial_\eta^\alpha \psi_{\pm, k, (\theta)}^{n, mod, r}(t,x,\xi) \bigr| &\lesssim \theta^{-1-|\alpha|} 2^{-2k} 2^{+2\delta_\ast k} 2^{(0+)k} \| P_k A_{x,r}^{<n-1} \|_{R_k}, \\ 
  \bigl| \partial_{|\xi|}^l \partial_\eta^\alpha \psi_{\pm, k}^{n, mod, r}(t,x,\xi) \bigr| &\lesssim 2^{\sigma(1+|\alpha|) n} 2^{-2k} 2^{+2\delta_\ast k} 2^{(0+)k} \| P_k A_{x,r}^{<n-1} \|_{R_k}, \\   
  \bigl| \partial_{|\xi|}^l \partial_\eta^\alpha \nabla_{t,x} \psi_{\pm, k}^{n, mod, r, (\theta)}(t,x,\xi) \bigr| &\lesssim \theta^{-1-|\alpha|} 2^{-k} 2^{+2\delta_\ast k} 2^{(0+)k} \| P_k A_{x,r}^{<n-1} \|_{R_k}, \\
  \bigl| \partial_{|\xi|}^l \partial_\eta^\alpha \nabla_{t,x} \psi_{\pm, k}^{n, mod, r}(t,x,\xi) \bigr| &\lesssim 2^{\sigma(1+|\alpha|) n} 2^{-k} 2^{+2\delta_\ast k} 2^{(0+)k} \| P_k A_{x,r}^{<n-1} \|_{R_k}.
 \end{align*}
 Similarly, for derivatives of the smooth part of the phase function with respect to the frequency variable we have for any multi-index $\alpha$ with $|\alpha| \geq 1$, any $l \geq 0$, any $k \in \bbZ$, and any $1 \gtrsim \theta > 2^{\sigma \min\{k, -n\}}$ that
 \begin{align*}
  \bigl| \partial_{|\xi|}^l \partial_\eta^\alpha \psi_{\pm, k, (\theta)}^{n, mod, s}(t,x,\xi) \bigr| &\lesssim \theta^{\frac{1}{2}-|\alpha|} \bigl( 2^k \| P_k A_{x,s}^{<n-1} \|_{L^\infty_t L^2_x} + 2^{\frac{3}{2} k} \|P_k A_0^{<n-1}\|_{L^2_t L^2_x} \bigr), \\ 
  \bigl| \partial_{|\xi|}^l \partial_\eta^\alpha \psi_{\pm, k}^{n, mod, s}(t,x,\xi) \bigr| &\lesssim 2^{\sigma (|\alpha|-\frac{1}{2}) \max\{-k, n\}} \bigl( 2^k \| P_k A_{x,s}^{<n-1} \|_{L^\infty_t L^2_x} + 2^{\frac{3}{2} k} \|P_k A_0^{<n-1}\|_{L^2_t L^2_x} \bigr), \\ 
  \bigl| \partial_{|\xi|}^l \partial_\eta^\alpha \nabla_{t,x} \psi_{\pm, k, (\theta)}^{n, mod, s}(t,x,\xi) \bigr| &\lesssim \theta^{\frac{1}{2}-|\alpha|} 2^k \bigl( \| \nabla_{t,x} P_k A_{x,s}^{<n-1} \|_{L^\infty_t L^2_x} +  2^{\frac{1}{2} k} \|\nabla_{t,x} P_k A_0^{<n-1}\|_{L^2_t L^2_x} \bigr), \\
  \bigl| \partial_{|\xi|}^l \partial_\eta^\alpha \nabla_{t,x} \psi_{\pm, k}^{n, mod, s}(t,x,\xi) \bigr| &\lesssim 2^{\sigma (|\alpha|-\frac{1}{2}) \max\{-k, n\}} 2^k \bigl( \| \nabla_{t,x} P_k A_{x,s}^{<n-1} \|_{L^\infty_t L^2_x} + 2^{\frac{1}{2}k} \| \nabla_{t,x} P_k A_0^{<n-1}\|_{L^\infty_t L^2_x} \bigr).
 \end{align*}
\end{lemma}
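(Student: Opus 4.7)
I would follow the pattern of Lemma~\ref{lem:det_phase_function_Linfty_bounds}, making three modifications to account for the new angular threshold $2^{\sigma\min\{k,-n\}}$, the elliptic inverse $(L_\mp^\eta)^{-1}\Pi^{\mp,\eta}_{\gg 2^k|\measuredangle|^2}$, and the temporal component $A_0^{<n-1}$, which does not enjoy a Coulomb cancellation. For each angular sector $\theta\gtrsim 2^{\sigma\min\{k,-n\}}$, the strategy is to extract (i) a symbol bound for $L_\pm^\eta\Delta_{\eta^\perp}^{-1}$ or $(L_\mp^\eta)^{-1}$, (ii) a Coulomb-induced factor of $\theta$ for the spatial components (using $\partial^j(A^{<n-1}_x)_j=0$, which survives the splitting into rough and smooth parts because both pieces are individually divergence-free), and (iii) a Bernstein-type embedding from $L^\infty_tL^2_x$ or $L^2_tL^2_x$ into $L^\infty_tL^\infty_x$. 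The un-sectored bounds would then follow by summing the dyadic angular decomposition, and $\nabla_{t,x}$ would merely contribute a factor $2^k$.

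For the rough piece, the symbol of $L_\pm^\eta\Delta_{\eta^\perp}^{-1}$ is of size $2^{-k}\theta^{-2}$, and the redeeming $R_kL^\infty_tL^\infty_x$ component with its built-in angular gains supplies the sharp bound $\|\Pi^\eta_\theta P_k A^{<n-1}_{x,r}\|_{L^\infty_tL^\infty_x}\lesssim 2^{-(1-20\sigma)k}\theta^{-\delta_1}\min\{(\theta 2^k)^{3/2-},1\}\|P_kA^{<n-1}_{x,r}\|_{R_k}$. Multiplying these three factors together produces the stated sector bound (any small $2^{2\delta_\ast k}$ discrepancy from the claim being free slack). For the smooth piece, the $L_\pm^\eta\Delta_{\eta^\perp}^{-1}\Pi^{\mp,\eta}_{\lesssim 2^k|\measuredangle|^2}$ contribution involving $A_{x,s}^{<n-1}$ would be bounded using the spatial Bernstein estimate $\Pi^\eta_\theta P_k\colon L^\infty_tL^2_x\to\theta^{3/2}2^{2k}L^\infty_tL^\infty_x$, yielding $\theta^{1/2}2^k\|P_kA^{<n-1}_{x,s}\|_{L^\infty_tL^2_x}$ after the Coulomb gain; the analogous $A_0$ contribution, which lacks Coulomb, would be handled by the full space-time Bernstein on the modulation slab of measure $\theta^3 2^{4k}\cdot 2^k\theta^2$, producing $\theta^{1/2}2^{3k/2}\|P_kA_0^{<n-1}\|_{L^2_tL^2_x}$. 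The new $(L_\mp^\eta)^{-1}\Pi^{\mp,\eta}_{\gg 2^k|\measuredangle|^2}$ piece would be treated by a modulation dyadic decomposition $\lambda\gtrsim 2^k\theta^2$: for each $\lambda$, the symbol size $\lambda^{-1}$ combines with space-time Bernstein on a box of measure $\theta^3 2^{4k}\lambda$ to give size $\theta^{3/2}2^{2k}\lambda^{-1/2}\|\cdot\|_{L^2_{t,x}}$, and the geometric sum in $\lambda$ is dominated by the endpoint $\lambda=2^k\theta^2$, matching the sizes produced by the elliptic piece.

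The frequency-variable derivative bounds exploit the fact that $\psi^{n,mod}_\pm$ depends on $\xi$ only through $\eta=\xi/|\xi|$: the radial derivatives $\partial_{|\xi|}^l$ act trivially, while each $\partial_\eta$ differentiates either a sector cutoff, a modulation cutoff, or the symbol $L_\pm^\eta\Delta_{\eta^\perp}^{-1}$, contributing a factor of $\theta^{-1}$ per derivative; the $\theta^{-|\alpha|}$ loss in the un-sectored bound would then be absorbed by summing against the smallest admissible angle $2^{\sigma\min\{k,-n\}}$, producing the claimed $2^{\sigma(1+|\alpha|)n}$ factor for the rough part and $2^{\sigma(|\alpha|-\frac12)\max\{-k,n\}}$ for the smooth part. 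The hardest point will be the bookkeeping of the correct function spaces for the smooth inputs — ensuring that $A_{x,s}^{<n-1}$ is measured in $L^\infty_tL^2_x$ (controlled by $S^1[m]$ through the energy bound) while $A_0^{<n-1}$ is measured in $L^2_tL^2_x$ (controlled by $Y^1[m]$ through $L^2_t\dot H^{3/2}_x$) — and then cleanly tracking the interaction of the modulation projections $\Pi^{\mp,\eta}_\lambda$ with these norms, which becomes routine once the $L^2_{t,x}$ Bernstein estimate is applied on each individual modulation slab.
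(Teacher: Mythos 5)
Your proposal is correct and follows essentially the same route as the paper's proof: the rough part reduces to the deterministic computation with the tighter angular threshold, the smooth spatial part uses the Coulomb gain plus fixed-time Bernstein against $L^\infty_t L^2_x$, the $A_0^{<n-1}$ contribution is handled by a dyadic decomposition in the modulation $\lambda$ of $L_\mp^\eta$ together with space-time Bernstein against $L^2_t L^2_x$ (with the geometric sums dominated at $\lambda \sim 2^k\theta^2$, exactly as you compute), and the $\xi$-derivative bounds follow since the phase depends on $\xi$ only through $\eta$ and each $\partial_\eta$ costs $\theta^{-1}$, with the un-sectored bounds obtained by summing down to the smallest admissible angle. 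One small caution: for the $(L_\mp^\eta)^{-1}\Pi^{\mp,\eta}_{\gg 2^k|\measuredangle|^2}$ piece applied to $A_{x,s}^{<n-1}$ (as opposed to $A_0^{<n-1}$) the modulation decomposition with an $L^2_{t,x}$ input norm is not the right bookkeeping, since only $L^\infty_t L^2_x$ is controlled there; one instead uses the crude symbol bound $(2^k\theta^2)^{-1}$ on that region and the same fixed-time Bernstein as for the elliptic piece, which is precisely what the general strategy in your first paragraph already prescribes.
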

\begin{proof}
 The rough part of the ``probabilistic'' phase function coincides with the rough part of the ``deterministic'' phase function up to the tighter angle cut-off in the ``probabilistic'' phase. For this reason the proofs of the bounds for $\psi_{\pm, k, (\theta)}^{n, mod, r}(t,x,\xi)$ with localization to an angle $\theta$ is identical to the proof of the corresponding bounds for the ``deterministic'' phase function. We then obtain slightly different bounds for the rough component $\psi_{\pm, k}^{n, mod, r}(t,x,\xi)$ upon summing over the angles $1 \gtrsim \theta \gtrsim 2^{\sigma \min\{k, -n\}}$ due to the tighter angle cut-off in the definition of the ``probabilistic'' phase function. 
 
 While the smooth part of the ``deterministic'' phase function only contains the free wave evolution $\calA_{x,s}^{0,free}$ of the lowest frequency block, the smooth part of the ``probabilistic'' phase function incorporates the homogeneous and inhomogeneous spatial components of the connection form $A_{x,s}^{<n-1}$ (from frequency stages up to $n-1$) as well as the temporal components of the connection form $A_0^{<n-1}$ (from frequency stages up to $n-1$). Correspondingly, the bounds for the smooth part of the ``probabilistic'' phase function require more explanations. 
 
 In order to estimate the contribution of the spatial component of the connection form $A_{x,s}^{<n-1}$ we again exploit the Coulomb gauge condition to gain another factor of $\theta$ and use the Bernstein estimate $\Pi_{\theta}^\eta P_k L^2_x \to (\theta^3 2^{4k})^{\frac{1}{2}} L^\infty_x$. Specifically, we find that
 \begin{align*}
  \sup_{\eta} \, \Bigl\| (L_{\mp}^\eta)^{-1} \Pi_{\gg 2^k |\measuredangle|^2}^{\mp, \eta} \Pi_\theta^{\eta} \bigl( P_k A_{x,s}^{<n-1} \cdot \eta \bigr) \Bigr\|_{L^\infty_t L^\infty_x} &\lesssim \theta^{-2} 2^{-k} \sup_{\eta} \, \bigl\| \Pi_\theta^{\eta} P_k A_{x,s}^{<n-1} \cdot \eta \bigr\|_{L^\infty_t L^\infty_x} \\
  &\lesssim \theta^{-1} 2^{-k} \sup_\eta \, \bigl\| \Pi_\theta^{\eta} P_k A_{x,s}^{<n-1} \bigr\|_{L^\infty_t L^\infty_x} \\
  &\lesssim \theta^{\frac{1}{2}} 2^k \|P_k A_{x,s}^{<n-1} \|_{L^\infty_t L^2_x}
 \end{align*}
 as well as
 \begin{align*}
  \sup_{\eta} \, \Bigl\| L_\pm^\eta \Delta_{\eta^\perp}^{-1} \Pi_{\lesssim 2^k |\measuredangle|^2}^{\mp, \eta} \Pi_\theta^{\eta} \bigl( P_k A_{x,s}^{<n-1} \cdot \eta \bigr) \Bigr\|_{L^\infty_t L^\infty_x} &\lesssim \theta^{-2} 2^{-k} \sup_{\eta} \, \bigl\| \Pi_\theta^{\eta} P_k A_{x,s}^{<n-1} \cdot \eta \bigr\|_{L^\infty_t L^\infty_x} \\
  &\lesssim \theta^{-1} 2^{-k} \sup_\eta \, \bigl\| \Pi_\theta^{\eta} P_k A_{x,s}^{<n-1} \bigr\|_{L^\infty_t L^\infty_x} \\
  &\lesssim \theta^{\frac{1}{2}} 2^k \|P_k A_{x,s}^{<n-1} \|_{L^\infty_t L^2_x}.
 \end{align*}
In order to estimate the contribution of the temporal component $A_0^{<n-1}$ of the connection form, we dyadically decompose the size of the symbol of $L_\mp^\eta$, i.e. $|\tau \mp \eta \cdot \zeta| \sim \lambda$, $\lambda \in 2^{\bbZ}$. 
Using the Bernstein estimate $\Pi^{\pm, \eta}_\lambda \Pi_{\theta}^\eta P_k L^2_t L^2_x \to (\lambda \, \theta^3 2^{4k})^{\frac{1}{2}} L^\infty_t L^\infty_x$, we then find that
\begin{align*}
 \sup_{\eta} \, \bigl\| (L_{\mp}^\eta)^{-1} \Pi_{\gg 2^k |\measuredangle|^2}^{\mp, \eta} \Pi_\theta^{\eta} \bigl( P_k A_0^{<n-1} \bigr) \bigr\|_{L^\infty_t L^\infty_x} &\lesssim \sum_{\lambda \gg 2^k \theta^2} \sup_{\eta} \,   \bigl\| (L_{\mp}^\eta)^{-1} \Pi_{\lambda}^{\mp, \eta} \Pi_\theta^{\eta} \bigl( P_k A_0^{<n-1} \bigr) \bigr\|_{L^\infty_t L^\infty_x} \\
 &\lesssim \sum_{\lambda \gg 2^k \theta^2} \lambda^{-1} \bigl( \lambda \, \theta^3 2^{4k} \bigr)^{\frac{1}{2}} \|P_k A_0^{<n-1}\|_{L^2_t L^2_x} \\
 &\lesssim \sum_{\lambda \gg 2^k \theta^2} \lambda^{-\frac{1}{2}} \theta^{\frac{3}{2}} 2^{2k} \|P_k A_0^{<n-1}\|_{L^2_t L^2_x} \\
 &\lesssim \theta^{\frac{1}{2}} 2^{\frac{3}{2} k} \|P_k A_0^{<n-1}\|_{L^2_t L^2_x}
\end{align*}
as well as 
\begin{align*}
 \sup_{\eta} \, \bigl\| L_\pm^\eta \Delta_{\eta^\perp}^{-1} \Pi_{\lesssim 2^k |\measuredangle|^2}^{\mp, \eta} \Pi_\theta^{\eta} \bigl( P_k A_0^{<n-1} \bigr) \bigr\|_{L^\infty_t L^\infty_x} &\lesssim \sum_{\lambda \lesssim 2^k \theta^2} \sup_{\eta} \, \bigl\| L_\pm^\eta \Delta_{\eta^\perp}^{-1} \Pi_{\lambda}^{\mp, \eta} \Pi_\theta^{\eta} \bigl( P_k A_0^{<n-1} \bigr) \bigr\|_{L^\infty_t L^\infty_x} \\
 &\lesssim \sum_{\lambda \lesssim 2^k \theta^2} 2^k \theta^{-2} 2^{-2j} \bigl( \lambda \, \theta^3 2^{4k} \bigr)^{\frac{1}{2}} \|P_k A_0^{<n-1}\|_{L^2_t L^2_x} \\
 &\lesssim \sum_{\lambda \lesssim 2^k \theta^2} \lambda^{\frac{1}{2}} \theta^{-\frac{1}{2}} 2^k \|P_k A_0^{<n-1}\|_{L^2_t L^2_x} \\
 &\lesssim \theta^{\frac{1}{2}} 2^{\frac{3}{2} k} \|P_k A_0^{<n-1}\|_{L^2_t L^2_x}.
\end{align*}
Putting the above estimates together we arrive at the following bound on the smooth part of the ``deterministic'' phase function
\begin{equation*}
 \bigl| \psi_{\pm, k, (\theta)}^{n, mod, s}(t,x,\xi) \bigr| \lesssim \theta^{\frac{1}{2}} \bigl( 2^k \|P_k A_{x, s}^{<n-1}\|_{L^\infty_t L^2_x} + 2^{\frac{3}{2} k} \|P_k A_0^{<n-1}\|_{L^2_t L^2_x} \bigr).
\end{equation*}
Then the other bounds on the smooth part of the phase function again follow upon summing over the dyadic angles $1 \gtrsim \theta \gtrsim 2^{\sigma \min\{k,-n\}}$ and upon taking an additional $\nabla_{t,x}$ derivative. 

Finally, the estimates for $\partial_{|\xi|}^l \partial_\eta^\alpha$ derivatives of the ``probabilistic'' phase function are proved similarly, noting that differentiating with respect to $\eta := \frac{\xi}{|\xi|}$ yields additional $\theta^{-1}$ factors, while differentiating with respect to the radial frequency variable $|\xi|$ is harmless since the definition of the phase function only involves~$\eta$. 
\end{proof}

Next, we establish $L^\infty$ bounds for differences of two ``probabilistic'' phase functions. 
\begin{lemma}[Additional symbol bounds for differences of ``probabilistic'' phase functions] \label{lem:prob_difference_phase_fct}
 Let $n \geq 1$ and assume that 
 \begin{equation*}
  \sum_{m=1}^{n-1} \|\calA_{x,r}^m\|_{R_m} + \sum_{m=0}^{n-1} \|\calA_{x,s}^m\|_{S[m]} + \sum_{m=0}^{n-1} \|\calA_0^m\|_{Y^1[m]} \lesssim \varepsilon.
 \end{equation*}
 Then we have for any multi-index $\alpha$ with $1 \leq |\alpha| \leq \frac{\gamma}{2\sigma} + \frac{1}{2}$ and any $l \geq 0$ that
 \begin{align}
  \bigl| \psi_{\pm}^{n, mod}(t,x,\xi) - \psi_{\pm}^{n, mod}(s,y,\xi) \bigr| &\lesssim \varepsilon \log \bigl( 1 + 2^n (|t-s| + |x-y|) \bigr),  \label{equ:prob_phase_function_difference_psis} \\
  \bigl| \partial_{|\xi|}^l \partial_\eta^\alpha \bigl( \psi_{\pm}^{n, mod}(t,x,\xi) - \psi_{\pm}^{n, mod}(s,y,\xi) \bigr) \bigr| &\lesssim \varepsilon \bigl( 1 + 2^n (|t-s| + |x-y|) \bigr)^{\frac{2 \sigma}{\gamma} (|\alpha|-\frac{1}{2}) }. \label{equ:prob_phase_function_difference_xi_derivative_psis}
 \end{align}
\end{lemma}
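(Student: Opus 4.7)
The plan is to adapt the argument from the deterministic analogue proved earlier in the paper for $\psi_{\pm}^{n,<k}$, using the pointwise and gradient $L^\infty$ bounds from Lemma~\ref{lem:prob_phase_function_Linfty_bounds}. Set $T := |t-s|+|x-y|$ and decompose $\psi_{\pm}^{n,mod} = \psi_{\pm}^{n,mod,r} + \psi_{\pm}^{n,mod,s}$, so it suffices to treat the rough and smooth parts separately. For each dyadic frequency piece $\psi_{\pm,k}^{n,mod,\cdot}$, the basic dichotomy is to bound the difference by
\[
 \min \Bigl\{ 2 \, \bigl\| \psi_{\pm,k}^{n,mod,\cdot} \bigr\|_{L^\infty_{t,x}}, \ T \, \bigl\| \nabla_{t,x} \psi_{\pm,k}^{n,mod,\cdot} \bigr\|_{L^\infty_{t,x}} \Bigr\},
\]
and to split the frequency sum at a threshold $k_0 \sim \log_2(1/T)$, using the gradient bound for $k \leq k_0$ and the pointwise bound for $k > k_0$.

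For the logarithmic estimate~\eqref{equ:prob_phase_function_difference_psis}, the rough part ranges over $0 \leq k \leq (1-\gamma)n$: the pointwise bound decays like $2^{-(1-O(\sigma+\delta_\ast+\delta_1))k}$ while the gradient bound grows only like $2^{O(\delta_\ast+\delta_1)k}$, so the split-sum telescopes to $O(\varepsilon)$ irrespective of $T$. The logarithmic factor is produced entirely by the smooth part: the pointwise bound is $\lesssim \varepsilon$ per frequency while the gradient bound is $\lesssim 2^k \varepsilon$, so summing gradient contributions for $k \leq k_0$ yields $O(\varepsilon)$, and counting the number of pointwise pieces in the allowed range $k_0 \leq k \leq (1-\gamma)n$ gives $\lesssim (1-\gamma)n - k_0 \lesssim \log(1+2^n T)$ (with the case $k_0 \geq (1-\gamma)n$, i.e. $T \lesssim 2^{-(1-\gamma)n}$, handled by the gradient bound alone).

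For the power estimate~\eqref{equ:prob_phase_function_difference_xi_derivative_psis}, the same threshold strategy applies, but each frequency-piece bound now carries the derivative-loss factors $2^{\sigma(1+|\alpha|)n}$ (rough) and $2^{\sigma(|\alpha|-\frac{1}{2})\max\{-k,n\}}$ (smooth) coming from the tight angular cutoff $\theta > 2^{\sigma \min\{k,-n\}}$ in the definition~\eqref{equ:definition_prob_phase_function}. For the smooth part at very low frequencies $k < -n$, the factor $2^{-\sigma(|\alpha|-\frac12)k}$ grows as $k \to -\infty$, forcing us to employ gradient $\times T$ for all $k$ sufficiently negative; summability in this regime requires $1-\sigma(|\alpha|-\frac12) > 0$, which is ensured by the hypothesis $|\alpha| \leq \frac{\gamma}{2\sigma} + \frac12$. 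Aggregating the pointwise contributions over the bounded frequency window $|k| \lesssim n$ produces a factor $n \cdot 2^{\sigma(|\alpha|-\frac12)n}$, which is absorbed by $2^{\frac{2\sigma}{\gamma}(|\alpha|-\frac12)n}$, and this in turn is bounded by $(1+2^n T)^{\frac{2\sigma}{\gamma}(|\alpha|-\frac12)}$ when $T \gtrsim 1$; the regimes $T \ll 2^{-n}$ and $2^{-n} \lesssim T \lesssim 1$ are handled analogously to the deterministic case, the first using only the gradient bound and the second by splitting at $k_0 \sim \log_2(1/T)$.

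The main obstacle is the bookkeeping in the power estimate. Unlike the deterministic phase, where the exponent $\sigma(|\alpha|+1)$ is tied to the variable cutoff $2^{\sigma(j-k)}$, the probabilistic phase uses the nearly-uniform cutoff $2^{\sigma\min\{k,-n\}}$, concentrating the angular loss into one dominant factor $2^{\sigma(|\alpha|-\frac12)n}$ that must be carefully balanced against the gains from $T$ and from the frequency truncation at $(1-\gamma)n$. The hypothesis $|\alpha| \leq \frac{\gamma}{2\sigma}+\frac12$ is exactly what makes the target exponent $\frac{2\sigma}{\gamma}(|\alpha|-\frac12) \leq 1$, enabling polynomial (in $n$) losses to be absorbed into $(1+2^n T)$-type bounds; maintaining this delicate interplay between the parameters $\sigma$, $\gamma$, $\delta_\ast$, $\delta_1$ throughout the splitting argument is the most technical point of the proof.
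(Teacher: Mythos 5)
Your overall skeleton---the rough/smooth splitting, the per-frequency dichotomy between the pointwise and the gradient $L^\infty$ bounds of Lemma~\ref{lem:prob_phase_function_Linfty_bounds}, and a frequency split at a $T$-dependent threshold---is the same as the paper's, and your treatment of the logarithmic bound and of the extreme regimes $T \gtrsim 1$ and $T \lesssim 2^{-n}$ is essentially fine. The genuine gap is in the power estimate \eqref{equ:prob_phase_function_difference_xi_derivative_psis} for the smooth part in the intermediate regime $2^{-n} \lesssim T \lesssim 1$, which you dispose of with ``handled analogously to the deterministic case \ldots by splitting at $k_0 \sim \log_2(1/T)$''. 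That analogy does not transfer: in the deterministic lemma the angular loss at frequency $j$ is $2^{\sigma(|\alpha|+1)(k-j)}$, geometric in $j$, so the threshold choice $2^{-j_0} \sim T$ automatically produces $(2^k T)^{\sigma(|\alpha|+1)}$. Here, by Lemma~\ref{lem:prob_phase_function_Linfty_bounds}, the loss is the flat factor $2^{\sigma(|\alpha|-\frac12)\max\{-j,n\}} = 2^{\sigma(|\alpha|-\frac12)n}$ for all $j \geq -n$, so splitting at $k_0$ only yields a bound of size $2^{\sigma(|\alpha|-\frac12)n}\varepsilon$ (times your counting factor $n$), which exceeds the target $(1+2^nT)^{\frac{2\sigma}{\gamma}(|\alpha|-\frac12)}\varepsilon$ whenever $2^nT \ll 2^{\frac{\gamma}{2}n}$. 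The paper closes this with an observation you never use: since the phase only contains frequencies $j \leq (1-\gamma)n$, one has $n \leq (n-j)/\gamma$ and hence $2^{\sigma(|\alpha|-\frac12)n} \leq 2^{\frac{\sigma}{\gamma}(|\alpha|-\frac12)(n-j)}$, which restores geometric summation in $j$ and, with $2^{-j_0} \sim T$, gives exactly $(2^nT)^{\frac{\sigma}{\gamma}(|\alpha|-\frac12)}$; without this (or an equivalent extra case split at $2^nT \sim 2^{\frac{\gamma}{2}n}$ combined with the gradient-only bound below that threshold), the balancing you yourself flag as ``the most technical point'' is left undone precisely where the estimate is tight.

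Relatedly, you misplace where the hypothesis $|\alpha| \leq \frac{\gamma}{2\sigma} + \frac12$ actually bites. Summability of the smooth gradient bounds as $k \to -\infty$ only requires $\sigma(|\alpha|-\frac12) < 1$, and ``making the target exponent $\frac{2\sigma}{\gamma}(|\alpha|-\frac12) \leq 1$'' plays no direct role in any estimate. The full strength $\sigma(|\alpha|-\frac12) \leq \frac{\gamma}{2}$ is needed in the small-$T$ regime $T \lesssim 2^{-(1-\gamma)n}$: there the gradient-only bound gives $\lesssim 2^{\sigma(|\alpha|-\frac12)n}\,2^{-\gamma n}\,2^nT\,\varepsilon$, which is $\lesssim \varepsilon$ for $2^nT \lesssim 2^{\frac{\gamma}{2}n}$ exactly because of this inequality, and is $\lesssim (2^nT)^{\frac{2\sigma}{\gamma}(|\alpha|-\frac12)}\varepsilon$ for $2^{\frac{\gamma}{2}n} \lesssim 2^nT \lesssim 2^{\gamma n}$. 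Identifying this step is necessary for a complete proof, since otherwise the constraint on $|\alpha|$ remains unexplained in the very regime where it is sharp.
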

\begin{proof}
In the following we again use the shorthand notation $T := |x-y| + |t-s|$.
The proof of the first estimate~\eqref{equ:prob_phase_function_difference_psis} is very similar to the corresponding proof of the estimate~\eqref{equ:det_phase_function_difference_psis} for the ``deterministic'' phase function.

In the proof of the second estimate~\eqref{equ:prob_phase_function_difference_xi_derivative_psis} we treat the rough and the smooth part of the phase function separately. 
The treatment of the rough part proceeds similarly to the treatment of the contribution of the rough part of the ``deterministic'' phase function in the proof of the estimate~\eqref{equ:det_phase_function_difference_xi_derivative_psis}. The contributions of the smooth part of the ``probabilistic'' phase function have to be discussed more carefully here.
We distinguish several cases depending on the size of $T$. Throughout we make use of the $L^\infty$ bounds on the ``probabilistic'' phase function from Lemma~\ref{lem:prob_phase_function_Linfty_bounds} without further mentioning.
If $2^{-(1-\gamma)n} \lesssim T \lesssim 2^{n}$, we pick some $-n \leq j_0 \leq (1-\gamma) n$ and decompose into
 \begin{align*}
  &\bigl| \partial_{|\xi|}^l \partial_\eta^\alpha \bigl( \psi_{\pm}^{n, mod, s}(t,x,\xi) - \psi_{\pm}^{n, mod, s}(s,y,\xi) \bigr) \bigr| \\
  &\lesssim \sum_{j \leq j_0} \sup_{\eta} \, \bigl\| \nabla_{t,x} \partial_{|\xi|}^l \partial_\eta^\alpha \psi_{\pm, j}^{n, mod, s} \bigr\|_{L^\infty_t L^\infty_x} \bigl( |t-s| + |x-y| \bigr) + \sum_{j_0 \leq j \leq (1-\gamma)n} \sup_\eta \, \bigl\| \partial_{|\xi|}^l \partial_\eta^\alpha \psi_{\pm, j}^{n, mod, s} \bigr\|_{L^\infty_t L^\infty_x} \\
  &\lesssim \sum_{j \leq j_0} 2^{\sigma(|\alpha|-\frac{1}{2}) \max\{-j, n\} } 2^j T \varepsilon + \sum_{j_0 \leq j \leq (1-\gamma)n} 2^{\sigma(|\alpha|-\frac{1}{2}) \max\{-j, n\} } \varepsilon \\
  &\lesssim 2^{\sigma(|\alpha|-\frac{1}{2})n} 2^{-n} T \varepsilon + \sum_{-n \leq j \leq j_0} 2^{\sigma(|\alpha|-\frac{1}{2}) n} 2^j T \varepsilon + \sum_{j_0 \leq j \leq (1-\gamma)n} 2^{\sigma(|\alpha|-\frac{1}{2}) n } \varepsilon.
 \end{align*}
 Using that $j \leq (1-\gamma) n$ we may further bound the last line by 
 \begin{align*}
  &2^{\sigma(|\alpha|-\frac{1}{2})n} 2^{-n} T \varepsilon + \sum_{-n \leq j \leq j_0} 2^{\frac{\sigma}{\gamma}(|\alpha|-\frac{1}{2}) (n-j)} 2^j T \varepsilon + \sum_{j_0 \leq j \leq (1-\gamma)n} 2^{\frac{\sigma}{\gamma}(|\alpha|-\frac{1}{2}) (n-j) } \varepsilon \\
  &\quad \lesssim \max \{ 1, (2^n T)^{\sigma(|\alpha|-\frac{1}{2})} \} \varepsilon + 2^{\frac{\sigma}{\gamma}(|\alpha|-\frac{1}{2}) (n-j_0)} (2^{j_0} T + 1) \varepsilon.
 \end{align*}
 Then choosing $2^{-j_0} \sim T$ yields the desired bound. If $T \gtrsim 2^n$ the argument proceeds similarly by decomposing with respect to a suitably chosen $j_0 \leq -n$. Finally, if $T \lesssim 2^{-(1-\gamma)n}$ we bound by
 \begin{align*}
  &\bigl| \partial_{|\xi|}^l \partial_\eta^\alpha \bigl( \psi_{\pm}^{n, mod, s}(t,x,\xi) - \psi_{\pm}^{n, mod, s}(s,y,\xi) \bigr) \bigr| \\
  &\quad \lesssim \sum_{j \leq (1-\gamma)n} \sup_{\eta} \, \bigl\| \nabla_{t,x} \partial_{|\xi|}^l \partial_\eta^\alpha \psi_{\pm, j}^{n, mod, s} \bigr\|_{L^\infty_t L^\infty_x} \bigl( |t-s| + |x-y| \bigr) \\
  &\quad \lesssim 2^{\sigma(|\alpha|-\frac{1}{2}) n} 2^{-\gamma n} 2^{n} T \varepsilon.
 \end{align*}
 Then if $2^n T \lesssim 2^{\frac{1}{2} \gamma n}$, we may just bound by $\varepsilon$ as long as $\sigma (|\alpha|-\frac{1}{2}) \leq \frac{1}{2} \gamma$, while if $2^{\frac{1}{2} \gamma n} \lesssim 2^n T \lesssim 2^{\gamma n}$, we can obtain a bound in terms of $(2^n T)^{\frac{2 \sigma}{\gamma} (|\alpha|-\frac{1}{2})} \varepsilon$.
\end{proof}

Finally, we record decomposable estimates for the ``probabilistic'' phase function. 
\begin{lemma}[Decomposable estimates for the ``probabilistic'' phase function] \label{lem:prob_decomposable_est}
 Let $n \geq 1$ and assume that 
 \begin{equation*}
  \sum_{m=1}^{n-1} \|\calA_{x,r}^m\|_{R_m} + \sum_{m=0}^{n-1} \|\calA_{x,s}^m\|_{S[m]} + \sum_{m=0}^{n-1} \|\calA_0^m\|_{Y^1[m]} \lesssim \varepsilon.
 \end{equation*}
 Let $k \leq (1-\gamma)n$. For $2 \leq q < \infty$ and $\frac{2}{q} + \frac{3}{r} \leq \frac{3}{2+}$ we have that 
 \begin{equation} \label{equ:decomposable_estimate_prob_phase_angle}
  \bigl\| \bigl( \psi_{\pm, k, (\theta)}^{n, mod}, 2^{-k} \nabla_{t,x} \psi_{\pm, k, (\theta)}^{n, mod} \bigr) \bigr\|_{D L^q_t L^r_x} \lesssim 2^{-(\frac{1}{q} + \frac{4}{r})k} \theta^{\frac{1}{2+}-\frac{2}{q}-\frac{3}{r}} \varepsilon. 
 \end{equation}
 Moreover, for $4+ < q \leq \infty$ it holds that
 \begin{equation} \label{equ:decomposable_estimate_prob_phase_qinfty}
  \bigl\| \bigl( \psi_{\pm, k}^{n, mod}, 2^{-k} \nabla_{t,x} \psi_{\pm, k}^{n, mod} \bigr) \bigr\|_{D L^q_t L^\infty_x} \lesssim 2^{- \frac{1}{q} k} \varepsilon. 
 \end{equation}
\end{lemma}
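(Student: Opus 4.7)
The plan is to split $\psi^{n,mod}_{\pm,k,(\theta)} = \psi^{n,mod,r}_{\pm,k,(\theta)} + \psi^{n,mod,s}_{\pm,k,(\theta)}$ and estimate each piece in $DL^q_t L^r_x$ separately. The rough part is structurally identical to the rough piece of the ``deterministic'' phase considered in Lemma~\ref{lem:det_decomposable_est}, the only differences being the slightly tighter angular cutoff $2^{\sigma\min\{k,-n\}}$ and that we now sum over the entire rough connection form $A^{<n-1}_{x,r} = \sum_{m=1}^{n-1}\calA^m_{x,r}$ rather than a single free wave at the lowest frequency block. Since the argument in Lemma~\ref{lem:det_decomposable_est} relies only on the redeeming $R_k L^2_t L^\infty_x$ bound with angular gains, the Coulomb gauge (to gain a factor of $\theta$), and a Bernstein-plus-interpolation step between a sharp wave-admissible Strichartz pair and the redeeming side, it applies verbatim with $\calA^{0,free}_{x,s}$ replaced by $A^{<n-1}_{x,r}$ and the envelope bound $\sum_{m=1}^{n-1}\|\calA^m_{x,r}\|_{R_m}\lesssim \varepsilon$.

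For the smooth part, we further decompose $\psi^{n,mod,s}_{\pm,k,(\theta)}$ into the contribution containing $A^{<n-1}_{x,s}$ and the contribution containing $A^{<n-1}_0$. The $A^{<n-1}_{x,s}$ part is estimated as in Lemma~7.3 of~\cite{KST}: one uses the Coulomb gauge to gain $\theta$, passes from $DL^q_t L^r_x$ to an $\ell^2$-sum over caps of $L^q_t L^r_x$ norms of $\Pi^\eta_\theta P_k A^{<n-1}_{x,s}$, and invokes the wave Strichartz norms built into $S^1[m]$, summed via $\sum_{m=0}^{n-1}\|\calA^m_{x,s}\|_{S^1[m]}\lesssim \varepsilon$. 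For the $A^{<n-1}_0$ part, we dyadically decompose the size of the symbol $|L^\eta_\mp|\sim \lambda$ exactly as in the proof of Lemma~\ref{lem:prob_phase_function_Linfty_bounds}, splitting at the borderline $\lambda \sim 2^k\theta^2$. The Bernstein bound $\Pi^{\mp,\eta}_\lambda\Pi^\eta_\theta P_k L^2_t L^2_x \hookrightarrow (\lambda\theta^3 2^{4k})^{1/2} L^\infty_t L^\infty_x$ and geometric summation of the $\lambda$-series on either side of the borderline yields an $L^\infty_t L^\infty_x$ endpoint bound of the form $\theta^{1/2}2^{3k/2}\|P_k A^{<n-1}_0\|_{L^2_t L^2_x}$. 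Interpolating this with a sharp wave-admissible Strichartz pair whose $L^\infty_t L^2_x$ endpoint is controlled through the $\dot H^{3/2}_x$ and $\partial_t \dot H^{1/2}_x$ components of $Y^1[m]$, summed via $\sum_{m=0}^{n-1}\|\calA^m_0\|_{Y^1[m]}\lesssim\varepsilon$, produces the required $\theta^{\frac{1}{2+}-\frac{2}{q}-\frac{3}{r}}$ gain and $2^{-(\frac{1}{q}+\frac{4}{r})k}$ scaling in \eqref{equ:decomposable_estimate_prob_phase_angle}.

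The bound \eqref{equ:decomposable_estimate_prob_phase_qinfty} for $4+ < q < \infty$ follows from \eqref{equ:decomposable_estimate_prob_phase_angle} by summing over dyadic angles $\theta \in [2^{\sigma\min\{k,-n\}},1]$, using that the exponent $\frac{1}{2+}-\frac{2}{q}-\frac{3}{r}$ is strictly positive in the relevant range. The endpoint $q=\infty$ must be argued directly, using the angular-gain structure of the redeeming $R_k L^\infty_t L^\infty_x$ norm for the rough part, together with the $L^\infty_t L^2_x$ components of $S^1[m]$ and $Y^1[m]$ combined with Bernstein on an angular cap of size $\theta$ for the smooth part, following the same pattern as in Lemma~\ref{lem:prob_phase_function_Linfty_bounds}.

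The main obstacle is the $A^{<n-1}_0$ contribution, which has no free-wave analogue in~\cite{KST}: one must glue together the two operators in the definition of $\psi^{n,mod,s}_{\pm,k}$ across the borderline $\lambda \sim 2^k\theta^2$, and the marginal geometric convergence of the resulting $\lambda$-sum hinges on simultaneously exploiting the $L^2_t\dot H^{3/2}_x$ and $L^\infty_t \dot H^1_x$ components of the $Y^1$ norm. A secondary subtlety is the absorption of the $2^{\delta_\ast k}$ loss coming from placing the rough component in its $S^{1-\delta_\ast}_k$ slot, which is handled by interpolating sufficiently close to the redeeming side, consistent with the range of $(q,r)$ stated in the lemma.
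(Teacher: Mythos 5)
Your overall architecture coincides with the paper's: the rough part and the $A_{x,s}^{<n-1}$ contributions are deferred to the deterministic Lemma~\ref{lem:det_decomposable_est} (i.e.\ to \cite[Lemma 7.3]{KST}), the genuinely new term is the $A_0^{<n-1}$ contribution, and \eqref{equ:decomposable_estimate_prob_phase_qinfty} is obtained by summing dyadic angles for $4+<q<\infty$ and directly at $q=\infty$. The gap is in your treatment of the $A_0^{<n-1}$ piece. The temporal component is an elliptic variable: $Y^1[m]$ only controls $\nabla_{t,x}\calA_0^m$ in $L^\infty_tL^2_x$, $\calA_0^m$ in $L^2_t\dot H^{3/2}_x$ and $\partial_t\calA_0^m$ in $L^2_t\dot H^{1/2}_x$, and provides no wave-admissible Strichartz bounds, so the ``sharp wave-admissible Strichartz pair'' you propose to interpolate against does not exist for $A_0$. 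Moreover, even granting some $L^2_x$-based second endpoint, interpolating it with your summed $L^\infty_tL^\infty_x$ bound only reaches exponent pairs on the segment joining $(1/q,1/r)=(0,0)$ to that endpoint, which has $1/r>0$; anisotropic pairs such as $(q,r)=(2,\infty)$, which lie in the stated range $\frac{2}{q}+\frac{3}{r}\le\frac{3}{2+}$, are unreachable this way. So, as written, your argument does not yield \eqref{equ:decomposable_estimate_prob_phase_angle} for the $A_0$ contribution in the full range of exponents.

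The repair is exactly what the paper does: keep the modulation-dyadic decomposition $|\pm\tau+\eta\cdot\zeta|\sim\lambda$ about the borderline $2^k\theta^2$, but apply the space-time Bernstein estimate directly at the target exponents, $\Pi^{\mp,\eta}_\lambda\Pi^\eta_\theta P_k\colon L^2_tL^2_x\to\lambda^{\frac12-\frac1q}(\theta^3 2^{4k})^{\frac12-\frac1r}L^q_tL^r_x$, combined with the symbol bounds $\lambda^{-1}$ for $(L^\eta_\mp)^{-1}\Pi_\lambda$ when $\lambda\gg2^k\theta^2$ and $2^k\theta^{-2}2^{-2k}$ for $L^\eta_\pm\Delta_{\eta^\perp}^{-1}\Pi_\lambda$ when $\lambda\lesssim2^k\theta^2$; the geometric $\lambda$-sums on either side of the borderline then give $\theta^{\frac12-\frac2q-\frac3r}2^{-(\frac1q+\frac4r)k}\,2^{\frac32 k}\|P_kA_0^{<n-1}\|_{L^2_tL^2_x}$, and only the $L^2_t\dot H^{3/2}_x$ component of $Y^1[m]$ is used — no gluing with the $L^\infty_t\dot H^1_x$ or $\partial_t$ components is needed, contrary to what you identify as the main obstacle. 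Two smaller corrections: in the deterministic rough-part interpolation the exponent $\mu$ is taken close to the Strichartz side (the small redeeming $L^2_tL^\infty_x$ weight supplies the $2^{-(\frac12-20\sigma)(1-\mu)j}$ decay that absorbs the $2^{\mu\delta_\ast j}$ loss); interpolating ``close to the redeeming side'' would destroy the required $\theta$-gain. Also, both the deterministic and the probabilistic phase functions have rough part built from the full $A^{<n-1}_{x,r}$; it is only the smooth part of the deterministic phase that is restricted to $\calA^{0,free}_{x,s}$.
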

\begin{proof}
 The proofs of \eqref{equ:decomposable_estimate_prob_phase_angle}--\eqref{equ:decomposable_estimate_prob_phase_qinfty} for the rough part of the ``probabilistic'' phase function and for the contributions of the spatial components of the connection form $A_{x,s}^{<n-1}$ to the smooth part of the ``probabilistic'' phase function closely resemble the corresponding proofs of \eqref{equ:decomposable_estimate_det_phase_angle}--\eqref{equ:decomposable_estimate_det_phase_qinfty} for the ``deterministic'' phase function. It therefore only remains to discuss the contributions of the temporal component $A_{0}^{<n-1}$. Here it is straightforward to obtain the desired estimates.
 Interchanging integration and the $\eta$ summation as in~\cite[Lemma 7.3]{KST} we find that
 \begin{align*}
  &\biggl( \sum_\eta \, \bigl\| (L_{\mp}^\eta)^{-1} \Pi_{\gg 2^k |\measuredangle|^2}^{\mp, \eta} \Pi_\theta^{\eta} \bigl( P_k A_0^{<n-1} \bigr) \bigr\|_{L^q_t L^r_x}^2 \biggr)^{\frac{1}{2}} \\ 
  &\qquad \lesssim \biggl( \sum_\eta \, \Bigl( \sum_{\lambda \gg 2^k \theta^2} \bigl\| (L_{\mp}^\eta)^{-1} \Pi_{\lambda}^{\mp, \eta} \Pi_\theta^{\eta} \bigl( P_k A_0^{<n-1} \bigr) \bigr\|_{L^q_t L^r_x} \Bigr)^2 \biggr)^{\frac{1}{2}} \\
  &\qquad \lesssim \biggl( \sum_\eta \, \Bigl( \sum_{\lambda \gg 2^k \theta^2} \lambda^{-1} \lambda^{\frac{1}{2}-\frac{1}{q}} (\theta^3 2^{4k})^{\frac{1}{2}-\frac{1}{r}} \bigl\| \Pi_\theta^{\eta} \bigl( P_k A_0^{<n-1} \bigr) \bigr\|_{L^2_t L^2_x} \Bigr)^2 \biggr)^{\frac{1}{2}} \\
  &\qquad \lesssim \theta^{\frac{1}{2}-\frac{2}{q}-\frac{3}{r}} 2^{-(\frac{1}{q}+\frac{4}{r})k} 2^{\frac{3}{2}k} \biggl( \sum_\eta \, \bigl\| \Pi_\theta^{\eta} \bigl( P_k A_0^{<n-1} \bigr) \bigr\|_{L^2_t L^2_x}^2 \biggr)^{\frac{1}{2}} \\
  &\qquad \lesssim \theta^{\frac{1}{2}-\frac{2}{q}-\frac{3}{r}} 2^{-(\frac{1}{q}+\frac{4}{r})k} 2^{\frac{3}{2}k} \bigl\| P_k A_0^{<n-1} \bigr\|_{L^2_t L^2_x} \\
  &\qquad \lesssim \theta^{\frac{1}{2}-\frac{2}{q}-\frac{3}{r}} 2^{-(\frac{1}{q}+\frac{4}{r})k}
 \end{align*}
 and that
 \begin{align*}
  &\biggl( \sum_\eta \, \bigl\| L_\pm^\eta \Delta_{\eta^\perp}^{-1} \Pi_{\lesssim 2^k |\measuredangle|^2}^{\mp, \eta} \Pi_\theta^{\eta} \bigl( P_k A_0^{<n-1} \bigr) \bigr\|_{L^q_t L^r_x}^2 \biggr)^{\frac{1}{2}} \\
  &\qquad \lesssim \biggl( \sum_\eta \, \Bigl( \sum_{\lambda \lesssim 2^k \theta^2} \bigl\| L_\pm^\eta \Delta_{\eta^\perp}^{-1} \Pi_{\lambda}^{\mp, \eta} \Pi_\theta^{\eta} \bigl( P_k A_0^{<n-1} \bigr) \bigr\|_{L^q_t L^r_x} \Bigr)^2 \biggr)^{\frac{1}{2}} \\
  &\qquad \lesssim \biggl( \sum_\eta \, \Bigl( \sum_{\lambda \lesssim 2^k \theta^2} 2^k \theta^{-2} 2^{-2j} \lambda^{\frac{1}{2}-\frac{1}{q}} (\theta^3 2^{4k})^{\frac{1}{2}-\frac{1}{r}} \bigl\| \Pi_\theta^{\eta} \bigl( P_k A_0^{<n-1} \bigr) \bigr\|_{L^2_t L^2_x} \Bigr)^2 \biggr)^{\frac{1}{2}} \\ 
  &\qquad \lesssim \theta^{\frac{1}{2}-\frac{2}{q}-\frac{3}{r}} 2^{-(\frac{1}{q}+\frac{4}{r})k} 2^{\frac{3}{2}k} \biggl( \sum_\eta \, \bigl\| \Pi_\theta^{\eta} \bigl( P_k A_0^{<n-1} \bigr) \bigr\|_{L^2_t L^2_x}^2 \biggr)^{\frac{1}{2}} \\
  &\qquad \lesssim \theta^{\frac{1}{2}-\frac{2}{q}-\frac{3}{r}} 2^{-(\frac{1}{q}+\frac{4}{r})k} 2^{\frac{3}{2}k} \bigl\| P_k A_0^{<n-1} \bigr\|_{L^2_t L^2_x} \\
  &\qquad \lesssim \theta^{\frac{1}{2}-\frac{2}{q}-\frac{3}{r}} 2^{-(\frac{1}{q}+\frac{4}{r})k} \varepsilon.
 \end{align*}
\end{proof}

\subsection{Probabilistic Strichartz estimates for the adapted rough linear evolution $\Phi_r^n$} \label{subsec:prob_strichartz_phi}

We now turn to the derivation of the redeeming probabilistic space-time integrability properties (on a suitable event) of the adapted rough linear evolution $\Phi^n_r$ of the random data $T_n \phi^\omega[0]$, $n \geq 1$. These are a consequence of moment bounds for the redeeming $R_n$ norm of the evolution $\Phi_r^n$ established in the next proposition. 
At its core the proof is based on a combination of Bernstein's inequality, (refined) Strichartz estimates, Minkowski's integral inequality, and Khintchine's inequality, which allows one to decouple the ``atoms'' of the Wiener randomization and gain from their unit-sized frequency supports to beat the scaling. This idea was first used in~\cite{ZF12, LM14} for the Wiener randomization. 

However, in our setting $\Phi^n_r$ is not the free wave evolution of the random data $T_n \phi^\omega[0]$, but a modified linear evolution defined as an infinite sum in terms of iterative applications of the ``probabilistic'' parametrix~\eqref{equ:definition_parametrix_modified}. 
This comes with two main difficulties. 
First, the ``probabilistic'' parametrix~\eqref{equ:definition_parametrix_modified} is defined in terms of the modified phase functions $\psi_{\pm}^{n,mod}$. The definition of the latter involves the connection form $A^{<n-1}$ from the prior induction stages, which however depends in a highly nonlinear manner on the random initial data $T_{<n-1} A_x^\omega[0]$ and $T_{<n-1} \phi^\omega[0]$. Crucially, these are independent of the random data $T_n \phi^\omega[0]$ for the adapted linear evolution $\Phi^n_r$. One can therefore still decouple the ``atoms'' of the random data $T_n \phi^\omega[0]$ for the adapted linear evolution~$\Phi^n_r$ via Khintchine's inequality by conditioning on the $\sigma$-algebra $\calF_{n-1}$ generated by the Gaussians $\{ g_m, \tilde{g}_m, h_m, \tilde{h}_m \, : \, m \in \bbZ^4, |m| < 2^{n-1} \}$. This type of argument first appeared in~\cite{Bringmann18_2} for the Wiener randomization and we refer to~\cite[Proposition 4.4]{Bringmann18_2} for a nice illustration of this circle of ideas within a simpler functional framework.

A second difficulty is that the higher iterates $\Phi^{n, [\ell]}_r$ in the definition of $\Phi^n_r = \sum_{\ell=0}^\infty \Phi^{n, [\ell]}_r$ are defined in terms of iterative applications of the ``probabilistic'' parametrix~\eqref{equ:definition_parametrix_modified}. This could potentially more and more ``smear out'' the unit-sized frequency support of the ``atoms'' of the Wiener randomization and the desired gain from their unit-sized frequency support would eventually break down. However, this is prevented by the careful definition of the rough errors~\eqref{equ:definition_rough_error_stage_l} accrued at every stage in terms of ``small strings of frequencies''. It allows to essentially offset the loss due to the smearing out of the frequency supports by the gain in smallness of the higher iterates, at the expense of a very small regularity loss that is built into the definition of our redeeming norms.

\begin{proposition} \label{prop:prob_strichartz_phi}
 Let $n \geq 1$. Assume that the functions $\{ \calA_{x,r}^m \}_{m=1}^{n-1}$, $\{ \calA_{x,s}^m \}_{m=0}^{n-1}$, $\{ \calA_0^m \}_{m=0}^{n-1}$, $\{ \Phi_r^m \}_{m=1}^{n-1}$, and $\{ \Phi_s^m \}_{m=0}^{n-1}$ are measurable with respect to the $\sigma$-algebra $\calF_{n-1}$ and that we have almost surely 
 \begin{equation*}
  \sum_{m=1}^{n-1} \bigl( \|\calA_{x,r}^m\|_{R_m} + \|\Phi_r^m\|_{R_m} \bigr)  + \sum_{m=0}^{n-1} \bigl( \|\calA_{x,s}^m\|_{S^1[m]} + \|\calA_0^m\|_{Y^1[m]} + \|\Phi^m_s\|_{S^1[m]} \bigr) < \infty.
 \end{equation*}
 Let $\mathds{1}_{[0,2C_0\varepsilon]}$ be the characteristic function of the interval $[0, 2C_0\varepsilon]$ and set
 \begin{equation*}
  \mathds{1}_\varepsilon^{<n-1} := \mathds{1}_{[0,2C_0\varepsilon]} \biggl( \sum_{m=1}^{n-1} \bigl( \|\calA_{x,r}^m\|_{R_m} + \|\Phi_r^m\|_{R_m} \bigr)  + \sum_{m=0}^{n-1} \bigl( \|\calA_{x,s}^m\|_{S^1[m]} + \|\calA_0^m\|_{Y^1[m]} + \|\Phi^m_s\|_{S^1[m]} \bigr) \biggr).
 \end{equation*}
 Let $\Phi^n_r$ be defined as in~\eqref{equ:definition_Phi_n_rough}.
 Then we have for all $1 \leq p < \infty$ that
 \begin{equation} \label{equ:expectation_redeeming_phi}
  \bigl\| \mathds{1}_\varepsilon^{<n-1} \Phi^n_r \bigr\|_{L^p_\omega(\Omega; R_n)} \lesssim \sqrt{p} \, \bigl\| (P_n \phi_0, P_n \phi_1) \bigr\|_{H^{1-\delta_\ast}_x \times H^{-\delta_\ast}_x},
 \end{equation}
 with an analogous bound for $2^{-n} \nabla_{t,x} \Phi^n_r$.
\end{proposition}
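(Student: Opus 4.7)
\medskip

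\noindent \textbf{Proof plan for Proposition~\ref{prop:prob_strichartz_phi}.}

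The overall strategy is to condition on the $\sigma$-algebra $\calF_{n-1}$ to exploit the crucial independence of the frequency block $T_n\phi^\omega[0]$ from the previously constructed connection form $A^{<n-1}$. After conditioning, all phase functions $\psi^{n,mod}_{\pm}$ become $\calF_{n-1}$-measurable, and on the event where $\mathds{1}_\varepsilon^{<n-1} = 1$ the (deterministic) smallness hypothesis of Proposition~\ref{prop:prob_renormalization_mapping_properties}, Lemma~\ref{lem:prob_phase_function_Linfty_bounds}, Lemma~\ref{lem:prob_difference_phase_fct}, and Lemma~\ref{lem:prob_decomposable_est} is satisfied. Since $\Phi^n_r = \sum_{\ell\geq 0}\Phi^{n,[\ell]}_r$, the plan is to establish a bound of the form
\begin{equation*}
 \bigl\| \mathds{1}_\varepsilon^{<n-1} \, \Phi^{n,[\ell]}_r \bigr\|_{L^p_\omega(\Omega; R_n)} \lesssim \sqrt{p} \, \eta^{\ell} \, \bigl\| (P_n\phi_0, P_n\phi_1) \bigr\|_{H^{1-\delta_\ast}_x\times H^{-\delta_\ast}_x}
\end{equation*}
for some absolute constant $0<\eta<1$, so that the series summing $\Phi^{n,[\ell]}_r$ converges geometrically.

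\medskip

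For the base case $\ell=0$, recall that $\Phi^{n,[0]}_r$ is the parametrix output
\begin{equation*}
 \tfrac{1}{2} \sum_\pm e^{-i\psi^{n,mod}_{\pm}}_{<n-C}(t,x,D) \, \frac{e^{\pm it|D|}}{i|D|} \, e^{+i\psi^{n,mod}_{\pm}}_{<n-C}(D,y,0) \bigl( i|D| T_n\phi_0^\omega \pm T_n\phi_1^\omega \bigr).
\end{equation*}
I will estimate each of the components of the redeeming $R_n$ norm separately. The plan is to use the decomposable calculus (Lemma~\ref{lem:decomposability_lemma_KST}) combined with the $L^\infty$ and decomposable bounds on the phase functions from Lemmas~\ref{lem:prob_phase_function_Linfty_bounds}--\ref{lem:prob_decomposable_est} to pass the various Strichartz-type, cap-localized, and $S^{1-\delta_\ast}_n$ estimates through the two renormalization symbols, reducing matters to the corresponding bounds on the free wave evolution of $T_n\phi^\omega[0]$. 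Then I invoke the classical Wiener randomization procedure: conditioning on $\calF_{n-1}$, decomposing $T_n\phi^\omega[0] = \sum_{2^{n-1}\leq |m|<2^n} h_m(\omega)\varphi(D-m)\phi_0$ into unit-scale atoms, applying Minkowski's integral inequality to swap $L^p_\omega$ past the space-time norm (since $p$ is eventually large), and finally using Khintchine's inequality (Lemma~\ref{lem:khintchine}) to produce the square function. Each atom has unit-scale Fourier support, so Bernstein's inequality upgrades any $L^r_x$ norm to $L^2_x$ with only a universal loss, which beats the scaling by exactly the factors $2^{-(1-20\sigma)n}$ and $2^{-(\frac12-20\sigma)n}$ built into the definitions of $R_nL^\infty_tL^\infty_x$, $R_nL^2_tL^\infty_x$, $R_nL^2_tL^6_x$, and $R_nStr$, yielding the $H^{1-\delta_\ast}_x\times H^{-\delta_\ast}_x$ norm of the data (the $S^{1-\delta_\ast}_n$ component is controlled directly by the mapping property $e^{-i\psi^{n,mod}_{\pm}}_{<n-C} \colon S^\sharp_n \to S_n$ from Proposition~\ref{prop:prob_renormalization_mapping_properties}, then by the $L^2$-boundedness of the half-wave propagator with the $2^{-\delta_\ast n}$ weight).

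\medskip

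For the inductive step $\ell \geq 1$, $\Phi^{n,[\ell]}_r$ is the parametrix applied to $-\calE^{n,[\ell-1]}_{rough}$, which by~\eqref{equ:definition_rough_error_stage_l} is a sum over small strings $(\underline{k})$ of length $\ell$ and angular parameters $(\underline{\alpha}),(\underline{h})$ with $\sum_j\alpha_j \geq -\sigma n/100$ and $\sum_j h_j \geq -\sigma n/10$. The plan is to prove an analogous $L^p_\omega$ bound for each $\Phi^{n,\pm,(\underline{k}),(\underline{\alpha}),(\underline{h})}_r$ (cf.~\eqref{equ:definition_Phi_string}) and then sum. The randomness still resides entirely in $T_n\phi^\omega[0]$ (after conditioning on $\calF_{n-1}$), and the iterative structure of $\Psi^{n,(\underline{k})}_r$ means we can peel off one layer at a time: each additional layer contributes a factor $2^{k_i}\|P_{k_i}A^{<n-1}_{x,s}\|_{L^\infty_tL^2_x} + 2^{\frac{3}{2}k_i}\|P_{k_i}A^{<n-1}_0\|_{L^2_tL^2_x}$, bounded by $\kappa_{n-1}\lesssim \varepsilon$ via Bernstein and the assumptions, together with the renormalization mapping properties on $N_n,N_n^\ast$. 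The angular constraints $\sum\alpha_j\geq -\sigma n/100$ and $\sum h_j\geq -\sigma n/10$ restrict the size of the sum to $\lesssim n^{C\ell}$ combinations, giving at most a polynomial loss in $\ell$, which is overwhelmed by the geometric factor $(C\varepsilon)^{\ell+1}$. After $L^p_\omega$ expectation is taken using Khintchine at the innermost (most deeply nested) instance of $T_n\phi^\omega[0]$, we obtain the desired $\eta^\ell$ decay, and the sum over $\ell$ converges.

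\medskip

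The main obstacle I anticipate is the third step: controlling the higher iterates cleanly. Two interlocking difficulties arise. First, the operators $Z^{\eta,\mp}_{k,\alpha}$ and the multiple layers of renormalization symbols $e^{\pm i\psi^{n,mod}_\pm}_{<n-C}$ tend to smear out the sharp frequency support of the Wiener atoms of $T_n\phi^\omega[0]$, which could in principle degrade the Bernstein-type gains that power the redeeming bounds. The mitigating factor here is precisely the smallness of the total angle $\sum\alpha_j + \sum h_j$ and the fact that the string $(\underline{k})$ is small (all frequencies $\leq 3\sigma n$), so that the support spreading is quantitatively controlled. Second, the decomposable bounds for the phase function in Lemma~\ref{lem:prob_decomposable_est} are only valid for $q>4+$ on $L^q_tL^\infty_x$, which forces the redeeming $R_n$ norm to exclude the endpoint; a careful bookkeeping of exponents (using $\delta_1,\sigma,\delta_\ast$ smallness as laid out in Section~2.1) is required to close all estimates. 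These two issues together are what dictates the precise parameters in the definition of the $R_n$ norm and in the definition~\eqref{equ:definition_rough_error_stage_l} of $\calE^{n,[\ell]}_{rough}$, so the estimates are designed to match.
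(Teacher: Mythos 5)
Your global strategy coincides with the paper's: condition on $\calF_{n-1}$, decompose $\Phi^n_r=\sum_{\ell\geq 0}\Phi^{n,[\ell]}_r$, decouple the Wiener atoms via Minkowski plus Khintchine, and sum the iterates geometrically using the smallness enforced by $\mathds{1}_\varepsilon^{<n-1}$. However, there is a genuine gap at the heart of the base case $\ell=0$. You propose to ``pass the Strichartz-type, cap-localized and $S^{1-\delta_\ast}_n$ estimates through the two renormalization symbols, reducing matters to the corresponding bounds for the free wave evolution of $T_n\phi^\omega[0]$''. This reduction is not available: the block and cap projections $P_{\calC_{k'}(l')}P_l^\kappa$ in the $R_nL^2_tL^\infty_x$, $R_nL^2_tL^6_x$, $R_nL^\infty_tL^\infty_x$ components, and more importantly the unit-scale projections $P_c$ of the Wiener atoms, do not commute with $e^{\pm i\psi^{n,mod}_{\pm}}_{<n-C}(t,x,D)$, whose symbol carries spatial frequencies all the way up to $2^{n-C}$, far above unit scale. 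The entire probabilistic gain rests on the unit-scale Fourier support of each atom surviving the conjugation, and the decomposable calculus by itself throws away exactly this information. The paper's proof therefore splits each renormalization symbol into its low-frequency part $e^{\pm i\psi}_{<-C}$ and dyadic pieces $P_{\ell_j}\bigl(e^{\pm i\psi^{n,mod}_{\pm}}\bigr)$ with $-C\leq \ell_j\leq n-C$, and runs a four-case (LL/LH/HL/HH) analysis: in the low-low case a slight enlargement $\widetilde P_c$ of the atom projection commutes outside and one invokes square-summed refined Strichartz estimates, with gain from frequency localization to small balls, for the conjugated wave operator; in the remaining cases the smearing of the frequency support is compensated by the decay $2^{-2\ell_j}$ coming from crude decomposable bounds on the high-frequency symbol pieces, played against the mismatched localizations. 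Without some version of this mechanism your base case does not close, since the smearing caused by the renormalization symbols at the zeroth stage is not small and is not controlled by the small-string structure you invoke (that structure only governs the higher iterates).

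A secondary inaccuracy concerns the higher iterates: counting the angular/frequency configurations as ``$\lesssim n^{C\ell}$ combinations, a polynomial loss in $\ell$ overwhelmed by $(C\varepsilon)^{\ell+1}$'' is not right as stated, since $n^{C\ell}$ grows with $n$ and is not dominated by a fixed geometric factor. What actually controls the sums over small strings and over $(\underline{\alpha}),(\underline{h})$ is that the per-layer factors are themselves summable: reinserting the (at least quadratic) equations for $A^{<n-1}_{x,s}$ and $A_0^{<n-1}$ gives frequency-summable bounds of total size $\lesssim(\kappa_{n-1})^2$ per layer, and the angular factors are geometric in $\alpha_j,h_j$, so the stage-$\ell$ contribution is $\lesssim C^{\ell}(\kappa_{n-1})^{2\ell}$ with no separate counting loss. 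The frequency smearing of the atoms at stage $\ell$ is then at most $\lesssim 10^{\ell}2^{3\sigma n}$, absorbed by the $2^{-20\sigma n}$ room built into the $R_n$ norm, and the atom-decoupling argument is applied only once, to the inhomogeneous parametrix acting on $-\calE^{n,[\ell-1]}_{rough}$.
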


Observe that the presence of the cutoff $\mathds{1}_\varepsilon^{<n-1}$ on the left-hand side of~\eqref{equ:expectation_redeeming_phi} is of utmost importance in the proof of Proposition~\ref{prop:prob_strichartz_phi}. It enforces the necessary smallness to invoke the mapping properties of the renormalization operators $e^{\pm i \psi_{\pm}^{n, mod}}_{<n-C}$ from Proposition~\ref{prop:det_renormalization_mapping_properties} and it ensures sufficient smallness to sum up all higher iterates $\Phi^{n,[\ell]}_r$ in the definition of the adapted linear evolution $\Phi^n_r$.

\begin{proof}[Proof of Proposition~\ref{prop:prob_strichartz_phi}]
 We first note that
 \begin{equation*} 
  \bigl\| \mathds{1}_\varepsilon^{<n-1} \Phi^n_r \bigr\|_{L^p_\omega(\Omega; R_n)} \lesssim \sum_{\ell = 0}^\infty \, \bigl\| \mathds{1}_\varepsilon^{<n-1} \Phi^{n, [\ell]}_r \bigr\|_{L^p_\omega(\Omega; R_n)}.
 \end{equation*}
 In what follows we will conclude for every stage $\ell \geq 0$ the moment bound
 \begin{equation} \label{equ:prob_strichartz_phi_moment_bound_stagel}
  \bigl\| \mathds{1}_\varepsilon^{<n-1} \Phi^{n, [\ell]}_r \bigr\|_{L^p_\omega(\Omega; R_n)} \lesssim 10^\ell (\kappa_{n-1})^{2\ell} \sqrt{p} \, \bigl\| (P_n \phi_0, P_n \phi_1) \bigr\|_{H^{1-\delta_\ast}_x \times H^{-\delta_\ast}_x},
 \end{equation} 
 where we recall the definition of $\kappa_{n-1}$ in~\eqref{equ:definition_kappa}.
 Thanks to the cutoff $\mathds{1}_\varepsilon^{<n-1}$ we may assume that $\kappa_{n-1} \ll 1$ so that the asserted moment bound~\eqref{equ:expectation_redeeming_phi} follows from summing the previous estimate~\eqref{equ:prob_strichartz_phi_moment_bound_stagel} over all $\ell \geq 0$.
 
We begin with a careful treatment of the moment bound~\eqref{equ:prob_strichartz_phi_moment_bound_stagel} for the zeroth iterate.
To this end it suffices to only consider the homogeneous parametrix
\begin{equation*}
 \widetilde{\Phi}^{n, [0]}_r(t,x) := \frac12 \sum_\pm e^{-i \psi_{\pm}^{n, mod}}_{<n-C}(t,x,D) e^{\pm i t |D|} e^{+i \psi_{\pm}^{n, mod}}_{<n-C}(D,y,0)  T_n \phi_0^\omega 
\end{equation*}
for the random initial condition 
\begin{equation}
 T_n \phi_0^\omega = \sum_{2^{n-1} \leq |m| < 2^n} h_m(\omega) \varphi(D-m) \phi_0
\end{equation} 
and to establish the bound corresponding to~\eqref{equ:prob_strichartz_phi_moment_bound_stagel} for $\ell =0$, i.e. to show for all $1 \leq p < \infty$ that 
\begin{equation} \label{equ:expectation_redeeming_phi_freqn}
 \bigl\| \mathds{1}_\varepsilon^{<n-1} \widetilde{\Phi}^{n, [0]}_r \bigr\|_{L^p_\omega(\Omega; R_n)} \lesssim \sqrt{p} \, \bigl\| P_n \phi_0  \bigr\|_{H^{1-\delta_\ast}_x}.
\end{equation}
We now establish \eqref{equ:expectation_redeeming_phi_freqn} separately for each component of our redeeming $R_n$ norm. Here it suffices to prove the corresponding bounds for $\widetilde{\Phi}^{n, [0]}_r$, noting that the bounds for $2^{-n} \nabla_{t,x} \widetilde{\Phi}^{n, [0]}_r$ follow analogously.

\medskip 

\noindent {\bf Moment bounds for $R_n L^2_t L^\infty_x$:} 
Recall that the Gaussians $\{ h_m \, : \, m \in \bbZ^4, 2^{n-1} \leq |m| < 2^n \}$ are independent of the $\sigma$-algebra $\calF_{n-1}$ generated by the Gaussians $\{ g_m, \tilde{g}_m, h_m, \tilde{h}_m \, : \, m \in \bbZ^4, |m| < 2^{n-1} \}$ (from the prior induction stages) and that the functions $\{ \calA_{x,r}^m \}_{m=1}^{n-1}$, $\{ \calA_0^m \}_{m=0}^{n-1}$, and $\{ \calA_{x,s}^m \}_{m=0}^{n-1}$ entering the definition of the phase functions $\psi_\pm^{n,mod}$ are assumed to be measurable with respect to~$\calF_{n-1}$. 
Conditioning on $\calF_{n-1}$ and using Bernstein's estimate to go down to $L^{\infty-}_x$, we have  
\begin{equation} \label{equ:moment_bound_RL2Linfty_est1}
\begin{aligned}
 &\bigl\| \mathds{1}_\varepsilon^{<n-1} P_n \widetilde{\Phi}^{n, [0]}_r \bigr\|_{L^p_\omega(\Omega; R_nL^2_t L^\infty_x)} \\
 &= \Biggl( \bbE \biggl[ \biggl( \bbE \, \Bigl[ \bigl\| \mathds{1}_\varepsilon^{<n-1} P_n \widetilde{\Phi}^{n, [0]}_r \bigr\|_{R_nL^2_t L^\infty_x}^p \, \Bigr| \, \calF_{n-1} \Bigr] \biggr)^{\frac{p}{p}} \biggr] \Biggr)^{\frac1p}  \\
 &\lesssim 2^{(0+)n} 2^{(\frac12 - 20 \sigma)n} \Biggl( \bbE \biggl[ \biggl( \bbE \biggl[ \biggl( \, \sum_{l<0}2^{\delta_1 l} \Bigl( \sum_{\substack{k'\leq n,\,l'\leq 0\\n+2l\leq k'+l'\leq n+l}} \gamma(k',l')^{-2} \times \\
 &\qquad \qquad \qquad \qquad \qquad \times \sum_{\kappa} \sum_{\mathcal{C}_{k'}(l')} \big\|P_{\mathcal{C}_{k'}(l')}P_l^{\kappa}Q_{<n+2l} \bigl( \mathds{1}_\varepsilon^{<n-1} P_n \widetilde{\Phi}^{n, [0]}_r \bigr) \big\|_{L_t^2 L_x^{\infty-}}^2 \Bigr)^{\frac12} \biggr)^p \, \biggr| \, \calF_{n-1} \biggr] \biggr)^{\frac{p}{p}} \biggr] \Biggr)^{\frac1p}, 
\end{aligned}
\end{equation}
where we recall from the definition of the redeeming space $R_n L^2_t L^\infty_x$ that
\begin{equation*}
 \gamma(k',l') := \bigl( \min\{ 2^{k'}, 1\} \bigr)^{\frac12 -} \bigl( \min\{ 2^{k'+l'}, 1\} \bigr)^{\frac12-}.
\end{equation*}
We distinguish summation in $k', l'$ over the frequency ranges $n+2l \leq k'+l' \leq \min\{n+l,0\}$ and over $\max\{n+2l, 0\} \leq k'+l' \leq n+l$. We start with the first case.
For any $p \geq \infty-$ we now use Minkowski's integral inequality and Khintchine's inequality (with respect to the conditional expectation), while for $1 \leq p \leq \infty-$ we first apply H\"older's inequality in~$\omega$. Then we can bound the last line~\eqref{equ:moment_bound_RL2Linfty_est1} by
\begin{equation} \label{equ:moment_bounds_RL2Linfty_after_khintchine}
 \begin{aligned}
 &2^{(0+)n} 2^{(\frac12 - 20 \sigma)n} \Biggl( \bbE \biggl[ \biggl( \, \sum_{l<0}2^{\delta_1 l} \biggl( \sum_{\substack{k'\leq n,\,l'\leq 0\\n+2l\leq k'+l'\leq n+l}} \gamma(k',l')^{-2} \times \\
 &\quad \times \sum_{\kappa} \sum_{\mathcal{C}_{k'}(l')} \bigg\| \biggl( \bbE \Bigl[ \, \Bigl| \sum_c h_c(\omega) P_{\mathcal{C}_{k'}(l')} P_l^{\kappa}Q_{<n+2l} \bigl( \mathds{1}_\varepsilon^{<n-1} P_n \widetilde{\Phi}^{n, [0], (c)}_r \bigr) \Bigr|^p \, \Bigr| \, \calF_{n-1} \Bigr] \biggr)^{\frac1p}  \biggr\|_{L_t^2 L_x^{\infty-}}^2 \biggr)^{\frac12}  \biggr)^p \biggr] \Biggr)^{\frac1p}, \\
 &\lesssim \Biggl\| \sqrt{p} \, 2^{(\frac12 - 19 \sigma)n} \sum_{l<0}2^{\delta_1 l} \biggl( \sum_c \sum_{\substack{k'\leq n,\,l'\leq 0\\n+2l\leq k'+l'\leq 0}} \gamma(k',l')^{-2} \times \\
 &\qquad \qquad \qquad \qquad \qquad \qquad \times \sum_{\kappa} \sum_{\mathcal{C}_{k'}(l')} \big\|P_{\mathcal{C}_{k'}(l')}P_l^{\kappa}Q_{<n+2l} \bigl( \mathds{1}_\varepsilon^{<n-1} P_n \widetilde{\Phi}^{n, [0], (c)}_r \bigr) \big\|_{L_t^2 L_x^{\infty-}}^2 \biggr)^{\frac12} \Biggr\|_{L^p_\omega}.
 \end{aligned}
\end{equation}
Here, $\sum_c$ denotes the sum over a covering of the annulus $|\xi| \sim 2^n$ by unit-sized balls with associated frequency projections $P_c$ (coming from the unit-scale frequency projections in the definition of the Wiener randomization) and where the parametrix applied to the (deterministic) initial datum $P_c \phi_0$ is denoted by
\begin{equation*}
 \widetilde{\Phi}^{n, [0], (c)}_r(t, x) := \frac12 \sum_\pm e^{-i \psi_{\pm}^{n, mod}}_{<n-C}(t,x,D) e^{\pm i t |D|} e^{+i \psi_{\pm}^{n, mod}}_{<n-C}(D,y,0)  P_c \phi_0.
\end{equation*}
Note that we tacitly changed the notation of the Gaussians and the unit-scale projections associated with the Wiener randomization to $h_c(\omega)$, respectively to $P_c$, to better distinguish the latter from the standard dyadic Littlewood-Paley projections $P_n$ within this proof.
Our goal is now to bound the integrand inside the $L^p_\omega$ norm on the right-hand side of~\eqref{equ:moment_bounds_RL2Linfty_after_khintchine} by $\sqrt{p} \|P_n \phi_0\|_{H^{1-\delta_\ast}_x}$. At that point the $L^p_\omega$ norm can be trivially dropped. To this end we have to distinguish several cases depending on the frequency localization of the symbols $e^{\pm i \psi_{\pm}^{n, mod}}(t,x,\xi)$. We introduce the corresponding short-hand notations
\begin{align*}
 \widetilde{\Phi}^{n, [0], (c)}_{r, LL}(t,x) &:= \frac12 \sum_\pm e^{-i \psi_{\pm}^{n, mod}}_{<-C}(t,x,D) e^{\pm i t |D|} e^{+i \psi_{\pm}^{n, mod}}_{<-C}(D,y,0) P_c \phi_0, \\
 \widetilde{\Phi}^{n, [0], (c)}_{r, LH}(t,x) &:= \sum_{-C \leq \ell_2 \leq n-C} \frac12 \sum_\pm e^{-i \psi_{\pm}^{n, mod}}_{<-C}(t,x,D) e^{\pm i t |D|} e^{+i \psi_{\pm}^{n, mod}}_{\ell_2}(D,y,0) P_c \phi_0, \\
 \widetilde{\Phi}^{n, [0], (c)}_{r, HL}(t,x) &:= \sum_{-C \leq \ell_1 \leq n-C} \frac12 \sum_\pm e^{-i \psi_{\pm}^{n, mod}}_{\ell_1}(t,x,D) e^{\pm i t |D|} e^{+i \psi_{\pm}^{n, mod}}_{<-C}(D,y,0) P_c \phi_0, \\
 \widetilde{\Phi}^{n, [0], (c)}_{r, HH}(t,x) &:= \sum_{-C \leq \ell_1, \ell_2 \leq n-C} \frac12 \sum_\pm e^{-i \psi_{\pm}^{n, mod}}_{\ell_1}(t,x,D) e^{\pm i t |D|} e^{+i \psi_{\pm}^{n, mod}}_{\ell_2}(D,y,0) P_c \phi_0.
\end{align*}

\medskip 

\noindent {\it Case 1: Bounding the contribution of $\widetilde{\Phi}^{n, [0], (c)}_{r, LL}$:}  
Here the frequency projection $P_c$ can essentially be moved through the parametrix to the outside (up to passing to a slight enlargement $\widetilde{P}_c$), and we reduce to bounding 
\begin{equation} \label{equ:moment_bounds_RL2Linfty_LL1}
 \sqrt{p} \, 2^{(\frac12 - 19 \sigma)n} \sum_{l<0}2^{\delta_1 l} \biggl( \sum_c  \sum_{\substack{k'\leq n,\,l'\leq 0\\n+2l\leq k'+l'\leq 0}} \gamma(k',l')^{-2} \sum_{\kappa} \sum_{\mathcal{C}_{k'}(l')} \big\|P_{\mathcal{C}_{k'}(l')} \widetilde{P}_c P_l^{\kappa}Q_{<n+2l} \bigl( \mathds{1}_\varepsilon^{<n-1} P_n \widetilde{\Phi}^{n, [0], (c)}_{r, LL} \bigr) \big\|_{L_t^2 L_x^{\infty-}}^2 \biggr)^{\frac12}.
\end{equation}
Then for fixed choice of $k', l'$ (with $k'+l' \leq 0$) we first use the Bernstein estimate $P_{\calC_{k'}(\ell')} \widetilde{P}_c L^6_x \to ( 2^{3(k'+l')} \min\{2^{k'}, 1\} )^{\frac16 -} L^{\infty-}_x$ to bound 
\begin{align*}
 &\sum_{\kappa} \sum_{\mathcal{C}_{k'}(l')} \big\| P_{\mathcal{C}_{k'}(l')} \widetilde{P}_c P_l^{\kappa}Q_{<n+2l} \bigl( \mathds{1}_\varepsilon^{<n-1} P_n \widetilde{\Phi}^{n, [0], (c)}_{r, LL} \bigr) \big\|_{L_t^2 L_x^{\infty-}}^2 \\
 &\quad \lesssim \Bigl( \bigl( 2^{3(k'+l')} \min\{2^{k'}, 1\} \bigr)^{\frac16 -} \Bigr)^2 \sum_{\kappa} \sum_{\mathcal{C}_{k'}(l')} \big\|P_{\mathcal{C}_{k'}(l')} \widetilde{P}_c P_l^{\kappa}Q_{<n+2l} \bigl( \mathds{1}_\varepsilon^{<n-1} P_n \widetilde{\Phi}^{n, [0], (c)}_{r, LL} \bigr) \big\|_{L_t^2 L_x^6}^2. 
\end{align*}
Then we invoke that by the mapping properties of the renormalization operator $e^{-i \psi_{\pm}^{n, mod}}_{<-C}(t,x,D)$ as in Proposition~\ref{prop:prob_renormalization_mapping_properties}, we have a square-summed $L^2_t L^6_x$ Strichartz estimate with a gain from frequency localization to a ball of diameter $\sim \min\{ 2^{k'}, 1\}$ (at distance $\sim 2^n$ from the origin of frequency space) for the wave operator $e^{-i \psi_{\pm}^{n, mod}}_{<-C}(t,x,D) e^{\pm i t |D|} e^{+i \psi_{\pm}^{n, mod}}_{<-C}(D,y,0) P_c$ (see Section 11 in~\cite{KST}). Thus, we can further estimate the previous line by
\begin{align*}
 &\Bigl( \bigl( 2^{3(k'+l')} \min\{2^{k'}, 1\} \bigr)^{\frac16 -} \Bigr)^2 \sum_{\kappa} \sum_{\mathcal{C}_{k'}(l')} \big\|P_{\mathcal{C}_{k'}(l')} \widetilde{P}_c P_l^{\kappa}Q_{<n+2l} \bigl( \mathds{1}_\varepsilon^{<n-1} P_n \widetilde{\Phi}^{n, [0], (c)}_{r, LL} \bigr) \big\|_{L_t^2 L_x^6}^2 \\
 &\quad \lesssim \Bigl( \bigl( 2^{3(k'+l')} \min\{2^{k'}, 1\} \bigr)^{\frac16 -} \bigl( \min\{ 2^{k'}, 1 \} 2^{-n} \bigr)^{\frac{1}{3}} 2^{\frac{5}{6}n} \Bigr)^2 \|P_c \phi_0\|_{L^2_x}^2 \\
 &\quad \simeq \Bigl( 2^{(\frac{1}{2}-)(k'+l')} \bigl(  \min\{2^{k'}, 1\} \bigr)^{\frac12 -} 2^{\frac{1}{2} n} \Bigr)^2 \|P_c \phi_0\|_{L^2_x}^2.
\end{align*}
Then we use a small portion of the factor $2^{(\frac{1}{2}-)(k'+l')}$ to sum over $k', l'$ in the indicated range. Square-summing over the unit-sized cubes $c$, we find that~\eqref{equ:moment_bounds_RL2Linfty_LL1} is safely bounded by $\sqrt{p} \, \|P_n \phi_0\|_{H^{1-\delta_\ast}_x}$.

\medskip 

\noindent {\it Case 2: Bounding the contribution of $\widetilde{\Phi}^{n, [0], (c)}_{r, LH}$:} 
In this case we first observe that for $-C \leq \ell_2 \leq n-C$ the operator $e^{+i \psi_{\pm}^{n, mod}}_{\ell_2}(D,y,0) P_c$ has the mapping property
\begin{equation} \label{equ:moment_bounds_RL2Linfty_LH_mapping_property}
 e^{+i \psi_{\pm}^{n, mod}}_{\ell_2}(D,y,0) P_c \colon L^2_y \to 2^{10\sigma n} 2^{-2 \ell_2} L^2_y.
\end{equation}
This follows from a crude decomposable estimate $\|\psi_{\pm, \ell_2}^{n, mod}\|_{D L^\infty L^2} \lesssim 2^{10 \sigma n} 2^{-2 \ell_2}$ and the unit-scale Bernstein estimate $P_c \colon L^2_y \to L^\infty_y$.
Moreover, we note that the evolution
\begin{equation*}
 e^{-i \psi_{\pm}^{n, mod}}_{<-C}(t,x,D) e^{\pm i t |D|} e^{+i \psi_{\pm}^{n, mod}}_{\ell_2}(D,y,0) P_c \phi_0, \quad -C \leq \ell_2 \leq n-C,
\end{equation*}
has spatial Fourier support in a ball of diameter $\sim 2^{\ell_2}$ (located at distance $\sim 2^n$ from the origin of frequency space). We may therefore freely insert outside a corresponding frequency projection $\widetilde{P}_{2^{\ell_2} c}$ adapted to a slight enlargement of that ball.
Then we proceed analogously to Case 1 and use the Bernstein estimate $P_{\calC_{k'}(\ell')} \widetilde{P}_{2^{\ell_2}c} L^6_x \to ( 2^{3(k'+l')} \min\{2^{k'}, 2^{\ell_2} \} )^{\frac16 -} L^{\infty-}_x$ and subsequently a square-summed $L^2_t L^6_x$ Strichartz estimate with gain from frequency localization to a ball of diameter $\sim \min\{ 2^{k'}, 2^{\ell_2} \}$ together with the mapping property~\eqref{equ:moment_bounds_RL2Linfty_LH_mapping_property} to find for fixed choice of $k', l'$ (with $k'+l' \leq 0$) that 
\begin{align*}
&\sum_{\kappa} \sum_{\mathcal{C}_{k'}(l')} \big\|P_{\mathcal{C}_{k'}(l')} P_l^{\kappa} Q_{<n+2l} \bigl( \mathds{1}_\varepsilon^{<n-1} P_n \widetilde{\Phi}^{n, [0], (c)}_{r, LH} \bigr) \big\|_{L_t^2 L_x^{\infty-}}^2 \\
&\quad \lesssim \biggl( \sum_{-C \leq \ell_2 \leq n-C} 2^{(\frac{1}{2}-)(k'+l')} \bigl(  \min\{2^{k'}, 2^{\ell_2} \} \bigr)^{\frac12 -} 2^{(\frac12 + 10 \sigma) n} 2^{-2 \ell_2} \|P_c \phi_0\|_{L^2_x} \biggr)^{2}.
\end{align*}
Thus, we have at least a gain of $2^{-\frac32 \ell_2}$, which can be summed over $-C \leq \ell_2 \leq n-C$. Then, again using a small fraction of the factor $2^{(\frac{1}{2}-)(k'+l')}$ to sum over $k', l'$ in the indicated range and square-summing over the unit-sized cubes $c$, we obtain that the contribution of $\widetilde{\Phi}^{n, [0], (c)}_{r, LH}$ to~\eqref{equ:moment_bounds_RL2Linfty_after_khintchine} is safely bounded by $\sqrt{p} \, \|P_n \phi_0\|_{H^{1-\delta_\ast}_x}$.

\medskip 

\noindent {\it Case 3: Bounding the contribution of $\widetilde{\Phi}^{n, [0], (c)}_{r, HL}$:} 
We begin by noting that the evolution
\begin{equation*}
 e^{-i \psi_{\pm}^{n, mod}}_{\ell_1}(t,x,D) e^{\pm i t |D|} e^{+i \psi_{\pm}^{n, mod}}_{<-C}(D,y,0) P_c \phi_0
\end{equation*}
has spatial Fourier support in a ball of diameter $\sim 2^{\ell_1}$ (located at distance $\sim 2^n$ from the origin of frequency space). We may therefore freely insert a corresponding frequency projection $\widetilde{P}_{2^{\ell_1} c}$ adapted to a slight enlargement of that ball. Hence, using the Bernstein estimate $P_{\calC_{k'}(l')} \widetilde{P}_{2^{\ell_1} c} \colon L^2_x \to \bigl( 2^{3(k'+l')} \min\{ 2^{k'}, 2^{\ell_1} \} \bigr)^{\frac{1}{2}-} L^{\infty-}_x$ and square-summing over the caps $\kappa$ and boxes $\calC_{k'}(l')$, we obtain for fixed choice of $k', l'$ (with $k' + l' \leq 0$) that 
\begin{align*}
&\sum_{\kappa} \sum_{\mathcal{C}_{k'}(l')} \big\|P_{\mathcal{C}_{k'}(l')} P_l^{\kappa} Q_{<n+2l} \bigl( \mathds{1}_\varepsilon^{<n-1} P_n \widetilde{\Phi}^{n, [0], (c)}_{r, HL} \bigr) \big\|_{L_t^2 L_x^{\infty-}}^2 \\
&\lesssim \Biggl( \sum_{-C \leq \ell_1 \leq n-C}  \biggl( \sum_{\kappa} \sum_{\mathcal{C}_{k'}(l')} \big\| \widetilde{P}_{2^{\ell_1} c} P_{\mathcal{C}_{k'}(l')} P_l^{\kappa} Q_{<n+2l} \bigl( \mathds{1}_\varepsilon^{<n-1} e^{-i \psi_{\pm}^{n, mod}}_{\ell_1}(t,x,D) e^{\pm i t |D|} e^{+i \psi_{\pm}^{n, mod}}_{<-C}(D,y,0) P_c \phi_0  \bigr) \big\|_{L_t^2 L_x^{\infty-}}^2 \biggr)^{\frac12} \Biggr)^2 \\
&\lesssim \Biggl( \sum_{-C \leq \ell_1 \leq n-C} \bigl( 2^{3(k'+l')} \min\{ 2^{k'}, 2^{\ell_1} \} \bigr)^{\frac{1}{2}-} \big\| \mathds{1}_\varepsilon^{<n-1} e^{-i \psi_{\pm}^{n, mod}}_{\ell_1}(t,x,D) \widetilde{P}_c e^{\pm i t |D|} e^{+i \psi_{\pm}^{n, mod}}_{<-C}(D,y,0) P_c \phi_0 \big\|_{L_t^2 L_x^2} \Biggr)^2.
\end{align*}
In the last line we already indicated that (a slight enlargement of) the frequency projection $P_c$ can be moved through to the outside of the renormalization operator $e^{+i \psi_{\pm}^{n, mod}}_{<-C}(D,y,0)$. 
Since the operator $e^{-i \psi_{\pm}^{n, mod}}_{\ell_1}(t,x,D) \widetilde{P}_c$ is essentially smooth at the scale of the unit-scale projection $\widetilde{P}_c$, we obtain from crude decomposability estimates the operator bound \begin{align*}
  e^{-i \psi_{\pm}^{n, mod}}_{\ell_1}(t,x,D) \widetilde{P}_c \colon L^\infty_x \to 2^{10\sigma n} 2^{-2\ell_1} L^2_x.
\end{align*}
By the mapping properties of the renormalization operator as in Proposition~\ref{prop:prob_renormalization_mapping_properties}, it follows that the wave operator $\widetilde{P}_c e^{\pm it|D|} e^{+i \psi_{\pm}^{n, mod}}_{<-C}(D,y,0)$ satisfies an improved Strichartz estimate $L^2_x \to 2^{\frac12 n} L^2_t L^\infty_x$ with gain from frequency localization to a ball of diameter $\sim 1$ (at distance $\sim 2^n$ from the origin of frequency space). Hence, the previous line can be further bounded by 
\begin{equation*}
 \biggl( 2^{10\sigma n} \sum_{-C \leq \ell_1 \leq n-C} 2^{(\frac32 -)(k'+l')} \bigl( \min\{ 2^{k'}, 2^{\ell_1} \} \bigr)^{\frac{1}{2}-} 2^{-2\ell_1} 2^{\frac12 n} \|P_c \phi_0 \|_{L_x^2} \biggr)^2.
\end{equation*}
Again, we have at least a gain of $2^{-\frac32 \ell_2}$, which can be summed over $-C \leq \ell_2 \leq n-C$, and subsequently we can use a small portion of the factor $2^{(\frac{3}{2}-)(k'+l')}$ to sum over $k', l'$ in the indicated range. After square-summing over the unit-sized cubes $c$, we obtain that the contribution of $\widetilde{\Phi}^{n, [0], (c)}_{r, HL}$ to~\eqref{equ:moment_bounds_RL2Linfty_after_khintchine} is also safely bounded by$\sqrt{p} \, \|P_n \phi_0\|_{H^{1-\delta_\ast}_x}$.

\medskip 

\noindent {\it Case 4: Bounding the contribution of $\widetilde{\Phi}^{n, [0], (c)}_{r, HH}$:} 
This case can be treated by combining the arguments from the previous two cases. 

\medskip 

To conclude the discussion of the derivation of the redeeming $R_n L^2_t L^\infty_x$ bounds, it remains to describe how to deal with the frequency range $\max\{n+2l, 0\} \leq k'+l' \leq n+l$. We proceed as above for the other frequency range, but in this regime we can only exploit the unit-scale frequency localization and cannot further gain from the radially directed frequency blocks $\calC_{k'}(l')$. The summation over $k',l'$ in this frequency range comes at the expense of a factor $2^{(0+)n}$ that can be safely compensated.

\medskip 

\noindent {\bf Moment bounds for $R_n L^2_t L^6_x$:}
This is effectively just a special case of the derivation of the moment bounds for the $R_n L^2_t L^\infty_x$ norm, we therefore omit the details.

\medskip 

\noindent {\bf Moment bounds for $R_n L^\infty_t L^\infty_x$:} 
We proceed analogously to the derivation of the moment bounds for the redeeming $R_n L^2_t L^\infty_x$ norm. First, we use Bernstein's inequality and fractional Sobolev embedding in time to go down to $L^{\infty-}_t L^{\infty-}_x$ at the expense of picking up a factor $2^{(0+)n}$. Then we condition on $\calF_{n-1}$, use Minkowski's integral inequality together with Khintchine's inequality as above, and distinguish the same four cases depending on the frequency localization of the renormalization operator symbols. In order to estimate the contribution of the main term $\widetilde{\Phi}^{n, [0], (c)}_{r, LL}(t)$, we use the Bernstein estimate $P_l^\kappa \widetilde{P}_c \colon L^{2+}_x \to \bigl( \min\{ 2^{3(n+l)}, 1 \} \bigr)^{\frac12 -} L^{\infty-}_x$ and subsequently a square-summed $L^{\infty-}_t L^{2+}_x$ Strichartz estimate. In the other three cases we proceed analogously to the treatment of the $R_n L^2_t L^\infty_x$ norm and play out the mismatched frequency localizations. 

\medskip 

\noindent {\bf Moment bounds for $R_n Str$:} 
This bound is also very analogous to the derivation of the moment bounds for the redeeming $R_n L^2_t L^\infty_x$ norm above. For a given admissible Strichartz pair $(q,r)$, we first condition on $\calF_{n-1}$ and use Minkowski's integral inequality together with Khintchine's inequality (after possibly going down to $L^{\infty-}_t$ or $L^{\infty-}_x$ at the expense of a factor $2^{(0+)n}$). Then we again distinguish the same four cases depending on the frequency localization of the renormalization operator symbols. For the contribution of the main term $\widetilde{\Phi}^{n, [0], (c)}_{r, LL}(t)$, we first use the unit-scale Bernstein estimate $\widetilde{P}_c \colon L^{r_0}_x \to L^r_x$ where $(q,r_0)$ is sharp-admissible and then apply a Strichartz estimate with gain from the frequency localization to a unit-sized ball (at distance $\sim 2^n$ from the origin of frequency space). In the other cases we take advantage of the mismatched frequency localizations. 

\medskip 

\noindent {\bf Moment bounds for $S^{1-\delta_\ast}_n$:} 
Here we do not look for a gain from probabilistic decoupling of the unit-scale frequency localized pieces. By the mapping properties from Proposition~\ref{prop:prob_renormalization_mapping_properties}, we right away have an $S^{1-\delta_\ast}_n$ norm bound for the homogeneous parametrix $\Phi^{n,[0]}_r$ (as in Section~11 in~\cite{KST}), and only subsequently apply Khintchine's inequality and square-sum over the unit-scale frequency localized pieces to find for any $p \geq 2$ that
\begin{equation*}
 \bigl\| \mathds{1}_\varepsilon^{<n-1} \Phi^n_r \bigr\|_{L^p_\omega(\Omega; S^{1-\delta_\ast}_n)} \lesssim \bigl\| (T_n \phi_0^\omega, T_n \phi_1^\omega) \bigr\|_{L^p_\omega(\Omega; H^{1-\delta_\ast}_x \times H^{-\delta_\ast}_x)} \lesssim \sqrt{p} \, \bigl\| (P_n \phi_0, P_n \phi_1) \bigr\|_{H^{1-\delta_\ast}_x \times H^{-\delta_\ast}_x}.
\end{equation*}
In the case $1 \leq p \leq 2$ we first use H\"older's inequality in $\omega$.

\medskip 

This finishes the proof of the moment bound~\eqref{equ:prob_strichartz_phi_moment_bound_stagel} for the zeroth iterate and it now remains to discuss the derivation of the moment bounds~\eqref{equ:prob_strichartz_phi_moment_bound_stagel} for all higher iterates $\ell \geq 1$.
Recall that the higher iterates $\Phi^{n,[\ell]}_r$, $\ell \geq 1$, are defined as the inhomogeneous parametrix applied to the error $-\calE^{[\ell-1]}_{rough}$. The key point that makes the derivation of the redeeming bounds work for all higher iterates $\Phi^{n,[\ell]}_r$, $\ell \geq 1$, is that the error $\calE^{[\ell-1]}_{rough}$ defined in~\eqref{equ:definition_rough_error_stage_l} consists only of ``small strings of frequencies of length $\ell-1$'' that are all at frequencies $\leq 2^{3 \sigma n}$. Thus, the Fourier support of the $\ell$-th iterate $\Phi^{n, [\ell], (c)}_r$ applied to a single unit-sized frequency localized piece smears out by at most $\lesssim 10^\ell 2^{3\sigma n}$. Moreover, the error $\calE^{[\ell-1]}_{rough}$ gains smallness and is of size $(\kappa_{n-1})^{2\ell}$. This smallness gain stems from repeatedly estimating the schematic magnetic potential terms 
\begin{equation*}
 Z_k^{\eta, \mp} \bigl( P_k A_{x,s}^{<n-1} \cdot \xi \mp P_{k} A_0^{<n-1} |\xi| \bigr)
\end{equation*}
using the equations for $A_{x,s}^{<n-1}$ and $A_0^{<n-1}$ (whose nonlinearities are at least quadratic). See the proof of Proposition~\ref{prop:prob_renormalization_error_estimate} for more details).

In the proof of the moment bound for the redeeming norms of $\Phi^{n,[\ell]}_r$, $\ell \geq 1$, we then only once have to argue analogously to the zeroth iterate above, namely for the inhomogeneoux parametrix applied to the error $-\calE^{[\ell-1]}_{rough}$, which costs some $2^{10\sigma n}$. Since we built enough room of size $2^{-20\sigma n}$ into the definition of our redeeming spaces, we can easily absorb the additional factor of $2^{3\sigma n}$ arising due to the smearing out of the frequency support. Overall, we therefore obtain the desired bound
\begin{equation*}
 \bigl\| \mathds{1}_\varepsilon^{<n-1} \Phi^{n, [\ell]}_r \bigr\|_{L^p_\omega(\Omega; R_n)} \lesssim 10^\ell (\kappa_{n-1})^{2\ell} \sqrt{p} \, \bigl\| (P_n \phi_0, P_n \phi_1) \bigr\|_{H^{1-\delta_\ast}_x \times H^{-\delta_\ast}_x}.
\end{equation*}
\end{proof}

\subsection{Probabilistic energy bounds for the data error $\Phi^n_s[0]$} \label{subsec:prob_data_error}

Next, we present a new type of moment bound, which ensures that the {\it data error} generated by the modified rough linear evolution $\Phi^n_r$ in fact gains smoothness (on a suitable event).

\begin{proposition} \label{prop:prob_data_error}
 Let $n \geq 1$. Assume that we have almost surely 
 \begin{equation*}
  \sum_{m=1}^{n-1} \bigl( \|\calA_{x,r}^m\|_{R_m} + \|\Phi_r^m\|_{R_m} \bigr)  + \sum_{m=0}^{n-1} \bigl( \|\calA_{x,s}^m\|_{S^1[m]} + \|\calA_0^m\|_{Y^1[m]} + \|\Phi^m_s\|_{S^1[m]} \bigr) < \infty.
 \end{equation*}
 Let $\mathds{1}_{[0,2C_0\varepsilon]}$ be the characteristic function of the interval $[0, 2C_0\varepsilon]$ and set
 \begin{equation*}
  \mathds{1}_\varepsilon^{<n-1} := \mathds{1}_{[0,2C_0\varepsilon]} \biggl( \sum_{m=1}^{n-1} \bigl( \|\calA_{x,r}^m\|_{R_m} + \|\Phi_r^m\|_{R_m} \bigr)  + \sum_{m=0}^{n-1} \bigl( \|\calA_{x,s}^m\|_{S^1[m]} + \|\calA_0^m\|_{Y^1[m]} + \|\Phi^m_s\|_{S^1[m]} \bigr) \biggr).
 \end{equation*}
 Let $\Phi^n_r$ be defined as in~\eqref{equ:definition_Phi_n_rough}. For every $1 \leq p < \infty$ it holds that
 \begin{equation} \label{equ:prob_data_error}
  \bigl\| \mathds{1}_\varepsilon^{<n-1} \bigl( \Phi^n_r[0] - T_n\phi^\omega[0] \bigr) \bigr\|_{L^p_\omega(\Omega; \dot{H}^1_x \times L^2_x)} \lesssim \sqrt{p} \, \bigl\| (P_n \phi_0, P_n \phi_1) \bigr\|_{H^{1-\delta_\ast}_x \times H^{-\delta_\ast}_x}.
 \end{equation}
\end{proposition}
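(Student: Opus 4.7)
The strategy parallels the proof of Proposition~\ref{prop:prob_strichartz_phi}: split the adapted linear evolution into the sum $\Phi^n_r = \sum_{\ell \geq 0} \Phi^{n,[\ell]}_r$, and treat the zeroth iterate by means of probabilistic decoupling, and the higher iterates using the smallness of the rough errors $\calE^{n,[\ell-1]}_{rough}$.

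First I would dispose of the higher iterates $\ell \geq 1$. Since $\Phi^{n,[\ell]}_r$ is built from the inhomogeneous parametrix and $K^\pm F(t)|_{t=0} = 0$, one has $\Phi^{n,[\ell]}_r(0,x) = 0$, so only the initial velocity contributes. Computing $\partial_t K^\pm F(t)|_{t=0} = F(0)$ and collecting the additional terms from $\partial_t$ hitting the left renormalization operator, we reduce $\partial_t \Phi^{n,[\ell]}_r(0,x)$ to a bounded composition of renormalization operators applied to $\calE^{n,[\ell-1]}_{rough}(0,x)$. The mapping properties established in Proposition~\ref{prop:prob_renormalization_mapping_properties} for $Z = L^2$, together with the inductive smallness $(\kappa_{n-1})^{2\ell}$ of the rough errors proven via the analysis of Subsection~\ref{subsec:prob_renormalization_error} (see Proposition~\ref{prop:prob_renormalization_error_estimate}), give a geometrically convergent bound summed over $\ell \geq 1$ that is absorbed into the right-hand side of \eqref{equ:prob_data_error} thanks to the cutoff $\mathds{1}_\varepsilon^{<n-1}$.

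The main work lies in $\ell = 0$. Evaluating the homogeneous parametrix at $t=0$ and isolating the identity contribution one obtains
\begin{equation*}
 \Phi^{n,[0]}_r(0,\cdot) - T_n \phi_0^\omega
 = \tfrac{1}{2} \sum_\pm \calC_\pm \, \frac{i|D|T_n\phi_0^\omega \pm T_n\phi_1^\omega}{i|D|},
\end{equation*}
where $\calC_\pm := [e^{-i\psi_\pm^{n,mod}}]_{<n-C}(0,x,D) [e^{+i\psi_\pm^{n,mod}}]_{<n-C}(D,y,0) - I$ is the conjugation error at $t=s=0$, with an analogous formula for $\partial_t \Phi^{n,[0]}_r(0,\cdot) - T_n\phi_1^\omega$. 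Crucially, the modified phase $\psi^{n,mod}_\pm$ is $\calF_{n-1}$-measurable because it is assembled from $A^{<n-1}$, and hence so is $\calC_\pm$. Conditioning on $\calF_{n-1}$ and expanding the random data over the unit-scale Wiener blocks $T_n\phi_j^\omega = \sum_c h_c^{(j)}(\omega) P_c \phi_j$ (where $\{P_c\}$ covers the annulus $|\xi| \sim 2^n$), Minkowski's integral inequality followed by Khintchine's inequality (Lemma~\ref{lem:khintchine}) yields
\begin{equation*}
 \bigl\| \mathds{1}_\varepsilon^{<n-1} (\Phi^{n,[0]}_r[0] - T_n\phi^\omega[0]) \bigr\|_{L^p_\omega(\dot H^1_x \times L^2_x)}
 \lesssim \sqrt{p} \, \Biggl( \sum_c \bigl\| \mathds{1}_\varepsilon^{<n-1} \calC_\pm \widetilde{P}_c \bigr\|_{L^2_x \to L^2_x}^2 \|P_c \phi_j\|_{L^2_x}^2 \Biggr)^{\frac{1}{2}},
\end{equation*}
where $\widetilde{P}_c$ denotes the composition of $P_c$ with a suitably enlarged unit-scale projection absorbing the (at most $2^{10\sigma n}$) Fourier smearing produced by the renormalization operators.

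The main obstacle is then to prove the unit-scale operator bound
\begin{equation*}
 \bigl\| \mathds{1}_\varepsilon^{<n-1} \calC_\pm \widetilde{P}_c \bigr\|_{L^2_x \to L^2_x} \lesssim \varepsilon \cdot 2^{-2\delta_\ast n},
\end{equation*}
so that, after square-summing over $c$ and multiplying by $2^n$ (to upgrade $L^2_x$ to $\dot H^1_x$), we recover exactly the $H^{1-\delta_\ast}_x$ norm of the data on the right-hand side. To establish this refined bound, I would Taylor expand the composition kernel $e^{-i[\psi(0,x,\xi)-\psi(0,y,\xi)]}-1$, integrate by parts in $\xi$ in order to convert the $(x-y)$ factor into $\nabla_\xi$ acting on the unit-scale localized pieces (which is harmless since $\widetilde P_c$ is smooth at frequency scale one), and combine the resulting $\nabla_x \psi^{n,mod}_\pm$ factor with the decomposable $L^\infty_t L^\infty_x$ and symbol bounds from Lemma~\ref{lem:prob_phase_function_Linfty_bounds}, Lemma~\ref{lem:prob_difference_phase_fct}, and Lemma~\ref{lem:prob_decomposable_est}. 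The hierarchy $\delta_\ast \ll \sigma \ll \gamma$ is used to absorb the various losses (in particular the $2^{10\sigma n}$ arising from the space-time frequency cutoff $[\cdot]_{<n-C}$) into the slack $2^{-20\sigma n}$ that is built into the redeeming framework, leaving the needed $2^{-2\delta_\ast n}$ gain. Assembling the zeroth and higher iterate contributions and finally invoking Lemma~\ref{lem:probability_estimate} in the usual way concludes the proof of \eqref{equ:prob_data_error}.
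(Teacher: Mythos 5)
Your outline follows essentially the same route as the paper: the data error is identified with the conjugation error $\calC_\pm = e^{-i\psi^{n,mod}_\pm}_{<n-C}(0,x,D)\,e^{+i\psi^{n,mod}_\pm}_{<n-C}(D,y,0) - I$ acting on the frequency-$2^n$ data, one proves a purely deterministic smoothing bound of size $2^{-\delta_\ast n}$ for this operator on $L^2_x$ using the phase-difference bounds of Lemma~\ref{lem:prob_difference_phase_fct} (the paper does this via a Schur test on the kernel, splitting into $|x-y| \lessgtr 2^{-(1-\delta_\ast)n}$, with the Lipschitz bound on the phase in the near regime and non-stationary phase in the far regime --- the same mechanism as your Taylor-expansion/integration-by-parts plan; a gain of $2^{-\delta_\ast n}$ already suffices), and Khintchine then yields \eqref{equ:prob_data_error}.

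Two deviations are worth recording. First, the conditioning on $\calF_{n-1}$ and the unit-scale decoupling buy you nothing here: since the operator bound is pathwise and the target norm is $L^2_x$-based, one simply applies Lemma~\ref{lem:khintchine} to $T_n\phi^\omega[0]$ at the very end; note that the present proposition (unlike Proposition~\ref{prop:prob_strichartz_phi}) does not even assume $\calF_{n-1}$-measurability of the previous-stage functions, so the conditioning step is not licensed by the stated hypotheses, whereas the pathwise argument needs no such assumption. Second, and more substantively, your disposal of the $\ell \geq 1$ velocity contributions rests on a fixed-time bound for $\calE^{n,[\ell-1]}_{rough}(0,\cdot)$: the $(\kappa_{n-1})^{2\ell}$-smallness of the rough errors is established only in space-time norms (of $N_n$ and $L^1_t L^2_x$ type), and the building blocks in~\eqref{equ:definition_rough_error_stage_l} involve space-time Fourier multipliers, so citing the $L^2$ mapping properties of Proposition~\ref{prop:prob_renormalization_mapping_properties} together with Proposition~\ref{prop:prob_renormalization_error_estimate} does not directly give the needed estimate $\bigl\| |D|^{-1}\calE^{n,[\ell-1]}_{rough}(0)\bigr\|_{L^2_x} \lesssim (\kappa_{n-1})^{2\ell}\, \bigl\|T_n\phi^\omega[0]\bigr\|_{H^{1-\delta_\ast}_x\times H^{-\delta_\ast}_x}$; you would have to supply an $L^\infty_t L^2_x$-type variant of those bounds (in the spirit of the fixed-time estimates appearing in the proof of Proposition~\ref{prop:moment_bound_error_control_quantity}). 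The paper treats these extra time-derivative terms, including those coming from the higher iterates, via decomposable estimates for $\nabla_{t,x}\psi^{n,mod}_\pm$ and is admittedly brief there as well; your plan goes through once this fixed-time ingredient is made explicit.
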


We emphasize that the presence of the cut-off $\mathds{1}_\varepsilon^{<n-1}$ on the left-hand side of~\eqref{equ:prob_data_error} is again crucial to enforce the necessary smallness to invoke the mapping properties of the renormalization operators $e^{\pm i \psi_{\pm}^{n, mod}}_{<n-C}$ from Proposition~\ref{prop:det_renormalization_mapping_properties}, which enter the definition of~$\Phi_r^n$. Moreover, it guarantees the necessary smallness to sum up all higher iterates in the definition of $\Phi^n_r$.

\begin{proof}[Proof of Proposition~\ref{prop:prob_data_error}]
Here the main work actually goes into proving that almost surely we have
\begin{equation} \label{equ:prob_data_error_main_work}
  \bigl\| \mathds{1}_\varepsilon^{<n-1} \bigl( \Phi^n_r[0] - T_n\phi^\omega[0] \bigr) \bigr\|_{\dot{H}^1_x \times L^2_x} \lesssim \bigl\| \mathds{1}_\varepsilon^{<n-1} T_n \phi^\omega[0] \bigr\|_{H^{1-\delta_\ast}_x \times H^{-\delta_\ast}_x}.
 \end{equation}
 Then the asserted bound~\eqref{equ:prob_data_error} is a simple consequence of~\eqref{equ:prob_data_error_main_work} and a subsequent application of Khintchine's inequality (so that no conditioning on $\calF_{n-1}$ is necessary).
 We begin with the $\dot{H}^1_x$ bound. By definition of~$\Phi_r^n$, we have that
 \begin{equation} \label{equ:data_error_difference}
  \Phi^n_r(0) - T_n \phi^\omega_0 = \Phi^{n, [0]}_r(0) - T_n \phi^\omega_0 = \frac{1}{2} \sum_{\pm} \Bigl( e_{<n-C}^{-i \psi_{\pm}^{<n, mod}}(0,x,D) e_{<n-C}^{+i\psi_{\pm}^{<n, mod}}(D, y, 0) - 1 \Bigr) T_n \phi^\omega_0.
 \end{equation}
 Correspondingly, we need to show that
 \begin{equation} \label{equ:data_error_goal}
  \Bigl\| \mathds{1}_\varepsilon^{<n-1} \Bigl( e_{<n-C}^{-i \psi_{\pm}^{<n, mod}}(0,x,D) e_{<n-C}^{+i\psi_{\pm}^{<n, mod}}(D, y, 0) - 1 \Bigr) T_n \phi_0^\omega \Bigr\|_{\dot{H}^1_x} \lesssim 2^{-\delta_\ast n} \bigl\| T_n \phi_0^\omega \bigr\|_{\dot{H}^1_x}. 
 \end{equation}
 To this end we introduce the kernel 
 \begin{equation*}
  K_n(x,y) := \mathds{1}_\varepsilon^{<n-1} \int_{\bbR^4} e^{i(x-y)\cdot\xi} \Bigl( e^{-i \psi_{\pm}^{<n, mod}(0,x,\xi)} e^{+i\psi_{\pm}^{<n, mod}(\xi,y,0)} - 1 \Bigr) \chi \Bigl( \frac{\xi}{2^n} \Bigr) \, \ud \xi,
 \end{equation*}
 where $\chi(z)$ is a suitable bump function supported around $|z| \sim 1$. 
 Then by averaging arguments such as in Proposition~8.2 in~\cite{KST} and by the decomposable estimate $\| \nabla_{t,x} \psi_\pm^{<n,mod} \|_{D L^\infty_t L^\infty_x} \lesssim 2^{(1-\gamma)n} \varepsilon$ from~\eqref{equ:decomposable_estimate_prob_phase_qinfty}, the bound~\eqref{equ:data_error_goal} reduces  to proving that
 \begin{equation*}
  \biggl\| \int_{\bbR^4} K_n(x,y) (P_n f)(y) \, \ud y \biggr\|_{L^2_x} \lesssim 2^{-\delta_\ast n} \|P_n f\|_{L^2_x},
 \end{equation*}
 which in turn follows from Schur's test upon establishing that 
 \begin{equation} \label{equ:schur_test_needed}
  \sup_{y} \, \bigl\| K_n(x,y) \bigr\|_{L^1_x} + \sup_{x} \, \bigl\| K_n(x,y) \bigr\|_{L^1_y} \lesssim 2^{-\delta_\ast n}.
 \end{equation}
 We only estimate the first term on the left-hand side, the estimate for the second term being analogous. By non-stationary phase arguments using~\eqref{equ:prob_phase_function_difference_xi_derivative_psis}, we have 
 \begin{equation*}
  |K_n(x,y)| \lesssim_N 2^{4n} \japanese{2^n (x-y)}^{-N}, \quad \text{ for any } 1 \leq N \ll \frac{\gamma}{2\sigma}.
 \end{equation*}
 This decay estimate easily yields the desired bound for $|x-y| \gtrsim 2^{-(1-\delta_\ast)n}$ since we have for $N \gg 1$ that
 \begin{align*}
  \sup_y \, \int_{\{|x-y| \gtrsim 2^{-(1-\delta_\ast)n}\}} |K_n(x,y)| \, \ud y &\lesssim 2^{-(N-4)\delta_\ast n} \lesssim 2^{-\delta_\ast n}.
 \end{align*}
 If $|x-y| \lesssim 2^{-(1-\delta_\ast)n}$ we use that by Lemma~\ref{lem:prob_phase_function_Linfty_bounds} we have the bound 
 \begin{align*}
  |K_n(x,y)| 
  &\lesssim 2^{4n} \bigl\| \nabla_{t,x} \psi_{\pm}^{<n,mod} \bigr\|_{L^\infty_t L^\infty_x} |x-y| \lesssim 2^{4n} 2^{(1-\gamma)n} |x-y| \varepsilon,
 \end{align*}
 and hence, 
 \begin{align*}
  \sup_y \, \int_{\{|x-y| \lesssim 2^{-(1-\delta_\ast)n}\}} |K_n(x,y)| \, \ud y &\lesssim 2^{4n} 2^{(1-\gamma)n} \sup_y \, \int_{\{|x-y| \lesssim 2^{-(1-\delta_\ast)n}\}} |x-y| \, \ud y \\
  &\lesssim 2^{4n} 2^{(1-\gamma)n} 2^{-5(1-\delta_\ast)n} \\
  &\lesssim 2^{-(\gamma - 5\delta_\ast)n}.
 \end{align*}
 This yields the desired estimate~\eqref{equ:schur_test_needed} since $1 \gg \gamma \gg \delta_\ast > 0$.
 
 The proof of the $L^2_x$ bound for the time derivative in~\eqref{equ:prob_data_error_main_work} proceeds analogously. We note that here the time derivative produces additional terms when it falls onto the phase functions $\psi^{n,mod}_\pm$ (also of the higher iterates). However, these terms gain regularity easily by decomposable estimates such as $\| \nabla_{t,x} \psi_\pm^{<n,mod} \|_{D L^\infty_t L^\infty_x} \lesssim 2^{(1-\gamma)n} \varepsilon$ from~\eqref{equ:decomposable_estimate_prob_phase_qinfty}. 
\end{proof}

\subsection{Probabilistic Strichartz estimates for the rough linear evolution $\calA_{x,r}^n$} \label{subsec:prob_strichartz_A}

We also record moment bounds for the redeeming $R_n$ norm of the rough linear evolution $\calA_{x,r}^n$ of the random data $T_n A_x^\omega[0]$, $n \geq 1$. Since $\calA^n_{x,r} := S(t)[T_n a^\omega, T_n b^\omega]$ is just the free wave evolution of $T_n A_x^\omega[0] = (T_n a^\omega, T_n b^\omega)$, the proof is a (very simple) special case of the proof of Proposition~\ref{prop:prob_strichartz_phi} for the zeroth iterate $\Phi^{n,[0]}_r$ with the renormalization operators replaced by the identity (and no necessity for a probabilistic cutoff $\mathds{1}_{\varepsilon}^{<n-1}$).

\begin{proposition} \label{prop:prob_strichartz_A}
 Let $n \geq 1$. Then we have for all $1 \leq p < \infty$ that
 \begin{equation}
  \bigl\| \calA^n_{x,r} \bigr\|_{L^p_\omega(\Omega; R_n)} \lesssim \sqrt{p} \, \bigl\| (P_n a, P_n b) \bigr\|_{H^{1-\delta_\ast}_x \times H^{-\delta_\ast}_x}.
 \end{equation}
\end{proposition}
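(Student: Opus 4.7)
The plan is to follow the same scheme as in the proof of Proposition~\ref{prop:prob_strichartz_phi}, but in the considerably simpler setting where the renormalization operators $e^{\pm i\psi^{n,mod}_\pm}_{<n-C}$ are replaced by the identity and no inhomogeneous iterates $\Phi^{n,[\ell]}_r$ with $\ell\geq 1$ have to be summed. Consequently there is no need to condition on $\calF_{n-1}$, and no probabilistic cutoff $\mathds{1}_\varepsilon^{<n-1}$ is required: since $\calA^n_{x,r}(t)=S(t)[T_n a^\omega, T_n b^\omega]$ is an honest free wave, all of its bounds can be obtained without any reference to the connection forms or scalar fields from previous stages.

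Concretely, I would estimate each of the five building blocks of $\|\cdot\|_{R_n}$ separately, writing
\[
\calA^n_{x,r}(t) = \sum_{2^{n-1}\leq |m|<2^n} g_m(\omega)\,S(t)\bigl[\varphi(D-m)a,\,\tilde g_m(\omega) \text{-weighted part of }b\bigr] \equiv \sum_c g_c(\omega)\, \calA^{n,(c)}_{x,r}(t),
\]
where $\calA^{n,(c)}_{x,r}$ denotes the free wave with deterministic unit-scale localized data $P_c(a,b)$ (with obvious separate treatment of the $a$- and $b$-pieces). For each component of $R_n$, I would first apply Minkowski's integral inequality to exchange the $L^p_\omega$ norm with the spatial and temporal integrals (going down to $L^{\infty-}$ norms and losing a harmless factor $2^{(0+)n}$ if needed), then invoke Khintchine's inequality (Lemma~\ref{lem:khintchine}) to produce the gain $\sqrt p$ and pass to a square function. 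For $1\leq p\leq\infty-$ one applies H\"older in $\omega$ beforehand to pass to some large $p$.

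The remaining deterministic square-function bounds are exactly those appearing in Cases~1 of the proof of Proposition~\ref{prop:prob_strichartz_phi} (``$LL$-case''), since there are no phase functions to interact with the frequency support of $P_c$. For the $R_n L^2_t L^\infty_x$ and $R_n L^2_t L^6_x$ components, I would use the Bernstein estimate $P_{\calC_{k'}(l')}P_c \colon L^6_x\to (2^{3(k'+l')}\min\{2^{k'},1\})^{\frac16-}L^{\infty-}_x$ followed by a square-summed $L^2_t L^6_x$ Strichartz estimate with gain from the unit-scale frequency localization (at distance $\sim 2^n$ from the origin); the factor $2^{(\frac12-)(k'+l')}$ then absorbs the $l, k', l'$ summations with room to spare. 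For $R_n L^\infty_t L^\infty_x$ I would pass to $L^{\infty-}_t L^{\infty-}_x$, use the Bernstein estimate on angular caps, and apply an $L^{\infty-}_t L^{2+}_x$ Strichartz estimate. The $R_n Str$ part reduces to standard wave-admissible Strichartz estimates combined with the unit-scale Klainerman--Tataru gain. Finally, the $S^{1-\delta_\ast}_n$ part is immediate: by the standard energy estimate for $\Box$ and Khintchine's inequality,
\[
\bigl\|\calA^n_{x,r}\bigr\|_{L^p_\omega(\Omega; S^{1-\delta_\ast}_n)}\lesssim \bigl\|(T_n a^\omega, T_n b^\omega)\bigr\|_{L^p_\omega(\Omega; H^{1-\delta_\ast}_x\times H^{-\delta_\ast}_x)}\lesssim \sqrt p\,\bigl\|(P_n a, P_n b)\bigr\|_{H^{1-\delta_\ast}_x\times H^{-\delta_\ast}_x}.
\]

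There is essentially no main obstacle here: the reason the full proof of Proposition~\ref{prop:prob_strichartz_phi} was delicate was (i) the presence of the modified renormalization operators, forcing one to condition on $\calF_{n-1}$ in order to apply Khintchine's inequality independently of the random symbol $\psi^{n,mod}_\pm$, and (ii) the summability of the infinite chain of iterates $\Phi^{n,[\ell]}_r$, which required the rough error bookkeeping built into~\eqref{equ:definition_rough_error_stage_l}. Both issues are absent in the present statement, and the Wiener-randomization gain from the unit-scale frequency decoupling is used exactly once, cleanly, on the free wave $\calA^n_{x,r}$ itself. The only bookkeeping item to double-check is that, because the random initial data are sharply frequency-localized to $|\xi|\sim 2^n$ and $T_{-m}=\overline{T_m}$ enforces $\calA^n_{x,r}$ to remain a real-valued Coulomb $1$-form, no additional symmetrization step is needed in the Khintchine step beyond the standard real/imaginary part splitting recalled in the randomization procedure.
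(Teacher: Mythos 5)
Your proposal is correct and matches the paper's own treatment: the paper also observes that $\calA^n_{x,r}$ is just the free wave evolution of $T_n A_x^\omega[0]$, so the bound is a simple special case of the proof of Proposition~\ref{prop:prob_strichartz_phi} for the zeroth iterate with the renormalization operators replaced by the identity and with no probabilistic cutoff or conditioning on $\calF_{n-1}$ required. Your component-by-component Khintchine/Bernstein/Strichartz argument is exactly the ``$LL$'' case of that proof, and your remark on the symmetry $g_{-m}=\overline{g_m}$ is the only bookkeeping point worth recording.
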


\subsection{The renormalization error estimate for $\Box_{A^{<n-1}}^{p,mod} \Phi^n_r$} \label{subsec:prob_renormalization_error}

In this section we turn to the subtle treatment of the error term $\Box_{A^{<n-1}}^{p, mod} \Phi^n_r$ produced by the rough linear evolution $\Phi^n_r$. We need to establish that it gains regularity and acts as a (small) ``smooth'' source term in the equation for $\Phi_s^n$ in the system of forced MKG-CG equations (fMKG-CG\textsubscript{n}) at dyadic level $n$.

Recall from~\eqref{equ:overall_accrued_error} that by construction of $\Phi^n_r$ the overall accrued error $\Box_{A^{<n-1}}^{p, mod} \Phi^n_r$ consists of ``mild'' and ``delicate'' error terms. To handle the ``mild'' error terms and show that they gain regularity we primarily rely on the tighter angle cutoff and the ``strongly low-high'' frequency separation. For the treatment of the ``delicate'' error terms (that do not enjoy tight angular localizations) we have to invoke as an additional key ingredient probabilistic redeeming bounds for the following error control quantity
\begin{equation} \label{equ:definition_EC_n}
 \calE \calC^n := \sum_{\ell \geq 0} \calE \calC^{n, [\ell]}, \quad n \geq 1,
\end{equation}
with 
\begin{equation*}
 \calE \calC^{n, [\ell]} := 2^{-\sigma n} 2^{-\delta_\ast n} 2^{-\nu n} \sum_{\sum_j h_j > - \frac{\sigma n}{10} } \sum_{\sum_j \alpha_j > - \frac{\sigma n}{100} } \sum_{\substack{\text{small strings } (\underline{k}) \\ \text{of length } \ell}} 2^{-\frac{r_a}{3}} \bigl\| \nabla_{t,x} \Phi^{n, \pm, (\underline{k}), (\underline{h}), (\underline{\alpha})}_r \bigr\|_{L^M_t L^6_x},
\end{equation*}
where $M \gg 1$ is sufficiently large, $0 < \nu \equiv \nu(M) \ll 1$ is sufficiently small with $\lim_{M\to\infty} \nu(M) = 0$, and for each small string $(\underline{k})$ of length $\ell$ we denote by $r_a$ its largest dominating frequency. 
We derive moment bounds for $\calE \calC^n$ in Proposition~\ref{prop:moment_bound_error_control_quantity} below. First, we turn to the treatment of the error estimate for $\Box_{A^{<n-1}}^{p, mod} \Phi^n_r$ in the following proposition (which should be regarded as an entirely deterministic estimate).

\begin{proposition}[Renormalization error estimate] \label{prop:prob_renormalization_error_estimate}
 Let $n \geq 1$.
 Let $(\calA_{x, s}^0, \calA_0^0, \Phi_s^0)$ be the solution to (MKG-CG) with initial data $(T_0 A_x^\omega, T_0 \phi^\omega)[0]$ and let $\{ (\calA_{x,s}^m, \calA_0^m, \Phi_s^m) \}_{m=0}^{n-1}$ be the solutions to (fMKG-CG\textsubscript{m}), $1 \leq m \leq n-1$, satisfying
 \begin{equation*}
  \sum_{m=0}^{n-1} \|\calA_{x,s}^m\|_{S^1[m]} + \sum_{m=0}^{n-1} \|\calA_0^m\|_{Y^1[m]} + \sum_{m=0}^{n-1} \|\Phi^m_s\|_{S^1[m]} \lesssim \varepsilon.
 \end{equation*}
 Moreover, assume that the corresponding rough linear evolutions satisfy 
 \begin{equation*}
  \sum_{m=1}^{n-1} \|\calA_{x,r}^m\|_{R_m} + \sum_{m=1}^{n-1} \|\Phi_r^m\|_{R_m} \lesssim \varepsilon,
 \end{equation*}
 and that the error control quantity satisfies $\calE \calC^n < \infty$.
 Let $\Phi^n_r$ be defined as in~\eqref{equ:definition_Phi_n_rough}.
 Then we have 
 \begin{equation}
  \bigl\| \Box_{A^{<n-1}}^{p, mod} \Phi^n_r \bigr\|_{N_n \cap \ell^1 L^2_t \dot{H}^{-\frac12}_x} \lesssim \bigl\| T_n \phi^\omega[0] \bigr\|_{H^{1-\delta_\ast}_x \times H^{-\delta_\ast}_x} + \calE \calC^n.
 \end{equation}
\end{proposition}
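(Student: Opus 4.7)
The starting point is the decomposition derived in Subsection~\ref{subsec:prob_definition_Phi},
\[
\Box_{A^{<n-1}}^{p,mod}\Phi^n_r \;=\; \sum_{\ell\ge 0}\calE^{n,[\ell]}_{mild} \;+\; \sum_{\ell\ge 0}\calE^{n,[\ell]}_{del},
\]
together with the smallness estimate $\|\Phi^{n,[\ell]}_r\|_{R_n}\lesssim 10^{\ell}(\kappa_{n-1})^{2\ell}\|T_n\phi^\omega[0]\|_{H^{1-\delta_\ast}\times H^{-\delta_\ast}}$ extracted from the proof of Proposition~\ref{prop:prob_strichartz_phi}. Because $\kappa_{n-1}\lesssim\varepsilon\ll 1$, it suffices to prove the bound for each fixed $\ell$ with a right-hand side carrying an $\ell$-independent geometric factor like $(C\varepsilon)^{\ell}$, and then sum.

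The plan is to estimate the mild errors first. Each of the six (or seven, for $\ell\ge 1$) pieces $\mathrm{Diff}^{[\ell]}_{1,(a)}$, $\mathrm{Diff}^{[\ell]}_{1,(b)}$, $\mathrm{Diff}^{[\ell]}_{j}$ ($2\le j\le 7$) has either a tight angular cutoff or a derivative/commutator falling on the low-frequency symbol. For $\mathrm{Diff}^{[\ell]}_{1,(a)}$ and $\mathrm{Diff}^{[\ell]}_{1,(b)}$ one combines the angular truncation with the Coulomb gauge (which donates a factor of $\theta$) and the redeeming norm of $A_{x,r}^{<n-1}$, respectively the critical bounds on $A_{x,s}^{<n-1}$, $A_0^{<n-1}$; the tight cutoff $\theta\le 2^{-\sigma n}$ or $\theta\le 2^{\sigma\min\{k,-n\}}$ yields an $N_n\cap\ell^1 L^2_t\dot H^{-1/2}_x$ gain of $2^{-c\sigma n}$. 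For $j=2,\ldots,6$ the structural derivative forces a low-frequency factor which, combined with the decomposable estimates of Lemma~\ref{lem:prob_decomposable_est} and the frequency separation $k\le(1-\gamma)n$, yields a similar $2^{-c\gamma n}$ gain through the mapping properties of $e^{\pm i\psi^{n,mod}_{\pm}}_{<n-C}$ from Proposition~\ref{prop:prob_renormalization_mapping_properties}. The term $\mathrm{Diff}^{[\ell]}_{7}$ is handled by noting that $\partial_t\pm i|D|+2iA_0^{<n-1}$ acting on $e^{+i\psi^{n,mod}_\pm}_{<n-C}$ produces a commutator plus a term cancelling with the low-modulation part of $\calE^{n,[\ell-1]}_{rough}$ (cf.\ the data-error type argument behind Proposition~\ref{prop:prob_data_error}). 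Together these yield
\[
\Bigl\|\sum_{\ell\ge 0}\calE^{n,[\ell]}_{mild}\Bigr\|_{N_n\cap\ell^1 L^2_t\dot H^{-1/2}_x}\lesssim \|T_n\phi^\omega[0]\|_{H^{1-\delta_\ast}\times H^{-\delta_\ast}}.
\]

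For the delicate errors one uses $\calE^{n,[\ell]}_{del}=\mathrm{Diff}^{[\ell]}_{1,rough}-\calE^{n,[\ell]}_{rough}$, which after the angular and phase refinements of Subsection~\ref{subsec:prob_definition_Phi} is a sum of $\Psi^{n,(\underline{k}),(\underline\alpha),(\underline h)}_r$ over one of three good regimes: (i) the string $(\underline{k})$ is \emph{not} small, so some $k_j>3\sigma n$; (ii) $(\underline{k})$ is small but $\sum_j\alpha_j\le-\tfrac{\sigma n}{100}$; (iii) $(\underline{k})$ is small but $\sum_j h_j\le-\tfrac{\sigma n}{10}$. In regime (i) one fixes the largest dominating frequency $r_a>3\sigma n$ and estimates the corresponding magnetic factor $Z^{\eta,\mp}_{r_a}(P_{r_a}A^{<n-1}_{x,s}\cdot\xi\mp P_{r_a}A^{<n-1}_0|\xi|)$ by its $L^{M/(M-2)}_tL^{3/2}_x$ norm using the critical $S^1[r_a]$ and $Y^1[r_a]$ bounds, pairing it with $\nabla_{t,x}\Phi^{n,\pm,(\underline k),(\underline\alpha),(\underline h)}_r$ in $L^{M}_tL^{6}_x$ through H\"older, and invoking the $2^{-r_a/3}$ factor built into $\calE\calC^n$ (plus the off-diagonal decay from the frequency separation) to absorb the terminating frequency into $\calE\calC^{n,[\ell]}$. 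In regimes (ii) and (iii) the small string is by definition controlled at frequency $\le 3\sigma n$, and the angular/phase mismatch provides an additional smallness factor $2^{\sum_j\alpha_j}\le 2^{-\sigma n/100}$ or $2^{\sum_j h_j}\le 2^{-\sigma n/10}$, respectively, which more than compensates the $2^{-\sigma n}2^{-\delta_\ast n}2^{-\nu n}$ normalization in the definition of $\calE\calC^{n,[\ell]}$; the same H\"older pairing then yields the estimate
\[
\Bigl\|\sum_{\ell\ge 0}\calE^{n,[\ell]}_{del}\Bigr\|_{N_n\cap\ell^1 L^2_t\dot H^{-1/2}_x}\lesssim \calE\calC^n.
\]

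The principal obstacle will be in regime (ii) of the delicate errors: at each step of the iteration one acquires an operator $Z^{\eta,\mp}_{k_j,\alpha_j}$ whose $L^{M/(M-2)}_tL^{3/2}_x$ mapping norm on $(A^{<n-1}_{x,s}\cdot\xi\mp A^{<n-1}_0|\xi|)$ must be tracked with the sharp angular dependence $2^{\alpha_j}$ (coming from $\Pi^{\pm,\eta}_{\lesssim 2^{k_j}|\measuredangle|^2}$ and the Coulomb gauge), rather than losing it to a crude $\theta^{-2}$ bound; only then does the cumulative gain $2^{\sum_j\alpha_j}$ materialize. The bookkeeping is compatible with the inductive definition of $\Phi^{n,\pm,(\underline{k})}_r$ in~\eqref{equ:definition_Phi_string} because each $Z^{\eta,\mp}_{k_j,\alpha_j}$ is applied to a factor of the connection form that is independent of the string below it, so the angular and modulation calculi decouple and can be iterated. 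Once this is carried out, summation over small strings in the sums appearing in $\calE\calC^{n,[\ell]}$ closes the estimate, and the geometric $\ell$-summation completes the proof.
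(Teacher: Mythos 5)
Your overall architecture matches the paper's (split into mild and delicate errors, bound each iterate with a geometric smallness factor $(\kappa_{n-1})^{2\ell}$, and use the error control quantity for the terminating cases), but three steps as written would not close. First and most importantly, your treatment of $\text{Diff}^{[\ell]}_{1,(b)}$ claims that the tight angular cutoff plus the Coulomb gauge and the critical $S^1[m]$, $Y^1[m]$ bounds yield the $2^{-c\sigma n}$ gain by bilinear means. This is exactly the piece for which bilinear estimates do not suffice: it is a microlocal version of the non-perturbative low-high interaction $A^{j}_{low}\partial_j\phi_{high}$, and after peeling off the parts controlled by the generalized bilinear estimates and the $Z$ norm one is left with the contribution $\calH^\ast\bigl[\Pi_{\leq 2^{\sigma\min\{k,-n\}}}(\calH_k A^{<n-1}_{x,s}\cdot\xi)\,e^{-i\psi^{n,mod}_\pm}\bigr]\calR^{n,\pm,[\ell]}_r$, which admits no bilinear bound at critical regularity. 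One must re-insert the equations for $A^{<n-1}_{x,s}$ and $A^{<n-1}_0$ and exploit the cancellation between the spatial and temporal contributions (the trilinear null forms $\calQ_1,\calQ_2,\calQ_3$, estimated via the generalized bounds \eqref{equ:generalization_136_KST}--\eqref{equ:generalization_138_KST}); the tight angular cutoff only serves to convert the fractional angular gain inside those trilinear estimates into an exponential gain in $-n$ that absorbs the $2^{\delta_\ast n}$ loss from placing $\calR^{n,\pm,[\ell]}_r$ into $S^{1-\delta_\ast}_n$. The same structural point recurs in your regime (ii): the operators $Z^{\eta,\mp}_{k_j,\alpha_j}=-\Box\Delta_{\eta^\perp}^{-1}\Pi^{\mp,\eta}_{\lesssim 2^{k_j}|\measuredangle|^2}\Pi^\eta_{2^{\alpha_j}}$ are $O(1)$ in size (the $2^{-2\alpha_j}$ loss from $\Delta_{\eta^\perp}^{-1}$ is cancelled by the modulation restriction inside $\Box$), so the cumulative gain $2^{c\sum_j\alpha_j}$ does not come from a sharp mapping norm of $Z$ but again from the null structure after reinsertion of the equations, together with a combinatorial count of the angle configurations balanced against a fractional power of $(\kappa_{n-1})^{2\ell}$.

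Second, regimes (ii) and (iii) are by construction \emph{excluded} from $\calE\calC^{n,[\ell]}$ (which only sums over $\sum_j\alpha_j>-\tfrac{\sigma n}{100}$ and $\sum_j h_j>-\tfrac{\sigma n}{10}$), so they cannot be absorbed into $\calE\calC^n$ by the H\"older pairing you describe; they must be estimated directly against $\|T_n\phi^\omega[0]\|_{H^{1-\delta_\ast}_x\times H^{-\delta_\ast}_x}$, using the exponential small-angle gains, the decomposable bound $\|\chi_{h\sim a}\partial_h(\Pi_{>h}\psi^{n,mod}_\pm)\|_{DL^\infty_tL^\infty_x}\lesssim 2^{a/2}\kappa_{n-1}$ for the phase-expansion pieces, and the counting argument (which is where the balance $2^{-\sum_j\alpha_j}2^{\frac12\sum_j h_j}$ versus the constraints on $\sum\alpha_j$, $\sum h_j$ matters). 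Third, in the terminating regime (i) the factor $Z^{\eta,\mp}_{\geq 3\sigma n}(P_{\geq 3\sigma n}A^{<n-1}_{x,s}\cdot\xi\mp P_{\geq 3\sigma n}A^{<n-1}_0|\xi|)$ cannot be bounded directly from $S^1[r_a]$, $Y^1[r_a]$: $Z$ carries a $\Box$, and neither $\Box A_0^{<n-1}$ nor the decomposable $L^{1+}_tL^3_x$ bound with the frequency decay $2^{-(4-)\sigma n}$ is available from the critical norms alone. One must insert the (at least quadratic) equations for $A^{<n-1}_{x,s}$ and $A^{<n-1}_0$, which simultaneously produces the $(\kappa_{n-1})^2$ smallness and the high-frequency decay; the H\"older pairing should then be $DL^{1+}_tL^3_x$ against the square-summed $L^M_tL^6_x$ norm over caps (so that the output lands in $L^1_tL^2_x$), rather than $L^{M/(M-2)}_tL^{3/2}_x$ against $L^M_tL^6_x$, which does not produce $L^1_tL^2_x$.
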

\begin{proof}
 By construction of the rough linear evolution $\Phi^n_r$, the error is given by
 \begin{align*}
  \Box_{A^{<n-1}}^{p,mod} \Phi^n_r = \sum_{\ell=0}^\infty \calE^{n, [\ell]}_{mild} + \sum_{\ell=0}^{\infty} \calE^{n, [\ell]}_{del}.
 \end{align*}
 We treat the ``mild'' and ``delicate'' error terms separately, starting with the former. We only focus on estimating the more difficult $N_n$ norm and omit the details for the high-modulation bound.
 
 \medskip 
 
 \noindent {\bf ``Mild'' error terms:} Our goal is to show that for all $\ell \geq 0$,
 \begin{equation*}
  \bigl\| \calE^{n, [\ell]}_{mild} \bigr\|_{N_n} \lesssim (\kappa_{n-1})^{2\ell} \bigl\| T_n \phi^\omega[0] \bigr\|_{H^{1-\delta_\ast}_x \times H^{-\delta_\ast}_x}.
 \end{equation*}
 Thanks to the smallness assumptions this then gives sufficient control for all ``mild'' error terms
 \begin{align*}
  \biggl\| \sum_{\ell=0}^\infty \calE^{n, [\ell]}_{mild} \biggr\|_{N_n} \lesssim \sum_{\ell=0}^\infty (\kappa_{n-1})^{2\ell} \bigl\| T_n \phi^\omega[0] \bigr\|_{H^{1-\delta_\ast}_x \times H^{-\delta_\ast}_x} \lesssim \bigl\| T_n \phi^\omega[0] \bigr\|_{H^{1-\delta_\ast}_x \times H^{-\delta_\ast}_x}.
 \end{align*}
 We describe in detail the estimates of the ``mild'' error $\calE^{n, [0]}_{mild}$ for the zeroth iterate and afterwards explain how to deal with the higher order iterates.
 Recall from~\eqref{equ:definition_Phi_n_rough} that 
 \begin{align*}
  \calE^{n, [0]}_{mild} = \text{Diff}^{[0]}_{1,(a)} + \text{Diff}^{[0]}_{1,(b)} + \sum_{j=2}^6 \text{Diff}^{[0]}_{j}.
 \end{align*}

 \noindent {\it The estimate for $\text{Diff}^{[0]}_{1,(a)}$}:
 Here we essentially argue exactly as in the treatment of the term $\text{Diff}_1$ in~\cite[Subsection 10.2]{KST}, only that we use the redeeming $L^2_t L^\infty_x$ norm for $A_{x,r}^{<n-1}$ and place $\calR_r^{n, \pm, [0]}$ into $S_n^{1-\delta_\ast}$. Importantly, the frequency separation $k \leq (1-\gamma)n$ ensures that we will gain a negative power in $n$ that compensates for the loss of $2^{\delta_*n}$ caused by placing $\calR_r^{n, \pm, [0]} $ into $S_n^{1-\delta_\ast}$. 
 
 \medskip 
 
 \noindent {\it The estimate for $\text{Diff}^{[0]}_{1,(b)}$}:
 Next, we consider the contribution of the smooth part. For the contribution of the spatial part $A_{x,s}^{<n-1}$, we generate the error term
 \[
  \sum_{k \leq (1-\gamma) n} \sum_\pm 2 \Bigl[ \Pi_{\leq 2^{\sigma \min\{k, -n\}}} \bigl( P_k A_{x,s}^{<n-1} \cdot \xi \bigr) e^{-i \psi_\pm^{n, mod}} \Bigr]_{<n-C} \calR_r^{n, \pm, [0]}. 
 \]
This term is of course a microlocal version of the interaction term $A_{x,s}^{<n-1, j} \partial_j$ and needs to be handled in analogy to it, but obtaining an extra exponential gain in $-n$, thanks to the additional tight angular localization. This additional gain allows to compensate a loss of $2^{\delta_\ast n}$ caused by placing $\calR_r^{n, \pm, [0]}$ into $S_n^{1-\delta_\ast}$. However, this will only be possible after re-iterating the equation for $A_{x,s}^{<n-1}$ and exploiting the subtle cancellation against the corresponding temporal contribution coming from the equation for $A_{0}^{<n-1}$. Since this is a recapitulation of the estimates in~\cite{KST} with one extra observation, we shall be relatively brief: 

\noindent {\it{(i)}} 
By translation invariance of all spaces involved, we can first dispose of the frequency localization $[\ldots]_{<n}$. As in~\cite{KST}, the goal shall be to reduce the contribution of the spatial part $A_{x, s}^{<n-1}$ of the connection form to an expression of the schematic form
\[
 \sum_{k \leq (1-\gamma) n}  \mathcal{H}^\ast \big[ \Pi_{\leq 2^{\sigma \min\{k, -n\}}} \bigl( \mathcal{H}_k A_{x,s}^{<n-1} \cdot \xi \bigr) e^{-i\psi^{n,mod}_{\pm}}\big] \calR_r^{n, \pm, [0]},
\]
where $\mathcal{H}^\ast [\ldots]$ is defined analogously to~\eqref{equ:calHast_def} and is understood as an operator acting on the frequency~$\sim 2^n$ function $\calR_r^{n, \pm, [0]}$. The above expression will then be combined with the corresponding one from $A_0^{<n-1}$ to result in the desired null form type cancellation. 

\noindent {\it{(ii) Reduction to $\mathcal{H}^\ast [\ldots] \calR_r^{n, \pm, [0]}$}}:
Consider 
\begin{align*}
&\sum_{k \leq (1-\gamma) n}  (1-\mathcal{H}^\ast)\big[ \Pi_{\leq 2^{\sigma \min\{k, -n\}}} \bigl( A_{x,s}^{<n-1} \cdot \xi \bigr) e^{-i\psi^{n,mod}_{\pm}}\big] \calR_r^{n, \pm, [0]}
\\
& = \sum_{k\leq(1-\gamma)n}\sum_{j}'Q_{\geq j-C}\big[\Pi_{\leq 2^{\sigma \min\{k, -n\}}} \bigl( Q_j P_k A_{x,s}^{<n-1} \cdot \xi \bigr) e^{-i\psi^{n,mod}_{\pm}}\big] \calR_r^{n, \pm, [0]}
\\
&\quad +  \sum_{k\leq(1-\gamma)n}\sum_{j}'\big[\Pi_{\leq 2^{\sigma \min\{k, -n\}}} \bigl( Q_jP_kA_{x,s}^{<n-1} \cdot \xi \bigr) Q_{\geq j-C} e^{-i\psi^{n,mod}_{\pm}}\big] \calR_r^{n, \pm, [0]},
\end{align*}
where the inner sum is over $\frac{j-k}{2}<-\sigma n$. For the first term on the right, we place the whole expression into $X_1^{0,-\frac12}$. For this we place $\Pi_{\leq 2^{\sigma \min\{k, -n\}}} Q_jP_kA_{x,s}^{<n-1}$ into $DL_t^2 L_x^\infty$, while $e^{-i\psi^{n,mod}_{\pm}}\calR_r^{n, \pm, [0]}$ gets placed into $L_t^\infty L_x^2$. Due the the null structure we gain $2^{\frac{j-k}{4}}\leq 2^{-\frac{\sigma n}{2}}$, which is enough.  
The second term on the right is handled similarly by placing the full expression into $L_t^1 L_x^2$ and the function $Q_{\geq j-C} e^{-i\psi^{n,mod}_{\pm}} \calR_r^{n, \pm, [0]}$ into~$L^2_t L^2_x$. 

\noindent {\it{(iii) Reduction to $\mathcal{H}^\ast \big[\Pi_{\leq 2^{\sigma \min\{k, -n\}}} \mathcal{H}A_{x,s}^{<n-1}\ldots\big] \calR_r^{n, \pm, [0]}$}}:
Consider the schematic expression 
\[
 \sum_{k\leq(1-\gamma)n} \mathcal{H}^* \big[\Pi_{\leq 2^{\sigma \min\{k, -n\}}}(1-\mathcal{H}_k) \bigl( A_{x,s}^{<n-1}\cdot\xi \bigr) e^{-i\psi^{n,mod}_{\pm}}\big] \calR_r^{n, \pm, [0]}.
\]
Iterating the equation, write this term as
\[
 \sum_{k\leq(1-\gamma)n} \sum_{j}' e^{i\psi^{n,mod}_{\pm}}\mathcal{H}^*\big[ \Pi_{\leq 2^{\sigma \min\{k, -n\}}} P_kQ_j\Box^{-1}\mathcal{N}(Q_{\geq j-C}\phi_1^{<n-1}, \phi_2^{<n-1})\cdot\xi e^{-i\psi^{n,mod}_{\pm}}\big] \calR_r^{n, \pm, [0]},
\]
where $\mathcal{N}$ denotes the null form which appears in the equation for $A_{x,s}^{<n-1}$. Then place the null form into $L_t^1 L_x^{\frac32}$ and pass from here to $L_t^1 L_x^\infty$ via Bernstein's inequality. Keeping track of the outer null form coming from the inner product with $\xi$, we gain a total of 
\[
2^{-j-k}\cdot 2^{\frac{j-k}{2}}\cdot 2^{3\cdot\frac23\cdot\frac{j-k}{2}} = 2^{\frac{j-k}{2}}\cdot 2^{-2k}, 
\]
and we can use the first factor to gain $2^{-\frac{\sigma}{2}n}$, as desired.

\noindent {\it{(iv) Dealing with the reduced term $\mathcal{H}^\ast \big[ \Pi_{\leq 2^{\sigma \min\{k, -n\}}} \mathcal{H}A_{x,s}^{<n-1}\ldots\big] \calR_r^{n, \pm, [0]}$}}:
This requires combination with the corresponding contribution from $A_0^{<n-1}$, which can be similarly reduced, and we omit the details for that. In order to deal with the combined term the idea is to argue precisely as in~\cite{KST}, using the generalized versions~\eqref{equ:generalization_136_KST}--\eqref{equ:generalization_138_KST} of the core trilinear null form estimates (136)--(138) from \cite[Theorem 12.1]{KST}, except that here the structure is slightly different due to the microlocal formulation. To get rid of this obstacle, we first observe that the multiplication with $\xi$ can be replaced by $\partial_x$, and similarly for the contribution from $A_0$ where multiplication with $|\xi|$ gets replaced by $\partial_t$. Moreover, localizing $\mathcal{H}A_{x,s}^{<n-1}$ to $P_k Q_j\mathcal{H}A_{x,s}^{<n-1}$ as in the proof of Theorem 12.1 in~\cite{KST}, the angular cut-off $\Pi_{\leq 2^{\sigma \min\{k, -n\}}}$ is smooth at the angular scale $2^{\frac{j-k}{2}}<2^{-\sigma n}$. Thus, we can expand the cut-off into a discrete Fourier series and decouple $P_k Q_j\mathcal{H}A_{x,s}^{<n-1}$ from the rest of the expression, i.e. we are formally allowed to replace the original expression 
\[
 \mathcal{H}^*\big[\Pi_{\leq 2^{\sigma \min\{k, -n\}}} P_kQ_j\mathcal{H}\bigl( A_{x,s}^{<n-1} \cdot \xi \bigr) e^{-i\psi^{n,mod}_{\pm}}\big]\calR_r^{n, \pm, [0]}
\]
by 
\[
 \mathcal{H}^*\big(P_kQ_j\mathcal{H}A_{x,s}^{<n-1} \cdot \partial_x \big(e^{-i\psi^{n,mod}_{\pm}} \calR_r^{n, \pm, [0]}\big)\big).
\]
But this term, combined with its analogue coming from $A_0^{<n-1}$, is now amenable to the generalized trilinear null form estimates~\eqref{equ:generalization_136_KST}--\eqref{equ:generalization_138_KST}. Moreover, we have $2^{\frac{j-k}{2}}<2^{-\sigma n}$, whence the fact that all estimates in the proofs of~\eqref{equ:generalization_136_KST}--\eqref{equ:generalization_138_KST} gain a small power of $j-k$ translates to an exponential gain in $-n$. This concludes the estimate for the term $\text{Diff}_{1, (b)}^{[0]}$.

\medskip 

\noindent {\it The estimate for $\text{Diff}^{[0]}_{j}$, $2 \leq j \leq 6$}. In all these terms there is an extra derivative falling on the low frequency term $\psi_{\pm}^{n,mod}$, and so the analogous estimates as in~\cite[Theorem 12.1]{KST} furnish an exponential gain in $-n$ due to the separation of the frequency support of $\psi_{\pm}^{n,mod}$ from $n$. 
 
\medskip 

Since by the mapping properties of the renormalization operators we have 
\begin{equation*}
 \|\calR_r^{n, \pm, [0]}\|_{S_n^{1-\delta_\ast}} \lesssim \bigl\| T_n \phi^\omega[0] \bigr\|_{H^{1-\delta_\ast}_x \times H^{-\delta_\ast}_x},
\end{equation*}
we obtain from the above that
\begin{equation*}
 \bigl\| \calE^{n, [0]}_{mild} \bigr\|_{N_n} \lesssim \bigl\| T_n \phi^\omega[0] \bigr\|_{H^{1-\delta_\ast}_x \times H^{-\delta_\ast}_x}.
\end{equation*}

The treatment of the ``mild'' error terms $\calE^{n, [\ell]}_{mild}$ generated by all higher iterates $\ell \geq 1$ proceeds analogously, placing $\calR_r^{n, \pm, [\ell]}$ into $S^{1-\delta_\ast}_n$. Then we gain additional smallness from the bound 
\begin{equation*}
 \bigl\| \calR_r^{n, \pm, [\ell]} \bigr\|_{S_n^{1-\delta_\ast}} \lesssim (\kappa_{n-1})^{2\ell} \bigl\| T_n \phi^\omega[0] \bigr\|_{H^{1-\delta_\ast}_x \times H^{-\delta_\ast}_x}, \quad \ell \geq 1,
\end{equation*}
which follows from the mapping properties of the renormalization operators and the definition of $\calE^{[\ell-1]}_{rough}$ (note that each magnetic potential term $Z_{k}^{\eta, \mp} \bigl( P_{k} A_{x, s}^{<n-1} \cdot \xi \mp P_{k} A_0^{<n-1} |\xi| \bigr)$ produces a smallness factor $(\kappa_{n-1})^2$, because to estimate these terms we insert the equations for $A_{x,s}^{<n-1}$ and $A_0^{<n-1}$, which are at least quadratic in the unknowns).
We remark that the additional error term $\text{Diff}^{[0]}_{7}$ that arises in the case of the inhomogeneous parametrix gains regularity from the difference to the error from the previous stage (see also the proof of the data error estimate in Proposition~\ref{prop:prob_data_error}).
 
\medskip 
 
\noindent {\bf ``Delicate'' error terms:}
Let $\ell \geq 0$. Recall that
\begin{equation}
 \calE^{n,[\ell]}_{del} = \text{Diff}^{[\ell]}_{1, rough} - \calE^{n, [\ell]}_{rough}
\end{equation} 
with
\begin{equation}
 \calE^{n, [\ell]}_{rough} := \sum_{\sum_j h_j \geq -\frac{\sigma n}{10}} \sum_{\sum_j \alpha_j \geq -\frac{\sigma n}{100} } \sum_{\substack{\text{small strings } (\underline{k}) \\ \text{of length } \ell}}    \Psi^{n, (\underline{k}), (\underline{\alpha}), (\underline{h})}_r.
\end{equation}
In order to estimate all ``delicate'' error terms we first dispose of the ``small angle'' cases $\sum_j \alpha_j \leq -\frac{\sigma n}{100}$ and $\sum_j h_j \leq -\frac{\sigma n}{10}$. Then we consider terminating situations, where (at least) one of the frequencies $k_{3\ell+3}$, $k_{3\ell+2}$, or $k_{3\ell+1}$ is greater than $\geq 3 \sigma n$. 
Once we have taken care of all these smoother parts of $\text{Diff}^{[\ell]}_{1, rough}$, we are left exactly with $\calE^{n,[\ell]}_{rough}$ and we are done.

\medskip 

We begin with the small angle case $\sum_j \alpha_j \leq -\frac{\sigma n}{100}$. We split $\text{Diff}_{1,rough}^{[\ell]}$ into two parts
\begin{align*}
 \sum_{\substack{k_i \leq (1-\gamma)n \\ 3\ell+1 \leq i \leq 3\ell+3}} \sum_{\substack{\text{small strings } (\underline{k}') \\ \text{of length } \ell-1}} \Psi^{n, (\underline{k})}_r &= \sum_{\sum_j \alpha_j \leq - \frac{\sigma n}{100} } \sum_{\substack{k_i \leq (1-\gamma)n \\ 3\ell+1 \leq i \leq 3\ell+3}} \sum_{\substack{\text{small strings } (\underline{k}') \\ \text{of length } \ell-1}} \Psi^{n, (\underline{k}), (\underline{\alpha})}_r \\
 &\qquad + \sum_{\sum_j \alpha_j > - \frac{\sigma n}{100} } \sum_{\substack{k_i \leq (1-\gamma)n \\ 3\ell+1 \leq i \leq 3\ell+3}} \sum_{\substack{\text{small strings } (\underline{k}') \\ \text{of length } \ell-1}}  \Psi^{n, (\underline{k}), (\underline{\alpha})}_r.
\end{align*}
For the small angle case we infer the following bound 
\begin{equation*}
 \biggl\| \sum_{\sum_j \alpha_j \leq - \frac{\sigma n}{100} } \sum_{\substack{k_i \leq (1-\gamma)n \\ 3\ell+1 \leq i \leq 3\ell+3}} \sum_{\substack{\text{small strings } (\underline{k}') \\ \text{of length } \ell-1}}  \Psi^{n, (\underline{k}), (\underline{\alpha})}_r \biggr\|_{N_n} \lesssim 2^{-\frac{\sigma n}{1000}} (\kappa_{n-1})^{\frac32 \ell} 2^{+\delta_\ast n} \bigl\| T_n \phi^\omega[0] \bigr\|_{H^{1-\delta_\ast}_x \times H^{-\delta_\ast}_x}, 
\end{equation*}
whence this term is in the smooth source space.
To see this, observe that we get an exponential gain from the small angles at each stage, gaining $2^{\frac14\sum_j\alpha_j}$ (by proceeding as in the treatment of the milder error term $\text{Diff}^{[0]}_{1, (b)}$ above). Moreover, at each stage we gain a power $(\kappa_{n-1})^{2}$, resulting in an overall smallness gain of $(\kappa_{n-1})^{2\ell}$. Now fixing the (integer) value of $\frac{\sigma n}{100} \leq \sum_j |\alpha_j| \leq \ell \sigma n$ and summing over all possible combinations of angles costs 
\[
\leq \left(\begin{array}{c}\sum_j |\alpha_j| + \ell \\ \ell \end{array}\right)\leq \left(\begin{array}{c}2 \sum_j |\alpha_j|\\ \ell \end{array}\right)\leq \frac{\big(2\sum_j|\alpha_j|\big)^\ell}{\ell!}.
\]
Combining this with a fractional power $(\kappa^{(n-1)})^{\frac{1}{4} \ell}$ of the overall smallness gain results in 
\[
\frac{\big(2\sum|\alpha_j|\big)^\ell}{\ell!}\cdot (\kappa_{n-1})^{\frac{1}{4}\ell}\leq e^{2 (\kappa_{n-1})^{\frac14} \sum_j |\alpha_j|} \lesssim 2^{C (\kappa_{n-1})^{\frac14} \sum_j |\alpha_j|}.
\]
Thus, the total effect of combining the gain $2^{\frac14\sum_j\alpha_j}$ and the smallness gain $(\kappa_{n-1})^{2\ell}$ with the loss due to counting all possible combinations and due to summing over $\frac{\sigma n}{100} \leq \sum_j |\alpha_j| \leq \ell \sigma n$ is bounded by
\[
 \ell \sigma n \, 2^{-\frac14 \sum_j |\alpha_j|}  2^{C (\kappa_{n-1})^{\frac14} \sum_j |\alpha_j|} (\kappa_{n-1})^{\frac74 \ell} 2^{+\delta_\ast n} \bigl\| T_n \phi^\omega[0] \bigr\|_{H^{1-\delta_\ast}_x \times H^{-\delta_\ast}_x} \lesssim 2^{-\frac{\sigma n}{1000}} (\kappa_{n-1})^{\frac32 \ell} 2^{+\delta_\ast n} \bigl\| T_n \phi^\omega[0] \bigr\|_{H^{1-\delta_\ast}_x \times H^{-\delta_\ast}_x}.
\]
It follows that we can reduce to considering 
\begin{equation*}
 \sum_{\sum_j \alpha_j > - \frac{\sigma n}{100} } \sum_{\substack{k_i \leq (1-\gamma)n \\ 3\ell+1 \leq i \leq 3\ell+3}} \sum_{\substack{\text{small strings } (\underline{k}') \\ \text{of length } \ell-1}}  \Psi^{n, (\underline{k}), (\underline{\alpha})}_r.
\end{equation*}

Next, we argue that we can further dispose of the error terms
\begin{equation*}
 \sum_{\sum_j h_j < -\frac{\sigma n}{10}} \sum_{\sum_j \alpha_j > - \frac{\sigma n}{100} } \sum_{\substack{k_i \leq (1-\gamma)n \\ 3\ell+1 \leq i \leq 3\ell+3}} \sum_{\substack{\text{small strings } (\underline{k}') \\ \text{of length } \ell-1}}  \Psi^{n, (\underline{k}), (\underline{\alpha}), (\underline{h})}_r,
\end{equation*}
where $\Psi^{n, (\underline{k}), (\underline{\alpha}), (\underline{h})}_r$ stands for the iterated integral expression when $m$ instances of the exponentials $e^{\pm i\psi^{n,mod}_{\pm}}$ are replaced by the integral expressions in the decomposition
\begin{align*}
 e^{\pm i\psi^{n,mod}_{\pm}} &= e^{\pm i\Pi_{>-10}\psi^{n,mod}_{\pm}} +  \bigl( e^{\pm i\psi^{n,mod}_{\pm}} - e^{\pm i\Pi_{>-10}\psi^{n,mod}_{\pm}} \bigr) \\
 &= e^{\pm i\Pi_{>-10}\psi^{n,mod}_{\pm}}  \mp \int_{-\sigma n}^{-10} i\frac{\partial}{\partial h}\big(\Pi_{>h}\psi^{n,mod}_{\pm}\big)e^{\pm i\Pi_{>h}\psi^{n,mod}_{\pm}}\,dh.
 \end{align*}
We recall that $\Pi_{>a}$ denotes a smooth cutoff localizing the angular separation of the Fourier support to direction $\eta := \frac{\xi}{|\xi|}$ to an angle $\gtrsim 2^a$.
 To see this, observe that for $a\leq -10$ the integral
 \[
  \int_{-\sigma n}^{-10} \chi_{h\sim a}\frac{\partial}{\partial h}\big(\Pi_{>h}\psi^{n,mod}_{\pm}\big)e^{\pm i\Pi_{>h}\psi^{n,mod}_{\pm}}\,dh
 \]
 defines a map $L_x^2 \longrightarrow L_x^2$ with norm $\lesssim 2^{\frac{a}{2}} \kappa_{n-1}$. This is a consequence of the bound 
 \[
  \Bigl\| \chi_{h\sim a}\frac{\partial}{\partial h}\big(\Pi_{>h}\psi^{n,mod}_{\pm}\big) \Bigr\|_{DL_t^\infty L_x^\infty} \lesssim 2^{\frac{a}{2}} \kappa_{n-1}.
 \]
 We can then iteratively bound the $L^1_t L^2_x$ norm of the contribution to
 \begin{equation*}
  \sum_{\sum_j \alpha_j > - \frac{\sigma n}{100} } \sum_{\substack{k_i \leq (1-\gamma)n \\ 3\ell+1 \leq i \leq 3\ell+3}} \sum_{\substack{\text{small strings } (\underline{k}') \\ \text{of length } \ell-1}}  \Psi^{n, (\underline{k}), (\underline{\alpha}), (\underline{h})}_r,
 \end{equation*}
 where $m$ exponentials $e^{\pm i\psi^{n,mod}_{\pm}}$ are replaced by the integral expression above,
 by invoking the schematic bound 
 \begin{align*}
  &\biggl\|\sum_{k_{3\ell+3} < (1-\gamma)n} Z^{\eta, \mp}_{k_{3\ell+3}, \alpha_\ell} \bigl( P_{k_{3\ell+3}} A_{x, s}^{<n-1} \cdot \xi \mp P_{k_{3\ell+3}} A_0^{<n-1} |\xi| \bigr) \Phi^{n,\pm,(\underline{k}), (\underline{\alpha})}_r \biggr\|_{L_t^1 L_x^2} \\
  &\lesssim \sum_{k_{3\ell+3} < (1-\gamma)n} \sup_\kappa \, \Bigl\| \Pi_\kappa Z^{\eta, \mp}_{k_{3\ell+3}, \alpha_\ell} \Bigl( P_{k_{3\ell+3}} A_{x, s}^{<n-1} \cdot \frac{\xi}{|\xi|} \mp P_{k_{3\ell+3}} A_0^{<n-1} \Bigr) \Bigr\|_{D L^1_t L^\infty_x} \bigl\| \nabla_x \Phi^{n,\pm,(\underline{k}), (\underline{\alpha})}_r \bigr\|_{L_t^\infty L_x^2} \\
  &\lesssim 2^{-\alpha_\ell}(\kappa_{n-1})^2 \bigl\| \nabla_x \Phi^{n,\pm,(\underline{k}), (\underline{\alpha})}_r \bigr\|_{L_t^\infty L_x^2}
 \end{align*}
 in conjunction with schematic bounds of the form
 \begin{align*}
  \biggl\| \nabla_x \biggl( \int_{-\sigma n}^{-10} \chi_{h\sim a}\frac{\partial}{\partial h} \big(\Pi_{>h}\psi^{n,mod}_{\pm}\big) e^{- i\Pi_{>h}\psi^{n,mod}_{\pm}} \, \ud h \biggr) \frac{K^{\pm}}{i|D|} e^{+ i\psi^{n,mod}_{\pm}} H \biggr\|_{L_t^\infty L_x^2}
  \lesssim  2^{\frac{a}{2}} \kappa_{n-1}\big\|H\big\|_{L_t^1 L_x^2}. 
 \end{align*}
 It follows that the contribution to 
 \begin{equation*}
  \sum_{\sum_j \alpha_j > - \frac{\sigma n}{100} } \sum_{\substack{k_i \leq (1-\gamma)n \\ 3\ell+1 \leq i \leq 3\ell+3}} \sum_{\substack{\text{small strings } (\underline{k}') \\ \text{of length } \ell-1}}  \Psi^{n, (\underline{k}), (\underline{\alpha}), (\underline{h})}_r
 \end{equation*}
 coming from those terms in the iterated expansion, where $m$ exponentials $e^{\pm i\psi^{n,mod}_{\pm}}$ are replaced by the integral expression above and where we impose the additional constraint 
 \[
 \sum_j h_j \leq - \frac{\sigma n}{10}
 \]
 for the integration variables, can be bounded with respect to $\|\cdot\|_{L_t^1 L_x^2}$ by 
 \begin{align*}
 2^{-\sum_j \alpha_j} 2^{\frac12 \sum_j h_j}  (\kappa_{n-1})^{2\ell} 2^{\delta_* n} \bigl\| T_n \phi^\omega[0] \bigr\|_{H^{1-\delta_\ast}_x \times H^{-\delta_\ast}_x} &\lesssim 2^{-(\frac{1}{40}-\frac{1}{100})\sigma n} (\kappa_{n-1})^{2\ell} 2^{\delta_* n} \bigl\| T_n \phi^\omega[0] \bigr\|_{H^{1-\delta_\ast}_x \times H^{-\delta_\ast}_x}.
 \end{align*}
 This easily allows us to place this contribution into the smooth source space, even after summation over all possible $\alpha_j$ as well as $m \leq 2 \ell$. 
 
It now follows that we may assume for the angular scales $h_j$ occurring in the phases in the exponentials $e^{\pm i\psi^{n,mod}_{\pm}}$ the additional constraint 
\[
\sum_j h_j > -\frac{\sigma n}{10}
\]
and we can henceforth omit the effect of the singular operator $\Delta_{\eta^{\perp}}^{-1}$ due to the angular degeneracy in the phases $\psi^{n,mod}_{\pm}$ up to paying a factor $2^{\frac{\sigma n}{10}}$ at the end. This is analogous to the restriction $\sum_j \alpha_j > -\frac{\sigma n}{100}$ that we impose on the angles occurring in the definition of the magnetic potential terms 
\[
 Z^{\eta, \mp}_{k, \alpha_j} \bigl( P_k A_{x, s}^{<n-1} \cdot \xi \mp P_k A_0^{<n-1} |\xi| \bigr). 
\]
We shall henceforth suppress these angular losses, and replace all operators $\Delta_{\eta^{\perp}}^{-1}$ by $\Delta^{-1}$, it being understood that at the end of the day we always have to have enough margin to absorb a loss of $2^{\frac{\sigma}{10}n}\cdot 2^{\frac{\sigma}{100}n}$.

\medskip 

At this point we are left to consider ``terminating situations'' where at least one of the frequencies $k_{3\ell+3}$, $k_{3\ell+2}$, or $k_{3\ell+1}$ is greater than $\geq 3\sigma n$. 
We describe in detail how to treat a ``delicate'' error term where the frequencies $k_{3\ell+3}$ are greater than $3 \sigma n$, noting that all other ``delicate'' error terms can be treated analogously. This error term is of the schematic form
\begin{align*}
 \sum_{\substack{\text{small strings } (\underline{k}) \\ \text{of length } \ell}} Z_{\geq 3 \sigma n}^{\eta, \mp} \bigl( P_{\geq 3 \sigma n} A_{x, s}^{<n-1} \cdot \xi \mp P_{\geq 3 \sigma n} A_0^{<n-1} |\xi| \bigr) \Phi^{n, \pm, (\underline{k})}_r,
\end{align*}
where we suppress the explicit notations $(\underline{\alpha})$ and $(\underline{h})$ for the angular restrictions, and where we recall that then
\begin{align*}
 \Phi^{n, \pm, (\underline{k})}_r := P_{k_{3\ell+2}} \bigl( e^{-i \psi_\pm^{n, mod}} \bigr)(t,x,D) \frac{K^{\pm}}{i|D|} P_{k_{3\ell+1}} \bigl( e^{+i \psi_{\pm}^{n, mod}} \bigr)(D,y,s) \Psi^{n, (\underline{k}')}_r.
\end{align*}
In order to show that this error term gains regularity and can be treated as a smooth source term, we bring in the crucial redeeming ``error control'' quantity $\calE \calC^{n, [\ell]}$. Suppressing the angular localizations, it reads
\begin{equation*}
 \calE \calC^{n, [\ell]} = 2^{-\sigma n} 2^{-\delta_\ast n} 2^{-\nu n} \sum_{\substack{\text{small strings } (\underline{k}) \\ \text{of length } \ell}} 2^{-\frac{r_a}{3}} \bigl\| \nabla_{t,x} \Phi^{n, \pm, (\underline{k})}_r \bigr\|_{L^M_t L^6_x},
\end{equation*}
where $M \gg 1$ is sufficiently large, $0 < \nu \equiv \nu(M) \ll 1$ with $\lim_{M\to\infty} \nu(M) = 0$, and for each small string $(\underline{k})$ of length $\ell$ we denote by $r_a$ its largest dominating frequency. 
Then we claim the following ``terminating bound''
\begin{equation} \label{equ:error_terminating_situation_estimate}
 \biggl\| \sum_{\substack{\text{small strings } (\underline{k}) \\ \text{of length } \ell}} Z_{\geq 3 \sigma n}^{\eta, \mp} \bigl( P_{\geq 3 \sigma n} A_{x, s}^{<n-1} \cdot \xi \mp P_{\geq 3 \sigma n} A_0^{<n-1} |\xi| \bigr) \Phi^{n, \pm, (\underline{k})}_r \biggr\|_{L^1_t L^2_x} \lesssim 2^{-(3-) \sigma n} 2^{\sigma n} 2^{\delta_\ast n} 2^{\nu n} \calE \calC^{n, [\ell]},
\end{equation}
with analogous bounds for all other ``terminating situations''.
Since $\sigma \gg \nu + \delta_\ast$ we have ample room to ensure the margin required to handle the losses arising from the angular degeneracies that we have suppressed. 
It follows that we can bound all ``delicate'' error terms accrued in the course of all inductive stages by
\begin{align*}
 \biggl\| \, \sum_{\ell=0}^\infty \calE^{n, [\ell]}_{del} \biggr\|_{L^1_t L^2_x} \lesssim \sum_{\ell =0}^\infty \calE \calC^{n, [\ell]} = \calE \calC^n.
\end{align*}

It remains to prove~\eqref{equ:error_terminating_situation_estimate}.
To see this we bound schematically 
\begin{align*}
 &\bigl\| Z_{\geq 3 \sigma n}^{\eta, \mp} \bigl( P_{\geq 3 \sigma n} A_{x, s}^{<n-1} \cdot \xi \mp P_{\geq 3 \sigma n} A_0^{<n-1} |\xi| \bigr) \Phi^{n, \pm, (\underline{k})}_r \bigr\|_{L^1_t L^2_x} \\
 &\lesssim \sup_\kappa \, \Bigl\| \Pi_\kappa Z_{\geq 3 \sigma n}^{\eta, \mp} \bigl( P_{\geq 3 \sigma n} A_{x, s}^{<n-1} \cdot \frac{\xi}{|\xi|} \mp P_{\geq 3 \sigma n} A_0^{<n-1} \bigr) \Bigr\|_{D L^{1+}_t L^3_x} \biggl( \sum_\kappa \, \bigl\| \Pi_\kappa \nabla_x \Phi^{n, \pm, (\underline{k})}_r \bigr\|_{L^M_t L^6_x}^2 \biggr)^{\frac12}.
\end{align*}
We show below that we can estimate
\begin{equation} \label{equ:error_estimate_Z_term_high_D}
 \sup_\kappa \, \Bigl\| \Pi_\kappa Z_{\geq 3 \sigma n}^{\eta, \mp} \bigl( P_{\geq 3 \sigma n} A_{x, s}^{<n-1} \cdot \frac{\xi}{|\xi|} \mp P_{\geq 3 \sigma n} A_0^{<n-1} \bigr) \Bigr\|_{D L^{1+}_t L^3_x} \lesssim 2^{-(4-) \sigma n} (\kappa_{n-1})^2,
\end{equation}
where we suppressed any losses arising from angular degeneracies. Then taking advantage of the probabilistic error control quantity $\calE \calC^{n, [\ell]}$ and suppressing any losses due to summations over angular caps, we obtain the desired bound 
\begin{align*}
 &\biggl\| \sum_{\substack{\text{small strings } (\underline{k}) \\ \text{of length } \ell}} Z_{\geq 3 \sigma n}^{\eta, \mp} \bigl( P_{\geq 3 \sigma n} A_{x, s}^{<n-1} \cdot \xi \mp P_{\geq 3 \sigma n} A_0^{<n-1} |\xi| \bigr) \Phi^{n, \pm, (\underline{k})}_r \biggr\|_{L^1_t L^2_x} \\
 &\lesssim 2^{-(4-) \sigma n} (\kappa_{n-1})^2 2^{\sigma n} \sum_{\substack{\text{small strings } (\underline{k}) \\ \text{of length } \ell}} 2^{-\frac{r_a}{3}} \bigl\| \nabla_{t,x} \Phi^{n, \pm, (\underline{k})}_r \bigr\|_{L^M_t L^6_x} \\
 &\lesssim 2^{-(3-)\sigma n} 2^{\sigma n} 2^{\delta_\ast n} 2^{\nu n} \calE \calC^{n, [\ell]}.
\end{align*}
In order to derive the estimate~\eqref{equ:error_estimate_Z_term_high_D}, we insert the equations for $A_{x,s}^{<n-1}$ and $A_0^{<n-1}$. 
Here we have very schematically that (ignoring angular localizations and replacing $\Delta_{\eta^\perp}^{-1}$ by $\Delta^{-1}$)
\begin{align*}
 &\Pi_\kappa Z_{\geq 3 \sigma n}^{\eta, \mp} P_{\geq 3 \sigma n} A_{j, s}^{<n-1} \cdot \frac{\xi}{|\xi|} \\
 &\qquad \simeq - \Delta^{-1} P_{\geq 3\sigma n} \Im \, \partial^k \Delta^{-1} \calN_{kj} \bigl( \phi^{<n-1}, \overline{\phi^{<n-1}} \bigr) \cdot \frac{\xi}{|\xi|} - \Delta^{-1} P_{\geq 3 \sigma n} \bigl( (\phi^{<n-1})^2 A^{<n-1} \bigr) \cdot \frac{\xi}{|\xi|}. 
\end{align*}
In the key quadratic term we can effectively ignore the $high \times high \to low$ case, then we obtain by just placing the inputs into $L^2_t L^6_x$ and $L^{2+}_t L^6_x$ that schematically \footnote{Strictly speaking, this bound is only valid provided the input frequencies in the null-form are at most comparable to the output frequencies. However, in case of $high \times high \to low$ situations with frequency differences $\geq \frac{\sigma}{100}n$, one can again easily place the corresponding contributions into the smooth space, and one reduces to strings with only small frequency differences by reasoning as for the angles $\alpha_j$.} 
\begin{equation*}
 \bigl\| \Delta^{-1} P_{\geq 3\sigma n} \Im \, \partial^k \Delta^{-1} \calN_{kj} \bigl( \phi^{<n-1}, \overline{\phi^{<n-1}} \bigr) \cdot \frac{\xi}{|\xi|} \bigr\|_{D L^{1+}_t L^3_x} \lesssim 2^{-(4-) \sigma n} (\kappa_{n-1})^2 
\end{equation*}
with analogous bounds for the cubic contribution to the equation for $A_x^{<n-1}$ and for the equation for $A_0^{<n-1}$.
\end{proof}

Finally, we turn to the proof of the derivation of moment bounds for the redeeming energy control quantity, for which we need the following technical refinement of certain decomposable bounds from~\cite{KST}.

\begin{lemma} \label{lem:decomprefinement} 
Let $0 < k \leq n-C$ be a positive integer or else $k$ equals the symbol $\leq 0$. Then the operators 
\[
 P_k \big(e^{\pm i\psi^{n,mod}_{\pm}}\big): S_n^{\sharp}\longrightarrow S^{Str},
\]
with mapping norms bounded by constants $a_k^{(n)}$ satisfy
\[
 \sum_k a_k^{(n)}\lesssim 1+\kappa_{n-1}.
\]
Moreover, we have the bound 
\[
 \big\|P_k\big(e^{\pm i\Pi_{\gtrsim h}\psi^{n,mod}_{\pm}}\big)\big\|_{DL_t^\infty L_x^2}\lesssim 2^{-2k} 2^{-h} a^{(n)}_k,
\]
where $\Pi_{\gtrsim h}$ localizes the scale of the angle between $\frac{\xi}{|\xi|}$ and the Fourier support of the phase to size $\gtrsim 2^{h}$, with say $h <-10$.
\end{lemma}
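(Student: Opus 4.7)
The plan is to follow the overall strategy used in the parametrix construction of \cite{KST} and in the earlier parts of this section for analogous renormalization operators, but with careful bookkeeping of the dyadic frequency block and the angular cutoff. First I would expand the exponential multiplicatively along dyadic frequency blocks of the phase, writing schematically $e^{\pm i\psi^{n,mod}_{\pm}} = \prod_{j\le n-C} e^{\pm i \psi^{n,mod}_{\pm,j}}$, and isolate the dominant contribution to $P_k$. For positive $k \le n-C$, only the linear term $\pm i\psi^{n,mod}_{\pm,k}$ in the Taylor expansion of the $j=k$ factor produces output of spatial $(t,x)$-frequency $\sim 2^k$ to leading order; the lower-frequency factors $e^{\pm i\psi^{n,mod}_{\pm,<k-C}}$ are essentially transported through, while the higher-order Taylor terms and cross contributions across distinct dyadic blocks are absorbed using the uniform bound $\|\psi^{n,mod}_{\pm,j}\|_{L^\infty_{t,x}} \lesssim \varepsilon$ from Lemma~\ref{lem:prob_phase_function_Linfty_bounds}. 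For the symbol ``$k$ equals $\le 0$'' I would keep the full low-frequency exponential, whose leading contribution is the identity and which is therefore mapped to $S^{Str}$ by the classical Strichartz estimates attached to $S_n^{\sharp}$.

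For the first bound, I would combine the decomposability lemma (Lemma~\ref{lem:decomposability_lemma_KST}) with the decomposable estimates of Lemma~\ref{lem:prob_decomposable_est}. For each admissible Strichartz pair $(q,r)$ in the definition of $S^{Str}$, the operator $P_k(e^{\pm i\psi^{n,mod}_{\pm}})(t,x,D)$ composed with a free wave evolution is handled by Lemma~\ref{lem:decomposability_lemma_KST} with a companion Strichartz pair $(q_2, 2)$ for $u \in S_n^\sharp$ and the symbol norm in $DL^{q_1}_t L^{r_1}_x$. The resulting mapping norm $a_k^{(n)}$ is then controlled by the decomposable norms of $\psi^{n,mod}_{\pm, k}$ from Lemma~\ref{lem:prob_decomposable_est}, plus lower-order commutator contributions. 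Summing the dyadic decomposable estimates across $k$ and across the spatial, temporal, smooth and rough components of $A^{<n-1}$ entering the phase yields $\sum_{k>0} a_k^{(n)} \lesssim \kappa_{n-1}$, while the $k \le 0$ piece gives the universal $1$, for a total $\sum_k a_k^{(n)} \lesssim 1 + \kappa_{n-1}$.

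For the $DL^\infty_t L^2_x$ bound in the second statement, the leading order contribution to $P_k(e^{\pm i \Pi_{\gtrsim h}\psi^{n,mod}_{\pm}})$ is $\pm i P_k(\Pi_{\gtrsim h}\psi^{n,mod}_{\pm})$ multiplied by the transported low-frequency exponential. The key estimate is
\[
 \bigl\|\psi^{n,mod}_{\pm,k,(\theta)}\bigr\|_{DL^\infty_t L^2_x} \lesssim \theta^{-1} \cdot 2^{-2k} \, a_k^{(n)}
\]
for each dyadic angular scale $\theta$. This follows because $L^\eta_\pm \Delta_{\eta^\perp}^{-1}$ contributes a factor $\theta^{-2} 2^{-k}$ at frequency $\sim 2^k$ on an angular sector of size $\theta$, the Coulomb gauge supplies an extra factor of $\theta$ on the spatial contribution (with the analogous cancellation on the temporal side via the elliptic equation for $A^{<n-1}_0$), and the remaining $P_k A^{<n-1}_x \cdot \eta$ or $P_k A^{<n-1}_0$ is placed into $L^\infty_t L^2_x$ at the cost of a factor $2^{-k}$ coming from the $S^1$, respectively $Y^1$, normalization. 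Summing over dyadic $\theta \in [2^h, 1]$ produces the factor $2^{-h}$ in the claimed bound.

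The main obstacle I anticipate is the rigorous control of the multiplicative expansion of the exponential into dyadic frequency factors, ensuring that higher-order Taylor terms and cross-terms between distinct dyadic blocks do not accumulate past the claimed summability. This is where the uniform smallness $\|\psi^{n,mod}_{\pm, j}\|_{L^\infty_{t,x}} \lesssim \varepsilon$ from Lemma~\ref{lem:prob_phase_function_Linfty_bounds} is essential: each factor $e^{\pm i\psi^{n,mod}_{\pm, j}}$ is close to the identity on $L^2_x$ with operator-norm error $\lesssim \varepsilon$, so the formal combinatorial explosion of cross terms is compensated by geometric smallness, exactly as in the proofs of the analogous mapping properties in Sections~10--11 of \cite{KST} and in Proposition~\ref{prop:prob_renormalization_mapping_properties} above. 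The same principle, together with the additional angular factor $\theta$ at each order, ensures that the linear term in the Taylor expansion captures the leading behavior for the second bound.
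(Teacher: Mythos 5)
There is a genuine gap at the heart of your argument: the multiplicative/Taylor expansion $e^{\pm i\psi^{n,mod}_{\pm}}=\prod_j e^{\pm i\psi^{n,mod}_{\pm,j}}$ together with the uniform smallness $\|\psi^{n,mod}_{\pm,j}\|_{L^\infty_{t,x}}\lesssim \varepsilon$ does not deliver the crucial conclusion of the lemma, namely the $\ell^1$-in-$k$ bound $\sum_k a_k^{(n)}\lesssim 1+\kappa_{n-1}$. Smallness in $\varepsilon$ of each factor controls the size of cross terms but gives no decay in $k$ for their contribution to $P_k\big(e^{\pm i\psi^{n,mod}_{\pm}}\big)$: for instance a high-high interaction of two phase blocks at frequency $2^{k_1}\gg 2^k$ contributes to \emph{every} output frequency $k\le k_1+C$ with essentially the same size, so summing over $k$ produces a factor of order $k_1$ (up to $\sim n$) unless one extracts quantitative decay in $k_1-k$ (or a gain $2^{-k}$ tied to the output frequency). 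Your claim that "only the linear term of the $j=k$ factor produces output at frequency $\sim 2^k$ to leading order" is therefore not justified as stated. The paper's proof avoids this by a different mechanism: it writes $P_k\big(e^{\pm i\psi^{n,mod}_{\pm}}\big)=2^{-k}P_k\big(i\nabla_x\psi^{n,mod}_{\pm}\,e^{\pm i\psi^{n,mod}_{\pm}}\big)$, so the frequency localization is converted into a factor $2^{-k}\nabla_x\psi^{n,mod}_{\pm}$; the trichotomy $P_{<k-C}$, $P_{[k-C,k+C]}$, $P_{\geq k+C}$ of this gradient then gives summability directly from $\sum_k 2^{-k}\|\nabla_x P_{<k-C}\psi^{n,mod}_{\pm}\|_{DL^\infty_{t,x}}+\sum_k 2^{-k}\|\nabla_x P_{[k-C,k+C]}\psi^{n,mod}_{\pm}\|_{DL^\infty_{t,x}}\lesssim \kappa_{n-1}$, and the remaining high-frequency piece is handled by re-applying the same identity to the exponential at the high frequency, closing after a two-fold expansion (using $DL^\infty_t L^{6+}_x$ for both gradient factors plus Bernstein at the output frequency, and the assumed angular lower bound to control the loss from $\Delta_{\eta^\perp}^{-1}$). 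If you want to salvage your route, you must supply precisely this kind of quantitative treatment of the high-high and iterated cross terms; pointing to Proposition~\ref{prop:prob_renormalization_mapping_properties} is not enough, since that proposition concerns fixed-frequency mapping properties and does not encode the $\ell^1_k$ summation.

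Your heuristic for the second bound is consistent with the claimed constant at the level of the linear term ($L^\eta_\pm\Delta_{\eta^\perp}^{-1}$ giving $\theta^{-2}2^{-k}$, Coulomb gauge and the elliptic $A_0$ structure giving one $\theta$, the $L^\infty_t L^2_x$ placement giving $2^{-k}$, and the dyadic sum over $\theta\gtrsim 2^h$ giving $2^{-h}$), but the same objection applies to the remainder: the higher-order contributions to $P_k\big(e^{\pm i\Pi_{\gtrsim h}\psi^{n,mod}_{\pm}}\big)$ again require the derivative-identity re-iteration (this is what the paper means by "proved similarly"), not an appeal to $\varepsilon$-smallness alone.
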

\begin{proof} 
This follows by writing schematically 
\begin{align*}
P_k\big(e^{\pm i\psi^{n,mod}_{\pm}}\big) &= 2^{-k}P_k\big(i\nabla_x\psi^{n,mod}_{\pm}\cdot e^{\pm i\psi^{n,mod}_{\pm}}\big)\\
&= 2^{-k}P_k\big(i\nabla_xP_{<k-C}\psi^{n,mod}_{\pm}\cdot e^{\pm i\psi^{n,mod}_{\pm}}\big)\\
&\quad +  2^{-k}P_k\big(i\nabla_xP_{[k-C,k+C]}\psi^{n,mod}_{\pm}\cdot e^{\pm i\psi^{n,mod}_{\pm}}\big)\\
&\quad + 2^{-k}P_k\big(i\nabla_xP_{\geq k+C}\psi^{n,mod}_{\pm}\cdot e^{\pm i\psi^{n,mod}_{\pm}}\big)\\
&\equiv I + II + III.
\end{align*}
Then we use that 
\begin{align*}
\sum_k 2^{-k}\big\|\nabla_xP_{<k-C}\psi^{n,mod}_{\pm}\big\|_{DL_{t,x}^\infty} + \sum_k 2^{-k}\big\|\nabla_xP_{[k-C,k+C]}\psi^{n,mod}_{\pm}\big\|_{DL_{t,x}^\infty}\lesssim \kappa_{n-1}
\end{align*}
to handle $I$ and $II$. To estimate $III$, we expand further
 \begin{align*}
 &2^{-k}P_k\big(i\nabla_xP_{\geq k+C}\psi^{n,mod}_{\pm}\cdot e^{\pm i\psi^{n,mod}_{\pm}}\big)\\
 & = \sum_{k_1\geq k+C}2^{-k}P_k\big(i\nabla_xP_{k_1}\psi^{n,mod}_{\pm}\cdot P_{k_1+C}[e^{\pm i\psi^{n,mod}_{\pm}}]\big)\\
 & = \sum_{k_1\geq k+C}2^{-k-k_1}P_k\big(i\nabla_xP_{k_1}\psi^{n,mod}_{\pm}\cdot P_{k_1+C}[i\nabla_x\psi^{n,mod}_{\pm} e^{\pm i\psi^{n,mod}_{\pm}}]\big)
 \end{align*}
 and then reiterate the splitting $I$--$III$ for the inner parentheses. Then we close the cases $I$--$II$ by using $DL_t^\infty L_x^{6+}$ for both factors $\nabla_x\psi^{n,mod}_{\pm}$ and Bernstein's inequality. The remaining case $III$ is treated by again expanding. We note that this infinite re-iteration procedure is required if we make no assumptions on the angular localisations of the phases $\psi^{n,mod}_{\pm}$. However, in the present setting, we in fact assume that the angles are bounded from below, in which case one can conclude after two-fold expansion, taking into account the loss from the degenerate operator $\Delta_{\eta^{\perp}}^{-1}$ in the definition of $\psi^{n,mod}_{\pm}$. 
The final estimate is proved similarly. 
\end{proof}

We are now in the position to establish moment bounds for the redeeming error control quantity.

\begin{proposition} \label{prop:moment_bound_error_control_quantity}
 Let $n \geq 1$. Assume that the functions $\{ \calA_{x,r}^m \}_{m=1}^{n-1}$, $\{ \calA_{x,s}^m \}_{m=0}^{n-1}$, $\{ \calA_0^m \}_{m=0}^{n-1}$, $\{ \Phi_r^m \}_{m=1}^{n-1}$, and $\{ \Phi_s^m \}_{m=0}^{n-1}$ are measurable with respect to the $\sigma$-algebra $\calF_{n-1}$ and that we have almost surely 
 \begin{equation*}
  \sum_{m=1}^{n-1} \bigl( \|\calA_{x,r}^m\|_{R_m} + \|\Phi_r^m\|_{R_m} \bigr)  + \sum_{m=0}^{n-1} \bigl( \|\calA_{x,s}^m\|_{S^1[m]} + \|\calA_0^m\|_{Y^1[m]} + \|\Phi^m_s\|_{S^1[m]} \bigr) < \infty.
 \end{equation*}
 Let $\mathds{1}_{[0,2C_0\varepsilon]}$ be the characteristic function of the interval $[0, 2C_0\varepsilon]$ and set
 \begin{equation*}
  \mathds{1}_\varepsilon^{<n-1} := \mathds{1}_{[0,2C_0\varepsilon]} \biggl( \sum_{m=1}^{n-1} \bigl( \|\calA_{x,r}^m\|_{R_m} + \|\Phi_r^m\|_{R_m} \bigr)  + \sum_{m=0}^{n-1} \bigl( \|\calA_{x,s}^m\|_{S^1[m]} + \|\calA_0^m\|_{Y^1[m]} + \|\Phi^m_s\|_{S^1[m]} \bigr) \biggr).
 \end{equation*}
 Let $\calE \calC^n$ be defined as in~\eqref{equ:definition_EC_n}. 
 Then we have for all $1 \leq p < \infty$ that
 \begin{equation} \label{equ:moment_bound_error_control_quantity}
  \bigl\| \mathds{1}_\varepsilon^{<n-1} \calE \calC^n \bigr\|_{L^p_\omega(\Omega)} \lesssim \sqrt{p} \, \bigl\| (P_n \phi_0, P_n \phi_1) \bigr\|_{H^{1-\delta_\ast}_x \times H^{-\delta_\ast}_x}.
 \end{equation}
\end{proposition}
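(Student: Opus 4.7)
The plan is to establish a stage-by-stage moment bound of the form
\begin{equation*}
 \bigl\| \mathds{1}_\varepsilon^{<n-1} \calE \calC^{n,[\ell]} \bigr\|_{L^p_\omega(\Omega)} \lesssim C^{\ell}(\kappa_{n-1})^{2\ell}\,2^{-\mu n}\,\sqrt{p}\,\bigl\|(P_n\phi_0,P_n\phi_1)\bigr\|_{H^{1-\delta_\ast}_x\times H^{-\delta_\ast}_x}
\end{equation*}
for a small absolute constant $\mu>0$ and some $C>0$. Since the probabilistic cutoff $\mathds{1}_\varepsilon^{<n-1}$ enforces $\kappa_{n-1}\lesssim\varepsilon\ll 1$, summing this bound over $\ell\geq 0$ closes the geometric series and yields the claim, with the factor $2^{-\mu n}$ comfortably absorbing the $2^{-\sigma n}2^{-\delta_\ast n}2^{-\nu n}$ prefactor and leaving an honest exponential gain in $n$.

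For each stage $\ell\geq 0$ and each small string $(\underline{k})$ of length $\ell$, I would estimate the single-string quantity $2^{-r_a/3}\|\nabla_{t,x}\Phi^{n,\pm,(\underline{k}),(\underline{h}),(\underline{\alpha})}_r\|_{L^M_tL^6_x}$ by unwinding the iterative definition of $\Phi^{n,\pm,(\underline{k})}_r$ layer by layer. The outermost layer is a renormalized wave evolution whose data is a Duhamel term built from magnetic potential factors $Z^{\eta,\mp}_{k_j,\alpha_j}\bigl(P_{k_j}A_{x,s}^{<n-1}\cdot\xi\mp P_{k_j}A_0^{<n-1}|\xi|\bigr)$ sandwiched between renormalization operators and ultimately acting on the random datum $T_n\phi^\omega[0]$. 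Crucially, all of these magnetic potentials and all of the phases $\psi^{n,mod}_\pm$ are $\calF_{n-1}$-measurable, so conditioning on $\calF_{n-1}$ and applying Minkowski followed by Khintchine's inequality on the Wiener atoms $\{h_c(\omega)\}_c$ of $T_n\phi^\omega[0]$ reduces the estimate to a deterministic square-summed expression over unit-scale frequency cubes $c$, at the cost of the factor $\sqrt{p}$. For each such $c$ I would bound the outermost evolution by the wave-admissible Strichartz pair $(M,6)$ (which satisfies $\frac{1}{M}+\frac{3}{2\cdot 6}\leq\frac{3}{4}$ for $M$ large and lies strictly inside the admissible range, giving a positive margin $\nu(M)\to 0$), and then propagate back through the iterated parametrix using the decomposable bounds of Lemma~\ref{lem:decomprefinement} in conjunction with Lemma~\ref{lem:decomposability_lemma_KST}. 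Each magnetic potential layer contributes an $L^{1+}_tL^3_x$ decomposable factor of size $(\kappa_{n-1})^2$ once the equations for $A_{x,s}^{<n-1}$ and $A_0^{<n-1}$ are reinserted, as in the proof of the terminating bound \eqref{equ:error_terminating_situation_estimate}, while each adjacent pair of renormalization operators behaves as essentially an identity on $L^\infty_tL^2_x$ by Proposition~\ref{prop:prob_renormalization_mapping_properties}.

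The summation over small strings is handled by exploiting the geometric weight $2^{-r_a/3}$: decomposing $(\underline{k})$ into segments with ascending dominating frequencies $r_1<\ldots<r_q$ of lengths $b_1,\ldots,b_q$, the number of strings with fixed dominating profile is bounded by $\prod_j r_j^{b_j-1}$, and $2^{-r_q/3}$ dominates these polynomial losses together with the sum over $r_1,\ldots,r_{q-1}\leq r_q$. The inner sums over angular cutoffs $\sum_j\alpha_j>-\frac{\sigma n}{100}$ and $\sum_j h_j>-\frac{\sigma n}{10}$ contribute at most $(\sigma n)^{O(\ell)}$ combinatorial factors, which are absorbed by a small fraction of $(\kappa_{n-1})^{2\ell}$ via the same $e^{2(\kappa_{n-1})^{1/4}\sum|\alpha_j|}$-type bookkeeping used in the ``small angle'' reduction in the proof of Proposition~\ref{prop:prob_renormalization_error_estimate}.

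The main obstacle will be controlling the smearing of the unit-scale Fourier localization of $T_n\phi^\omega[0]$ through $\ell+1$ layers of the parametrix while retaining the Khintchine decoupling gain. Each layer can spread the Fourier support by at most $\lesssim 2^{3\sigma n}$ (since all string frequencies obey $k_j\leq 3\sigma n$), so after $\ell$ iterations the effective support is of size $\lesssim C^\ell 2^{3\sigma n}$; this loss is compensated by the prefactor $2^{-\sigma n}2^{-\delta_\ast n}2^{-\nu n}$ built into the definition of $\calE\calC^{n,[\ell]}$ together with the $(\kappa_{n-1})^{2\ell}$ gain from the magnetic potentials, leaving a net exponential gain $2^{-\mu n}$ per stage. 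The technically subtlest point is that the outermost $L^M_tL^6_x$ Strichartz estimate (rather than, say, the $L^2_tL^\infty_x$ estimate used for $R_n$) must retain a genuine unit-scale Bernstein improvement after Khintchine decoupling; the fact that $(M,6)$ lies strictly inside the admissible region guarantees a positive such margin, and this is what ultimately forces the choice $\nu=\nu(M)$ with $\lim_{M\to\infty}\nu(M)=0$ in the definition of $\calE\calC^{n,[\ell]}$.
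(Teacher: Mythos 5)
Your probabilistic skeleton --- conditioning on $\calF_{n-1}$, Minkowski and Khintchine over the unit-scale atoms, reduction to a deterministic estimate square-summed over cubes $c$, and per-layer smallness $(\kappa_{n-1})^2$ obtained by reinserting the equations for $A_{x,s}^{<n-1}$ and $A_0^{<n-1}$ --- is exactly the paper's reduction to \eqref{equ:moment_bound_error_control_quantity_key_det_est}. The gap is in the deterministic core, in two places. First, your accounting of the support smearing does not close quantitatively: passing from $L^6_x$ to $L^{2+}_x$ by Bernstein on a Fourier support of radius $\sim 2^{3\sigma n}$ costs $\sim 2^{(4-)\sigma n}$ in four space dimensions, whereas the only prefactor available for this purpose is $2^{-\sigma n}$ (the factors $2^{-\delta_\ast n}$ and $2^{-\nu n}$ are already spoken for by the $H^{1-\delta_\ast}_x$ data regularity and by the regularity $\nu(M)$ of the sharp $(M,2+)$ pair, and part of the $2^{-\sigma n}$ must absorb the suppressed angular losses $2^{\frac{\sigma}{10}n}2^{\frac{\sigma}{100}n}$). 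The paper instead tracks the support radius string by string as $\sim b_1 2^{r_1}+\ldots+b_a 2^{r_a}\lesssim \ell\,2^{r_a}$ and cancels the resulting $2^{(\frac43-)r_a}$ loss against the weight $2^{-\frac{r_a}{3}}$ together with a genuine gain $2^{-r_a}$ harvested at the top dominating frequency, coming either from the $2^{-\frac43 r_j}$ decay of the $D L^1_t L^3_x$ bound on the potential $Z^{\eta,\mp}_{r_j}(\ldots)$ or from the $2^{-2r_j}$ bound of Lemma~\ref{lem:decomprefinement} when $r_j$ sits on a phase exponential; the intermediate ratios $\bigl((b_q 2^{r_q}+\ldots+b_1 2^{r_1})/2^{r_q}\bigr)^{\gamma_q}$ are then tamed by AM-GM, which is the content of \eqref{equ:moment_bound_error_control_quantity_key_Linfty_L2}. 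Your propagation step, which treats every potential layer as a flat factor $(\kappa_{n-1})^2$ and every adjacent pair of renormalization operators as an identity on $L^\infty_t L^2_x$, produces none of these frequency gains, so nothing offsets the Bernstein costs that necessarily arise when an $L^3_x$-type potential bound is paired with an iterate controlled only in $L^2_x$ on a grown Fourier support.

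Second, you spend the weight $2^{-\frac{r_a}{3}}$ on counting strings, which both deprives you of the gain needed above and fails on its own: the number of small strings of length $\ell$ grows like $(\sigma n)^{3\ell}$ (or like $r_a^{\,c\ell}$ for a fixed dominating profile), and a fixed exponential $2^{-\frac{r_a}{3}}$ beats such a polynomial only at the price of a constant of size $\ell^{3\ell}$, which destroys the summation over $\ell$ even against $(\kappa_{n-1})^{2\ell}$. In the paper no such counting occurs: the sum over strings is performed frequency by frequency using the summable sequences $a_k^{(n)}$ (with $\sum_k a_k^{(n)}\lesssim 1+\kappa_{n-1}$, from Lemma~\ref{lem:decomprefinement}) and $\beta_k^{(n)}$ (with $\sum_k \beta_k^{(n)}\lesssim (\kappa_{n-1})^2$), which yields the factor $C^\ell(\kappa_{n-1})^{2\ell}$ directly. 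Relatedly, your claimed extra gain $2^{-\mu n}$ is not available after the correct bookkeeping (nor is it needed for \eqref{equ:moment_bound_error_control_quantity}): the deterministic estimate \eqref{equ:moment_bound_error_control_quantity_key_det_est} carries losses $2^{\delta_\ast n}2^{\nu n}$ that exactly exhaust the corresponding prefactors in the definition of $\calE\calC^{n,[\ell]}$.
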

\begin{proof}
 Recall from~\eqref{equ:definition_EC_n} that the redeeming error control quantity is defined as $\calE \calC^n := \sum_{\ell \geq 0} \calE \calC^{n, [\ell]}$ with
 \begin{equation*}
  \calE \calC^{n, [\ell]} := 2^{-\sigma n} 2^{-\delta_\ast n} 2^{-\nu n} \sum_{\sum_j h_j > - \frac{\sigma n}{10} } \sum_{\sum_j \alpha_j > - \frac{\sigma n}{100} } \sum_{\substack{\text{small strings } (\underline{k}) \\ \text{of length } \ell}} 2^{-\frac{r_a}{3}} \bigl\| \nabla_{t,x} \Phi^{n, \pm, (\underline{k}), (\underline{h}), (\underline{\alpha})}_r \bigr\|_{L^M_t L^6_x},
 \end{equation*}
 where $M \gg 1$ is sufficiently large, $0 < \nu \equiv \nu(M) \ll 1$ with $\lim_{M\to\infty} \nu(M) = 0$, and for each small string $(\underline{k})$ of length $\ell$ we denote by $r_a$ its largest dominating frequency. 
 
 Thanks to the angular restrictions $\sum_j \alpha_j > - \frac{\sigma n}{100}$ and $\sum_j h_j > - \frac{\sigma n}{10}$, in what follows we can omit the effect of the singular operator $\Delta_{\eta^{\perp}}^{-1}$ due to the angular degeneracy in the phases $\psi^{n,mod}_\pm$ as well as in the operators $Z^{\eta, \mp}_{k, \alpha_j}$ up to paying a factor $2^{\frac{\sigma}{10}n}\cdot 2^{\frac{\sigma}{100}n}$ at the end. We shall henceforth suppress these angular losses, and replace all operators $\Delta_{\eta^{\perp}}^{-1}$ by $\Delta^{-1}$, it being understood that at the end of the day we always have to have enough margin to absorb a loss of $2^{\frac{\sigma}{10}n}\cdot 2^{\frac{\sigma}{100}n}$. This is the purpose of the factor $2^{-\sigma n}$ in the definition of $\calE \calC^{n}$. Correspondingly, we omit the explicit notations $(\underline{\alpha})$ and $(\underline{h})$ from now on.
 
 We let $\Phi^{n, \pm, (\ulk), (c)}_r$ be defined inductively like $\Phi^{n, \pm, (\ulk)}_r$, only that the the data $(T_n \phi_0^\omega, T_n \phi_1^\omega)$ are replaced by $(P_c \phi_0, P_c \phi_1)$, where $P_c$ is a frequency projection to a unit-sized cube at distance $\sim 2^n$ from the origin of frequency space. Analogously, we define $ \Psi^{n, (\ulk), (c)}_r$.
 Conditioning on $\calF_{n-1}$ and using a conditional expectation version of Khintchine's inequality (as in the proof of Proposition~\ref{prop:prob_strichartz_phi}), the asserted moment bound~\eqref{equ:moment_bound_error_control_quantity} follows immediately from the following (deterministic) estimate
 \begin{equation} \label{equ:moment_bound_error_control_quantity_key_det_est}
  \begin{aligned}
   &\sum_{\substack{\text{small strings } (\underline{k}) \\ \text{of length } \ell}} 2^{-\frac{r_a}{3}} \biggl( \sum_c \, \bigl\| \nabla_{t,x} \Phi^{n, \pm, (\ulk), (c)}_r \bigr\|_{L^M_t L^6_x}^2 \biggr)^{\frac12} \\
   &\qquad \qquad \qquad  \lesssim \ell^{\frac43} C^\ell (\kappa_{n-1})^{2\ell} 2^{\delta_\ast n} 2^{\nu n} \bigl\| (P_n \phi_0, P_n \phi_1) \bigr\|_{H^{1-\delta_\ast}_x \times H^{-\delta_\ast}_x},
  \end{aligned}
 \end{equation}
 where $C \geq 1$ is some absolute constant.
 Indeed, thanks to the cutoff $\mathds{1}_\varepsilon^{<n-1}$ we may assume $\kappa_{n-1} \ll 1$ so that we can sum up $\sum_{\ell=0}^\infty \ell^{\frac43} C^\ell (\kappa_{n-1})^{2\ell} \lesssim 1$. Then we obtain that
 \begin{align*}
  \bigl\| \mathds{1}_\varepsilon^{<n-1} \calE \calC^n \bigr\|_{L^p_\omega(\Omega)} &\lesssim \sum_{\ell=0}^\infty \, \bigl\| \mathds{1}_\varepsilon^{<n-1} \calE \calC^{n, [\ell]} \bigr\|_{L^p_\omega(\Omega)} \lesssim \sqrt{p} \, \bigl\| (P_n \phi_0, P_n \phi_1) \bigr\|_{H^{1-\delta_\ast}_x \times H^{-\delta_\ast}_x}.
 \end{align*} 
  
 Let us therefore turn to the derivation of~\eqref{equ:moment_bound_error_control_quantity_key_det_est}. Fix a small string of frequencies $(\ulk)$ of length $\ell$ with dominating frequencies $r_1 < r_2 < \cdots < r_a$ and associated segments $b_1, \ldots, b_a$. Note that for a fixed cube $c$, the Fourier support of $\Phi^{n, \pm, (\ulk), (c)}_r$ is contained in a ball of radius $\sim b_1 2^{r_1} + b_2 2^{r_2} + \ldots + b_a 2^{r_a}$. Let $2+$ be such that $(M, 2+)$ is a sharp Strichartz admissible pair at regularity $\nu(M)$. Using Bernstein's inequality to go down from $L^6_x$ to $L^{2+}_x$ and using the fact that $\sum_{j=1}^a b_j = 3\ell+3$, we infer that
 \begin{equation} \label{equ:moment_bound_error_control_quantity_key_L^M_L^6}
  \begin{aligned}
   2^{-\frac{r_a}{3}} \biggl( \sum_c \, \bigl\| \nabla_{t,x} \Phi^{n, \pm, (\ulk), (c)}_r \bigr\|_{L^M_t L^6_x}^2 \biggr)^{\frac12} &\lesssim 2^{-\frac{r_a}{3}} \bigl( b_1 2^{r_1} + \ldots b_a 2^{r_a} \bigr)^{\frac{4}{3}-} \biggl( \sum_c \, \bigl\| \nabla_{t,x} \Phi^{n, \pm, (\ulk), (c)}_r \bigr\|_{L^M_t L^{2+}_x}^2 \biggr)^{\frac12} \\
   &\lesssim \ell^{\frac43} 2^{r_a} \biggl( \sum_c \, \bigl\| \nabla_{t,x} \Phi^{n, \pm, (\ulk), (c)}_r \bigr\|_{L^M_t L^{2+}_x}^2 \biggr)^{\frac12}.
  \end{aligned}
 \end{equation}
 For $1 \leq j \leq a$ let $d_j$ be such that $r_j \in \{ k_{3d_j}, k_{3d_{j+1}}, k_{3d_{j+2}} \}$ and denote $(\underline{r_1 r_j}) = (k_1 \ldots k_{3d_j+2})$. 
 Then by Strichartz estimates, the multilinear estimates from Section~\ref{sec:multilinear_estimates}, and Lemma~\ref{lem:decomprefinement} we have that 
 \begin{equation} \label{equ:moment_bound_error_control_quantity_key_LM_L2plus}
  \begin{aligned}
   \bigl\| \nabla_{t,x} \Phi^{n, \pm, (\ulk), (c)}_r \bigr\|_{L^M_t L^{2+}_x} &= \biggl\| \nabla_{t,x} P_{k_{3\ell+2}} \bigl( e^{-i\psi_\pm^{n,mod}} \bigr) \frac{K^\pm}{i|D|} P_{k_{3\ell+1}} \bigl( e^{+i\psi_\pm^{n,mod}} \bigr) \Psi^{n, (\ulk'), (c)}_r \biggr\|_{L^M_t L^{2+}_x} \\
   &\lesssim 2^{\nu n} a_{k_{3\ell+2}}^{(n)} a_{k_{3\ell+1}}^{(n)} \bigl\| \Psi^{n, (\ulk'), (c)}_r \bigr\|_{L^1_t L^2_x} \\
   &\lesssim 2^{\nu n} a_{k_{3\ell+2}}^{(n)} a_{k_{3\ell+1}}^{(n)} \beta_{k_{3\ell}}^{(n)} \bigl\| \nabla_{t,x} \Phi^{n, \pm, (\ulk'), (c)}_r \bigr\|_{L^\infty_t L^2_x} \\
   &\lesssim \ldots \\
   &\lesssim 2^{\nu n} \Biggl( \prod_{\tilde{l} = d_a + 1}^\ell a_{k_{3\tilde{l}+2}}^{(n)} a_{k_{3\tilde{l}+1}}^{(n)} \Biggr) \Biggl( \prod_{\tilde{l} = d_a + 1}^{\ell} \beta_{k_{3\tilde{l}}}^{(n)} \Biggr) \bigl\| \nabla_{t,x} \Phi^{n, \pm, (\underline{r_1 r_a}), (c)}_r \bigr\|_{L^\infty_t L^2_x},
  \end{aligned}
 \end{equation}
 where $\beta_{k_{3\tilde{l}}}^{(n)}$ denotes a bound for the estimate of the frequency-localized magnetic potential term 
 \begin{equation*}
  Z^{\eta, \mp}_{k_3} \bigl( P_{k_3} A_{x, s}^{<n-1} \cdot \xi \mp P_{k_3} A_0^{<n-1} |\xi| \bigr)
 \end{equation*}
 and is such that
 \begin{align*}
  \sum_{k_{3\tilde{l}} \leq 3 \sigma n} \beta_{k_{3\tilde{l}}}^{(n)} \lesssim (\kappa_{n-1})^2.
 \end{align*}
 We also recall that the factors $a_{k_{3\tilde{l}+2}}^{(n)}$ denote bounds on the mapping norms of $P_{k_{3\tilde{l}+2}} \big(e^{\pm i\psi^{n,mod}_{\pm}}\big): S_n^{\sharp}\longrightarrow S^{Str}$ satisfying $\sum_{k_{3\tilde{l}+2}} a_{k_{3\tilde{l}+2}}^{(n)} \lesssim 1+\kappa_{n-1}$, and similarly for the factors $a_{k_{3\tilde{l}+1}}^{(n)}$.
 Then we claim that for all $1 \leq j \leq a$ it holds that
 \begin{equation} \label{equ:moment_bound_error_control_quantity_key_Linfty_L2}
  \begin{aligned}
   &\biggl( \sum_c \, \bigl\| \nabla_{t,x} \Phi^{n, \pm, (\underline{r_1 r_j}), (c)}_r \bigr\|_{L^\infty_t L^2_x}^2 \biggr)^{\frac12} \\
   &\qquad \qquad \lesssim C^{\sum_{i=1}^{j-1} b_i} \biggl( \prod_{\tilde{l}=1}^{d_j} a_{k_{3\tilde{l}+2}}^{(n)} a_{k_{3\tilde{l}+1}}^{(n)} \biggr) \biggl( \prod_{\tilde{l}=1}^{d_j} \beta_{k_{3\tilde{l}}}^{(n)} \biggr) 2^{-r_j} 2^{\delta_\ast n} \bigl\| (P_n \phi_0, P_n \phi_1) \bigr\|_{H^{1-\delta_\ast}_x \times H^{-\delta_\ast}_x}.
  \end{aligned}
 \end{equation}
 Using~\eqref{equ:moment_bound_error_control_quantity_key_Linfty_L2} with $j=a$, combining with~\eqref{equ:moment_bound_error_control_quantity_key_LM_L2plus}, and square-summing over all cubes $c$, we conclude that
 \begin{equation*}
  \begin{aligned}
   &\biggl( \sum_c \bigl\| \nabla_{t,x} \Phi^{n, \pm, (\ulk), (c)}_r \bigr\|_{L^M_t L^{2+}_x}^2 \biggr)^{\frac12} \\
   &\qquad \qquad \lesssim C^\ell \biggl( \prod_{\tilde{l}=1}^{\ell} a_{k_{3\tilde{l}+2}}^{(n)} a_{k_{3\tilde{l}+1}}^{(n)} \biggr) \biggl( \prod_{\tilde{l}=1}^{\ell} \beta_{k_{3\tilde{l}}}^{(n)} \biggr) 2^{-r_a} 2^{\nu n} 2^{\delta_\ast n} \bigl\| (P_n \phi_0, P_n \phi_1) \bigr\|_{H^{1-\delta_\ast}_x \times H^{-\delta_\ast}_x}.
  \end{aligned}
 \end{equation*}
 Combining the previous estimate with~\eqref{equ:moment_bound_error_control_quantity_key_L^M_L^6} and summing over all small strings $(\ulk)$ yields~\eqref{equ:moment_bound_error_control_quantity_key_det_est}.
 
 It now remains to prove~\eqref{equ:moment_bound_error_control_quantity_key_Linfty_L2}. Recall that for $1 \leq j \leq a$ we let $d_j$ be such that $r_j \in \{k_{3d_j}, k_{3d_j+1}, k_{3d_j+2}\}$ and that we use the notation $(\underline{r_1 r_j}) = (k_1 \ldots k_{3d_j+2})$. We distinguish the cases $r_j = k_{3 d_j}$, $r_j = k_{3 d_j + 1}$, and $r_j = k_{3d_j + 2}$.
 
 We begin with the case $r_j = k_{3 d_j}$. For a fixed cube $c$ we use Lemma~\ref{lem:decomprefinement} to schematically estimate
 \begin{equation} \label{equ:moment_bound_error_control_quantity_Linfty_L2_single_cube}
  \begin{aligned}
   &\bigl\| \nabla_{t,x} \Phi^{n, \pm, (\underline{r_1 r_j}), (c)}_r \bigr\|_{L^\infty_t L^2_x} \\   
   &= \biggl\| \nabla_{t,x} P_{k_{3 d_j +2}} \bigl( e^{-i\psi_\pm^{n,mod}} \bigr) \frac{K^\pm}{i|D|} P_{k_{3 d_j +1}} \bigl( e^{+i\psi_\pm^{n,mod}} \bigr) \Psi^{n, (k_1 \ldots k_{3d_j}), (c)}_r \biggr\|_{L^\infty_t L^2_x} \\
   &\lesssim a_{k_{3d_j+2}}^{(n)} a_{k_{3d_j +1}}^{(n)} \Bigl\| Z_{r_j}^{\eta, \mp} \bigl( P_{r_j} A_{x, s}^{<n-1} \cdot \xi \mp P_{r_j} A_0^{<n-1} |\xi| \bigr) \Phi^{n, \pm, (k_1 \ldots k_{3d_j-1}), (c)}_r \Bigr\|_{L^1_t L^2_x} \\
   &\lesssim a_{k_{3d_j+2}}^{(n)} a_{k_{3d_j +1}}^{(n)} \sup_\kappa \, \Bigl\| \Pi_\kappa Z_{r_j}^{\eta, \mp} \Bigl( P_{r_j} A_{x, s}^{<n-1} \cdot \frac{\xi}{|\xi|} \mp P_{r_j} A_0^{<n-1} \Bigr) \Bigr\|_{D L^1_t L^3_x} \times \\
   &\qquad \qquad \qquad \qquad \qquad \qquad \qquad \times \Bigl\| \Bigl( \sum_\kappa \, \bigl\| \nabla_{t,x} \Pi_\kappa \Phi^{n, \pm, (k_1 \ldots k_{3d_j-1}), (c)}_r \bigr\|_{L^6_x}^2 \Bigr)^{\frac12} \Bigr\|_{L^\infty_t} \\
   &\lesssim a_{k_{3d_j+2}}^{(n)} a_{k_{3d_j +1}}^{(n)} 2^{-\frac{4}{3} r_j} \beta_{r_j}^{(n)} \bigl( b_{j-1} 2^{r_{j-1}} + \ldots + b_1 2^{r_1} \bigr)^{\frac43} \bigl\| \nabla_{t,x} \Phi^{n, \pm, (k_1 \ldots k_{3d_j-1}), (c)}_r \bigr\|_{L^\infty_t L^2_x}.
  \end{aligned}
 \end{equation}
 In order to achieve the last step, we bounded\footnote{Recall from the preceding footnote that we can again effectively ignore $high \times high \to low$ interactions here.}
 \begin{equation*}
  \sup_\kappa \, \Bigl\| \Pi_\kappa Z_{r_j}^{\eta, \mp} \Bigl( P_{r_j} A_{x, s}^{<n-1} \cdot \frac{\xi}{|\xi|} \mp P_{r_j} A_0^{<n-1} \Bigr) \Bigr\|_{D L^1_t L^3_x}
 \end{equation*}
 using the equations for $A_{x,s}^{<n-1}$ and $A_0^{<n-1}$, where for the key quadratic contribution to $\Box A_{x,s}^{<n-1}$ we just place both inputs into the $L^2_t L^6_x$ Strichartz space, and suppressing the errors accruing because of the angular localization. Moreover, we used that by construction $\Phi^{n, \pm, (k_1 \ldots k_{3d_j-1}), (c)}_r$ has Fourier support contained in a ball of radius $b_{j-1} 2^{r_{j-1}} + \ldots + b_1 2^{r_1}$ so that we can estimate 
 \begin{equation*}
  \Bigl\| \Bigl( \sum_\kappa \, \bigl\| \nabla_{t,x} \Pi_\kappa \Phi^{n, \pm, (k_1 \ldots k_{3d_j-1}), (c)}_r \bigr\|_{L^6_x}^2 \Bigr)^{\frac12} \Bigr\|_{L^\infty_t} 
 \end{equation*}
 by Bernstein to go from $L^6_x$ down to $L^2_x$, where we can then square-sum over the caps.
 
 Square-summing the estimate~\eqref{equ:moment_bound_error_control_quantity_Linfty_L2_single_cube} over the cubes $c$, and then repeatedly using the multilinear estimates from Section~\ref{sec:multilinear_estimates} along with Lemma~\ref{lem:decomprefinement} until we reach the next dominating frequency $r_{j-1}$, we obtain that
 \begin{equation*}
  \begin{aligned}
   &\biggl( \sum_c \, \bigl\| \nabla_{t,x} \Phi^{n, \pm, (\underline{r_1 r_j}), (c)}_r \bigr\|_{L^\infty_t L^2_x}^2 \biggr)^{\frac12} \\
   &\lesssim 2^{-\frac43 r_j} \beta_{r_j}^{(n)} \bigl( b_{j-1} 2^{r_{j-1}} + \ldots + b_1 2^{r_1} \bigr)^{\frac43} \biggl( \sum_c \, \bigl\| \nabla_{t,x} \Phi^{n, \pm, (k_1 \ldots k_{3d_j-1}), (c)}_r \bigr\|_{L^\infty_t L^2_x}^2 \biggr)^{\frac12} \\
   &\lesssim \Biggl( \prod_{\tilde{l}=d_{j-1}+1}^{d_j} a_{k_{3\tilde{l}+2}}^{(n)} a_{k_{3\tilde{l}+1}}^{(n)} \Biggr) 2^{-\frac43 r_j} \beta_{r_j}^{(n)} \bigl( b_{j-1} 2^{r_{j-1}} + \ldots + b_1 2^{r_1} \bigr)^{\frac43} \biggl( \sum_c \, \bigl\| \nabla_{t,x} \Phi^{n, \pm, (k_1 \ldots k_{3d_{j-1}+2}), (c)}_r \bigr\|_{L^\infty_t L^2_x}^2 \biggr)^{\frac12}.
  \end{aligned}
 \end{equation*}
 At this point we restart the process.
 
 If instead say $r_j = k_{3d_j+2}$, we obtain for a fixed cube $c$ the schematic bound
 \begin{equation*}
  \begin{aligned}
   &\bigl\| \nabla_{t,x} \Phi^{n, \pm, (\underline{r_1 r_j}), (c)}_r \bigr\|_{L^\infty_t L^2_x} \\
   &= \biggl\| \nabla_{t,x} P_{r_j} \bigl( e^{-i\psi_\pm^{n,mod}} \bigr) \frac{K^\pm}{i|D|} P_{k_{3 d_j +1}} \bigl( e^{+i\psi_\pm^{n,mod}} \bigr) \Psi^{n, (k_1 \ldots k_{3d_j}), (c)}_r \biggr\|_{L^\infty_t L^2_x} \\
   &\lesssim \sup_\kappa \, \bigl\| \Pi_\kappa P_{r_j} \bigl( e^{-i\psi_\pm^{n,mod}} \bigr) \bigr\|_{D L^\infty_t L^2_x} \biggl\| \nabla_{t,x} \sum_\kappa \, \Pi_\kappa \frac{K^\pm}{i|D|} P_{k_{3 d_j +1}} \bigl( e^{+i\psi_\pm^{n,mod}} \bigr) \Psi^{n, (k_1 \ldots k_{3d_j}), (c)}_r \biggr\|_{L^\infty_t L^\infty_x} \\
   &\lesssim \sup_\kappa \, \bigl\| \Pi_\kappa P_{r_j} \bigl( e^{-i\psi_\pm^{n,mod}} \bigr) \bigr\|_{D L^\infty_t L^2_x} \bigl( b_{j-1} 2^{r_{j-1}} + \ldots + b_1 2^{r_1} \bigr)^2 \times \\
   &\qquad \qquad \qquad \times \biggl\| \nabla_{t,x} \sum_\kappa \, \Pi_\kappa \frac{K^\pm}{i|D|} P_{k_{3 d_j +1}} \bigl( e^{+i\psi_\pm^{n,mod}} \bigr) \Psi^{n, (k_1 \ldots k_{3d_j}), (c)}_r \biggr\|_{L^\infty_t L^2_x},
  \end{aligned}
 \end{equation*}
 where we used Bernstein's inequality to go down from $L^\infty_x$ to $L^2_x$. Then we may invoke from Lemma~\ref{lem:decomprefinement} the following bound
 \begin{equation*}
  \sup_\kappa \, \bigl\| \Pi_\kappa P_{r_j} \bigl( e^{-i\psi_\pm^{n,mod}} \bigr) \bigr\|_{D L^\infty_t L^2_x} \lesssim a_{r_j}^{(n)} 2^{-2r_j},
 \end{equation*}
 where we exploit the assumption about the angular localizations of the Fourier support of the phases and we adhere to the convention of suppressing the accrued errors. Similarly to above we may estimate 
 \begin{equation*}
  \biggl\| \nabla_{t,x} \sum_\kappa \, \Pi_\kappa \frac{K^\pm}{i|D|} P_{k_{3 d_j +1}} \bigl( e^{+i\psi_\pm^{n,mod}} \bigr) \Psi^{n, (k_1 \ldots k_{3d_j}), (c)}_r \biggr\|_{L^\infty_t L^2_x} \lesssim a_{k_{3d_j+1}}^{(n)} \beta_{k_{3d_j}}^{(n)} \bigl\| \nabla_{t,x} \Phi^{n, \pm, (k_1 \ldots k_{3d_j-1}), (c)}_r \bigr\|_{L^\infty_t L^2_x}.
 \end{equation*}
 Combining the preceding bounds and square-summing over the cubes $c$, and then repeatedly using the multilinear estimates from Section~\ref{sec:multilinear_estimates} along with Lemma~\ref{lem:decomprefinement} until we reach the next dominating frequency~$r_{j-1}$, we obtain analogously to above that
 \begin{equation*}
  \begin{aligned}
   &\biggl( \sum_c \, \bigl\| \nabla_{t,x} \Phi^{n, \pm, (\underline{r_1 r_j}), (c)}_r \bigr\|_{L^\infty_t L^2_x}^2 \biggr)^{\frac12} \\
   &\lesssim \Biggl( \prod_{\tilde{l}=d_{j-1}+1}^{d_j} a_{k_{3\tilde{l}+2}}^{(n)} a_{k_{3\tilde{l}+1}}^{(n)} \Biggr) 2^{-2 r_j} \beta_{r_j}^{(n)} \bigl( b_{j-1} 2^{r_{j-1}} + \ldots + b_1 2^{r_1} \bigr)^2 \biggl( \sum_c \, \bigl\| \nabla_{t,x} \Phi^{n, \pm, (k_1 \ldots k_{3d_{j-1}+2}), (c)}_r \bigr\|_{L^\infty_t L^2_x}^2 \biggr)^{\frac12}.
  \end{aligned}
 \end{equation*}
 At this point we restart the process. The case $r_j = k_{3d_j+1}$ is similar. 
 
 Re-iterating the above procedure, we arrive at the bound 
 \begin{equation*}
  \begin{aligned}
   &\biggl( \sum_c \, \bigl\| \nabla_{t,x} \Phi^{n, \pm, (\underline{r_1 r_j}), (c)}_r \bigr\|_{L^\infty_t L^2_x}^2 \biggr)^{\frac12} \\      
   &\lesssim \Biggl( \prod_{\tilde{l}=1}^{d_j} a_{k_{3\tilde{l}+2}}^{(n)} a_{k_{3\tilde{l}+1}}^{(n)} \Biggr) \Biggl( \prod_{\tilde{l}=1}^{d_j} \beta_{k_{3\tilde{l}}}^{(n)} \Biggr) \Biggl( \prod_{q=1}^{j-1} 2^{-\gamma_q r_{q+1}} \bigl( b_q 2^{r_q} + \ldots + b_1 2^{r_1} \bigr)^{\gamma_q} \Biggr) 2^{-r_1} 2^{\delta_\ast n} \bigl\| (P_n \phi_0, P_n \phi_1) \bigr\|_{H^{1-\delta_\ast}_x \times H^{-\delta_\ast}_x},
  \end{aligned}
 \end{equation*}
 where $\gamma_q \in \{ \frac43, 2 \}$ for $1 \leq q \leq j-1$. In order to further estimate the third product we observe that 
 \begin{equation*}
  \Biggl( \prod_{q=1}^{j-1} 2^{-\gamma_q r_{q+1}} \bigl( b_q 2^{r_q} + \ldots + b_1 2^{r_1} \bigr)^{\gamma_q} \Biggr) 2^{-r_1} \lesssim 2^{-r_j} \prod_{q=1}^{j-1} \biggl( \frac{ b_q 2^{r_q} + \ldots + b_1 2^{r_1} }{2^{r_q}} \biggr)^{\gamma_q} 
 \end{equation*}
 and that 
 \begin{equation*}
  \sum_{q=1}^{j-1} \frac{ b_q 2^{r_q} + \ldots + b_1 2^{r_1} }{2^{r_q}} \lesssim \sum_{q=1}^{j-1} b_q.
 \end{equation*}
 Hence, by invoking the inequality of arithmetic and geometric means, we infer (for $j \geq 2$) that 
 \begin{equation*}
  \begin{aligned}
   \prod_{q=1}^{j-1} \biggl( \frac{ b_q 2^{r_q} + \ldots + b_1 2^{r_1} }{2^{r_q}} \biggr)^{\gamma_q} \leq \prod_{q=1}^{j-1} \biggl( \frac{ b_q 2^{r_q} + \ldots + b_1 2^{r_1} }{2^{r_q}} \biggr)^2 &\leq \biggl( \frac{\tilde{C} \sum_{q=1}^{j-1} b_q}{j-1} \biggr)^{2(j-1)} \lesssim C^{\sum_{q=1}^{j-1} b_q}.
  \end{aligned}
 \end{equation*}
 This gives~\eqref{equ:moment_bound_error_control_quantity_key_Linfty_L2} and thus finishes the proof of Proposition~\ref{prop:moment_bound_error_control_quantity}.
\end{proof}

\section{Proof of Theorem~\ref{thm:main}} \label{sec:proof_of_thm}

After the preparations in the previous sections, the main work to prove Theorem~\ref{thm:main} at this point goes into establishing the existence of an event $\Sigma \subset \Omega$ (with high probability) so that for all $\omega \in \Sigma$, we can obtain the corresponding solution to (MKG-CG) with random initial data $A_x^\omega[0]$, $\phi^\omega[0]$ as the limit of the sequence $(A_x^{<n}, A_0^{<n}, \phi^{<n})$ of solutions to (MKG-CG) with frequency truncated random initial data $T_{<n} A_x^\omega[0]$, $T_{<n} \phi^\omega[0]$, as described in Subsections~\ref{subsec:decomp_lin_nonlin}--\ref{subsec:global_forcedMKG}. Since it is not possible for the rough linear evolutions and the smooth nonlinear solution increments to almost surely satisfy the necessary smallness assumptions to apply the induction step Proposition~\ref{prop:induction_step} at every stage of the construction, we have to incorporate probabilistic cutoffs into the precise construction procedure.

More specifically, in the following we iteratively construct a sequence $\bigl\{ (\calA_{x,r}^{n,\chi}, \Phi_r^{n,\chi}) \bigr\}_{n \geq 1}$ of (possibly ``eventually cut off'') rough linear evolutions and a sequence $\bigl\{ (\calA_{x,s}^{n,\chi}, \calA_0^{n,\chi}, \Phi^{n,\chi}_s) \bigr\}_{n \geq 0}$ of (possibly ``eventually cut off'') smooth solutions to the sequence of systems of forced (fMKG-CG\textsubscript{n}) equations. The superscript~$\chi$ shall indicate this cutoff feature of the construction procedure. At the end we ensure that there exists an event $\Sigma \subset \Omega$ (with high probability) so that for every $\omega \in \Sigma$, the triples $(\calA_{x,s}^{n,\chi}, \calA_0^{n,\chi}, \Phi^{n,\chi}_s)$ are (non-trivial) solutions to the system of forced MKG-CG equations (fMKG-CG\textsubscript{n}) at dyadic level $n$ for every $n \geq 1$. Moreover, for every $\omega \in \Sigma$, the corresponding triple $(A_x, A_0, \phi)$ defined by
\begin{align*}
 A_x &:= \sum_{n=1}^\infty \calA_{x,r}^{n,\chi} + \sum_{n=0}^\infty \calA_{x,s}^{n,\chi} &&\in C^0_t H^s_x + S^1 \\
 A_0 &:= \sum_{n=0}^\infty \calA_0^{n, \chi} &&\in Y^1 \\
 \phi &:= \sum_{n=1}^\infty \Phi_r^{n,\chi} + \sum_{n=0}^\infty \Phi_s^{n,\chi} &&\in C^0_t H^s_x + S^1
\end{align*}
is then a solution to (MKG-CG) with random initial data $A_x[0] = (a^\omega, b^\omega)$, $\phi[0] = (\phi_0^\omega, \phi_1^\omega)$.

\medskip 

We begin by introducing various cutoff functions that will play a crucial role in the definition of the sequences $\bigl\{ (\calA_{x,r}^{n,\chi}, \Phi_r^{n,\chi}) \bigr\}_{n \geq 1}$ and $\bigl\{ (\calA_{x,s}^{n,\chi}, \calA_0^{n,\chi}, \Phi^{n,\chi}_s) \bigr\}_{n \geq 0}$ in what follows. To this end we denote by $\mathds{1}_{[0, \mu]}$ for any $\mu > 0$ the characteristic function of the interval $[0,\mu]$. Then we set
\begin{equation} \label{equ:proof_thm_def_chi_0}
 \chi_\varepsilon^0 := \mathds{1}_{[0, \varepsilon]} \Bigl( \| T_0 A_x^\omega[0] \|_{\dot{H}^1_x \times L^2_x} + \| T_0 \phi^\omega[0] \bigr\|_{\dot{H}^1_x \times L^2_x} \Bigr),
\end{equation}
and for every integer $n \geq 1$ we define 
\begin{equation} \label{equ:proof_thm_def_chi_less_n}
 \begin{aligned}
  \chi_\varepsilon^{<n-1} &:= \mathds{1}_{[0, \varepsilon]} \biggl( \| T_0 A_x^\omega[0] \|_{\dot{H}^1_x \times L^2_x} + \| T_0 \phi^\omega[0] \|_{\dot{H}^1_x \times L^2_x} + \sum_{m=1}^{n-1} \| \calA_{x,r}^{m,\chi} \|_{R_m} + \sum_{m=1}^{n-1} \| \Phi_r^{m,\chi} \|_{R_m} \\
  &\qquad \qquad \qquad \qquad + \sum_{m=1}^{n-1} \| \Phi^{m,\chi}_s[0] \|_{\dot{H}^1_x \times L^2_x} + \sum_{m=1}^{n-1} \| T_m \phi^\omega[0] \|_{H^{1-\delta_\ast}_x \times H^{-\delta_\ast}_x} + \sum_{m=1}^{n-1} \calE \calC^{m, \chi} \biggr) \times \\
  &\quad \, \, \,  \times \mathds{1}_{[0, C_0 \varepsilon]} \biggl( \sum_{m=0}^{n-1} \|\calA_{x,s}^{m,\chi}\|_{S^1[m]} + \sum_{m=0}^{n-1} \|\calA_0^{m,\chi}\|_{Y[m]} + \sum_{m=0}^{n-1} \|\Phi_s^{m,\chi}\|_{S^1[m]} \biggr)
 \end{aligned}
\end{equation}
as well as
\begin{equation} \label{equ:proof_thm_def_chi_n}
 \begin{aligned}
  \chi_\varepsilon^n &:= \mathds{1}_{[0, \varepsilon]} \biggl( \| \calA_{x,r}^{n,\chi} \|_{R_n} + \| \Phi_r^{n,\chi} \|_{R_n} + \| \Phi^{n,\chi}_s[0] \|_{\dot{H}^1_x \times L^2_x} + \| T_n \phi^\omega[0] \|_{H^{1-\delta_\ast}_x \times H^{-\delta_\ast}_x} + \calE \calC^{n, \chi} \biggr).  
 \end{aligned}
\end{equation}

\medskip 

\noindent \underline{Stage $n=0$:}
We define $(\calA_{x,s}^{0,\chi}, \calA_0^{0,\chi}, \Phi_s^{0,\chi})$ as the smooth solution to (MKG-CG) with smooth initial data
\begin{equation*}
 (\calA_{x,s}^{0,\chi}[0], \Phi_s^{0,\chi}[0]) = \chi_{\varepsilon}^0 \, (T_0 A_x^\omega[0], T_0 \phi^\omega[0]) \in \dot{H}^1_x \times L^2_x
\end{equation*}
provided by the induction base case Proposition~\ref{prop:induction_base_case}. Observe that the cutoff $\chi_\varepsilon^0$ ensures the necessary smallness of the data to apply Proposition~\ref{prop:induction_base_case}. In particular, it then holds almost surely that
\begin{equation} \label{equ:proof_thm_key_bound_ind_base_case}
 \begin{aligned}
  \|\calA_{x,s}^{0,\chi}\|_{S^1[0]} + \|\calA_0^{0,\chi}\|_{Y^1[0]} + \|\Phi_s^{0,\chi}\|_{S^1[0]} &\leq C_0 \Bigl(  \bigl\| \calA_{x,s}^{0,\chi}[0] \bigr\|_{\dot{H}^1_x \times L^2_x} + \bigl\| \Phi_s^{0,\chi}[0] \bigr\|_{\dot{H}^1_x \times L^2_x} \Bigr) \\
  &\leq C_0 \Bigl( \bigl\| T_0 A_x^\omega[0] \bigr\|_{\dot{H}^1_x \times L^2_x} + \bigl\| T_0 \phi^\omega[0] \bigr\|_{\dot{H}^1_x \times L^2_x} \Bigr).
 \end{aligned}
\end{equation}
Clearly, on an event with non-zero probability the initial data $\chi_{\varepsilon}^0 \, (T_0 A_x^\omega[0], T_0 \phi^\omega[0])$ vanishes and in those cases, $(\calA_{x,s}^{0,\chi}, \calA_0^{0,\chi}, \Phi_s^{0,\chi})$ is just the zero solution.

\medskip 

\noindent \underline{Stage $n \geq 1$:} 
Here we are given the smooth inhomogeneous parts $\{ (\calA_{x,s}^{m,\chi}, \calA_0^{m,\chi}, \Phi_s^{m,\chi}) \}_{m=0}^{n-1}$ and the rough linear evolutions $\{ (\calA_{x,r}^{m,\chi}, \Phi_r^{m,\chi}) \}_{m=1}^{n-1}$ from the previous stages $0, 1, \ldots, n-1$ of the construction. Importantly, these are measurable with respect to the $\sigma$-algebra $\calF_{n-1}$ (see for instance~\cite[Appendix A]{Bringmann18_2}).
Then we define the rough free wave evolution $\calA_{x,r}^{n,\chi}$ by
\begin{equation*}
  \calA_{x,r}^{n,\chi} := \chi_\varepsilon^{<n-1} \, S(t)\bigl[ T_n a^\omega, T_n b^\omega \bigr] = \chi_\varepsilon^{<n-1} \Bigl( \cos(t|D|) T_n a^\omega + \frac{\sin(t|D|)}{|D|} T_n b^\omega \Bigr)
 \end{equation*}
 and the rough adapted linear evolution $\Phi_r^{n,\chi}$ as in~\eqref{equ:definition_Phi_n_rough}, where the modified phase function $\psi_{\pm}^{n,mod}$ is defined in terms of $A_0^{<n-1, \chi} = \sum_{m=0}^{n-1} \calA_0^{m,\chi}$ and $A_x^{<n-1, \chi} = \sum_{m=1}^{n-1} \calA_{x,r}^{m,\chi} + \sum_{m=0}^{n-1} \calA_{x,s}^{m,\chi}$.
 Similarly, the redeeming error control quantity $\calE \calC^{n, \chi}$ is defined in terms of $A_{x,s}^{<n-1, \chi}$ and $A_0^{<n-1, \chi}$.
 Moreover, we define 
 \begin{equation*}
  \Phi_s^{n,\chi}[0] := \chi_{\varepsilon}^{<n-1} \bigl( T_n \phi^\omega[0] - \Phi_r^{n,\chi}[0] \bigr).
 \end{equation*}
 Observe that thanks to the cutoff $\chi_\varepsilon^{<n-1}$, we are in the position to invoke the moment bounds from Proposition~\ref{prop:prob_strichartz_phi}, Proposition~\ref{prop:prob_data_error}, Proposition~\ref{prop:prob_strichartz_A}, and Proposition~\ref{prop:moment_bound_error_control_quantity}. Hence, for any $1 \leq p < \infty$ it holds that
 \begin{equation} \label{equ:proof_thm_moment_bounds}
  \begin{aligned}
   \bigl\| \| \calA_{x,r}^{n,\chi} \|_{R_n} \bigr\|_{L^p_\omega} &\lesssim \sqrt{p} \, \bigl\| (P_n a, P_n b) \bigr\|_{H^{1-\delta_\ast}_x \times H^{-\delta_\ast}_x}, \\
   \bigl\| \| \Phi_r^{n,\chi} \|_{R_n} \bigr\|_{L^p_\omega} &\lesssim \sqrt{p} \, \bigl\| (P_n \phi_0, P_n \phi_1) \bigr\|_{H^{1-\delta_\ast}_x \times H^{-\delta_\ast}_x}, \\
   \bigl\| \| \Phi^{n,\chi}_s[0] \|_{\dot{H}^1_x \times L^2_x} \bigr\|_{L^p_\omega} &\lesssim \sqrt{p} \, \bigl\| (P_n \phi_0, P_n \phi_1) \bigr\|_{H^{1-\delta_\ast}_x \times H^{-\delta_\ast}_x}, \\
   \bigl\| \calE \calC^{n, \chi} \bigr\|_{L^p_\omega} &\lesssim \sqrt{p} \, \bigl\| (P_n \phi_0, P_n \phi_1) \bigr\|_{H^{1-\delta_\ast}_x \times H^{-\delta_\ast}_x}.
  \end{aligned}
 \end{equation}
 Now we use the induction step Proposition~\ref{prop:induction_step} to define $(\calA_{x,s}^{n,\chi}, \calA_0^{n,\chi}, \Phi^{n,\chi}_s)$ as the (smooth) solution to the system of forced Maxwell-Klein-Gordon equations (fMKG-CG\textsubscript{n}) at dyadic stage $n$ with forcing terms given by
 \begin{align*}
  &A_0^{<n-1} = \chi_\varepsilon^n \chi_\varepsilon^{<n-1} \biggl( \sum_{m=0}^{n-1} \calA_0^{m,\chi} \biggr), \quad A_x^{<n-1} = \chi_\varepsilon^n \chi_\varepsilon^{<n-1} \biggl( \sum_{m=1}^{n-1} \calA_{x,r}^{m,\chi} + \sum_{m=0}^{n-1} \calA_{x,s}^{m,\chi} \biggr), \\
  &\phi^{<n-1} = \chi_\varepsilon^n \chi_\varepsilon^{<n-1} \biggl( \sum_{m=1}^{n-1} \Phi_r^{m,\chi} + \sum_{m=0}^{n-1} \Phi_{s}^{m,\chi} \biggr), \\
  &\chi_\varepsilon^n \chi_\varepsilon^{<n-1} \calA_{x,r}^{n,\chi}, \quad \chi_\varepsilon^n \chi_\varepsilon^{<n-1} \Phi_r^{n,\chi}, \\
  &\chi_\varepsilon^n \chi_\varepsilon^{<n-1} \Box_{A^{<n-1}}^{p, mod} \Phi^{n, \chi}_r,
 \end{align*}
 and initial data for the scalar field given by 
 \begin{equation*}
  \chi_\varepsilon^n \chi_\varepsilon^{<n-1} \Phi^{n,\chi}_s[0].
 \end{equation*}
 Note that the cutoffs $\chi_\varepsilon^n \chi_\varepsilon^{<n-1}$ guarantee that the necessary smallness conditions in the statement of the induction step Proposition~\ref{prop:induction_step} are satisfied.
 Importantly, Proposition~\ref{prop:induction_step} also yields a bound on the $S^1[n]$ and $Y^1[n]$ norms of the solution $(\calA_{x,s}^{n,\chi}, \calA_0^{n,\chi}, \Phi_s^{n,\chi})$. Specifically, we have almost surely that
 \begin{equation} \label{equ:proof_thm_key_bound_from_ind_step_prop}
  \begin{aligned}
   &\|\calA_{x,s}^{n,\chi}\|_{S^1[n]} + \|\calA_0^{n,\chi}\|_{Y^1[n]} + \|\Phi_s^{n,\chi}\|_{S^1[n]} \\
   &\qquad \leq C_0 \Bigl( \| \chi_\varepsilon^n \chi_\varepsilon^{<n-1} \calA_{x,r}^{n,\chi} \|_{R_n} + \| \chi_\varepsilon^n \chi_\varepsilon^{<n-1} \Phi_r^{n,\chi} \|_{R_n} + \| \chi_\varepsilon^n \chi_\varepsilon^{<n-1} \Phi^{n,\chi}_s[0] \|_{\dot{H}^1_x \times L^2_x}  \\
   &\qquad \qquad \qquad \qquad \qquad \qquad \qquad \qquad + \| \chi_\varepsilon^n \chi_\varepsilon^{<n-1} T_n \phi^\omega[0] \|_{H^{1-\delta_\ast}_x \times H^{-\delta_\ast}_x} + \chi_\varepsilon^n \chi_\varepsilon^{<n-1} \calE \calC^{n, \chi} \Bigr) \\
   &\qquad \leq C_0 \Bigl( \| \calA_{x,r}^{n,\chi} \|_{R_n} + \| \Phi_r^{n,\chi} \|_{R_n} + \| \Phi^{n,\chi}_s[0] \|_{\dot{H}^1_x \times L^2_x} + \| T_n \phi^\omega[0] \|_{H^{1-\delta_\ast}_x \times H^{-\delta_\ast}_x} + \calE \calC^{n, \chi}  \Bigr).
  \end{aligned}
 \end{equation}
 Again, the cutoff clearly $\chi_\varepsilon^n \chi_\varepsilon^{<n-1}$ vanishes on an event with non-zero probability, and correspondingly $(\calA_{x,s}^{n,\chi}, \calA_0^{n,\chi}, \Phi_s^{n,\chi})$ is just the zero solution in those cases.
 
 \medskip 
 
 We carry out this construction for every integer $n \geq 1$. Then it remains to prove:
 \begin{enumerate}
  \item[(i)] The series of rough linear evolutions of the random data
 \begin{equation*}
  \sum_{n=1}^\infty \calA_{x,r}^{n,\chi} \quad \text{and} \quad \sum_{n=1}^\infty \Phi_r^{n,\chi} \quad \text{ converge in } L^2_\omega C^0_t H^s_x,
 \end{equation*}
 and the series of smooth nonlinear solution increments
 \begin{equation*}
  \sum_{n=0}^\infty \bigl( \calA_{x,s}^{n,\chi}, \calA_0^{n,\chi}, \Phi_s^{n,\chi} \bigr) \quad \text{ converges in } L^2_\omega \bigl( S^1 \times Y^1 \times S^1 \bigr).
 \end{equation*}
 Hence, for almost every $\omega \in \Omega$ these series converge in $C^0_t H^s_x$, respectively in $S^1 \times Y^1 \times S^1$. 

 \item[(ii)] There exists an event $\Sigma \subset \Omega$ with 
  \begin{equation*}
   \bbP(\Sigma) \geq 1 - C \exp \Bigl( - c \frac{\varepsilon^2}{D^2} \Bigr), \quad D := \|(a,b)\|_{H^s_x \times H^{s-1}_x} + \|(\phi_0, \phi_1)\|_{H^s_x \times H^{s-1}_x},
  \end{equation*}
  so that for every $\omega \in \Sigma$ none of the elements of the sequence $\bigl\{ \bigl( \calA_{x,r}^{n,\chi}, \Phi_r^{n,\chi}\bigr) \bigr\}_{n \geq 1}$ and of the sequence $\bigl\{ \bigl( \calA_{x,s}^{n,\chi}, \calA_0^{n,\chi}, \Phi_s^{n,\chi} \bigr) \bigr\}_{n \geq 0}$ are trivially set to zero in the above construction procedure. In particular, then for every $\omega \in \Sigma$ and for every $n \geq 1$ the triple $\bigl( \calA_{x,s}^{n,\chi}, \calA_0^{n,\chi}, \Phi_s^{n,\chi} \bigr)$ is a (non-trivial) solution to the system of forced Maxwell-Klein-Gordon equations (fMKG-CG\textsubscript{n}) at dyadic stage $n$ (with non-trivial forcing terms). Moreover, for every $\omega \in \Sigma$ the triple $(A_x, A_0, \phi)$ given by
  \begin{align*}
   A_x &:= \sum_{n=1}^\infty \calA_{x,r}^{n,\chi} + \sum_{n=0}^\infty \calA_{x,s}^{n,\chi} &&\in C^0_t H^s_x + S^1 \\
   A_0 &:= \sum_{n=0}^\infty \calA_0^{n, \chi} &&\in Y^1 \\
   \phi &:= \sum_{n=1}^\infty \Phi_r^{n,\chi} + \sum_{n=0}^\infty \Phi_s^{n,\chi} &&\in C^0_t H^s_x + S^1
  \end{align*}
  is a solution to (MKG-CG) with initial data $A_x[0] = (a^\omega, b^\omega)$, $\phi[0] = (\phi_0^\omega, \phi_1^\omega)$. 
 \end{enumerate}
 
\medskip 

\noindent \underline{Proof of (i):}
We begin with the rough linear evolutions.
For any $n \geq 1$ the energy estimate for the free wave evolution implies that almost surely
\begin{align*}
 \bigl\| \calA_{x,r}^{n,\chi} \bigr\|_{C^0_t H^s_x} \lesssim \bigl\| (T_n a^\omega, T_n b^\omega) \bigr\|_{H^s_x \times H^{s-1}_x}.
\end{align*}
Moreover, for any $n \geq 1$ we obtain from the mapping properties of the renormalization operators in Proposition~\ref{prop:prob_renormalization_mapping_properties} that almost surely
\begin{align*}
 \bigl\| \Phi_r^{n,\chi} \bigr\|_{C^0_t H^s_x} \lesssim \bigl\| (T_n \phi_0^\omega, T_n \phi_1^\omega) \bigr\|_{H^s_x \times H^{s-1}_x}. 
\end{align*}
Note that these bounds are trivial on the event where the cutoff $\chi_\varepsilon^{<n-1}$ vanishes. 
Thus, we have for any $N_2 \geq N_1 \geq 1$ by the almost orthogonality of the frequency supports that
\begin{align*}
 &\biggl\| \sum_{n=N_1}^{N_2} \calA_{x,r}^{n,\chi} \biggr\|_{L^2_\omega C^0_t H^s_x} + \biggl\| \sum_{n=N_1}^{N_2} \Phi_r^{n,\chi} \biggr\|_{L^2_\omega C^0_t H^s_x} \\
 &\lesssim \biggl\| \biggl( \sum_{n=N_1}^{N_2} \bigl\| \calA_{x,r}^{n,\chi} \bigr\|_{C^0_t H^s_x}^2 \biggr)^{\frac12} \biggr\|_{L^2_\omega} + \biggl\| \biggl( \sum_{n=N_1}^{N_2} \bigl\| \Phi_r^{n,\chi} \bigr\|_{C^0_t H^s_x}^2 \biggr)^{\frac12} \biggr\|_{L^2_\omega} \\
 &\lesssim \biggl\| \biggl( \sum_{n=N_1}^{N_2} \bigl\| (T_n a^\omega, T_n b^\omega) \bigr\|_{H^s_x \times H^{s-1}_x}^2 \biggr)^{\frac12} \biggr\|_{L^2_\omega} + \biggl\| \biggl( \sum_{n=N_1}^{N_2} \bigl\| (T_n \phi_0^\omega, T_n \phi_1^\omega) \bigr\|_{H^s_x \times H^{s-1}_x}^2 \biggr)^{\frac12} \biggr\|_{L^2_\omega} \\
 &\lesssim \biggl( \sum_{n=N_1}^\infty \bigl\| (P_n a, P_n b) \bigr\|_{H^s_x \times H^{s-1}_x}^2 \biggr)^{\frac12} + \biggl( \sum_{n=N_1}^\infty \bigl\| (P_n \phi_0, P_n \phi_1) \bigr\|_{H^s_x \times H^{s-1}_x}^2 \biggr)^{\frac12},
\end{align*}
which converges to zero as $N_1 \to \infty$. Thus, the series $\sum_{n=1}^\infty \calA_{x,r}^{n,\chi}$ and $\sum_{n=1}^\infty \Phi_r^{n,\chi}$ are Cauchy in $L^2_\omega C^0_t H^s_x$. 

Next, we turn to the smooth nonlinear components. Using the key bounds~\eqref{equ:proof_thm_key_bound_from_ind_step_prop} on the solutions $(\calA_{x,s}^{n,\chi}, \calA_0^{n,\chi}, \Phi_s^{n,\chi})$, $n \geq 1$, along with the moment bounds~\eqref{equ:proof_thm_moment_bounds}, we have for any $N_2 \geq N_1 \geq 1$ that
\begin{align*}
 &\biggl\| \sum_{n=N_1}^{N_2} (\calA_{x,s}^{n,\chi}, \calA_0^{n,\chi}, \Phi_s^{n,\chi}) \biggr\|_{L^2_\omega(S^1 \times Y^1 \times S^1)} \\
 &\lesssim \sum_{n=N_1}^{N_2} \Bigl\| \bigl\| (\calA_{x,s}^{n,\chi}, \calA_0^{n,\chi}, \Phi_s^{n,\chi}) \bigr\|_{S^1 \times Y^1 \times S^1} \Bigr\|_{L^2_\omega} \\
 &\lesssim \sum_{n=N_1}^{N_2} \Bigl\| \|\Phi^{n, \chi}_s\|_{S^1[n]} + \| \calA_{x,s}^{n,\chi} \|_{S^1[n]} + \| \calA_0^{n, \chi} \|_{Y^1[n]} \Bigr\|_{L^2_\omega} \\
 &\lesssim \sum_{n=N_1}^{N_2} \biggl( \bigl\| \| \Phi_s^{n, \chi}[0] \|_{\dot{H}^1_x \times L^2_x} \bigr\|_{L^2_\omega} + \bigl\| \|\Phi_r^{n, \chi}\|_{R_n} \bigr\|_{L^2_\omega} + \bigl\| \|\calA_{x,r}^{n, \chi}\|_{R_n} \bigr\|_{L^2_\omega} + \bigl\| \| T_n \phi^\omega[0] \|_{H^{1-\delta_\ast}_x \times H^{-\delta_\ast}_x} \bigr\|_{L^2_\omega} + \bigl\| \calE \calC^{n, \chi} \bigr\|_{L^2_\omega} \biggr) \\
 &\lesssim \sum_{n=N_1}^{N_2} \biggl( \bigl\| (P_n \phi_0, P_n \phi_1) \bigr\|_{H^{1-\delta_\ast}_x \times H^{-\delta_\ast}_x} + \bigl\| (P_n a, P_n b) \bigr\|_{H^{1-\delta_\ast}_x \times H^{-\delta_\ast}_x} \biggr).
\end{align*}
Since $1-\delta_\ast < s < 1$ by assumption, we may sum up the last line and bound it by
\begin{equation*}
 \biggl( \sum_{n=N_1}^{\infty} \bigl\| (P_n \phi_0, P_n \phi_1) \bigr\|_{H^{s}_x \times H^{s-1}_x}^2 + \bigl\| (P_n a, P_n b) \bigr\|_{H^{s}_x \times H^{s-1}_x}^2 \biggr)^{\frac12}, 
\end{equation*}
which converges to zero as $N_1 \to \infty$. Thus, the series $\sum_{n=0}^\infty (\calA_{x,s}^{n,\chi}, \calA_0^{n,\chi}, \Phi_s^{n,\chi})$ converges in $L^2_\omega (S^1 \times Y^1 \times S^1)$.

\medskip 

\noindent \underline{Proof of (ii):}
We need to show that there exists an event $\Sigma \subset \Omega$ (with high probability) on which none of the elements of the sequence of rough linear evolutions $\bigl\{ (\calA_{x,r}^{n,\chi}, \Phi_r^{n,\chi}) \bigr\}_{n=0}^\infty$ are trivially set to zero and on which none of the elements of the sequence of smooth nonlinear components $\bigl\{ (\calA_{x,s}^{n,\chi}, \calA_0^{n,\chi}, \Phi_s^{n,\chi}) \bigr\}_{n = 0}^\infty$ are trivial. In view of the definitions~\eqref{equ:proof_thm_def_chi_0}--\eqref{equ:proof_thm_def_chi_n} of the cutoffs $\chi_\varepsilon^0$, $\chi_\varepsilon^{<n-1}$, and $\chi_\varepsilon^n$ as well as in view of the crucial bound~\eqref{equ:proof_thm_key_bound_from_ind_step_prop} on the smooth nonlinear components, this is the case on the event $\Sigma \subset \Omega$ \emph{defined by the property that for all $\omega \in \Sigma$ it holds that}
\begin{equation} \label{equ:bound_ensuring_not_truncating}
 \begin{aligned}
  &\| T_0 A_x^\omega[0] \|_{\dot{H}^1_x \times L^2_x} + \| T_0 \phi^\omega[0] \|_{\dot{H}^1_x \times L^2_x} + \sum_{n=1}^\infty \| \calA_{x,r}^{n,\chi} \|_{R_n} + \sum_{n=1}^\infty \| \Phi_r^{n,\chi} \|_{R_n} \\
   &\qquad \qquad \qquad \qquad \qquad + \sum_{n=1}^\infty \| \Phi_s^{n,\chi}[0] \|_{\dot{H}^1_x \times L^2_x} + \sum_{n=1}^\infty \| T_n \phi^\omega[0] \|_{H^{1-\delta_\ast}_x \times H^{-\delta_\ast}_x} + \sum_{n=1}^\infty \calE \calC^{n, \chi} \leq \varepsilon.
 \end{aligned}
\end{equation}
Here the main point is that \eqref{equ:bound_ensuring_not_truncating} together with the key bounds~\eqref{equ:proof_thm_key_bound_ind_base_case} and~\eqref{equ:proof_thm_key_bound_from_ind_step_prop} on the solutions automatically ensure that the cutoff $\mathds{1}_{[0,C_0 \varepsilon]}(\cdot)$ in the definition~\eqref{equ:proof_thm_def_chi_less_n} of $\chi_\varepsilon^{<n-1}$ does not vanish on $\Sigma$ at every stage $n$. To determine a lower bound on the probability of the event $\Sigma$ we now establish $L^p_\omega$ bounds for the expression on the left-hand side of~\eqref{equ:bound_ensuring_not_truncating} and then invoke the tail estimate from Lemma~\ref{lem:probability_estimate}. By the moment bounds~\eqref{equ:proof_thm_moment_bounds} from above we have for all $1 \leq p < \infty$ that
\begin{align*}
 &\biggl\| \| T_0 A_x^\omega[0] \|_{\dot{H}^1_x \times L^2_x} + \| T_0 \phi^\omega[0] \|_{\dot{H}^1_x \times L^2_x} + \sum_{n=1}^\infty \| \calA_{x,r}^{n,\chi} \|_{R_n} + \sum_{n=1}^\infty \| \Phi_r^{n,\chi} \|_{R_n} \\
   &\qquad \qquad \qquad \qquad \qquad \qquad + \sum_{n=1}^\infty \| \Phi_s^{n,\chi}[0] \|_{\dot{H}^1_x \times L^2_x} + \sum_{n=1}^\infty \| T_n \phi^\omega[0] \|_{H^{1-\delta_\ast}_x \times H^{-\delta_\ast}_x} + \sum_{n=1}^\infty \calE \calC^{n, \chi}  \biggr\|_{L^p_\omega} \\
 &\lesssim \| T_0 A_x^\omega[0] \|_{L^p_\omega (\dot{H}^1_x \times L^2_x)} + \| T_0 \phi^\omega[0] \|_{L^p_\omega(\dot{H}^1_x \times L^2_x)} + \sum_{n=1}^\infty \| \calA_{x,r}^{n,\chi} \|_{L^p_\omega R_n} + \sum_{n=1}^\infty \| \Phi_r^{n,\chi} \|_{L^p_\omega R_n} \\
   &\qquad \qquad \qquad \qquad + \sum_{n=1}^\infty \| \Phi_s^{n,\chi}[0] \|_{L^p_\omega (\dot{H}^1_x \times L^2_x)} + \sum_{n=1}^\infty \| T_n \phi^\omega[0] \|_{L^p_\omega(H^{1-\delta_\ast}_x \times H^{-\delta_\ast}_x)} + \sum_{n=1}^\infty \| \calE \calC^{n, \chi} \|_{L^p_\omega} \\
 &\lesssim \sqrt{p} \, \biggl( \| (P_{\leq 0} a, P_{\leq 0} b) \|_{\dot{H}^1_x \times L^2_x} + \| (P_{\leq 0} \phi_0, P_{\leq 0} \phi_1) \|_{\dot{H}^1_x \times L^2_x} \\
 &\qquad \qquad \qquad + \sum_{n=1}^\infty \| (P_n a, P_n b) \|_{H^{1-\delta_\ast}_x \times H^{-\delta_\ast}_x} + \sum_{n=1}^\infty \| (P_n \phi_0, P_n \phi_1) \|_{H^{1-\delta_\ast}_x \times H^{-\delta_\ast}_x} \biggr) \\
 &\lesssim \sqrt{p} \bigl( \|(a,b)\|_{H^s_x \times H^{s-1}_x} + \|(\phi_0, \phi_1)\|_{H^s_x \times H^{s-1}_x} \bigr), 
\end{align*}
where in the last line we again used the assumption that $1-\delta_\ast < s < 1$ in order to sum up in $n$. By the tail estimate from Lemma~\ref{lem:probability_estimate} it follows that 
\begin{equation*}
 \bbP \bigl( \Sigma^c \bigr) \lesssim \exp \Bigl( - c \frac{\varepsilon^2}{D^2} \Bigr), \quad D := \|(a,b)\|_{H^s_x \times H^{s-1}_x} + \|(\phi_0, \phi_1)\|_{H^s_x \times H^{s-1}_x}.
\end{equation*}
Hence, we obtain that the probability of the event $\Sigma \subset \Omega$ is bounded from below by 
\begin{equation*}
 \bbP(\Sigma) \geq 1 - C \exp \Bigl( - c \frac{\varepsilon^2}{D^2} \Bigr),
\end{equation*}
which is close to $1$ for small (scaling super-critical) initial data with $0 < D \ll \varepsilon$.

\section{Multilinear estimates} \label{sec:multilinear_estimates}

In this section we establish generalized versions of the multilinear estimates from~\cite{KST} that allow for one or more rough inputs with redeeming space-time integrability properties.

\subsection{Core generic product estimates}

We begin with several generic product estimates that are immediate consequences of H\"older's inequality and Bernstein estimates.

\begin{lemma} \label{lem:core_generic_product_est}
 We have that 
 \begin{align}
  \bigl\| P_k \bigl( A_{k_1} \phi_{k_2} \bigr) \bigr\|_{L^1_t L^2_x} &\lesssim 2^{\delta (k - \max\{k_i\})} 2^{-\delta |k_1-k_2|} \|A_{k_1}\|_{L^2_t \dot{H}^{\hf}_x} \|\phi_{k_2}\|_{L^2_t \dot{W}^{6, \frac{1}{6}}_x + R_{k_2}} \label{equ:core_generic_product_est1}  \\
  \bigl\| P_k ( \phi_{k_1}^{(1)} \phi_{k_2}^{(2)} ) \bigr\|_{L^2_t \dot{H}^{\hf}_x} &\lesssim 2^{\delta (k - \max\{k_i\})} 2^{-\delta |k_1-k_2|} \| \phi_{k_1}^{(1)} \|_{S^1_{k_1} + R_{k_1}} \| \phi_{k_2}^{(2)} \|_{S^1_{k_2} + R_{k_2}} \label{equ:core_generic_product_est2} \\
  \bigl\| P_k ( \phi_{k_1}^{(1)} \phi_{k_2}^{(2)} ) \bigr\|_{L^2_t \dot{H}^{-\hf}_x} &\lesssim 2^{\delta (k - \max\{k_i\})} 2^{-\delta |k_1-k_2|} \|\phi_{k_1}^{(1)}\|_{L^\infty_t L^2_x} \| \phi_{k_2}^{(2)} \|_{L^2_t \dot{W}^{6, \frac{1}{6}}_x + R_{k_2}} \label{equ:core_generic_product_est3} \\
  \bigl\| P_k ( \nabla_{t,x} \phi_{k_1}^{(1)} \phi_{k_2}^{(2)} ) \bigr\|_{L^2_t \dot{H}^{-\hf}_x} &\lesssim 2^{\delta (k - \max\{k_i\})} 2^{-\delta |k_1-k_2|} \|\phi_{k_1}^{(1)}\|_{R_{k_1}} \| \phi_{k_2}^{(2)} \|_{L^2_t \dot{W}^{6, \frac{1}{6}}_x + R_{k_2}} \label{equ:core_generic_product_est4} 
 \end{align}
\end{lemma}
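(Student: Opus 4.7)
The plan is to derive all four estimates from the same Hölder in time plus Hölder/Bernstein in space paradigm that underlies the corresponding deterministic product estimates in Section~12 of~\cite{KST}, after observing that the redeeming $R_k$ norm controls a suitable Strichartz-type $L^2_t L^6_x$ piece (and hence, via unit-scale Bernstein at dyadic frequency $2^k$, also $L^2_t L^r_x$ for $r\geq 6$), up to an irrelevant loss $2^{-20\sigma k}$ and an angular/box sum that converges thanks to $\sum_{l<0}2^{\delta_1 l}<\infty$. In particular, for a function $\phi_{k_2}$ localized to frequencies $\sim 2^{k_2}$ one has the schematic embeddings
\begin{equation*}
 \|\phi_{k_2}\|_{L^2_t L^6_x}\lesssim 2^{-(\frac12-20\sigma)k_2}\|\phi_{k_2}\|_{R_{k_2}},\qquad \|\phi_{k_2}\|_{L^2_t \dot W^{6,\frac{1}{6}}_x}\lesssim 2^{\frac{k_2}{6}}\|\phi_{k_2}\|_{L^2_t L^6_x},
\end{equation*}
so that a hypothesis of the form $\phi_{k_2}\in L^2_t\dot W^{6,\frac16}_x + R_{k_2}$ always reduces, modulo factors absorbable by the $\delta$-gain (since $\delta\gg\sigma,\delta_1,\delta_2$), to an $L^2_t L^6_x$ hypothesis on $\phi_{k_2}$. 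The analogous reduction lets one replace $S^1_{k_i}+R_{k_i}$ inputs in~\eqref{equ:core_generic_product_est2} by one of the $L^q_t L^r_x$ components encoded in $S^1_{k_i}$ (the $L^\infty_t L^2_x$ or $L^2_t L^4_x$ Strichartz norms in four space dimensions) or in $R_{k_i}$.

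With that preparation, I would carry out the usual trichotomy $k_1\ll k_2$ (low-high), $k_1\gg k_2$ (high-low) and $k_1\approx k_2\geq k+\calO(1)$ (high-high$\to$low). In the first two cases the output frequency is $k=\max\{k_1,k_2\}+\calO(1)$, so $2^{\delta(k-\max k_i)}=1$ and one needs only the off-diagonal factor $2^{-\delta|k_1-k_2|}$; in the third case the output is genuinely low and the extra Bernstein gain on $P_k$ produces both $2^{-\delta|k_1-k_2|}$ and $2^{\delta(k-\max k_i)}$ simultaneously. For each estimate the derivation is the same shape: place the low-frequency factor into an $L^q_t L^r_x$ norm via the appropriate Sobolev/Bernstein embedding, pair the high-frequency factor via Hölder in $(t,x)$, and keep careful track of the derivative counts. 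For instance, in~\eqref{equ:core_generic_product_est1}, Hölder $L^1_t L^2_x\subset L^2_t L^3_x\cdot L^2_t L^6_x$ together with the Bernstein inequality $\|A_{k_1}\|_{L^3_x}\lesssim 2^{k_1/6}\|A_{k_1}\|_{\dot H^{1/2}_x}$ and the embedding above produces a bound with pre-factor $2^{(k_1-k_2)/6}$, which after Bernstein on $P_k$ in the high-high case is converted into $2^{\delta(k-\max k_i)}2^{-\delta|k_1-k_2|}$ for any $\delta$ strictly smaller than the raw $1/6$ exponent. Estimates~\eqref{equ:core_generic_product_est2}--\eqref{equ:core_generic_product_est4} are entirely parallel, now with the $L^\infty_t L^2_x$ or $L^2_t \dot W^{6,1/6}_x$ components of $S^1_{k_i}$, respectively the $R_{k_i}L^2_t L^6_x$ component of $R_{k_i}$, playing the role of $L^2_t L^3_x$ or $L^2_t L^6_x$ above; the $\nabla_{t,x}$ in~\eqref{equ:core_generic_product_est4} is accounted for by the normalization of $R_{k_1}$ which controls $2^{-k_1}\nabla_{t,x}\phi^{(1)}_{k_1}$.

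The only genuine point of care is bookkeeping: the redeeming $R_k$ norm has the prefactor $2^{-20\sigma k}$ and a small polynomial loss hidden in the $\bigl(\min\{2^{k'},1\}\bigr)^{1/2-}$ and $\sum_l 2^{\delta_1 l}$ factors of its definition. I would verify that in every case arising from the trichotomy these losses appear only with the frequency of the rough input, so that they are harmlessly absorbed by choosing $\delta$ smaller than $1/6$ but still much larger than $\sigma$, $\delta_1$, $\delta_2$, which is precisely the hierarchy imposed in Section~2. The main technical obstacle is therefore not analytic but combinatorial: confirming that in each of the three frequency regimes and in each of the four estimates the resulting exponent on $2^{|k_1-k_2|}$ is strictly negative and the resulting exponent on $2^{k-\max k_i}$ is nonpositive (in fact strictly negative in the $HH\to L$ regime) with uniform margin, so that a single $\delta$ works for all four estimates simultaneously.
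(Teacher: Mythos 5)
Your proposal follows essentially the same route as the paper: reduce the purely smooth interactions to the corresponding estimates in \cite{KST}, and handle the rough inputs by the frequency trichotomy with H\"older in $(t,x)$, Bernstein, and the Strichartz-type components of the $R_k$ and $S^1$ norms, with the hierarchy $\delta \gg \sigma, \delta_1, \delta_\ast$ absorbing the $2^{20\sigma k}$-type losses --- indeed your low-high computation for \eqref{equ:core_generic_product_est1} is exactly the one in the paper. The only (inconsequential) slip is the parenthetical claim that $L^2_t L^4_x$ is an admissible component of $S^1_{k}$ in four dimensions; the $q=2$ Strichartz endpoint there is $L^2_t L^6_x$, which is what the actual estimates use.
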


\begin{proof}[Proof of~\eqref{equ:core_generic_product_est1}]
 We may assume that $\phi_{k_2}$ is rough and that $k_2 \geq 1$. Otherwise, the estimate follows from (64) in~\cite{KST}. We begin with the low-high case $k_1 \leq k_2-C$. Then we obtain by H\"older's inequality and Bernstein estimates that 
 \begin{align*}
  \bigl\| P_k \bigl( A_{k_1} \phi_{k_2} \bigr) \bigr\|_{L^1_t L^2_x} &\lesssim \|A_{k_1}\|_{L^2_t L^3_x} \|\phi_{k_2}\|_{L^2_t L^6_x} \\
  &\lesssim 2^{+\frac{1}{6} k_1} \|A_{k_1}\|_{L^2_t \dot{H}^{\hf}_x} 2^{-(\frac12 - 20 \sigma)k_2} \|\phi_{k_2}\|_{R_{k_2}Str} \\
  &\lesssim 2^{-\frac{1}{6}(k_2-k_1)} \|A_{k_1}\|_{L^2_t \dot{H}^{\hf}_x} \|\phi_{k_2}\|_{R_{k_2}Str}.
 \end{align*}
 In the high-low case $k_2 \leq k_1-C$ we obtain in an analogous manner that
 \begin{align*}
  \bigl\| P_k \bigl( A_{k_1} \phi_{k_2} \bigr) \bigr\|_{L^1_t L^2_x} \lesssim \| A_{k_1} \|_{L^2_t L^2_x} \|\phi_{k_2}\|_{L^2_t L^\infty_x} &\lesssim 2^{-\hf(k_1-k_2)} \| A_{k_1} \|_{L^2_t \dot{H}^\hf_x} \|\phi_{k_2}\|_{L^2_t L^8_x} \\
  &\lesssim 2^{-\hf(k_1-k_2)} \| A_{k_1} \|_{L^2_t \dot{H}^\hf_x} \|\phi_{k_2}\|_{R_{k_2}Str}.
 \end{align*}
 Finally, in the high-high case $k_1 = k_2 + \calO(1) \gg k$ we bound by
 \begin{align*}
  \bigl\| P_k \bigl( A_{k_1} \phi_{k_2} \bigr) \bigr\|_{L^1_t L^2_x} &\lesssim 2^{\hf k} \bigl\| P_k \bigl( A_{k_1} \phi_{k_2} \bigr) \bigr\|_{L^1_t L^{\frac58}_x} \\
  &\lesssim 2^{\hf k} \|A_{k_1}\|_{L^2_t L^2_x} \|\phi_{k_2}\|_{L^2_t L^8_x} \\ 
  &\lesssim 2^{\hf(k-k_1)} \|A_{k_1}\|_{L^2_t \dot{H}^\hf_x} \|\phi_{k_2} \|_{R_{k_2}Str}.
 \end{align*}
\end{proof}

\begin{proof}[Proof of~\eqref{equ:core_generic_product_est2}]
 By symmetry considerations and the estimate~(67) from~\cite{KST}, it suffices to consider the two cases 
 \begin{itemize}
  \item[(i)] $\phi_{k_1}^{(1)}$ is rough ($k_1 \geq 1$) and $\phi_{k_2}^{(2)}$ is rough ($k_2 \geq 1$), 
  \item[(ii)] $\phi_{k_1}^{(1)}$ is rough ($k_1 \geq 1$) and $\phi_{k_2}^{(2)}$ is smooth.
 \end{itemize}
 We begin with the first case (i). For high-low interactions $k_2 \leq k_1-C$, we estimate 
 \begin{align*}
  \bigl\| P_k ( \phi_{k_1}^{(1)} \phi_{k_2}^{(2)} ) \bigr\|_{L^2_t \dot{H}^\hf_x} &\lesssim 2^{\hf k_1} \bigl\| \phi_{k_1}^{(1)} \phi_{k_2}^{(2)} \bigr\|_{L^2_t L^2_x} \\
  &\lesssim 2^{-(\hf-\delta_\ast)k_1} \| |\nabla|^{1-\delta_\ast} \phi_{k_1}^{(1)} \|_{L^\infty_t L^2_x} \|\phi_{k_2}^{(2)} \|_{L^2_t L^\infty_x} \\
  &\lesssim 2^{-(\hf-\delta_\ast)(k_1-k_2)} \| |\nabla|^{1-\delta_\ast} \phi_{k_1}^{(1)} \|_{L^\infty_t L^2_x} \|\phi_{k_2}^{(2)} \|_{L^2_t L^{8+}_x} \\
  &\lesssim 2^{-(\hf-\delta_\ast)(k_1-k_2)} \| \phi_{k_1}^{(1)} \|_{S^{1-\delta_\ast}_{k_1}} \|\phi_{k_2}^{(2)} \|_{R_{k_2}Str}.
 \end{align*}
 For low-high interactions we can proceed in the same manner by symmetry. For high-high interactions $k_1 = k_2 + \calO(1) \gg k$ we bound by
 \begin{align*}
  \bigl\| P_k ( \phi_{k_1}^{(1)} \phi_{k_2}^{(2)} ) \bigr\|_{L^2_t \dot{H}^\hf_x} &\lesssim 2^{\hf k} \bigl\| \phi_{k_1}^{(1)} \phi_{k_2}^{(2)} \bigr\|_{L^2_t L^2_x} \\
  &\lesssim 2^{\hf k} 2^{-(1-\delta_\ast)k_1} \| |\nabla|^{1-\delta_\ast} \phi_{k_1}^{(1)} \|_{L^\infty_t L^2_x} \|\phi_{k_2}^{(2)}\|_{L^2_t L^\infty_x} \\
  &\lesssim 2^{-\hf(k_1-k)} \|\phi_{k_1}^{(1)}\|_{S^{1-\delta_\ast}_{k_1}} \|\phi_{k_2}^{(2)}\|_{R_{k_2}Str}.
 \end{align*}
 Now we turn to the second case (ii). For the high-low interactions $k_2 \leq k_1-C$, we bound 
 \begin{align*}
  \bigl\| P_k ( \phi_{k_1}^{(1)} \phi_{k_2}^{(2)} ) \bigr\|_{L^2_t \dot{H}^\hf_x} &\lesssim 2^{\frac{1}{2} k_1} \| \phi_{k_1}^{(1)} \|_{L^{16}_t L^{\frac{24}{11}}_x} \| \phi_{k_2}^{(2)} \|_{L^{\frac{16}{7}}_t L^{24}_x} \\
  &\lesssim 2^{\hf k_1} \| \phi_{k_1}^{(1)} \|_{L^{16}_t L^{\frac{24}{11}}_x} 2^{\frac{19}{48} k_2} \| \phi_{k_2}^{(2)} \|_{L^{\frac{16}{7}}_t L^{\frac{64}{9}}_x} \\
  &\lesssim 2^{-\frac{19}{48}(k_1-k_2)} \bigl( 2^{\frac{43}{48}k_1} \|\phi_{k_1}\|_{L^{16}_t L^{\frac{24}{11}}_x} \bigr) \|\phi_{k_2}\|_{S_{k_2}^1} \\
  &\lesssim 2^{-\frac{19}{48}(k_1-k_2)} \bigl( 2^{\frac{43}{48}k_1} 2^{-(\frac{15}{16}-20\sigma)k_1} \|\phi_{k_1}\|_{R_{k_1}Str} \bigr) \|\phi_{k_2}\|_{S_{k_2}^1} \\
  &\lesssim 2^{-\frac{19}{48}(k_1-k_2)} \| \phi_{k_1}^{(1)} \|_{R_{k_1}Str} \|\phi_{k_2}^{(2)}\|_{S_{k_2}^1}.
 \end{align*}
 The bounds for the low-high interactions and the high-high interactions are more of the same. 
\end{proof}

\begin{proof}[Proof of~\eqref{equ:core_generic_product_est3} and~\eqref{equ:core_generic_product_est4}]
 These are generalizations of the estimate (65) in~\cite{KST}. The proofs are similar to the proofs of~\eqref{equ:core_generic_product_est1}--\eqref{equ:core_generic_product_est2}. 
\end{proof}

\subsection{Core bilinear null form estimates}

Here we present several generalized bilinear null form estimates. We begin with the generalization of the multilinear estimate (131) from~\cite{KST}.

\begin{lemma}
It holds that
\begin{equation} \label{equ:generalization_131_KST}
 \bigl\| P_{k_1} \calN ( \phi_{k_2}^{(2)}, \phi_{k_3}^{(3)} ) \bigr\|_{N_{k_1}} \lesssim 2^{k_1} 2^{\delta (k_1 - \max\{k_2, k_3\})} 2^{-\delta |k_2-k_3|} \|\phi_{k_2}^{(2)}\|_{S_{k_2}^1 + R_{k_2}} \|\phi_{k_3}^{(3)}\|_{S^1_{k_3} + R_{k_3}} 
\end{equation}
\end{lemma}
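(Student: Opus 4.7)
The plan is to reduce to the cases not already handled by the deterministic estimate (131) from~\cite{KST} and then mirror the high-low/low-high/high-high case analysis carried out above in the proof of~\eqref{equ:core_generic_product_est2}, but now retaining the null structure in order to gain the off-diagonal factor $2^{\delta(k_1-\max\{k_2,k_3\})}2^{-\delta|k_2-k_3|}$. Concretely, I split each input according to $\phi^{(j)}_{k_j}=\phi^{(j),s}_{k_j}+\phi^{(j),r}_{k_j}$ with $\phi^{(j),s}_{k_j}\in S^1_{k_j}$ and $\phi^{(j),r}_{k_j}\in R_{k_j}$. The smooth-smooth contribution is already controlled by estimate (131) in~\cite{KST}. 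By symmetry in $(k_2,k_3)$ it then suffices to treat the smooth-rough and rough-rough contributions.

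For the rough-rough piece $\mathcal{N}(\phi^{(2),r}_{k_2},\phi^{(3),r}_{k_3})$, by the sharp unit-scale frequency localization of the ``atoms'' of the Wiener randomization the rough evolutions are confined to $|\xi|\sim 2^{k_j}$, so only high-high interactions $k_2=k_3+\mathcal{O}(1)\geq k_1$ are possible. Here I perform the standard wave-table angular decomposition of the null form: writing $\mathcal{N}(\phi,\psi)\sim\sum_{l<0}\sum_\kappa 2^l 2^{\max\{k_2,k_3\}}\,P_l^{\kappa}\phi\cdot P_l^{-\kappa}\psi$ up to modulation cutoffs, I place one factor into the $R_{k_j}L^2_tL^\infty_x$ component (which encodes precisely the angular $\ell^2$-sum of $L^2_tL^\infty_x$ norms over caps of size $2^l$, tempered by $\gamma(k',l')^{-1}$) and the other factor into $L^\infty_tL^2_x$ coming from $S^{1-\delta_\ast}_{k_j}$. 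The $N_{k_1}$-norm is bounded via its $L^1_tL^2_x$ component; the null-form gain $2^l$ and the summation in $l$ via $2^{\delta_1 l}$ in the definition of $R_{k_j}L^2_tL^\infty_x$ conspire to produce the required $2^{\delta(k_1-k_2)}$ off-diagonal factor. The loss $2^{\delta_\ast k_j}$ incurred by placing the rough factor into $S^{1-\delta_\ast}$ is absorbed into the global factor $2^{k_1}$ since $\delta_\ast\ll\delta$.

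For the smooth-rough piece, say $\phi^{(2),s}_{k_2}\in S^1_{k_2}$ and $\phi^{(3),r}_{k_3}\in R_{k_3}$, three subcases arise. In the high-high case $k_2=k_3+\mathcal{O}(1)\geq k_1$ I argue exactly as in the rough-rough case, using the $R_{k_3}L^2_tL^\infty_x$ norm for the rough input and the $L^\infty_tL^2_x$ piece of $S^1_{k_2}$ for the smooth one; the null-form gain again supplies $2^{\delta(k_1-k_2)}$. In the low-high case $k_2\leq k_3-C$ (rough input at high frequency), I use the null-frame representation of $S_{k_2}^1$ to place the smooth factor in a $PW^\pm$ norm dualizing against the $R_{k_3}L^2_tL^\infty_x$ sector decomposition of $\phi^{(3),r}_{k_3}$, replicating the argument in Section~13 of~\cite{KST}. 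The off-diagonal gain $2^{-\delta|k_2-k_3|}$ comes from Bernstein on the low-frequency smooth factor, while $2^{\delta(k_1-k_3)}$ is produced exactly as in the smooth-smooth analysis. The high-low case $k_3\leq k_2-C$ is the direct analogue with the roles reversed; the rough input is now at low frequency and summability in the angular cap index is provided by the $\ell^2$-summed $R_{k_3}L^2_tL^\infty_x$ norm.

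The main obstacle will be the low-high case in the smooth-rough configuration: one needs to reproduce the delicate $PW$/$NE$/square-function argument from Section~13 of~\cite{KST} while only having $S^{1-\delta_\ast}$ control on the rough input at high frequency, and ensure that the $2^{\delta_\ast k_3}$ derivative loss does not destroy the off-diagonal gain. The remedy is that the null-form gain $2^l$ combined with the $2^{\delta_1 l}$ weight and the angular factors $\gamma(k',l')^{-1}$ in the definition of $R_{k_3}L^2_tL^\infty_x$ is strictly better than what is needed, leaving room $\delta-\delta_\ast>0$ to absorb the loss. Provided one respects the hierarchy $\delta_\ast\ll\sigma\ll\delta_1\ll\delta$, the desired estimate then assembles from the three subcases.
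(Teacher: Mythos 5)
The central gap is structural: you never perform the modulation decomposition on which an estimate of this type rests, and your stated H\"older pairing does not land where you claim. Placing one factor in the cap-localized $R_{k}L^2_tL^\infty_x$ component and the other in $L^\infty_tL^2_x$ from $S^{1-\delta_\ast}_{k}$ produces an $L^2_tL^2_x$ bound, not an $L^1_tL^2_x$ bound, so "the $N_{k_1}$-norm is bounded via its $L^1_tL^2_x$ component" cannot be the endpoint of that computation. To convert such bounds into $N_{k_1}$ control one must distinguish which of the output or the inputs carries the dominant modulation: when the output does, one restricts to output modulation $\sim 2^{j}$ and uses the $X_1^{0,-\frac12}$ part of $N_{k_1}$ (equivalently, duality against $X_\infty^{0,\frac12}$), and it is precisely this restriction that ties the admissible cap size to $j$ and makes the null-form gain quantifiable; when an input carries the dominant modulation, that input is placed in $L^2_{t,x}$ via its $X^{0,\frac12}$-type norm and only then does the product go into $L^1_tL^2_x$. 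This is exactly how the paper proves the lemma: it is deduced from the modulation-localized estimates of Lemma~\ref{lem:KST12.4} and Lemma~\ref{lem:KST12.4addendum}, and all of the frequency/angle bookkeeping--in particular where the $2^{\delta_\ast k}$ loss is beaten by the $2^{-(\frac12-20\sigma)k}$-type gains of the redeeming norms--lives inside those two lemmas. Mentioning "modulation cutoffs" without carrying out this case distinction leaves the announced gains unsubstantiated.

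Two further concrete errors. First, the claim that the rough-rough contribution admits only high-high interactions is false: the lemma is a deterministic statement about arbitrary functions bounded in $S^1_{k_2}+R_{k_2}$ and $S^1_{k_3}+R_{k_3}$, and membership in $R_{k_j}$ only confines the Fourier support to $|\xi|\sim 2^{k_j}$; it does not force $k_2=k_3+\mathcal{O}(1)$. Indeed the application requires rough-rough interactions at separated frequencies (e.g. $\Phi^n_r$ against $\Phi^m_r$, $m<n$, inside $\bA^2_x(\Phi^n,\phi^{<n-1})$), and the paper's lemmas accordingly treat high-low and low-high with a rough factor in either slot. Second, the claim that the $2^{\delta_\ast k_j}$ loss is "absorbed into the global factor $2^{k_1}$" is not coherent: in the high-high regime $k_2=k_3+\mathcal{O}(1)\gg k_1$ the loss grows with $k_2$ while $2^{k_1}$ is a fixed structural factor on the right-hand side, not slack; the loss must instead be beaten by gains exponential in $-k_2$, which in the paper come from the $2^{(\frac12-20\sigma)k}$ normalization of the $R_k$ components together with Bernstein/interpolation gains (producing factors such as $2^{(\delta_\ast-\frac{1}{24})k_2}$), and this works only because $\delta_\ast$ and $\sigma$ are tiny compared to those absolute exponents. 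Relatedly, the proposed $PW$/null-frame dualization in the low-high case has no footing, since the $R_k$ norm contains no $PW$ or $NE$ components; the paper instead handles that case with cap-localized $L^2_tL^\infty_x$ and $L^2_tL^6_x$ bounds (interpolated to $L^2_tL^3_x$) for the rough high-frequency factor.
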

\begin{proof} 
This follows from Lemma~\ref{lem:KST12.4} and Lemma~\ref{lem:KST12.4addendum} below after localizing the modulations. 
\end{proof}

Similarly, we have the following generalized version of the multilinear estimate (132) from~\cite{KST}.
\begin{lemma}
The following holds
\begin{equation} \label{equ:generalization_132_KST}
 \bigl\| (I - \calH_{k_1}^\ast) \calN( \phi_{k_1}^{(1)}, \phi_{k_2}^{(2)}) \bigr\|_{N_{k_2}} \lesssim 2^{k_1} \|\phi_{k_1}^{(1)}\|_{S^1_{k_1} + R_{k_1}} \|\phi_{k_2}^{(2)}\|_{S_{k_2}^1}, \quad k_1 \leq k_2-C.
\end{equation}
\end{lemma}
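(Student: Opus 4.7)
The estimate reduces to two cases depending on whether $\phi_{k_1}^{(1)}$ is placed in $S^1_{k_1}$ or in $R_{k_1}$. In the former case, \eqref{equ:generalization_132_KST} is precisely estimate (132) of~\cite{KST} and the proof applies verbatim; the new content is the case $\phi_{k_1}^{(1)} \in R_{k_1}$. The strategy is to repeat the proof of (132) in~\cite{KST}, replacing each use of the $S^1_{k_1}$ norm of the low-frequency factor by a suitable component of the redeeming $R_{k_1}$ norm.

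First, I would localize both inputs and the output in modulation at scales $2^{j_1}, 2^{j_2}, 2^{j_0}$ respectively. The projector $\calH_{k_1}^\ast$ precisely removes the interaction in which $\phi_{k_1}^{(1)}$ carries the highest modulation while both $\phi_{k_2}^{(2)}$ and the output sit at strictly smaller modulations. Hence $(I - \calH_{k_1}^\ast)\calN(\phi_{k_1}^{(1)}, \phi_{k_2}^{(2)})$ is supported on the three regimes: (a) the output has the largest modulation, $j_0 \geq \max(j_1, j_2)$; (b) $\phi_{k_2}^{(2)}$ has the largest modulation, $j_2 \geq \max(j_0, j_1)$; (c) $\phi_{k_1}^{(1)}$ has the largest modulation but $j_0 \sim j_1 \geq j_2$, so the triple is not removed by $\calH_{k_1}^\ast$. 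In every regime the null form produces an angular gain of order $2^{(j_{\max} - k_2)/2}$ between the Fourier supports of the two inputs.

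In regime (a) I would place the output into $X^{0,-1/2}_1 \subset N_{k_2}$, the high-frequency input into $L^\infty_t L^2_x \subset S^1_{k_2}$, and the low-frequency input into $R_{k_1} L^2_t L^\infty_x$ with the sector localization to caps of size $2^{(j_0 - k_2)/2}$ dictated by the null form. The redeeming $R_{k_1} L^2_t L^\infty_x$ component supplies precisely the unit-scale Klainerman--Tataru improvement that compensates the absence of an $S^1_{k_1} \hookrightarrow L^2_t L^\infty_x$ estimate for a rough $\phi_{k_1}^{(1)}$. In regime (b) I would place $\phi_{k_2}^{(2)}$ into $X^{0,1/2}_\infty$, paying the modulation weight $2^{-j_2/2}$, and bound the low-frequency factor using $R_{k_1} L^2_t L^6_x$ or the Strichartz component $R_{k_1} Str$; the summation in $j_2$ is absorbed by the null-form angular gain.

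The main technical obstacle is regime (c), where one can no longer transfer modulation weight away from $\phi_{k_1}^{(1)}$. Here I would invoke the $S^{1-\delta_\ast}_{k_1}$ component of $R_{k_1}$, which controls the $X^{0,1/2}_\infty$ norm at regularity $1 - \delta_\ast$, giving the frequency-weighted modulation estimate $\|Q_{j_1} P_{k_1}\phi_{k_1}^{(1)}\|_{L^2_t L^2_x} \lesssim 2^{-j_1/2 + \delta_\ast k_1 - k_1}\|\phi_{k_1}^{(1)}\|_{R_{k_1}}$. Placing the output in $X^{0,-1/2}_1$ and $\phi_{k_2}^{(2)}$ in $L^\infty_t L^2_x$, the null-form angular gain $2^{(j_1 - k_2)/2}$ together with Bernstein in the restricted angular sector produces the claimed bound $2^{k_1}$ up to a multiplicative factor $2^{\delta_\ast k_1 - c(k_2 - k_1)}$; since $k_1 \leq k_2 - C$ and $\delta_\ast \ll \sigma \ll 1$, this factor is uniformly bounded and summation in the three modulation parameters closes.

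Collecting the three regimes yields \eqref{equ:generalization_132_KST}. The key technical point throughout is that the $2^{\delta_\ast k_1}$ loss arising from invoking $S^{1-\delta_\ast}_{k_1}$ rather than $S^1_{k_1}$ on the low-frequency factor is always dominated by the off-diagonal decay produced by the null-form angular sector localization. This is precisely where the smallness relation $\delta_\ast \ll \sigma$ from the global small constants is used; no new structural ingredient beyond the redeeming components of $R_{k_1}$ is required.
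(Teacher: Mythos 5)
Your reduction to the case $\phi_{k_1}^{(1)}\in R_{k_1}$ and your regimes (a), (b) follow the paper's route (the paper splits $(I-\calH_{k_1}^\ast)\calN$ into the piece where the output modulation dominates the modulation of $\phi_{k_1}^{(1)}$ and the piece where $\phi_{k_2}^{(2)}$ does, and redeems the rough low-frequency factor through the $R_{k_1}L^2_tL^6_x$ / $R_{k_1}L^2_tL^\infty_x$ components with cap localization). But your regime (c) contains a genuine gap. First, the angular gain of the null form in a low-high interaction is governed by the \emph{minimum} input frequency: the angle between the inputs is $\sim 2^{(j_{\max}-k_1)/2}$, not $2^{(j_{\max}-k_2)/2}$ as you assert throughout. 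Consequently the null structure produces no factor depending on $k_2-k_1$ at all, so the "off-diagonal decay $2^{-c(k_2-k_1)}$" you invoke to absorb the $2^{\delta_\ast k_1}$ loss from placing $Q_{j_1}\phi^{(1)}_{k_1}$ in the $S^{1-\delta_\ast}_{k_1}$ (i.e.\ $X^{0,\frac12}_\infty$ at regularity $1-\delta_\ast$) component simply is not there. Second, even if such a factor were available, it could not close the estimate: the hypothesis is only $k_1\le k_2-C$ with $C$ a fixed constant, so $k_2-k_1$ may stay bounded while $k_1\to\infty$, and $2^{\delta_\ast k_1-c(k_2-k_1)}$ is then unbounded; the smallness relation $\delta_\ast\ll\sigma$ plays no role here. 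If you run your regime-(c) bookkeeping with the correct angle, you land at $2^{(1+\delta_\ast)k_1}$ on the right-hand side, overshooting the target $2^{k_1}$ by exactly the $S^{1-\delta_\ast}$ loss.

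The paper avoids this regime entirely: in its Term $I$ (output modulation $j_1\gtrsim$ modulation of $\phi^{(1)}_{k_1}$, which contains your case (c)) it keeps the whole low-modulation block $Q_{<j_1+C}\phi_{k_1}^{(1)}$, localizes both factors to caps of diameter $2^{\frac{j_1-k_1}{2}}$, and estimates this block in the redeeming $L^2_tL^6_x$ norm (plus sector Bernstein), whose weight $2^{(\frac12-20\sigma)k_1}$ beats scaling by almost half a derivative and yields an exponential \emph{gain} in $-k_1$ beyond the factor $2^{k_1}$ — no $2^{\delta_\ast k_1}$ loss ever appears, and hence nothing needs to be compensated. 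To repair your argument you should treat (a) and (c) together in this way, reserving the $S^{1-\delta_\ast}_{k_1}$ component for situations (as elsewhere in the paper) where a genuine frequency separation like $k_1\le(1-\gamma)k_2$ is available to pay for it.
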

\begin{proof} We write 
\begin{align*}
\| (I - \calH_{k_1}^\ast) \calN( \phi_{k_1}^{(1)}, \phi_{k_2}^{(2)}) & = \sum_{j<k_1+C} Q_{\geq j-C}\calN( Q_j\phi_{k_1}^{(1)}, \phi_{k_2}^{(2)}) +  \sum_{j<k_1+C} Q_{< j-C}\calN( Q_j\phi_{k_1}^{(1)}, Q_{\geq j-C}\phi_{k_2}^{(2)}) \\
&\equiv I + II.
\end{align*}
Due to the multilinear estimate (131) in~\cite{KST}, we may assume that $\phi_{k_1}^{(1)} \in R_{k_1}$, and in particular that $k_1>0$. 

\medskip 

\noindent {\it{Estimate for $I$}}:
Freezing the output modulation to $j_1\geq j-C$ and summing over $j\leq j_1-C$, we may localize the factors $Q_{<j_1+C}\phi_{k_1}^{(1)}, \phi_{k_2}^{(2)}$ to caps $\kappa_{1,2}$ of diameter $\sim 2^{\frac{j_1-k_1}{2}}$ and aligned or anti-aligned. Then for $j_1<k_1+O(1)$ estimate 
\begin{align*}
&\Big\| \sum_{\kappa_{1}\sim \pm\kappa_2}Q_{j_1}\calN( P_{\kappa_1}Q_{<j_1+C}\phi_{k_1}^{(1)}, P_{\kappa_2}\phi_{k_2}^{(2)}) \Big\|_{\dot{X}^{0,-\frac12}_1} \\
&\lesssim 2^{-\frac{j_1}{2}} 2^{\frac{j_1-k_1}{2}} \sum_{\kappa_{1}\sim \pm\kappa_2}\big\| P_{\kappa_1}Q_{<j_1+C}\phi_{k_1}^{(1)}\big\|_{L_t^2 L_x^\infty} \big\|P_{\kappa_2}\phi_{k_2}^{(2)}\big\|_{L_t^\infty L_x^2}\\
&\lesssim 2^{-\frac{k_1}{2}} 2^{\frac{j_1-k_1}{4}}  2^{\frac23 k_1} \biggl( \sum_{\kappa_{1}}\big\|\nabla_x P_{\kappa_1}Q_{<j_1+C}\phi_{k_1}^{(1)}\big\|_{L_t^2 L_x^6}^2 \biggr)^{\frac12} \biggl(\sum_{\kappa_{2}}\big\|\nabla_x P_{\kappa_2}\phi_{k_2}^{(2)}\big\|_{L_t^\infty L_x^2}^2 \biggr)^{\frac12}\\
&\lesssim  2^{\frac{j_1-k_1}{4}} 2^{\frac{k_1}{6}} 2^{-(\frac12 - 20\sigma)k_1} 2^{k_1} \big\|\phi_{k_1}^{(1)}\big\|_{R_{k_1}} \big\|\phi_{k_2}^{(2)}\big\|_{S_{k_2}^1},
\end{align*}
which can be summed over $j_1<k_1+O(1)$ to give (more than) the desired bound.  Note that the factor $2^{\frac{j_1-k_1}{2}}$ in the second line comes from the null-structure, and the factor $ 2^{\frac{j_1-k_1}{4}}$ in the third line comes from Bernstein's inequality passing from $L_t^2 L_x^6$ to $L_t^2 L_x^\infty$ and exploiting the angular localization. 
When $j_1>k_1+C$ one argues similarly but without angular localizations. 

\medskip 

\noindent {\it{Estimate for $II$}}: This is handled by placing the second factor $Q_{\geq j-C}\phi_{k_2}^{(2)}$ into $L_{t,x}^2$ and the first factor $Q_j\phi_{k_1}^{(1)}$ into $L_t^2 L_x^\infty$, thus placing the output into $L_t^1 L_x^2$. The details are similar to the preceding case. 
\end{proof}

The following is a variant of Lemma 12.4 in~\cite{KST}, which follows easily from the formulation there in case all factors are in the space $S_{k_j}^1$, and which suffices for the purposes of the core multilinear estimates in~\cite{KST}. 

\begin{lemma}[Core modulation estimates] \label{lem:KST12.4}
 The following estimate holds uniformly in the indices $j_i$, $k_i$, where $j_2, j_3 = j_1 + \calO(1)$:
 \begin{align*}
  &\big|\langle Q_{j_1}\phi_{k_1}^{(1)}, \mathcal{N}\big(Q_{<j_2}\phi_{k_2}^{(2)}, Q_{<j_3}\phi_{k_3}^{(3)}\rangle\big| \\
  &\quad \lesssim 2^{k_1} 2^{-\delta|j_1-k_2|}2^{-\delta|k_1 - k_3|}2^{-\delta|k_2-k_3|}\big\|\phi_{k_1}^{(1)}\big\|_{X_{\infty}^{0,\frac12}}\big\|\phi_{k_2}^{(2)}\big\|_{S_{k_2}^1+R_{k_2}}\big\|\phi_{k_3}^{(3)}\big\|_{S_{k_3}^1+R_{k_3}}.
 \end{align*}
 In addition, when $j>k_{\text{min}}+C$, we have the improved bound 
 \begin{align*}
  &\big|\langle Q_{j_1}\phi_{k_1}^{(1)}, \mathcal{N}\big(\phi_{k_2}^{(2)}, \phi_{k_3}^{(3)}\rangle\big| \\ 
  &\quad \lesssim 2^{-\delta(j-k_{\text{min}})}2^{2k_{\text{min}}}2^{k_{\text{max}}}\big\|\phi_{k_1}^{(1)}\big\|_{X_{\infty}^{0,\frac12}}\big\|\phi_{k_2}^{(2)}\big\|_{S_{k_2}+R_{k_2}}\big\|\phi_{k_3}^{(3)}\big\|_{N_{k_3}^*}.
 \end{align*}
\end{lemma}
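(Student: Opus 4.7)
The plan is to reduce to Lemma 12.4 of \cite{KST} by decomposing each factor into its smooth and rough components. Writing $\phi_{k_i}^{(i)} = \phi_{k_i,s}^{(i)} + \phi_{k_i,r}^{(i)}$ with $\phi_{k_i,s}^{(i)} \in S_{k_i}^1$ and $\phi_{k_i,r}^{(i)} \in R_{k_i}$ (and noting that $\phi_{k_i,r}^{(i)}$ is sharply frequency localized to $|\xi| \sim 2^{k_i}$), the case where all factors are smooth is exactly the KST statement, so we focus on the three remaining mixed cases in which either $\phi_{k_2}^{(2)}$ or $\phi_{k_3}^{(3)}$, or both, are rough. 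By the symmetry of the null form we may also assume an ordering of the frequencies $k_2, k_3$.

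The key structural observation is that the redeeming $R_k$ norms have been designed so that, upon allowing a harmless loss of $2^{20\sigma k}$, they play the role of the critical $S_k^1$ norm in every Strichartz-type ingredient that enters KST's proof: the square-summed $L_t^2 L_x^\infty$ bounds on dyadic angular caps and radially directed rectangular blocks $\calC_{k'}(l')$ are encoded directly (with the sharp weights $\gamma(k',l')^{-1}$) into $R_k L^2_t L^\infty_x$, the $L^2_t L^6_x$ bounds are encoded into $R_k L^2_t L^6_x$, the $L^\infty_t L^\infty_x$ bounds with angular gain are encoded into $R_k L^\infty_t L^\infty_x$, and the remaining admissible Strichartz pairs are captured in $R_k Str$. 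Since $\sigma \ll \delta$, the $2^{20\sigma k_i}$ losses incurred when passing an $R_{k_i}$-ingredient through a KST estimate are absorbed into the off-diagonal factors $2^{-\delta|k_1-k_3|}2^{-\delta|k_2-k_3|}2^{-\delta|j_1-k_2|}$ with plenty of room to spare.

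Concretely, I would revisit the case analysis of Lemma 12.4 of \cite{KST} (which itself splits into the high-low and high-high interactions, each further distinguished by whether $j_1$ is below or above $k_{\min}+C$), and in each subcase replace the relevant $S^1$-Strichartz ingredient by the corresponding $R_k$ ingredient. The null form decomposition $\calN = \sum_{\kappa_1 \sim \pm\kappa_2}$ into pairs of aligned or anti-aligned angular caps of size $\sim 2^{(j_1-k_{\max})/2}$ goes through verbatim, because the angular-cap sum structure appears both in $S_k^{ang}$ and in the definitions of $R_k L^2_t L^\infty_x$ and $R_k L^\infty_t L^\infty_x$. For the representative case $k_2 \leq k_1 = k_3 + \calO(1)$ with $\phi_{k_2}^{(2)}$ rough and $\phi_{k_3}^{(3)}$ smooth, one pairs $Q_{j_1}\phi_{k_1}^{(1)}$ in $L^2_{t,x}$ (controlled by $2^{-j_1/2}\|\phi_{k_1}^{(1)}\|_{X^{0,1/2}_\infty}$), $Q_{<j_2}\phi_{k_2,r}^{(2)}$ in $L^2_t L^\infty_x$ (controlled via $R_{k_2} L^2_t L^\infty_x$ with the cap decomposition and block gain $\gamma(k',l')$), and $Q_{<j_3}\phi_{k_3,s}^{(3)}$ in $L^\infty_t L^2_x$, and the null form produces the customary gain $\sim 2^{(j_1-k_{\max})/2}$. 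The improved bound in the regime $j > k_{\min}+C$ arises from the same argument with an additional modulation-gap gain and energy estimate on the $N^*_{k_3}$ factor.

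The main technical obstacle is book-keeping the thin-block losses in the high-high-to-low case and in situations where the output modulation $j_1$ is much smaller than the input frequencies, since these are precisely the settings in which the standard $L^2_t L^\infty_x$ Strichartz inequality degenerates and one must exploit the refined square-summed bound over radially directed boxes $\calC_{k'}(l')$. The $\gamma^{-2}(k',l')$ weight built into the $R_k L^2_t L^\infty_x$ norm is what makes the analysis pass through with the same decay as in KST; verifying that the scales $(k', l')$ triggered by the null-form/modulation geometry stay within the admissible range $k+2l \leq k'+l' \leq k+l$ (so that they are indeed controlled by the $R_k$ norm) is the one place where the argument requires genuine checking beyond a cosmetic substitution of norms.
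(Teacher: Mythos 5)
Your overall strategy is the paper's: run the case analysis of Lemma~12.4 of~\cite{KST} again (duality, half-space and angular-cap decomposition at scale $\sim 2^{(j_1+k_1)/2-k_2}$ resp.\ $2^{(j_1-k_3)/2}$, H\"older splitting $L^2_{t,x}\times L^2_tL^\infty_x\times L^\infty_tL^2_x$), substituting the redeeming $R_{k}$ ingredients for the rough factors and using $\sigma,\delta_\ast\ll\delta$. However, the mechanism you invoke to close the estimate is not the one that actually works, and as stated it fails in the main regime. You claim the $2^{20\sigma k_i}$ losses are ``absorbed into the off-diagonal factors $2^{-\delta|j_1-k_2|}2^{-\delta|k_1-k_3|}2^{-\delta|k_2-k_3|}$''. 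But those factors sit on the right-hand side of the bound you must \emph{produce}, and in the diagonal regime $k_1\sim k_2\sim k_3$, $j_1\sim k_2$ they are $O(1)$, so they absorb nothing. Moreover the relevant loss is not $2^{20\sigma k}$: the $R_k$ norm contains no critical energy-type component, only $S^{1-\delta_\ast}_k$, so whenever a rough factor must be placed in $L^\infty_tL^2_x$ (or an $X^{0,1/2}$-type norm) you pay $2^{\delta_\ast k}$. What saves the paper's argument is the intrinsic gain of the redeeming norms at the rough frequencies (which only occur at $k\geq 1$): the cap-summed bound $\bigl(\sum_{\kappa_2}\|Q^{\pm}_{<j_2}\phi^{(2)}_{k_2,\kappa_2}\|_{L^2_tL^\infty_x}^2\bigr)^{1/2}\lesssim 2^{-\delta|j_1-k_2|}2^{-(\frac12-40\sigma)k_2}\|\phi^{(2)}_{k_2}\|_{R_{k_2}}$ beats scaling by nearly a full power of $2^{-k_2}$, and it is this gain that simultaneously generates the modulation and frequency off-diagonal factors and compensates the $2^{\delta_\ast k_3}$ loss from the energy placement of the other (possibly rough) factor. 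Your proposal never identifies this balance, which is the actual content of the lemma beyond~\cite{KST}.

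A second, related omission: in the high-low case $k_2=k_1+O(1)\geq k_3+O(1)$ with $k_3<0$ (so the low-frequency factor is necessarily smooth), the paper must switch which factor is placed in $L^2_tL^\infty_x$ versus $L^\infty_tL^2_x$ according to whether $|k_3|<\sigma k_2$ or $|k_3|\geq\sigma k_2$, using the low-frequency gain to neutralize the $2^{\delta_\ast k_2}$ loss from the high-frequency rough factor; a uniform ``substitute $R_k$ for $S^1_k$ in every slot'' prescription does not cover this. Conversely, the obstacle you single out — the radially directed blocks $\calC_{k'}(l')$ and the $\gamma^{-2}(k',l')$ weights — does not actually arise in this lemma: the paper's proof uses only angular cap decompositions together with the cap-summed $L^2_tL^\infty_x$ and $L^\infty_tL^2_x$ bounds (the block-refined components of $R_k$ are needed elsewhere, e.g.\ in the low-high interaction estimates). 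So the skeleton of your plan is right, but the exponent bookkeeping — where the gains come from, where the $\delta_\ast$ losses occur, and the $|k_3|\gtrless\sigma k_2$ case switch — needs to be redone along the lines above before the argument closes.
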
 
\begin{proof} By symmetry we may assume $k_2\geq k_3$. 
Then we may assume that $k_2>0$, since else the estimate coincides with the one from~\cite{KST}. To begin with, assume that $\phi_{k_2}^{(2)}\in R_{k_2}$. By duality, it suffices to place the null-form $P_{k_1}Q_{j_1}\mathcal{N}\big(Q_{<j_2}\phi_{k_2}^{(2)}, Q_{<j_3}\phi_{k_3}^{(3)})$ into $X_1^{0,-\frac12}$. We verify this for the different frequency interactions. 

\medskip 

\noindent {\it{High-High interactions $k_2 = k_3+O(1)\geq k_1+O(1)$}}: Assume first that $j_1\leq k_1+O(1)$. Localizing the inputs further to the upper or lower half-space, we can further write this as 
\begin{align*}
 P_{k_1}Q_{j_1}\mathcal{N}\big(Q_{<j_2}^{\pm}\phi_{k_2}^{(2)}, Q_{<j_3}^{\pm}\phi_{k_3}^{(3)})= \sum_{\kappa_2\sim \pm\kappa_3}P_{k_1}Q_{j_1}\mathcal{N}\big(Q_{<j_2}^{\pm}\phi_{k_2,\kappa_2}^{(2)}, Q_{<j_3}^{\pm}\phi_{k_3,\kappa_3}^{(3)}),
\end{align*}
where the caps $\kappa_{2,3}$ range over the collections of spherical caps of diameter $2^{\frac{j_1+k_1}{2}-k_2}$. Then we bound the expression by 
\begin{align*}
&2^{-\frac{j_1}{2}}\big\|P_{k_1}Q_{j_1}\mathcal{N}\big(Q_{<j_2}^{\pm}\phi_{k_2}^{(2)}, Q_{<j_3}^{\pm}\phi_{k_3}^{(3)})\big\|_{L_{t,x}^2}\\
&\lesssim 2^{-\frac{j_1}{2}} 2^{\frac{j_1+k_1}{2}}\sum_{\kappa_2\sim \pm\kappa_3}\big\|Q_{<j_2}^{\pm}\phi_{k_2,\kappa_2}^{(2)}\big\|_{L_t^2L_x^\infty}\big\|Q_{<j_3}^{\pm}\nabla_x\phi_{k_3,\kappa_3}^{(3)}\|_{L_t^\infty L_x^2}\\
&\lesssim 2^{\frac{k_1}{2}-k_2} 2^{k_2} \biggl(\sum_{\kappa_2}\big\|Q_{<j_2}^{\pm}\phi_{k_2,\kappa_2}^{(2)}\big\|_{L_t^2L_x^\infty}^2\biggr)^{\frac12} \biggl(\sum_{\kappa_3}\big\|Q_{<j_3}^{\pm}\nabla_x\phi_{k_3,\kappa_3}^{(3)}\big\|_{L_t^\infty L_x^2}^2 \biggr)^{\frac12}
\end{align*}
on account of the Cauchy-Schwarz inequality. Note that 
\begin{align*}
 \biggl( \sum_{\kappa_2}\big\|Q_{<j_2}^{\pm}\phi_{k_2,\kappa_2}^{(2)}\big\|_{L_t^2L_x^\infty}^2 \biggr)^{\frac12} &\lesssim 2^{-\delta|j_1-k_2|} 2^{-(\frac12-40\sigma)k_2} \big\|\phi_{k_2,\kappa_2}^{(2)}\big\|_{R_{k_2}} \\
 \biggl(\sum_{\kappa_3}\big\|Q_{<j_3}^{\pm}\nabla_x\phi_{k_3,\kappa_3}^{(3)}\big\|_{L_t^\infty L_x^2}^2 \biggr)^{\frac12} &\lesssim 2^{\delta_* k_3}\big\|\phi_{k_3,\kappa_3}^{(3)}\big\|_{S_{k_3}^1+R_{k_3}}. 
\end{align*}
The desired bound follows easily from this, if we choose $\sigma$ and $\delta_\ast$ sufficiently small. 
The estimate when $j_1>k_1+C$ is similar, except that it suffices to localize to caps of diameter $\sim 2^{k_1-k_2}$. 

\medskip

\noindent {\it{High-Low interactions $k_2 = k_1+O(1)\geq k_3+O(1)$}}: Assume first that $j_1\leq k_3+O(1)$. Then we can localize the factors to discs of radius $\sim 2^{\frac{j_1-k_3}{2}}$, and either aligned or anti-aligned. 
If $k_3>0$, we use the same bounds as in the preceding case, which gives
\begin{align*}
&2^{-\frac{j_1}{2}}\big\|P_{k_1}Q_{j_1}\mathcal{N}\big(Q_{<j_2}^{\pm}\phi_{k_2}^{(2)}, Q_{<j_3}^{\pm}\phi_{k_3}^{(3)})\big\|_{L_{t,x}^2}\\
&\lesssim 2^{-\frac{j_1}{2}} 2^{\frac{j_1-k_3}{2}} \sum_{\kappa_2\sim \pm\kappa_3} \big\|Q_{<j_2}^{\pm}\nabla_x\phi_{k_2,\kappa_2}^{(2)}\big\|_{L_t^2L_x^\infty}\big\|Q_{<j_3}^{\pm}\nabla_x\phi_{k_3,\kappa_3}^{(3)}\|_{L_t^\infty L_x^2}\\
&\lesssim 2^{k_2} 2^{-\frac{k_3}{2}} \biggl( \sum_{\kappa_2}\big\|Q_{<j_2}^{\pm}\phi_{k_2,\kappa_2}^{(2)}\big\|_{L_t^2L_x^\infty}^2\biggr)^{\frac12} \biggl(\sum_{\kappa_3}\big\|Q_{<j_3}^{\pm}\nabla_x\phi_{k_3,\kappa_3}^{(3)}\big\|_{L_t^\infty L_x^2}^2\biggr)^{\frac12}.
\end{align*}
Combining with the bounds from the preceding case, we infer the bound 
\begin{align*}
&2^{-\frac{j_1}{2}}\big\|P_{k_1}Q_{j_1}\mathcal{N}\big(Q_{<j_2}^{\pm}\phi_{k_2}^{(2)}, Q_{<j_3}^{\pm}\phi_{k_3}^{(3)})\big\|_{L_{t,x}^2}\\
&\lesssim 2^{k_2} 2^{-\frac{k_3}{2}} 2^{-\delta|j_1-k_2|} 2^{-(\frac12-40\sigma)k_2}\big\|\phi_{k_2,\kappa_2}^{(2)}\big\|_{R_{k_2}} 2^{\delta_* k_3}\big\|\phi_{k_3,\kappa_3}^{(3)}\big\|_{S_{k_3}^1+R_{k_3}},
\end{align*}
which is (more than) the required bound. In case $k_3<0$, we use the same bound provided $|k_3|<\sigma k_2$, while we place the high frequency term $Q_{<j_2}^{\pm}\nabla_x\phi_{k_2,\kappa_2}^{(2)}$ into $L_t^\infty L_x^2$ and the low frequency term $Q_{<j_3}^{\pm}\nabla_x\phi_{k_3,\kappa_3}^{(3)}$ into $L_t^2 L_x^\infty$, provided $|k_3|\geq \sigma k_2$. The low frequency gain neutralises the loss of $2^{\delta_*k_2}$ coming from the high frequency term. 

If $j_1>k_3+C$, one argues similarly but without the angular localizations. 

\medskip 

\noindent {\it{Low-High interactions $k_3 = k_1+O(1)\geq k_2+O(1)$}}:  This can be handled by using identical estimates to the preceding case, changing the roles of $\phi_{k_2}^{(2)}, \phi_{k_3}^{(3)}$ if necessary. 
\end{proof}

\begin{lemma} \label{lem:KST12.4addendum} 
The following estimate holds uniformly in all indices:
\begin{align*}
&\big\|Q_{<j_1-C}P_{k_1}\mathcal{N}\big(Q_{j_1}\phi_{k_2}^{(2)},\,Q_{<j_1+C}\phi_{k_3}^{(3)}\big)\big\|_{N_{k_1}} \\
&\quad \lesssim 2^{k_1} 2^{-\delta|j_1 - k_2|}  2^{\delta(k_1 - \max\{k_2,k_3\})} 2^{-\delta|k_2 - k_3|}\big\|\phi_{k_2}^{(2)}\big\|_{S_{k_2}^1+R_{k_2}} \big\|\phi_{k_3}^{(3)}\big\|_{S_{k_3}^1+R_{k_3}}. 
\end{align*}
\end{lemma}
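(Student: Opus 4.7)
The plan is to bound the $N_{k_1}$ norm by its $L^1_t L^2_x$ component, since $L^1_t L^2_x \hookrightarrow N_{k_1}$. The first step is a modulation-balance reduction: because the output is restricted by $Q_{<j_1-C}$ and the first factor carries modulation exactly $2^{j_1}$, the inner product of the output with the bilinear expression can only see the piece of $Q_{<j_1+C}\phi_{k_3}^{(3)}$ with modulation $\sim 2^{j_1}$. Thus I may replace $Q_{<j_1+C}\phi_{k_3}^{(3)}$ by $Q_{j_1+O(1)}\phi_{k_3}^{(3)}$ and reduce the problem to estimating
\[
 \bigl\| P_{k_1}\calN\bigl(Q_{j_1}\phi_{k_2}^{(2)},\,Q_{j_1+O(1)}\phi_{k_3}^{(3)}\bigr)\bigr\|_{L^1_t L^2_x}.
\]
With both inputs at modulation $\sim 2^{j_1}$ and output modulation $<2^{j_1-C}$, one further localizes each input angularly to caps $\kappa_2 \sim \pm \kappa_3$ on $\mathbb{S}^3$ of diameter $\sim 2^{(j_1-k_{\min})/2}$ (where $k_{\min} = \min\{k_1,k_2,k_3\}$), and the null form $\calN$ produces an angular gain of $2^{(j_1-k_{\min})/2}$ from each such paired block.

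I would then run the standard case analysis in parallel with the proof of Lemma~\ref{lem:KST12.4}. In the high-high case $k_2 = k_3 + O(1) \geq k_1$, I place one factor (after extracting the modulation weight $2^{-j_1/2}$) in $L^\infty_t L^2_x$ and the other in the square-summed $L^2_t L^\infty_x$ over the angular caps: if that factor lies in $S_{k_i}^1$ I use the standard square-summed angular Strichartz bound from the $S_k^\eta(l)$ definition, and if it lies in $R_{k_i}$ I use the $R_{k_i}L^2_t L^\infty_x$ component, paying a loss of $2^{\delta_\ast k_i}$ when I convert $R_{k_i}$ to $S_{k_i}^{1-\delta_\ast}$-level control. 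In the high-low case $k_2 = k_1 + O(1) \geq k_3 + O(1)$ (and symmetrically low-high), I instead place the low-frequency factor in $L^2_t L^\infty_x$ (with angular localization) and the high-frequency factor in $L^\infty_t L^2_x$, so that the frequency gap $k_{\max}-k_{\min}$ contributes the off-diagonal factor $2^{\delta(k_1-\max\{k_2,k_3\})}2^{-\delta|k_2-k_3|}$ after interpolation with a crude $L^\infty_t L^2_x \times L^2_t L^\infty_x$ bound. In each case, combining the null form gain, the modulation weight $2^{-j_1/2}$, and the Bernstein exponents exactly as in the two preceding lemmas yields the factor $2^{k_1}2^{-\delta|j_1-k_2|}$ together with the desired off-diagonal decay.

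The main obstacle will be the angular Cauchy--Schwarz when one or both factors are rough. The redeeming $R_{k_i}L^2_t L^\infty_x$ norm was designed precisely so that the sum over angular sectors $\kappa$ of the $L^2_t L^\infty_x$ norms of the sector pieces is controlled, with the weight $\gamma(k',l')^{-1}$ handling thin radial blocks exactly as in the proof of~\eqref{equ:generalization_131_KST}; this lets me carry out the Cauchy--Schwarz over aligned/anti-aligned cap pairs $\kappa_2 \sim \pm \kappa_3$ without losing the angular summation. The residual loss $2^{\delta_\ast k}$ from placing the rough factor at the level of $S_k^{1-\delta_\ast}$ rather than $S_k^1$ is absorbed by the modulation-off-diagonal factor $2^{-\delta|j_1-k_2|}$ and the frequency-off-diagonal gain, using the hierarchy $0 < \delta_\ast \ll \sigma \ll \delta$ fixed at the outset. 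The improved bound in the regime $j > k_{\min}+C$ needed from the Lemma~\ref{lem:KST12.4} statement above would be recovered by omitting the angular localization and using only the frequency Bernstein estimates in the $L^1_t L^2_x$ bound.
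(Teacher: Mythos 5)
Your opening reduction is not valid. Writing $\tau=\tau_2+\tau_3$, $\xi=\xi_2+\xi_3$, one has schematically $(|\tau|-|\xi|)=\mathrm{mod}_2+\mathrm{mod}_3+\text{(resonance)}$, where the resonance term is governed by the angle between the input frequencies. Small output modulation together with $\mathrm{mod}_2\sim 2^{j_1}$ therefore forces either $\mathrm{mod}_3\gtrsim 2^{j_1}$ \emph{or} a resonance of size $\sim 2^{j_1}$; the second alternative is perfectly possible (e.g. $\phi^{(3)}_{k_3}$ essentially a free wave at a nontrivial angle to $\phi^{(2)}_{k_2}$), so you may not replace $Q_{<j_1+C}\phi^{(3)}_{k_3}$ by $Q_{j_1+O(1)}\phi^{(3)}_{k_3}$. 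The only correct consequence of the modulation constraints is the angular localization of the two inputs to aligned or anti-aligned caps, and the cap size depends on the frequency balance: $\sim 2^{(j_1-k_2)/2}$ in the low--high case, $\sim 2^{(j_1-k_3)/2}$ in the high--low case, and $\sim 2^{(j_1+k_1)/2-k_2}$ in the high--high case, so your uniform choice $2^{(j_1-k_{\min})/2}$ is too large in the high--high regime. The paper's proof keeps $Q_{<j_1+C}\phi^{(3)}_{k_3}$ untouched and works only with these cap decompositions.

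The functional scheme you then outline does not close. You invoke $L^1_tL^2_x\hookrightarrow N_{k_1}$, but the pairing you describe --- one factor in $L^\infty_tL^2_x$, the other in square-summed $L^2_tL^\infty_x$ --- only yields $L^2_tL^2_x$, and one cannot ``extract the modulation weight $2^{-j_1/2}$'' from a factor measured in $L^\infty_tL^2_x$: that gain is available only by placing the modulation-localized factor $Q_{j_1}\phi^{(2)}_{k_2}$ in an $L^2_t$-based norm (via $X^{0,\frac12}_\infty\subset S_{k_2}$, or the $S^{1-\delta_\ast}_{k_2}$ component of $R_{k_2}$ at the cost of $2^{\delta_\ast k_2}$). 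Placing the small-modulation output in $X^{0,-\frac12}_1$ instead fails, since one would have to sum $2^{-j/2}\|Q_j(\cdots)\|_{L^2_{t,x}}$ over all $j<j_1$, which diverges at low modulations. What actually closes the estimate, and is what the paper does, is: cap-localized, square-summed $L^2_{t,x}$ control of $\nabla_x Q_{j_1}\phi^{(2)}_{k_2}$ with gain $2^{-j_1/2}$, interpolated against the $L^2_tL^6_x$ redeeming/Strichartz bound to reach $L^2_tL^3_x$ (or $L^2_tL^{12/5}_x$ in the high--high case), then H\"older against $L^2_tL^6_x$ for the other factor plus Bernstein to land in $L^1_tL^2_x$, combined with the null-form angular gain. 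These interpolation exponents are precisely what generate the factor $2^{-\delta|j_1-k_2|}$ and the off-diagonal frequency decay while absorbing the $2^{\delta_\ast k_2}$ and $2^{20\sigma k_2}$ losses; in your write-up the source of $2^{-\delta|j_1-k_2|}$ is never identified, and with the norm pairing as stated it cannot be produced.
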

\begin{proof} We consider the case when at least one of $\phi_{k_2}^{(2)}, \phi_{k_3}^{(3)}$ is in the space $R_{k_j}, j = 2,3$. To begin with, assume $\phi_{k_2}^{(2)}\in R_{k_2}$. 

\medskip

\noindent {\it{Low-High interactions}} $k_2\leq k_3,\,k_1 = k_3+O(1)$: Of course we may assume $k_3>0$ since else the estimate is covered by those in~\cite{KST}. If $j_1<k_2+O(1)$, we may localize the Fourier supports of $\phi_{k_2}, \phi_{k_3}$ to caps $\kappa_{2,3}$ of diameter $\sim 2^{\frac{j_1-k_2}{2}}$ and aligned or anti-aligned. Then we get 
\begin{align*}
&\biggl( \sum_{\kappa_2}\big\|\nabla_x P_{\kappa_2}Q_{j_1}\phi_{k_2}^{(2)}\big\|_{L_{t,x}^2}^2\biggr)^{\frac12}\lesssim 2^{-\frac{j_1}{2}} 2^{\delta_*k_2} \big\|\phi_{k_2}^{(2)}\big\|_{R_{k_2}},\\
&\biggl(\sum_{\kappa_2}\big\|\nabla_x P_{\kappa_2}Q_{j_1}\phi_{k_2}^{(2)}\big\|_{L_t^2 L_x^6}^2\biggr)^{\frac12}\lesssim 2^{k_2} 2^{-(\frac12 - 20\sigma)k_2}\big\|\phi_{k_2}^{(2)}\big\|_{R_{k_2}}. 
\end{align*}
Interpolating gives 
\begin{align*}
\biggl( \sum_{\kappa_2} \big\|\nabla_x P_{\kappa_2}Q_{j_1}\phi_{k_2}^{(2)}\big\|_{L_t^2 L_x^3}^2 \biggr)^{\frac12}\lesssim 2^{\frac12(\frac12+20\sigma)k_2}  2^{-\frac{j_1}{4}} 2^{\frac{\delta_*}{2}k_2} \big\|\phi_{k_2}^{(2)}\big\|_{R_{k_2}}.
\end{align*}
Furthermore, we have 
\begin{align*}
 \biggl(\sum_{\kappa_3}\big\|\nabla_x P_{\kappa_3}Q_{j_1}\phi_{k_3}^{(3)}\big\|_{L_t^2 L_x^6}^2\biggr)^{\frac12}\lesssim 2^{\frac{5}{6}k_3}\big\|\phi_{k_3}^{(3)}\big\|_{S_{k_3}^1+R_{k_3}}. 
\end{align*}
Since we gain $2^{\frac{j_1-k_2}{2}}$ from the null-structure, we infer the bound 
\begin{align*}
&\big\|Q_{<j_1-C}P_{k_1}\mathcal{N}\big(Q_{j_1}\phi_{k_2}^{(2)},\,Q_{<j_1+C}\phi_{k_3}^{(3)}\big)\big\|_{N_{k_1}}\\
&\leq \big\|Q_{<j_1-C}P_{k_1}\mathcal{N}\big(Q_{j_1}\phi_{k_2}^{(2)},\,Q_{<j_1+C}\phi_{k_3}^{(3)}\big)\big\|_{L_t^1 L_x^2}\\
&\lesssim 2^{\frac{j_1 - k_2}{2}}\biggl( \sum_{\kappa_2} \big\|\nabla_x P_{\kappa_2}Q_{j_1}\phi_{k_2}^{(2)}\big\|_{L_t^2 L_x^3}^2 \biggr)^{\frac12} \biggl( \sum_{\kappa_3}\big\|\nabla_x P_{\kappa_3}Q_{j_1}\phi_{k_3}^{(3)}\big\|_{L_t^2 L_x^6}^2\biggr)^{\frac12}.
\end{align*}
Since $k_1 = k_3+O(1)$, the above bounds allow us to bound the preceding by 
\begin{align*}
2^{k_1} 2^{\frac{j_1 - k_2}{4}} 2^{(10\sigma + \frac{\delta_*}{2}) k_2 -\frac{k_3}{6}} \big\|\phi_{k_2}^{(2)}\big\|_{R_{k_2}} \big\|\phi_{k_3}^{(3)}\big\|_{S_{k_3}^1+R_{k_3}}, 
\end{align*}
which is as desired if $\sigma, \delta_*$ are sufficiently small. If $j_1\geq k_2$, we can proceed similarly without the extra factor $2^{\frac{j_1-k_2}{2}}$ from the angular gain before. 

\noindent {\it{High-Low interactions}} $k_2\geq k_3,\,k_1 = k_2+O(1)$: Here if $j_1\leq k_3+O(1)$ we can localize the two factors $\phi_{k_j}^{(j)}$ to caps of diameter $\sim 2^{\frac{j_1-k_3}{2}}$. Then similarly to the preceding, we bound 
\begin{align*}
&\big\|Q_{<j_1-C}P_{k_1}\mathcal{N}\big(Q_{j_1}\phi_{k_2}^{(2)},\,Q_{<j_1+C}\phi_{k_3}^{(3)}\big)\big\|_{N_{k_1}}\\
&\leq \big\|Q_{<j_1-C}P_{k_1}\mathcal{N}\big(Q_{j_1}\phi_{k_2}^{(2)},\,Q_{<j_1+C}\phi_{k_3}^{(3)}\big)\big\|_{L_t^1 L_x^2}\\
&\lesssim 2^{\frac{j_1-k_3}{2}} \biggl( \sum_{\kappa_2}\big\|\nabla_x P_{\kappa_2}Q_{j_1}\phi_{k_2}^{(2)}\big\|_{L_t^2 L_x^3}^2\biggr)^{\frac12} \biggl(\sum_{\kappa_3}\big\|\nabla_x P_{\kappa_3}Q_{j_1}\phi_{k_3}^{(3)}\big\|_{L_t^2 L_x^6}^2 \biggr)^{\frac12}\\
&\lesssim  2^{\frac{j_1-k_3}{2}} 2^{\frac12(\frac12+20\sigma)k_2}  2^{-\frac{j_1}{4}} 2^{\frac{\delta_*}{2}k_2} \big\|\phi_{k_2}^{(2)}\big\|_{R_{k_2}} 2^{\frac{5}{6}k_3} \big\|\phi_{k_3}^{(3)}\big\|_{S_{k_3}^1+R_{k_3}}.
\end{align*}
The preceding can be rearranged as 
\[
2^{\frac{j_1 - k_2}{4}} 2^{(10\sigma + \delta_* - \frac16)k_2} 2^{\frac{k_3 - k_2}{3}} \big\|\phi_{k_2}^{(2)}\big\|_{R_{k_2}} \big\|\phi_{k_3}^{(3)}\big\|_{S_{k_3}^1+R_{k_3}},
\]
again acceptable if $\sigma, \delta_*$ are sufficiently small. In case $j_1\geq k_3$ we argue similarly without the angular gain. 

\medskip 

\noindent {\it{High-High interactions}} $k_2 = k_3+O(1)\geq k_1+O(1)$: Here we can localize the factors $\phi_{k_j}^{(j)}$ to caps $\kappa_j$ of radius $2^{\frac{j+k_1}{2}-k_2}$, either aligned or anti-aligned. Then we estimate 
\begin{align*}
&\big\|Q_{<j_1-C}P_{k_1}\mathcal{N}\big(Q_{j_1}\phi_{k_2}^{(2)},\,Q_{<j_1+C}\phi_{k_3}^{(3)}\big)\big\|_{N_{k_1}}\\
&\leq \big\|Q_{<j_1-C}P_{k_1}\mathcal{N}\big(Q_{j_1}\phi_{k_2}^{(2)},\,Q_{<j_1+C}\phi_{k_3}^{(3)}\big)\big\|_{L_t^1 L_x^2}\\
&\lesssim 2^{\frac{k_1}{3}} 2^{\frac{j_1+k_1}{2} - k_2}\sum_{\kappa_2\sim\pm \kappa_3}\big\|\nabla_x P_{\kappa_2}Q_{j_1}\phi_{k_2}^{(2)}\big\|_{L_t^2 L_x^{\frac{12}{5}}} \big\|\nabla_x P_{\kappa_3}Q_{<j_1+C}\phi_{k_3}^{(3)}\big\|_{L_t^2 L_x^{6}}
\end{align*}
by Bernstein's inequality as well as the gain from the angular alignment and the null-structure. Using interpolation we get 
\begin{align*}
\big\|\nabla_xP_{\kappa_2}Q_{j_1}\phi_{k_2}^{(2)}\big\|_{L_t^2 L_x^{\frac{12}{5}}} \leq \big\|\nabla_x P_{\kappa_2}Q_{j_1}\phi_{k_2}^{(2)}\big\|_{L_{t,x}^2}^{\frac34} \big\|\nabla_x P_{\kappa_2}Q_{j_1}\phi_{k_2}^{(2)}\big\|_{L_t^2L_x^6}^{\frac14},
\end{align*}
and square summing over $\kappa_2$, we get 
\begin{align*}
\biggl( \sum_{\kappa_2}\big\|\nabla_x P_{\kappa_2}Q_{j_1}\phi_{k_2}^{(2)}\big\|_{L_t^2 L_x^{\frac{12}{5}}}^2 \biggr)^{\frac12}\lesssim \big(2^{-\frac{j_1}{2}} 2^{\delta_* k_2}\big)^{\frac34} \big(2^{(\frac12 + 20\sigma)k_2}\big)^{\frac14} \big\|\phi_{k_2}^{(2)}\big\|_{R_{k_2}},
\end{align*}
while we have directly from the definition of $R_{k_3}$ that 
\begin{align*}
\biggl(\sum_{\kappa_3} \big\|\nabla_x P_{\kappa_3}Q_{<j_1+C}\phi_{k_3}^{(3)}\big\|_{L_t^2 L_x^{6}}^2 \biggr)^{\frac12}\lesssim 2^{k_3} 2^{-\frac{k_3}{6}} \big\|\phi_{k_3}^{(3)}\big\|_{S_{k_3}^1+R_{k_3}}. 
\end{align*}
Combining these estimates and also using Cauchy-Schwarz to reduce to square-summation over the caps, we infer the desired bound by also observing that necessarily $j_1\leq k_1+O(1)$: 
\begin{align*}
&\big\|Q_{<j_1-C}P_{k_1}\mathcal{N}\big(Q_{j_1}\phi_{k_2}^{(2)},\,Q_{<j_1+C}\phi_{k_3}^{(3)}\big)\big\|_{N_{k_1}}\\
&\lesssim 2^{\frac{k_1}{3}} 2^{\frac{j_1+k_1}{2} - k_2} 2^{k_2} 2^{-\frac{3j_1}{8}} 2^{(\frac18 + \frac34 \delta_* + 5\sigma)k_2} 2^{-\frac{k_2}{6}} \big\|\phi_{k_2}^{(2)}\big\|_{R_{k_2}} \big\|\phi_{k_3}^{(3)}\big\|_{S_{k_3}^1+R_{k_3}}\\
& = 2^{\frac{j_1}{8}} 2^{\frac{5k_1}{6}} 2^{(-\frac{1}{24} + \frac34 \delta_* + 5\sigma)k_2} \big\|\phi_{k_2}^{(2)}\big\|_{R_{k_2}} \big\|\phi_{k_3}^{(3)}\big\|_{S_{k_3}^1+R_{k_3}},\\
\end{align*}
which is easily seen to be of the desired form if $\delta_*, \sigma$ are sufficiently small, recalling that $j_1\leq k_1+O(1)$. 

\medskip

To conclude the proof, we also need to deal with the case when $\phi_{k_2}^{(2)}$ belongs to $S^1_{k_2}$ but $\phi_{k_3}^{(3)}$ is in $R_{k_3}$. This case is much easier though, because then it suffices to place $Q_{j_1}\phi_{k_2}^{(2)}$ in $L_{t,x}^2$ and exploit the redeeming bounds for $\phi_{k_3}^{(3)}$. We omit the details. 
\end{proof}

Finally, we present the following generalizations of the multilinear estimates (134) and (135) in~\cite{KST}.
\begin{proposition}
 We have that
 \begin{align}
  &\bigl\| (1-\calH_{k_1}) P_{k_1} \calN( \phi_{k_2}^{(2)}, \phi_{k_3}^{(3)} ) \bigr\|_{\Box Z} \nonumber \\
  &\quad \lesssim 2^{k_1} 2^{\delta (k_1 - \max\{k_2, k_3\})} 2^{-\delta |k_2-k_3|} \|\phi_{k_2}^{(2)}\|_{S^1_{k_2} + R_{k_2}} \|\phi_{k_3}^{(3)}\|_{S^1_{k_3} + R_{k_3}}, \label{equ:generalization_134_KST}  \\
  &\bigl\| \calH_{k_1} \calN( \phi_{k_2}^{(2)}, \phi_{k_3}^{(3)} ) \|_{\Box Z} \nonumber \\
  &\quad \lesssim 2^{k_1} 2^{\delta (k_1 - \max\{ k_2, k_3 \})} 2^{-\delta |k_2-k_3|} \|\phi_{k_2}^{(2)}\|_{S^1_{k_2} + R_{k_2}} \|\phi_{k_3}^{(3)}\|_{S^1_{k_3} + R_{k_3}}, \quad k_1 > \max\{k_2, k_3\} - C. \label{equ:generalization_135_KST}
 \end{align}
\end{proposition}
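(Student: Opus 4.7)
The plan is to mirror the proof of the corresponding estimates (134)--(135) in Section~12 of~\cite{KST}, adapting the decompositions so as to accommodate rough inputs in the spaces $R_{k_i}$. The key observation is that the $Z$ norm is built from angularly localized $L^1_t L^\infty_x$ norms with modulation restrictions, so the components $R_k L^2_t L^\infty_x$ and $R_k L^2_t L^6_x$ of the redeeming norm--which carry angular gains precisely matching the caps used in the definition of~$Z$--play the role filled by the null frame and the square-summed $S_k^\eta(\ell)$-norms in the smooth context. The $S_k^{1-\delta_*}$ component provides a safety net when only an $L^\infty_t L^2_x$ bound is needed, at the price of a factor $2^{\delta_* k_i}$ which must be absorbed by the off-diagonal gain $2^{\delta(k_1-\max\{k_2,k_3\})}2^{-\delta|k_2-k_3|}$ together with the null form angular gain.

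First I would split according to whether each of $\phi^{(2)}_{k_2}, \phi^{(3)}_{k_3}$ is smooth or rough. The smooth-smooth case is exactly KST; we need to handle smooth-rough and rough-rough. In each case perform the standard trichotomy of frequency interactions: $high\times high\to low$, $high\times low\to high$ (and its symmetric counterpart), then localize the output modulation to $j$ with $j<k_1+2\ell$ and the inputs to modulations $<j-C$, aligning the inputs with caps $\kappa$ of diameter $\sim 2^{(j-k_{\max})/2}$ (aligned or anti-aligned), so that the null form produces the gain $2^{(j-k_{\max})/2}$. For $(1-\calH_{k_1})$ the output modulation is forced to dominate at least one input modulation, while for $\calH_{k_1}$ the side condition $k_1>\max\{k_2,k_3\}-C$ is what allows one to sum the output modulation up to $\sim k_1$. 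In both situations, one reads off the $Z_{k_1}$ norm by summing over angular caps with weight $2^\ell$ and over the modulation parameter, using the $L^1_t L^\infty_x$ component directly.

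The main obstacle will be the $high\times high\to low$ interactions with one or both inputs rough. Here $k_1$ is much smaller than $k_2 \approx k_3$, so producing the off-diagonal gain $2^{\delta(k_1-k_2)}$ requires extracting a sizable power of $k_1-k_2$ from the combination of the null form angular gain, the Bernstein loss in passing from $L^{r}_x$ to $L^\infty_x$ on a cap, and the built-in dyadic weight $2^\ell$ in the $Z$ norm. My approach here is to place one rough input into the redeeming $R_{k_i} L^2_t L^\infty_x$ norm after localizing to the finer caps of radius $\sim 2^{(j-k_{\max})/2}$ (exactly the angular scale for which the redeeming space was designed, thanks to the $\gamma(k',\ell')$ weights), and to place the second rough factor, or the smooth factor, in the complementary $L^\infty_t L^2_x$ slot via $S^{1-\delta_*}_{k_i}$ (with the loss $2^{\delta_* k_i}$). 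Cauchy-Schwarz in the caps then square-sums cleanly, and the excess in $k_{\max}$ is absorbed using the smallness hierarchy $\delta_* \ll \sigma \ll \delta$; in particular the factors $2^{-(1/2-20\sigma)k_{\max}}$ from the redeeming $R L^2_t L^\infty_x$ structure combine with the null form angular gain $2^{(j-k_{\max})/2}$ to beat the $\delta_*$-loss with margin.

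For $(1-\calH_{k_1})$, the modulation configuration forces $Q_{\geq j-C}$ on either the output or on one of the inputs; in the former case we take the $Z$ norm trivially at modulation $\sim j$, and in the latter we re-apply Lemma~\ref{lem:KST12.4addendum} (or its proof strategy) with a factor $2^{-\delta|j-k_{\min}|}$ that ensures summability in $j$. For $\calH_{k_1}$ in~\eqref{equ:generalization_135_KST}, the side condition $k_1>\max\{k_2,k_3\}-C$ restricts us to the $high\times low\to high$ (and $low\times high\to high$) and balanced regimes, where the gain $2^{\delta(k_1-k_2)}$ is either trivial or produced by Bernstein on a cap of size $\sim 2^{(j-k_1)/2}$; summing over $\ell<0$ with the weight $2^\ell$ and over $j<k_1+O(1)$ then yields the required $Z_{k_1}$ bound. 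The details in each sub-case are mechanical extensions of the computations of Lemmas~\ref{lem:KST12.4} and~\ref{lem:KST12.4addendum}, so no new analytic input is needed beyond the redeeming estimates already established in Section~\ref{sec:function_spaces}.
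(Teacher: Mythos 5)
Your skeleton (modulation trichotomy, cap localization at the null-form angle, redeeming norms for rough factors versus $S^{1-\delta_*}$ as a fallback) matches the paper's setup, and your reading of~\eqref{equ:generalization_135_KST} is roughly in the right spirit. But for~\eqref{equ:generalization_134_KST} the proposal has a genuine gap exactly in the case you single out as the main obstacle. First, your description of the modulation structure of $1-\calH_{k_1}$ is backwards at the key point: in $\calH_{k_1}$ the \emph{output} modulation dominates both inputs, so in $1-\calH_{k_1}$ either the output modulation is $\gtrsim 2^{k_1}$ (and then the term simply does not register in the $Z_{k_1}$ norm, which only sees modulations $2^{k_1+2l}$, $l<C$ — this is the observation the paper uses, not an estimate "taken trivially at modulation $\sim j$"), or one \emph{input} sits at a dominant modulation $Q_r$ with $r$ possibly far above the output scale. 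For that input the redeeming $R_{k}L^2_tL^\infty_x$ and $R_kL^2_tL^6_x$ components are not available as you propose, because by definition they only control the low-modulation part $Q_{<k+2l}v$; the only handle on $Q_r\phi^{(2)}_{k_2}$ inside $R_{k_2}$ is the $X^{0,\frac12}_\infty$ piece of $S^{1-\delta_*}_{k_2}$, which costs $2^{\delta_*k_2}$ but supplies the factor $2^{-r/2}$ needed to sum over the dominant modulation. Your scheme has no mechanism for this summation, and your claim that the $2^{-(\frac12-20\sigma)k_{\max}}$ prefactor plus the null-form angle beats the $\delta_*$-loss skips it entirely. The paper resolves this by an interpolation: it places the $Q_r$-localized rough factor in $L^2_tL^3_x$, obtained by interpolating the $L^2_{t,x}$ ($X^{0,\frac12}_\infty$) bound against the redeeming $L^2_tL^\infty_x$ bound, and only this combination yields simultaneous exponential gains in $-k_2$, in the angular parameter, and in $r-(k_1+2l)$.

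Second, your Hölder pairing in the high-high case — rough factor in $R\,L^2_tL^\infty_x$, the other in $L^\infty_tL^2_x$ via $S^{1-\delta_*}$ — produces an $L^2_t$-based output, which cannot be converted into the $L^1_tL^\infty_x$-based $Z_{k_1}$ norm on the infinite time interval; the modulation localization $Q_{k_1+2l}$ does not buy time integrability. One needs an $L^2_t\times L^2_t\to L^1_t$ pairing: the paper lands the product in $L^1_tL^2_x$ on radially directed boxes (rough factor in $L^2_tL^3_x$ as above, smooth factor in a box-square-summed $L^2_tL^6_x$ Strichartz norm) and then uses the embedding $2^{l/2}P_{k_1,\kappa}Q_{k_1+2l+O(1)}L^1_tL^2_x\subset \Box L^1_tL^\infty_x$ via Bernstein on the cap. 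Relatedly, re-applying Lemma~\ref{lem:KST12.4addendum} is not legitimate as stated, since that lemma (and Lemma~\ref{lem:KST12.4}) bound $N_{k_1}$-norms, which do not control the $Z$-norm; its proof strategy can be adapted, but that adaptation — the $L^2_tL^3_x$ interpolation and the box-Bernstein passage into $\Box L^1_tL^\infty_x$ — is precisely the new analytic input your proposal asserts is unnecessary.
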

\begin{proof} 
We start with the first estimate~\eqref{equ:generalization_134_KST}. To this end we observe the identity
\begin{align*}
(1-\mathcal{H}_{k_1})\mathcal{N}\big(\phi_{k_2}^{(2)}, \phi_{k_3}^{(3)}\big) &= \sum_{j\geq k+C}Q_j\mathcal{N}\big(Q_{<j-C}\phi_{k_2}^{(2)}, Q_{<j-C}\phi_{k_3}^{(3)}\big)\\
&\quad + \sum_jQ_{<j+O(1)}\mathcal{N}\big(Q_{j}\phi_{k_2}^{(2)}, Q_{<j+O(1)}\phi_{k_3}^{(3)}\big)\\
&\quad +  \sum_jQ_{<j+O(1)}\mathcal{N}\big(Q_{<j+O(1)}\phi_{k_2}^{(2)}, Q_{j}\phi_{k_3}^{(3)}\big).
\end{align*}
The first term on the right-hand side does not contribute to the norm $\|\cdot\|_{\Box Z}$ due to the definition. Consider then the most delicate case where $k_2 =k_3+O(1)\gg k_1$. We only need to consider the case where at least one factor is in the space $R_{k_j}$. By symmetry, it suffices to bound the term
\[
\sum_j P_{k_1}Q_{<j+O(1)}\mathcal{N}\big(Q_{j}\phi_{k_2}^{(2)}, Q_{<j+O(1)}\phi_{k_3}^{(3)}\big)
\]
We may assume that $\phi_{k_2}^{(2)}\in R_{k_2}$. Here if $r>k_1+2l$ and $l>k_1-k_2$, we use the estimate 
\begin{align*}
&2^{\frac{l}{2}} \biggl( \sum_{\kappa\in K_l}\big\|P_{k_1,\kappa}Q_{k_1+2l+O(1)}\mathcal{N}\big(Q_{r}\phi_{k_2}^{(2)}, Q_{<r+O(1)}\phi_{k_3}^{(3)} \bigr) \big\|_{\Box L_t^1L_x^\infty}^2 \biggr)^{\frac12}\\
&= 2^{\frac{l}{2}} \biggl( \sum_{\kappa\in K_l}\sum_{\substack{C_{2}\sim C_3\in C_{k_1}(l)\\\kappa(C_2)\in 2\kappa}}\big\|P_{k_1,\kappa}Q_{k_1+2l+O(1)}\mathcal{N}\big(Q_{r}\phi_{k_2,C_2}^{(2)}, Q_{<r+O(1)}\phi_{k_3, C_3}^{(3)}\big)\big\|_{\Box L_t^1L_x^\infty}^2 \biggr)^{\frac12} \\
&\lesssim 2^{k_1+l-k_2}2^{\frac23 k_1} \biggl( \sum_{C_2\in  C_{k_1}(l)}\big\|Q_{r}\nabla_x\phi_{k_2,C_2}^{(2)}\big\|_{L_t^2 L_x^{2}}^2 \biggr)^{\frac12} \biggl(\sum_{C_3\in  C_{k_1}(l)}\big\|Q_{<r+O(1)}\nabla_x\phi_{k_3,C_3}^{(3)}\big\|_{L_t^2 L_x^{6}}^2 \biggr)^{\frac12},
\end{align*}
where we have used Bernstein's inequality to pass from $L_x^{\frac32}$ to $L_x^2$ and we used the fact that 
\[
2^{\frac{l}{2}}P_{k_1,\kappa}Q_{k_1+2l+O(1)}L_t^1 L_x^2\subset \Box L_t^1 L_x^\infty.
\]
Then use the estimate 
\begin{align*}
\biggl( \sum_{C_2\in  C_{k_1}(l)}\big\|Q_{r}\nabla_x\phi_{k_2,C_2}^{(2)}\big\|_{L_t^2 L_x^{2}}^2 \biggr)^{\frac12}\lesssim 2^{\frac{k_1+2l-r}{2}} 2^{-\frac{k_1+2l}{2}} 2^{\delta_* k_2}\big\|\phi_{k_2}^{(2)}\big\|_{R_{k_2}}, 
\end{align*}
while we also have the improved Strichartz type estimate 
\begin{align*}
\biggl( \sum_{C_3\in  C_{k_1}(l)}\big\|Q_{<r+O(1)}\nabla_x\phi_{k_3,C_3}^{(3)}\big\|_{L_t^2 L_x^{6}}^2 \biggr)^{\frac12}\lesssim 2^{k_3} 2^{-\frac{k_3}{6}} 2^{\frac{k_1-k_3}{3}}\big\|\phi_{k_3}^{(3)}\big\|_{S_{k_3}^1}. 
\end{align*}
Combining the preceding estimates we infer the first bound we need 
\begin{align*}
&2^{\frac{l}{2}} \biggl(\sum_{\kappa\in K_l}\big\|P_{k_1,\kappa}Q_{k_1+2l+O(1)}\mathcal{N}\big(Q_{r}\phi_{k_2}^{(2)}, Q_{<r+O(1)}\phi_{k_3}^{(3)}\big)\big\|_{\Box L_t^1L_x^\infty}^2\biggr)^{\frac12}\\
&\lesssim  2^{k_1+l-k_2}2^{\frac23 k_1} 2^{\frac{k_1+2l-r}{2}} 2^{-\frac{k_1+2l}{2}} 2^{\delta_* k_2}2^{k_3} 2^{-\frac{k_3}{6}} 2^{\frac{k_1-k_3}{3}}\big\|\phi_{k_2}^{(2)}\big\|_{R_{k_2}}\big\|\phi_{k_3}^{(3)}\big\|_{S_{k_3}^1}.
\end{align*}
Using the assumption $k_2 = k_3+O(1)$, the preceding simplifies to 
\[
\lesssim 2^{\frac{3}{2}k_1 +(\delta_* - \frac12)k_2}  2^{\frac{k_1+2l-r}{2}} \big\|\phi_{k_2}^{(2)}\big\|_{R_{k_2}}\big\|\phi_{k_3}^{(3)}\big\|_{S_{k_3}^1}.
\]
This is good in terms of the decay in $k_2$ but bad since overall we leak $\delta_*$ in terms of the frequencies; this is as expected since we have not used the redeeming features of $\phi_{k_2}^{(2)}$, which we do next.  Using interpolation between $L_t^2L_x^\infty$ and $L_{t,x}^2$, we obtain the bound 
\begin{align*}
\biggl(\sum_{C_2\in  C_{k_1}(l)}\big\|Q_{r}\nabla_x\phi_{k_2,C_2}^{(2)}\big\|_{L_t^2 L_x^{3}}^2\biggr)^{\frac12}\lesssim \big(2^{k_2} 2^{-(\frac12-\delta_*)k_2}\big)^{\frac13} \big(2^{-\frac{k_1 + 2l}{2}+\delta_*k_2}\big)^{\frac23}\big\|\phi_{k_2}^{(2)}\big\|_{R_{k_2}}.
\end{align*}
We then infer the second bound 
\begin{align*}
&2^{\frac{l}{2}} \biggl(\sum_{\kappa\in K_l}\big\|P_{k_1,\kappa}Q_{k_1+2l+O(1)}\mathcal{N}\big(Q_{r}\phi_{k_2}^{(2)}, Q_{<r+O(1)}\phi_{k_3}^{(3)}\big)\big\|_{\Box L_t^1L_x^\infty}^2 \biggr)^{\frac12} \\
&\lesssim 2^{k_1+l-k_2}\biggl(\sum_{C_2\in  C_{k_1}(l)}\big\|Q_{r}\nabla_x\phi_{k_2,C_2}^{(2)}\big\|_{L_t^2 L_x^{3}}^2\biggr)^{\frac12} \biggl(\sum_{C_3\in  C_{k_1}(l)}\big\|Q_{<r+O(1)}\nabla_x\phi_{k_3,C_3}^{(3)}\big\|_{L_t^2 L_x^{6}}^2\biggr)^{\frac12}\\
&\lesssim  2^{k_1+l-k_2} 2^{\frac{k_2}{6}} 2^{-\frac{k_1+2l}{3}} 2^{k_2} 2^{-\frac{k_2}{2}} 2^{\frac{k_1}{3}} 2^{\delta_*k_2} \big\|\phi_{k_2}^{(2)}\big\|_{R_{k_2}}
\big\|\phi_{k_3}^{(3)}\big\|_{S_{k_3}^1}.
\end{align*}
This simplifies to 
\begin{align*}
\lesssim  2^{k_1} 2^{\frac{l}{3}} 2^{(\delta_* - \frac13)k_2} \big\|\phi_{k_2}^{(2)}\big\|_{R_{k_2}}
\big\|\phi_{k_3}^{(3)}\big\|_{S_{k_3}^1}.
\end{align*}
Interpolating between this bound and the preceding one results beyond the factor $2^{k_1}$ in exponential gains in $l, k_1+2l-r$, as well as $-k_2$, which is more than what we need. 

Consider next the second estimate~\eqref{equ:generalization_135_KST}. For symmetry reasons, we may assume that $k_1 = k_2+O(1)$. We need to bound 
\begin{align*}
\biggl\| \sum_{j\leq k+1+O(1)}P_{k_1}Q_j\mathcal{N}\big(Q_{<j-C}\phi_{k_2}^{(2)}, Q_{<j-C}\phi_{k_3}^{(3)}\big) \biggr\|_{\Box Z}. 
\end{align*}
We can further restrict the summation to $j\leq k_3+O(1)$, and we can localize $\phi_{k_j}^{(j)}$, $j = 2,3$, to angular caps $\kappa_j$ of size $\sim 2^{\frac{j-k_3}{2}}$ and either aligned or anti-aligned. The whole expression then also has Fourier support on $C\kappa_2$, and square summation over caps is handled by using the square-summation over caps inherent in the definitions of the norms $\|\cdot\|_{S^1_{k_2}}$ and $\|\cdot\|_{R_{k_2}}$. Then taking advantage of Bernstein's inequality, we have the bound (for $\kappa$ a cap of radius $\sim 2^{\frac{j-k_1}{2}}$)
\begin{align*}
&\big\|P_{k_1,\kappa}Q_j\mathcal{N}\big(Q_{<j-C}P_{k_2,\kappa_2}\phi_{k_2}^{(2)}, Q_{<j-C}P_{k_3,\kappa_3}\phi_{k_3}^{(3)}\big)\big\|_{\Box Z}\\
&\lesssim 2^{\frac{j-k_1}{4}} 2^{-k_1-j} 2^{\frac{j-k_3}{2}} 2^{\frac23 k_1}\big\|Q_{<j-C}P_{k_2,\kappa_2}\phi_{k_2}^{(2)}\big\|_{L_t^2 L_x^6} \big\|Q_{<j-C}P_{k_3,\kappa_3}\phi_{k_3}^{(3)}\big\|_{L_t^2 L_x^\infty}.
\end{align*}
Square-summing over the caps results in the bound 
\begin{align*}
\lesssim 2^{k_1} 2^{\frac{j-k_1}{4}} 2^{-\frac{k_1}{2}+\frac{k_3}{2}} \big\|\phi_{k_2}^{(2)}\big\|_{S_{k_2}^1+R_{k_2}} \big\|\phi_{k_3}^{(3)}\big\|_{S_{k_3}^1+R_{k_3}},
\end{align*}
which can then be summed over $j\leq k_3+O(1)$ to result in the desired bound. 
\end{proof}

\subsection{Core quadrilinear null form bounds}

Here we present the generalized versions of the key quadrilinear null form bounds (136)--(138) in~\cite{KST}.
\begin{proposition}
 The following quadrilinear form bounds hold under the condition $k < k_i - C$:
 \begin{align}
  &\big|\langle \Box^{-1}\mathcal{H}_k\big(\phi_{k_1}^{(1)}\partial_{\alpha}\phi_{k_2}^{(2)}\big), \mathcal{H}_k\big(\partial^{\alpha}\phi_{k_3}^{(3)}\psi_{k_4}\big)\rangle\big| \nonumber \\
  &\quad \lesssim 2^{\delta(k-\min\{k_i\})} \big\|\phi_{k_1}^{(1)}\big\|_{S^1_{k_1} + R_{k_1}} \big\|\phi_{k_2}^{(2)}\big\|_{S^1_{k_2} + R_{k_2}} \big\|\phi_{k_3}^{(3)}\big\|_{S_{k_3}^1} \big\|\psi_{k_4}\big\|_{N^*}, \label{equ:generalization_136_KST} \\
  &\big|\langle (\Box \Delta)^{-1}\mathcal{H}_k\partial^{\alpha} \big(\phi_{k_1}^{(1)}\partial_{\alpha}\phi_{k_2}^{(2)}\big), \mathcal{H}_k\partial_t\big(\partial_t\phi_{k_3}^{(3)}\psi_{k_4}\big)\rangle\big| \nonumber \\
  &\quad \lesssim 2^{\delta(k-\min\{k_i\})} \big\|\phi_{k_1}^{(1)}\big\|_{S^1_{k_1} + R_{k_1}}\big\|\phi_{k_2}^{(2)} \big\|_{S^1_{k_2}+R_{k_2}}\big\|\phi_{k_3}^{(3)}\big\|_{S_{k_3}^1} \big\|\psi_{k_4}\big\|_{N^*}, \label{equ:generalization_137_KST} \\
  &\big|\langle (\Box \Delta)^{-1}\mathcal{H}_k\nabla_x\big(\phi_{k_1}^{(1)}\nabla_x\phi_{k_2}^{(2)}\big), \mathcal{H}_k\partial_\alpha\big(\partial^{\alpha}\phi_{k_3}^{(3)}\psi_{k_4}\big)\rangle\big| \nonumber \\ &\quad \lesssim 2^{\delta(k-\min\{k_i\})} \big\|\phi_{k_1}^{(1)}\big\|_{S^1_{k_1} + R_{k_1}} \big\|\phi_{k_2}^{(2)}\big\|_{S^1_{k_2}+R_{k_2}} \big\|\phi_{k_3}^{(3)}\big\|_{S^1_{k_3}} \big\|\psi_{k_4}\big\|_{N^*}. \label{equ:generalization_138_KST}
\end{align}
\end{proposition}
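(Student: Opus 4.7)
I plan to mirror the proof of the core quadrilinear null form estimates (136)--(138) in~\cite[Theorem 12.1]{KST}. Since each bound is linear in every slot and slots~3 and~4 are unchanged from~\cite{KST}, it suffices by bilinearity (using $\phi^{(i)}_{k_i} = \phi^{(i), s}_{k_i} + \phi^{(i), r}_{k_i}$ with $\phi^{(i), s}_{k_i} \in S^1_{k_i}$ and $\phi^{(i), r}_{k_i} \in R_{k_i}$ for $i = 1, 2$) to control the additional contributions in which at least one of the first two slots is rough. The key observation is the inclusion $R_{k_i} \hookrightarrow 2^{\delta_\ast k_i} S^1_{k_i}$ built into the $S^{1-\delta_\ast}_{k_i}$ component of the $R_{k_i}$ norm, so that the KST estimates can be applied at a uniform cost of $2^{\delta_\ast k_i}$ per rough factor. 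Since $\delta_\ast \ll \delta$ and we are in the low-output regime $k < k_i - C$, the off-diagonal decay $2^{\delta(k - \min\{k_i\})}$ on the right-hand side of each estimate is expected to provide ample margin.

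Concretely, the KST proof proceeds in three stages which I intend to replicate. First, a modulation decomposition reduces the quadrilinear form to a core piece in which all four inputs have modulation below the output frequency, the large-modulation remainders being disposed of by the generic product estimates of Lemma~\ref{lem:core_generic_product_est} together with Lemma~\ref{lem:KST12.4} and Lemma~\ref{lem:KST12.4addendum}. Second, the inner bilinear pairings are split into $\calH$ and $(1-\calH)$ components; the $(1-\calH)$ pieces are handled by duality against the generalized $\Box Z$ bound~\eqref{equ:generalization_134_KST}. Third, the resulting hyperbolic-hyperbolic core is expanded via a plane-wave decomposition into an interaction of $PW^{\pm}_\eta(l)$ atoms with $NE$ atoms. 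For the rough slots, the $PW$ and $NE$ bounds are unavailable directly, but the inclusion into $S^{1-\delta_\ast}_{k_i}$ lets us borrow the KST plane-wave estimates at the cost of $2^{\delta_\ast k_i}$. An alternative route, preferable whenever angular refinements are already imposed on the bilinear pairing, is to replace the KST $PW$ bound by the angularly decomposed redeeming bound built into $R_{k_i} L^2_t L^\infty_x$, exploiting the factor $\gamma(k', l')^{-1}$ designed precisely to compensate the loss on thin radially-directed blocks $\mathcal{C}_{k'}(l')$.

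The main obstacle I foresee is the high-high to low configuration with $k_1, k_2 \sim k_{\max} \gg k$ and both $\phi^{(1)}_{k_1}, \phi^{(2)}_{k_2}$ rough. A naive reduction to $S^{1-\delta_\ast}_{k_i}$ in both slots would accrue a factor $2^{2\delta_\ast k_{\max}}$, which must be absorbed by a single bilinear null-form gain $2^{\delta(k - k_{\max})}$. Since $\delta_\ast \ll \delta$ this remains sufficient, but to keep the bookkeeping clean I would treat at least one of the two rough factors through its genuine redeeming bound (the $R_{k_i} L^2_t L^6_x$ or $R_{k_i} L^2_t L^\infty_x$ component), which beats critical Strichartz by a factor of roughly $2^{-(\frac{1}{3} - 20\sigma)k_i}$ and so wipes out the associated $\delta_\ast$ loss entirely. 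The extensions to~\eqref{equ:generalization_137_KST} and~\eqref{equ:generalization_138_KST} then follow from the analysis of~\eqref{equ:generalization_136_KST}: the extra derivatives $\partial^\alpha$, $\partial_t$, $\nabla_x$ outside the inner null forms are harmless because $(\Box \Delta)^{-1}$ carries two extra inverse derivatives, and the additional $\partial_t$ and $\partial^\alpha$ derivatives on the $\phi^{(3)}_{k_3} \psi_{k_4}$ side are absorbed by the dual pairing with $N^*$ at the same scaling.
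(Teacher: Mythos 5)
Your central quantitative justification does not hold up. You propose to apply the KST estimates as black boxes after the embedding $R_{k_i} \hookrightarrow 2^{\delta_\ast k_i} S^1_{k_i}$, arguing that the prefactor $2^{\delta(k-\min\{k_i\})}$ gives "ample margin" since $\delta_\ast \ll \delta$, and in the high-high both-rough case that $2^{2\delta_\ast k_{\max}}$ is absorbed by $2^{\delta(k-k_{\max})}$ for the same reason. This fails: the loss $2^{\delta_\ast k_i}$ is \emph{absolute} in the frequency of the rough factor, while every available gain is \emph{relative} to the output frequency $k$. The hypothesis only requires $k < k_i - C$, so one may take all four frequencies comparable and arbitrarily large, say $k_i = k + O(C)$ with $k \to \infty$; then $2^{\delta(k-\min\{k_i\})}$ --- indeed any fixed power of $2^{k-k_{\max}}$ --- is $O(1)$, while $2^{\delta_\ast k_{\max}} \to \infty$. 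Thus even with a single rough factor the black-box route loses an unbounded factor against the stated right-hand side (which carries $\|\phi^{(i)}\|_{S^1_{k_i}+R_{k_i}}$, not $2^{\delta_\ast k_i}$ times it); the smallness relation $\delta_\ast \ll \delta$ is irrelevant because the two exponents multiply different quantities ($k_i$ versus $k_i - k$).

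The remedy you mention only as a way "to keep the bookkeeping clean" is in fact indispensable, and it is exactly what the paper does: rerun the microlocal angular case analysis from the proof of (136)/(148) in \cite{KST}, and whenever a rough factor is present, estimate at least one rough factor through its genuine redeeming components (the square-summed $R_{k}L^2_tL^\infty_x$ bound over radially directed blocks, or $R_kL^2_tL^6_x$). These beat the corresponding critical Strichartz bounds by the absolute factors $2^{-(1-20\sigma)k_1}$, resp. $2^{-(\frac13-20\sigma)k_1}$, and since $k_1 = k_2 + O(1)$ in the relevant high-high-to-low configurations, this absorbs the $2^{\delta_\ast k_2}$ loss from placing the other (possibly rough) factor into $S^1_{k_2}$ via its $S^{1-\delta_\ast}_{k_2}$ component, because $\delta_\ast \ll \tfrac13$. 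If you promote this from an aside to the main argument, applied in each of the angular cases, your outline becomes essentially the paper's proof of \eqref{equ:generalization_136_KST}, and \eqref{equ:generalization_137_KST}--\eqref{equ:generalization_138_KST} are indeed handled analogously. A minor structural remark: the expressions in the proposition already carry $\calH_k$, so no additional $\calH/(1-\calH)$ splitting or duality against the $\Box Z$ bound \eqref{equ:generalization_134_KST} is needed inside this proof; the paper passes directly to the angular case analysis, distinguishing the three regimes for the angle between the second and third inputs.
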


\begin{proof}
 We present the details for the derivation of the first estimate~\eqref{equ:generalization_136_KST}. The remaining estimates~\eqref{equ:generalization_137_KST}--\eqref{equ:generalization_138_KST} can be handled analogously.

 We microlocalize as in (148) in~\cite{KST}. In particular, the modulation of $\mathcal{H}_k\big(\ldots\big)$ is restricted to $\sim 2^{j}$ and we set $2^l: = 2^{\frac{j-k}{2}}$. We may assume that at least one of the inputs $\phi_{k_{1}}^{(1)}$ or $\phi_{k_{2}}^{(2)}$ are in the space $R_{k_j}$, $j=1$ or $j=2$, and in particular that $k_1>0$. Following the argument in the proof of (136) in~\cite{KST}, we consider various situations depending on the angle between $\phi_{k_2}^{(2)}$ and $\phi_{k_3}^{(3)}$. 

\medskip

\noindent {\it{Case 1: $\angle (\phi^2,\phi^3) \,\text{mod}\,\pi \lesssim 2^l 2^{k-k_2}$.}} As in~\cite{KST} we obtain the bound 
\begin{align*}
&\lesssim 2^{k_3 - k_2}\sum_{\mathcal{C}_k(l)}\big\|P_{\mathcal{C}_k(l)}Q_{<j-C}\phi_{k_1}^{(1)}\big\|_{L_t^2 L_x^\infty} \big\|P_{-\mathcal{C}_k(l)}Q_{<j-C}\phi_{k_2}^{(2)}\big\|_{L_t^2 L_x^\infty} \times\\
&\qquad \qquad \qquad \times \sup_t\sum_{\mathcal{C}_{k'}(l)}\big\|P_{\mathcal{C}_{k'}(l)}Q_{<j-C}\phi_{k_3}^{(3)}\big\|_{L_x^2}\big\|P_{-\mathcal{C}_{k'}(l)}Q_{<j-C}\psi_{k_4}\big\|_{L_x^2}.
\end{align*}
By symmetry, we may assume that the first factor $\phi_{k_1}^{(1)}\in R_{k_1}$, while we use control over $\|\cdot\|_{S^1_{k_2}}$ for the second factor, potentially with a $2^{\delta_*k_2}$-loss. Then we can bound 
\begin{align*}
&2^{-k_2}\sum_{\mathcal{C}_k(l)}\big\|P_{\mathcal{C}_k(l)}Q_{<j-C}\phi_{k_1}^{(1)}\big\|_{L_t^2 L_x^\infty} \big\|P_{-\mathcal{C}_k(l)}Q_{<j-C}\phi_{k_2}^{(2)}\big\|_{L_t^2 L_x^\infty}\\
&\lesssim 2^{-k_2} 2^{-\frac{k_1}{2+}} 2^{\frac{l}{2}} 2^{\frac{k_2}{2}} 2^{\frac{k-k_2}{2}} 2^{\delta_*k_2}\big\|\phi_{k_1}^{(1)}\big\|_{R_{k_1}} \big\|\phi_{k_2}^{(2)}\big\|_{S^1_{k_2}+R_{k_2}}.
\end{align*}
Completing the estimate as in~\cite{KST}, we arrive at a bound that is indeed much better than what is required, due to additional exponential gains in $-k_1$. 

\medskip

\noindent {\it{Case 2: $\angle (\phi^2, \phi^3) \,\text{mod}\,\pi \lesssim 2^l 2^{k-k_3}$.}} Here we may assume $k_3<k_2$, in light of the previous case. 
This time we use the fixed-time bound 
\begin{align*}
&\lesssim 2^{k_2 - k_3} \sum_{\mathcal{C}_k(l)}\big\|P_{\mathcal{C}_k(l)}Q_{<j-C}\phi_{k_1}^{(1)}\big\|_{L_x^\infty} \big\|P_{-\mathcal{C}_k(l)}Q_{<j-C}\phi_{k_2}^{(2)}\big\|_{L_x^2} \times \\
&\qquad \qquad \qquad \qquad \times \sum_{\mathcal{C}_{k'}(l)}\big\|P_{\mathcal{C}_{k'}(l)}Q_{<j-C}\phi_{k_3}^{(3)}\big\|_{L_x^\infty}\big\|P_{-\mathcal{C}_{k'}(l)}Q_{<j-C}\psi_{k_4}\big\|_{ L_x^2}.
\end{align*}
Applying Cauchy-Schwarz in the second sum over rectangular boxes in order to reduce to $\big\|\psi_{k_4}\big\|_{L_x^2}$ and then integrating in time and using H\"older's inequality, we can estimate things as before by using the $L_t^2 L_x^\infty$ based norm for the factors $\phi_{k_1}^{(1)}, \phi_{k_3}^{(3)}$, and $L_t^\infty L_x^2$ for $\phi_{k_2}^{(2)}$(more precisely, we use square sums over pieces microlocalized to rectangular boxes). 
Note that if the other high-frequency factor $\phi_{k_2}^{(2)}$ is in $R_{k_2}$ and not the first one, we simply interchange the roles of these factors. Then the preceding expression can further be bounded by 
\begin{align*}
\lesssim 2^{-k_3} 2^{\frac{l}{2}} 2^{\frac{k-k_3}{3}} 2^{-\frac{k_3}{6}} 2^{\frac{k}{2}} 2^{\frac{k}{6}} 2^{\frac23 k} 2^{-\frac{k_1}{2+}} 2^{\delta_*k_2} \big\|\phi_{k_1}^{(1)}\big\|_{R_{k_1}} \big\|\phi_{k_2}^{(2)}\big\|_{S^1_{k_2}+R_{k_2}}\big\|\psi_{k_4}\big\|_{N_{k_4}^*}.
\end{align*}
This can again be summed over all relevant parameters to give (more than) the required bound. 

\medskip

\noindent {\it{Case 3: $2^l\gtrsim \angle (\phi^2, \phi^3) \, \text{mod}\,\pi \gg 2^l 2^{k-\min\{k_{2,3}\}}$.}}  Again we may assume that $\phi_{k_1}^{(1)}$ is in $R_{k_1}$, since both $\phi_{k_{1}}^{(1)}$ and $\phi_{k_2}^{(2)}$ form an angle $\gg 2^l 2^{k-\min\{k_{2,3}\}}\,\text{mod}\,\pi$ with $\phi_{k_3}^{(3)}$. Set $\angle (\phi^2,\phi^3) \sim 2^{l'}$. In analogy with~\cite{KST}, and specifically {Case 3} in the proof of estimate (148) there, we infer the bound 
\begin{align*}
\lesssim 2^{-2k} 2^{-2l} 2^{l'} 2^{\min\{k,0\}} 2^{-\frac{k_1}{2+}} 2^{k_2+k_3} I_{23}(l') \big\|\phi_{k_1}^{(1)}\big\|_{R_{k_1}} \big\|\psi_{k_4}\big\|_{N_{k_4}^*},
\end{align*}
 where we have 
 \[
 I_{23}(l')\lesssim 2^{\frac32 (k+l)} 2^{\delta_\ast k_2} \big\|\phi_{k_2}\big\|_{R_{k_2}} \big\|\phi_{k_3}\big\|_{S^1_{k_3}}.
 \]
 It is then straightforward to sum over $l'<l+C<O(1)$ to infer the desired bound. 
\end{proof}

We conclude with a generalized version of the multilinear estimate (141) in~\cite{KST}.
\begin{proposition}[Additional core product estimate]
 We have that
 \begin{equation} \label{equ:generalization_141_KST}
  \bigl\| (I - \calH_{k_1}) P_{k_1} ( \phi_{k_2}^{(2)} \partial_t \phi_{k_3}^{(3)} ) \bigr\|_{\Box^{\frac12} \Delta^{\frac12} Z} \lesssim 2^{\delta (k_1 - k_2)} \|\phi_{k_2}^{(2)}\|_{S^1_{k_2} + R_{k_2}} \|\phi_{k_3}^{(3)}\|_{S^1_{k_3} + R_{k_3}}, \quad k_1 \leq k_2-C.
 \end{equation}
\end{proposition}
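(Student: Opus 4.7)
The plan is to mirror the proof of the original estimate~(141) from~\cite{KST} and graft onto it the redeeming-space calculus used for~\eqref{equ:generalization_134_KST}--\eqref{equ:generalization_135_KST}. The smooth--smooth case is already contained in~\cite{KST}, so the task reduces to the sub-cases where at least one of $\phi_{k_2}^{(2)},\phi_{k_3}^{(3)}$ lies in $R_{k_2}$ or $R_{k_3}$. The constraint $k_1 \leq k_2-C$ together with the frequency localization of the output at $\sim 2^{k_1}$ forces $k_3 = k_2 + O(1)$, so we work in a $\textit{high}\times\textit{high}\to\textit{low}$ regime, which is the favorable one for extracting null-form cancellation.

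First, I would expand $(I-\calH_{k_1})P_{k_1}(\phi_{k_2}^{(2)}\partial_t\phi_{k_3}^{(3)})$ into three pieces, according to whether the dominant modulation $\sim 2^j$ sits in the output, in $\phi_{k_2}^{(2)}$, or in $\phi_{k_3}^{(3)}$:
\[
 Q_{\geq j} P_{k_1}\bigl(Q_{<j-C}\phi_{k_2}^{(2)}\,\partial_t Q_{<j-C}\phi_{k_3}^{(3)}\bigr),\qquad P_{k_1}\bigl(Q_{j}\phi_{k_2}^{(2)}\,\partial_t Q_{<j+O(1)}\phi_{k_3}^{(3)}\bigr),
\]
and its symmetric counterpart. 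In each piece, the modulation constraint $\bigl||\tau|-|\xi|\bigr|\sim 2^j$ at frequency $\sim 2^{k_2}$ pins the spatial Fourier supports of $\phi_{k_2}^{(2)}$ and $\phi_{k_3}^{(3)}$ to caps $\kappa_2\sim\pm\kappa_3$ of diameter $\sim 2^{(j-k_2)/2}$, yielding a null-form gain of the same size. To handle the $\Box^{1/2}\Delta^{1/2}Z$-norm of the output I would unpack the definition of $Z_{k_1}$, which at modulation $\sim 2^j$ and angular resolution $l = (j-k_1)/2$ requires a cap-summed $L^1_t L^\infty_x$ bound; Bernstein's inequality on radially directed tiles of dimensions $2^{k_1}\times(2^{k_1+l'})^3$ relates this to a cap-summed $L^1_t L^{3/2}_x$ bound that is then estimated by bilinear H\"older, using one of the pairings $L^2_t L^6_x\cdot L^2_t L^2_x$, $L^2_t L^\infty_x\cdot L^\infty_t L^2_x$, or the interpolated $L^2_t L^3_x\cdot L^2_t L^6_x$ variant employed in the proof of~\eqref{equ:generalization_134_KST}.

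By symmetry I may assume $\phi_{k_2}^{(2)}\in R_{k_2}$ and place this rough factor in the cap-decomposed $R_{k_2}L^2_t L^\infty_x$ or $R_{k_2}L^2_t L^6_x$ component, each of which delivers the angular square-summability needed to match the $\sum_\kappa$ inherent in the $Z$-norm. The partner factor $\phi_{k_3}^{(3)}$ is placed either in $S^1_{k_3}$ or, if it is also rough, in $S^{1-\delta_\ast}_{k_3}$ at the cost of an extra $2^{\delta_\ast k_3}$. Combining the null-form gain $2^{(j-k_2)/2}$, the Bernstein gain from the radial block structure, the $R$-space small-scale gain $2^{-(1/2-20\sigma)k_2}$, and the explicit weights carried by $\Box^{1/2}\Delta^{1/2}$, produces after summation in $j$ and $l$ an off-diagonal decay factor comfortably better than $2^{\delta(k_1-k_2)}$ in the smooth--rough case. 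The sub-case of dominant output modulation $j\geq k_1+O(1)$ is analogous but simpler and needs no angular decomposition.

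The main obstacle I foresee is the rough--rough regime, where both inputs are only in $R_{k_j}$. There one is forced to absorb a $2^{\delta_\ast k_2}$ loss on the factor placed in $S^{1-\delta_\ast}_{k_3}$, and the final exponential gain $2^{\delta(k_1-k_2)}$ must then be assembled from the combination of the null-form alignment, the Bernstein gain on the radially directed tiles, and the built-in $2^{-(1/2-20\sigma)k_2}$ decay of the $R_{k_2}$ norm. Closing this requires a careful choice of Lebesgue pairings in each modulation sub-case and verification that the smallness hierarchy $\delta_\ast\ll \sigma\ll \delta$ secures a net gain of at least $2^{-(\delta+2\delta_\ast)(k_2-k_1)}$ before the modulation and cap sums are performed; this is the only place in the argument where the constants $\delta_\ast,\sigma,\delta$ genuinely interact, and it should be handled exactly as in the corresponding rough--rough portion of the proof of~\eqref{equ:generalization_134_KST}.
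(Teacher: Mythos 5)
Your plan breaks at its central mechanism: you budget for ``a null-form gain of the same size'' $2^{(j-k_2)/2}$ coming from the angular alignment of the two high-frequency inputs, but the bilinear expression in \eqref{equ:generalization_141_KST} is the plain product $\phi_{k_2}^{(2)}\,\partial_t\phi_{k_3}^{(3)}$. Unlike \eqref{equ:generalization_131_KST}--\eqref{equ:generalization_135_KST} there is no null form $\calN$ here; the multiplier is just $\tau_3$ and does not vanish when the inputs are (anti-)aligned, so no angular smallness can be extracted from the symbol. Since your final bookkeeping explicitly ``assembles'' the off-diagonal decay from this (nonexistent) factor together with Bernstein and the $R$-space gain, the estimate does not close as written. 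Two further points: (a) the angular localization is misquantified for the $high\times high\to low$ geometry forced by $k_1\le k_2-C$ --- if the factor carrying the dominant modulation sits at modulation $2^{j_1}$, the inputs are pinned to caps of diameter $\sim 2^{\frac{j_1+k_1}{2}-k_2}$ (the output frequency enters), not $2^{(j-k_2)/2}$, which is the low-high formula; (b) the two factors are not interchangeable ``by symmetry'' (only one carries $\partial_t$, and the modulation decomposition distinguishes them), so the case $\phi_{k_2}^{(2)}\in S^1_{k_2}$, $\phi_{k_3}^{(3)}\in R_{k_3}$ has to be treated on its own (it is the easier one).

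For contrast, the argument that does work uses no null structure at all. First, the piece of $(I-\calH_{k_1})P_{k_1}(\cdots)$ with output modulation $\gtrsim 2^{k_1}$ need not be estimated: the $Z_{k_1}$ norm only tests modulations $Q_{k_1+2l}$ with $l<C$, so that term simply does not contribute (your plan spends effort estimating it). For the remaining pieces, say $Q_{j_1}\phi^{(2)}_{k_2}$ at modulation $j_1\ge j-C$ against $\partial_t Q_{<j-C}\phi^{(3)}_{k_3}$, the cap decomposition at scale $2^{\frac{j_1+k_1}{2}-k_2}$ is used only for orthogonality/square-summation, not for a gain: one converts the cap-summed $L^1_tL^\infty_x$ in the $Z$ norm to $L^1_tL^2_x$ by cap Bernstein, applies H\"older with the pairing $L^2_tL^{12/5}_x\cdot L^2_tL^6_x$ plus Bernstein at frequency $2^{k_1}$, and invokes the interpolated cap-square-summed bound for $Q_{j_1}P_{\kappa_2}\phi^{(2)}_{k_2}$ from the proof of Lemma~\ref{lem:KST12.4addendum}, which costs only $\bigl(2^{-j_1/2}2^{\delta_\ast k_2}\bigr)^{3/4}\bigl(2^{(\frac12+20\sigma)k_2}\bigr)^{1/4}$, together with the square-summed $L^2_tL^6_x$ Strichartz bound $2^{\frac56 k_3}\|\phi^{(3)}_{k_3}\|_{S^1_{k_3}+R_{k_3}}$ for the low-modulation factor. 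The weights $2^{-j/2}2^{-\frac32 k_1}$ from $\Box^{1/2}\Delta^{1/2}$ then produce a bound of the shape $2^{\frac{j-k_1}{8}}2^{-\frac{k_1}{24}}2^{(\delta_\ast+20\sigma-\frac1{24})k_2}$, which suffices for $k_1\ge 0$; for $k_1<0$ one switches pairings ($L^2_{t,x}\cdot L^2_tL^6_x$) so the $2^{-k_3/6}$ gain absorbs the $2^{\delta_\ast k_2}$ loss. In particular there is no special rough--rough obstruction of the kind you anticipate, and no delicate interplay of $\delta_\ast,\sigma,\delta$ beyond $\delta_\ast+20\sigma$ being small is needed; the missing ingredient in your proposal is precisely a replacement for the null-form gain, and the $L^2_tL^{12/5}_x$ interpolation together with the numerical gains in $k_1,k_2$ is that replacement.
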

\begin{proof} From the definition, we have 
\begin{align*}
 (I - \calH_{k_1}) P_{k_1} ( \phi_{k_2}^{(2)} \partial_t \phi_{k_3}^{(3)} ) &= \sum_{j\geq k_1+C} P_{k_1}Q_j (Q_{<j-C} \phi_{k_2}^{(2)} \partial_t Q_{<j-C}\phi_{k_3}^{(3)} )\\
 &\quad + \sum_{j<k_1+C} P_{k_1}Q_j (Q_{\geq j-C} \phi_{k_2}^{(2)} \partial_t Q_{<j-C}\phi_{k_3}^{(3)} )\\
 &\quad +  \sum_{j< k_1+C} P_{k_1}Q_j ( \phi_{k_2}^{(2)} \partial_t Q_{\geq j-C}\phi_{k_3}^{(3)} )\\
 &\equiv I + II + III. 
\end{align*}
Recall that we have 
\[
\big\|\phi\big\|_{Z_k}^2 = \sup_{l<C}2^l\big\|P^\kappa_lQ_{k+2l}\phi\big\|_{L_t^1 L_x^\infty}^2.
\]
In particular, the term $I$ does not contribute. We treat the term $II$, the remaining term $III$ being similar. 
Consider then the term $II$. We need to estimate (with $l = \frac{j-k}{2}$)
\begin{align*}
&2^{-\frac{j}{2}} 2^{-\frac{3k_1}{2}} 2^{\frac{j-k_1}{4}} \biggl( \sum_{\kappa}\big\|P^{\kappa}_l P_{k_1}Q_j (Q_{\geq j-C} \phi_{k_2}^{(2)} \partial_t Q_{<j-C}\phi_{k_3}^{(3)} )\big\|_{L_t^1 L_x^\infty}^2 \biggr)^{\frac12} \\
&\lesssim  2^{-\frac{j}{2}} 2^{-\frac{3k_1}{2}} 2^{\frac{j-k_1}{4}} \sum_{j_1\geq j-C}\sum_{\kappa_2\sim \pm \kappa_3} \biggl( \sum_{\kappa} \big\|P^\kappa_l P_{k_1}Q_j (Q_{j_1} P^{\kappa_2}\phi_{k_2}^{(2)} \partial_t Q_{<j-C}P^{\kappa_3}\phi_{k_3}^{(3)} )\big\|_{L_t^1 L_x^\infty}^2 \biggr)^{\frac12},
\end{align*}
where the caps $\kappa_{2,3}$ are of diameter $\sim 2^{\frac{j_1+k_1}{2}-k_2}$. Then from the proof of Lemma~\ref{lem:KST12.4addendum} recall the estimate 
\begin{align*}
2^{k_2} \biggl( \sum_{\kappa_2} \big\|Q_{j} P_{\kappa_2}\phi_{k_2}^{(2)}\big\|_{L_t^2 L_x^{\frac{12}{5}}}^2 \biggr)^{\frac12}\lesssim  \big(2^{-\frac{j}{2}} 2^{\delta_* k_2}\big)^{\frac34} \big(2^{(\frac12 + 20\sigma)k_2}\big)^{\frac14} \big\|\phi_{k_2}^{(2)}\big\|_{R_{k_2}},
\end{align*}
and furthermore that we have 
\begin{align*}
\biggl(\sum_{\kappa_3}\big\|\partial_t Q_{<j-C}P_{\kappa_3}\phi_{k_3}^{(3)}\big\|_{L_t^2 L_x^6}^2\biggr)^{\frac12}\lesssim 2^{\frac{5}{6}k_3} \big\|\phi_{k_3}^{(3)}\big\|_{S^1_{k_3}+R_{k_3}}. 
\end{align*}
Then use that 
\begin{align*}
\biggl(\sum_{\kappa}\big\|P^{\kappa}_l P_{k_1}Q_j f\big\|_{L_t^1 L_x^\infty}^2\biggr)^{\frac12}\lesssim 2^{\frac{ 3l}{2}} 2^{2k_1} \biggl(\sum_{\kappa}\big\|P^{\kappa}_l P_{k_1}Q_j f\big\|_{L_t^1 L_x^2}^2 \biggr)^{\frac12}\lesssim 2^{\frac{ 3l}{2}} 2^{2k_1} \big\|P_{k_1}Q_j f\big\|_{L_t^1 L_x^2},
\end{align*}
and apply the Cauchy-Schwarz inequality to $\sum_{\kappa_2\sim \pm \kappa_3}$, as well as Bernstein's inequality to pass from $L_t^1 L_x^{\frac{12}{7}}$ to $L_t^1 L_x^2$. It follows that 
\begin{align*}
&2^{-\frac{j}{2}} 2^{-\frac{3k_1}{2}} 2^{\frac{j-k_1}{4}} \biggl( \sum_{\kappa}\big\|P^{\kappa}_l P_{k_1}Q_j (Q_{\geq j-C} \phi_{k_2}^{(2)} \partial_t Q_{<j-C}\phi_{k_3}^{(3)} )\big\|_{L_t^1 L_x^\infty}^2 \biggr)^{\frac12}\\
&\lesssim 2^{-\frac{j}{2}} 2^{-\frac{3k_1}{2}} 2^{\frac{j-k_1}{4}} 2^{3\frac{j-k_1}{4}} 2^{\frac{7}{3}k_1} \big(2^{-\frac{j}{2}} 2^{\delta_* k_2}\big)^{\frac34} \big(2^{(\frac12 + 20\sigma)k_2}\big)^{\frac14} \big\|\phi_{k_2}^{(2)}\big\|_{R_{k_2}}  2^{-\frac{1}{6}k_3} \big\|\phi_{k_3}^{(3)}\big\|_{S^1_{k_3}+R_{k_3}}\\
&\simeq 2^{\frac{j-k_1}{8}} 2^{-\frac{k_1}{24}} 2^{(\delta_* + 20\sigma - \frac{1}{24})k_2}  \big\|\phi_{k_2}^{(2)}\big\|_{R_{k_2}}  \big\|\phi_{k_3}^{(3)}\big\|_{S^1_{k_3}+R_{k_3}},\\
\end{align*}
which is good provided that $\phi_{k_2}^{(2)}\in R_{k_2}$ and $k_1\geq 0$. 
If $k_1<0$, one places $Q_{\geq j-C} \phi_{k_2}^{(2)}$ into $L_{t,x}^2$ and $\partial_t Q_{<j-C}\phi_{k_3}^{(3)}$ into $L_t^2 L_x^6$, since the gain of $2^{-\frac{k_3}{6}}$ is then enough to neutralize the loss of $2^{\delta_* k_2}$. 
The case when $\phi_{k_2}^{(2)}\in S_{k_2}^1$ but $\phi_{k_3}^{(3)}\in R_{k_3}$ is simpler since one only needs to place $Q_{\geq j-C} \phi_{k_2}^{(2)}$ in $L_{t,x}^2$ while using the redeeming version of $L_t^2 L_x^6$ for $\partial_t Q_{<j-C}\phi_{k_3}^{(3)}$.  
\end{proof}

\bibliographystyle{amsplain}
\bibliography{references}

\providecommand{\bysame}{\leavevmode\hbox to3em{\hrulefill}\thinspace}
\providecommand{\MR}{\relax\ifhmode\unskip\space\fi MR }
\providecommand{\MRhref}[2]{%
  \href{http://www.ams.org/mathscinet-getitem?mr=#1}{#2}
}
\providecommand{\href}[2]{#2}
\begin{thebibliography}{10}

\bibitem{BOP2}
\'{A}. B\'enyi, T.~Oh, and O.~Pocovnicu, \emph{On the probabilistic {C}auchy
  theory of the cubic nonlinear {S}chr\"odinger equation on {$\mathbb{R}^d$},
  {$d \geq 3$}}, Trans. Amer. Math. Soc. Ser. B \textbf{2} (2015), 1--50.

\bibitem{BOP1}
\bysame, \emph{Wiener randomization on unbounded domains and an application to
  almost sure well-posedness of {NLS}}, Excursions in Harmonic Analysis. {V}ol.
  4, Appl. Numer. Harmon. Anal., Birkh\"auser/Springer, 2015, pp.~3--25.

\bibitem{BOP19}
\'{A}. B\'{e}nyi, T.~Oh, and O.~Pocovnicu, \emph{On the probabilistic {C}auchy
  theory for nonlinear dispersive {PDE}s}, Landscapes of time-frequency
  analysis, Appl. Numer. Harmon. Anal., Birkh\"{a}user/Springer, Cham, 2019,
  pp.~1--32.

\bibitem{B94_1}
J.~Bourgain, \emph{Periodic nonlinear {S}chr\"odinger equation and invariant
  measures}, Comm. Math. Phys. \textbf{166} (1994), no.~1, 1--26.

\bibitem{B96}
\bysame, \emph{Invariant measures for the {$2$}{D}-defocusing nonlinear
  {S}chr\"odinger equation}, Comm. Math. Phys. \textbf{176} (1996), no.~2,
  421--445.

\bibitem{Bringmann18_2}
B.~Bringmann, \emph{Almost sure local well-posedness for a derivative nonlinear
  wave equation}, Int. Math. Res. Not., to appear.

\bibitem{Bringmann18_3}
\bysame, \emph{Almost sure scattering for the energy critical nonlinear wave
  equation}, Preprint arXiv:1812.10187.

\bibitem{Bringmann18_1}
\bysame, \emph{Almost-sure scattering for the radial energy-critical nonlinear
  wave equation in three dimensions}, Anal. PDE \textbf{13} (2020), no.~4,
  1011--1050.

\bibitem{BuKr19}
N.~Burq and J.~Krieger, \emph{{Randomization improved Strichartz estimates and
  global well-posedness for supercritical data}}, Preprint arXiv:1902.06987.

\bibitem{BT08_1}
N.~Burq and N.~Tzvetkov, \emph{Random data {C}auchy theory for supercritical
  wave equations. {I}. {L}ocal theory}, Invent. Math. \textbf{173} (2008),
  no.~3, 449--475.

\bibitem{BT08_2}
\bysame, \emph{Random data {C}auchy theory for supercritical wave equations.
  {II}. {A} global existence result}, Invent. Math. \textbf{173} (2008), no.~3,
  477--496.

\bibitem{BT14}
\bysame, \emph{Probabilistic well-posedness for the cubic wave equation}, J.
  Eur. Math. Soc. \textbf{16} (2014), no.~1, 1--30.

\bibitem{CCMNS17}
S.~Chanillo, M.~Czubak, D.~Mendelson, A.~Nahmod, and G.~Staffilani,
  \emph{Almost sure boundedness of iterates for derivative nonlinear wave
  equations}, Comm. Anal. Geom., to appear.

\bibitem{PratoDeb02}
G.~Da~Prato and A.~Debussche, \emph{Two-dimensional {N}avier-{S}tokes equations
  driven by a space-time white noise}, J. Funct. Anal. \textbf{196} (2002),
  no.~1, 180--210.

\bibitem{BouDeb99}
A.~de~Bouard and A.~Debussche, \emph{A stochastic nonlinear {S}chr\"{o}dinger
  equation with multiplicative noise}, Comm. Math. Phys. \textbf{205} (1999),
  no.~1, 161--181.

\bibitem{DNY19}
Y.~Deng, A.~Nahmod, and H.~Yue, \emph{{Invariant Gibbs measures and global
  strong solutions for nonlinear Schr\"odinger equations in dimension two}},
  Preprint arXiv:1910.08492.

\bibitem{DNY20}
\bysame, \emph{{Random tensors, propagation of randomness, and nonlinear
  dispersive equations}}, Preprint arXiv:2006.09285.

\bibitem{DLM17}
B.~Dodson, J.~L\"{u}hrmann, and D.~Mendelson, \emph{Almost sure scattering for
  the 4{D} energy-critical defocusing nonlinear wave equation with radial
  data}, Amer. J. Math. \textbf{142} (2020), no.~2, 475--504.

\bibitem{Feichtinger}
H.~Feichtinger, \emph{Modulation spaces of locally compact abelian groups},
  Technical report, University of Vienna, in Proc. Internat. Conf. on Wavelets
  and Applications (Chennai, 2002) (1983), 1--56.

\bibitem{Glimm_Jaffe}
J.~Glimm and A.~Jaffe, \emph{Quantum physics}, second ed., Springer-Verlag, New
  York, 1987, A functional integral point of view.

\bibitem{GIP15}
M.~Gubinelli, P.~Imkeller, and N.~Perkowski, \emph{Paracontrolled distributions
  and singular {PDE}s}, Forum Math. Pi \textbf{3} (2015).

\bibitem{GKO18}
M.~Gubinelli, H.~Koch, and T.~Oh, \emph{Paracontrolled approach to the
  three-dimensional stochastic nonlinear wave equation with quadratic
  nonlinearity}, Preprint arXiv:1811.07808.

\bibitem{Hairer14}
M.~Hairer, \emph{A theory of regularity structures}, Invent. Math. \textbf{198}
  (2014), no.~2, 269--504.

\bibitem{KenigMend19}
C.~Kenig and D.~Mendelson, \emph{{The focusing energy-critical nonlinear wave
  equation with random initial data}}, Int. Math. Res. Not., to appear.

\bibitem{KlMa_MKG}
S.~Klainerman and M.~Machedon, \emph{On the {M}axwell-{K}lein-{G}ordon equation
  with finite energy}, Duke Math. J. \textbf{74} (1994), no.~1, 19--44.

\bibitem{KL15}
J.~Krieger and J.~L\"uhrmann, \emph{Concentration {C}ompactness for the
  {C}ritical {M}axwell-{K}lein-{G}ordon {E}quation}, Ann. PDE \textbf{1}
  (2015), no.~1, Art. 5, 208.

\bibitem{KST}
J.~Krieger, J.~Sterbenz, and D.~Tataru, \emph{Global well-posedness for the
  {M}axwell-{K}lein-{G}ordon equation in {$4+1$} dimensions: small energy},
  Duke Math. J. \textbf{164} (2015), no.~6, 973--1040.

\bibitem{KS}
Joachim Krieger and Wilhelm Schlag, \emph{Concentration compactness for
  critical wave maps}, EMS Monographs in Mathematics, European Mathematical
  Society (EMS), Z\"urich, 2012.

\bibitem{Krieger-Sterbenz}
Joachim Krieger and Jacob Sterbenz, \emph{Global regularity for the
  {Y}ang-{M}ills equations on high dimensional {M}inkowski space}, Mem. Amer.
  Math. Soc. \textbf{223} (2013), no.~1047, vi+99.

\bibitem{Kupi16}
A.~Kupiainen, \emph{Renormalization group and stochastic {PDE}s}, Ann. Henri
  Poincar\'{e} \textbf{17} (2016), no.~3, 497--535.

\bibitem{LRS}
J.~Lebowitz, H.~Rose, and E.~Speer, \emph{Statistical mechanics of the
  nonlinear {S}chr\"odinger equation}, J. Statist. Phys. \textbf{50} (1988),
  no.~3-4, 657--687.

\bibitem{LM14}
J.~L{\"u}hrmann and D.~Mendelson, \emph{Random data {C}auchy theory for
  nonlinear wave equations of power-type on {$\mathbb{R}^3$}}, Comm. Partial
  Differential Equations \textbf{39} (2014), no.~12, 2262--2283.

\bibitem{LM16}
\bysame, \emph{On the almost sure global well-posedness of energy sub-critical
  nonlinear wave equations on $\mathbb{R}^3$}, New York J. Math. \textbf{22}
  (2016), 209--227.

\bibitem{MaSte}
M.~Machedon and J.~Sterbenz, \emph{Almost optimal local well-posedness for the
  {$(3+1)$}-dimensional {M}axwell-{K}lein-{G}ordon equations}, J. Amer. Math.
  Soc. \textbf{17} (2004), no.~2, 297--359 (electronic).

\bibitem{NS19}
A.~Nahmod and G.~Staffilani, \emph{Randomness and nonlinear evolution
  equations}, Acta Math. Sin. (Engl. Ser.) \textbf{35} (2019), no.~6, 903--932.

\bibitem{OT1}
S.-J. Oh and D.~Tataru, \emph{Global well-posedness and scattering of the
  {$(4+1)$}-dimensional {M}axwell-{K}lein-{G}ordon equation}, Invent. Math.
  \textbf{205} (2016), no.~3, 781--877.

\bibitem{OT3}
\bysame, \emph{Local well-posedness of the {$(4 + 1)$}-dimensional
  {M}axwell-{K}lein-{G}ordon equation at energy regularity}, Ann. PDE
  \textbf{2} (2016), no.~1, Art. 2, 70.

\bibitem{OT2}
\bysame, \emph{Energy dispersed solutions for the {$(4+1)$}-dimensional
  {M}axwell-{K}lein-{G}ordon equation}, Amer. J. Math. \textbf{140} (2018),
  no.~1, 1--82.

\bibitem{Oh09}
T.~Oh, \emph{Invariant {G}ibbs measures and a.s. global well posedness for
  coupled {K}d{V} systems}, Differential Integral Equations \textbf{22} (2009),
  no.~7-8, 637--668.

\bibitem{OttoWeber16}
F.~Otto and H.~Weber, \emph{Quasilinear {SPDE}s via rough paths}, Arch. Ration.
  Mech. Anal. \textbf{232} (2019), no.~2, 873--950.

\bibitem{Poc17}
O.~Pocovnicu, \emph{Almost sure global well-posedness for the energy-critical
  defocusing nonlinear wave equation on {$\mathbb{R}^d$}, {$d=4$} and 5}, J.
  Eur. Math. Soc. (JEMS) \textbf{19} (2017), no.~8, 2521--2575.

\bibitem{OhPoc16}
O.~{Pocovnicu} and T.~Oh, \emph{Probabilistic global well-posedness of the
  energy-critical defocusing quintic nonlinear wave equation on
  {$\mathbb{R}^3$}}, J. Math. Pures Appl. (9) \textbf{105} (2016), no.~3,
  342--366.

\bibitem{RT}
I.~Rodnianski and T.~Tao, \emph{Global regularity for the
  {M}axwell-{K}lein-{G}ordon equation with small critical {S}obolev norm in
  high dimensions}, Comm. Math. Phys. \textbf{251} (2004), no.~2, 377--426.

\bibitem{Selberg}
S.~Selberg, \emph{Almost optimal local well-posedness of the
  {M}axwell-{K}lein-{G}ordon equations in {$1+4$} dimensions}, Comm. Partial
  Differential Equations \textbf{27} (2002), no.~5-6, 1183--1227.

\bibitem{Wiener32}
N.~Wiener, \emph{Tauberian theorems}, Ann. of Math. (2) \textbf{33} (1932),
  no.~1, 1--100.

\bibitem{ZF12}
T.~Zhang and D.~Fang, \emph{Random data {C}auchy theory for the generalized
  incompressible {N}avier-{S}tokes equations}, J. Math. Fluid Mech. \textbf{14}
  (2012), no.~2, 311--324.

\end{thebibliography}

\end{document}